\newtheorem{corollary}{Corollary}[section]
\newtheorem{conjecture}{Conjecture}
\newtheorem{con-proposition}{``Proposition''}
\newtheorem{proposition}{Proposition}[section]
\newtheorem{theorem}{Theorem}[section]
\newtheorem{lemma}{Lemma}[section]
\newtheorem{definition}{Definition}[section]
\newtheorem{remark}{Remark}[section]
\newtheorem{assumption}{Assumption}
\numberwithin{equation}{section}
\newcommand{\p}{\partial}
\newcommand{\beq}{\begin{equation}}
\newcommand{\eeq}{\end{equation}}
\newcommand{\bconj}{\begin{conjecture}}
	\newcommand{\econj}{\end{conjecture}}
\newcommand{\bueq}{\begin{equation*}}
\newcommand{\eueq}{\end{equation*}}
\newcommand{\bthm}{\begin{theorem}}
	\newcommand{\ethm}{\end{theorem}}
\newcommand{\eps}{\varepsilon}
\DeclareMathOperator\init{init}
\providecommand{\keywords}[1]
{
	\small	
	\textbf{{Keywords:}} #1
}
\providecommand{\msc}[1]
{
	\small	
	\textbf{{Mathematics Subject Classification:}} #1
}
\newcommand{\id}{\text{id}}
\newcommand{\sign}[1]{{\rm sign }\left({#1}\right)}
\newcounter{saveenumi}
\title{
Classical solutions of a mean field system for pulse-coupled oscillators: long time asymptotics versus blowup
}
\author{Jos\'e A. Carrillo\thanks{Mathematical Institute, University of Oxford, Oxford OX2 6GG, UK (carrillo@maths.ox.ac.uk)},\, \, Xu'an Dou\thanks{Beijing International Center for Mathematical Research, Peking University, Beijing, 100871, China (dxa@pku.edu.cn)},\, \, Pierre Roux\thanks{Institut Camille Jordan, \'Ecole Centrale de Lyon, 69134 Ecully, France (pierre.roux@ec-lyon.fr)} \, \, and\, \,  Zhennan Zhou\thanks{Institute for Theoretical Sciences, Westlake University, Hangzhou, Zhejiang Province, 310030, China (zhouzhennan@westlake.edu.cn).}}
\begin{document}
\maketitle
\begin{abstract}
We introduce a novel reformulation of the mean-field system for pulse-coupled oscillators. It is based on writing a closed equation for the inverse distribution function associated to the probability density of oscillators with a given phase in a suitable time scale. This new framework allows to show a hidden contraction/expansion of certain distances leading to a full clarification of the long-time behavior, existence of steady states, rates of convergence, and finite time blow-up of classical solutions for a large class of monotone phase response functions. In the process, we get insights about the origin of obstructions to global-in-time existence and uniform in time estimates on the firing rate of the oscillators.
\end{abstract}

\keywords{pulse-coupled oscillators, mean field equation, long time behavior, blow-up}

\

\msc{35Q92, 35B40, 35B44, 34C15}


\section{Introduction}

Systems of pulse-coupled oscillators model synchronization by singular interactions occurring as phase jumps changing in a particular pulsatile manner. They have numerous applications in science and engineering (e.g. \cite{Ramirez-Avila2019,peleg2023new,olami1992self,herz1995earthquake,rhouma2001self,hong2005scalable,pagliari2010scalable,wang2020global}). Yet their unique singular coupling mechanism renders them challenging to analyze. A partial differential equation (PDE) for pulse-coupled oscillators, which characterizes the nonlinear interaction at a continuous level, has been derived in physics literature 30 years ago by a formal mean-field argument \cite{kuramoto1991collective,1993PhysRevE.48.1483}. However, it still lacks a systematic mathematical treatment. In this work, we study the mean-field equation from a novel reformulation, which reveals several hidden structures of the equation, and facilitates the study on its basic properties, including well-posedness, blow-up and long time convergence to the steady state. 

\subsection{The particle system}

Pulse-coupled oscillators, as a particle system, describe a finite number of oscillators coupled in a pulselike manner. Here we present a model following \cite{MR0414135,mirollo1990synchronization}. Consider $\mathcal{N}$ oscillators ($\mathcal{N}\in\mathbb{N}_+$), each described by a phase variable $\phi_i(t)$, for time $t\geq 0$ and $i=1,2,...,\mathcal{N}$. The phase variable $\phi_i$ takes value in $[0,\Phi_F]$, where $\Phi_F>0$ is the firing phase. When all $\phi_i\in[0,\Phi_F)$, i.e., strictly less than $\Phi_F$, each $\phi_i$ is subject to its own dynamics, which is here assumed to be the simple ordinary differential equation (ODE)
\begin{equation}\label{pco-eq-1:no-spike}
    \frac{d}{dt}\phi_i(t)=1,\qquad i=1,2,...,\mathcal{N}.
\end{equation} An oscillator ($\phi_i$) reaching $\Phi_F$ constitutes a firing or spiking event, which has two consequences. For $\phi_i$ itself, it is immediately reset to $0$, i.e.
\begin{equation}\label{pco-eq-2:reset}
    \phi_i(t^-)=\Phi_F,\quad \Rightarrow \quad \phi_i(t)=0.
\end{equation} Also, the firing of $\phi_i$ induces an immediate phase jump in other oscillators, and the phase jump magnitude depends on the recipient's phase. More specifically, we have
\begin{equation}\label{pco-eq-3:spike-coupling}
    \phi_i(t^-)=\Phi_F,\quad \Rightarrow \quad \phi_j(t)=\phi_j(t^-)+\frac{K(\phi_j(t^-))}{\mathcal{N}},\quad j\neq i.
\end{equation} Here the function $K(\phi)$ reflects how oscillators at different phases respond \textit{differently} to the same spike.  We call $K(\phi)$ the phase response function, which is related to the infinitesimal phase response curve (iPRC) in some literature (e.g. \cite{mauroy2011thesis}).  In the induced jump size in \eqref{pco-eq-3:spike-coupling}, the factor $\frac{1}{\mathcal{N}}$ is a scaling to ensure (at least formally) that there is a meaningful mean-field limit, i.e. the limit when the number of particles $\mathcal{N}$ goes to infinity. Throughout this work we assume $K(\phi)>0$, which implies that oscillators at lower phases become closer to $\Phi_F$ after receiving a spike. Note that $\phi_j(t)$ in \eqref{pco-eq-3:spike-coupling} may exceed $\Phi_F$, in which case $\phi_j$ will also spike at the same time $t$. This can cause more oscillators to spike at the same time in a chain-reaction manner \cite{mirollo1990synchronization,delarue2015particle}, which we will however not discuss in details here. 

In \eqref{pco-eq-1:no-spike}-\eqref{pco-eq-3:spike-coupling}, the oscillators interact with each other only through the sudden impulse \eqref{pco-eq-3:spike-coupling} when firing events occur. This singular interaction mechanism is in accordance with the name pulse-coupled oscillators. We also note that the phase response function $K(\phi)$ can already induce intricate dynamics, despite that here the inter-spike dynamics \eqref{pco-eq-1:no-spike} appears simple.

The particle system in the form of \eqref{pco-eq-1:no-spike}-\eqref{pco-eq-3:spike-coupling} was proposed by Peskin in 1975 \cite{MR0414135}, whose original motivation was to model pacemaker cells in the heart. Nowadays, pulse-coupled oscillators have found widespread applications, both in modeling various natural phenomena, including flashing fireflies \cite{Ramirez-Avila2019,peleg2023new}, spiking neurons \cite{ermentrout1991multiple,ermentrout2010mathematical,gerstner2014neuronal,coombes2023neurodynamics} and earthquakes \cite{olami1992self,herz1995earthquake}, as well as in engineering fields, including clustering algorithms \cite{rhouma2001self} and wireless sensor networks \cite{hong2005scalable,pagliari2010scalable,wang2020global}. These diverse applications share the common feature of the pulselike coupling \eqref{pco-eq-2:reset}-\eqref{pco-eq-3:spike-coupling}. For instance, in the context of neurons, \eqref{pco-eq-1:no-spike}-\eqref{pco-eq-3:spike-coupling} gives a network of the so-called integrate-and-fire neurons \cite{ermentrout1991multiple,ermentrout2010mathematical,gerstner2014neuronal,coombes2023neurodynamics}, where $\phi_i$ represents a rescaled version of a neuron's voltage (membrane potential). The interaction here is through the spike \eqref{pco-eq-3:spike-coupling}, triggered when a neuron's voltage reaches the threshold $\Phi_F$ \eqref{pco-eq-2:reset}, followed by a reset of the voltage to $0$ \eqref{pco-eq-2:reset}.

While the pulselike interaction \eqref{pco-eq-3:spike-coupling} is relevant for various applications, its discontinuous nature also contributes to the challenges for the mathematical analysis of \eqref{pco-eq-1:no-spike}-\eqref{pco-eq-3:spike-coupling}. In 1990, Mirollo and Strogatz \cite{mirollo1990synchronization} proved that when the phase response function $K(\phi)$ is monotonically increasing, the particle system will converge to a perfect synchronization state in finite time, for almost every initial data. A key idea was to introduce a tool called the \textit{firing map}, which became a popular framework in the analysis of such systems. The seminal work \cite{mirollo1990synchronization} initiated a wide range of studies on \eqref{pco-eq-1:no-spike}-\eqref{pco-eq-3:spike-coupling} in various settings and extensions, including  decreasing $K$ \cite{mauroy2008clustering}, delays \cite{gerstner1996rapid}, refractory periods \cite{kirk1997effect}, noise \cite{2010Newhall,guiraud2016stability}, non-identical oscillator \cite{urbanczik2001similar} and graph structures \cite{nunez2014synchronization,lyu2018global}. However, there are still many problems open even for the simple model \eqref{pco-eq-1:no-spike}-\eqref{pco-eq-3:spike-coupling} (e.g. \cite{mauroy2011thesis}), due to its distinctive interaction mechanism. 

\subsection{The mean-field continuity equation}

A powerful approach to study the particle system \eqref{pco-eq-1:no-spike}-\eqref{pco-eq-3:spike-coupling} is to take the mean-field limit, seeking statistical descriptions when the number of particles $\mathcal{N}$ goes to infinity. PDEs characterize the limit of the system, for which continuous tools can be applicable to gain insights and to make the problem tractable for analysis. Such an approach has been widely used in many fields, from statistical physics to biology and social science \cite{meanfieldreview,particlereview,pareschi2013interacting,jabinstochastic}.

For the pulse-coupled oscillator \eqref{pco-eq-1:no-spike}-\eqref{pco-eq-3:spike-coupling}, the state of the mean-field system is represented by a distribution function $\rho(t,\phi)$, where $\rho(t,\cdot)$ is the phase distribution for each time $t$, and the spike activity is characterized by a mean firing rate $N(t)$, the number of spikes per unit time. The dynamics of $(\rho,N)$ is governed by the following PDE system, which has been formally derived in physics literature \cite{kuramoto1991collective,1993PhysRevE.48.1483,mauroy2012global}, 
\begin{align}
    \label{eq:rho-t-1}
&\p_t\rho+\p_{\phi}\bigl([1+K(\phi)N(t)]\rho\bigr)=0, &&t>0,\ \phi\in(0,\Phi_F),\\
    &N(t)=[1+K(\Phi_F)N(t)]\rho(t,\Phi_F), &&t>0,\label{def-N-t}\\
    &[1+K(0)N(t)]\rho(t,0)=[1+K(\Phi_F)N(t)]\rho(t,\Phi_F), &&t>0,\label{bc-rho-t}\\
        &\rho(t=0,\phi)=\rho_{\init}(\phi), &&\phi\in[0,\Phi_F].\label{ic-rho-t}
\end{align} 
Equation \eqref{eq:rho-t-1} is a continuity equation describing the transport of density $\rho$ with the velocity field $\left(1+K(\phi)N(t)\right)$, for $\phi\in(0,\Phi_F)$, where the $K(\phi)N(t)$ term reflects the pulse-coupled interaction \eqref{pco-eq-3:spike-coupling}. The phase response function $K(\phi)$ describes how oscillators at different phases respond to the same stimulus differently. The basic assumptions on $K$ along this work can be summarized as 
\begin{assumption}\label{as:global-K}
The phase response function $K$ satisfies
\begin{align}
        &K\in C^2(\mathbb{R})\qquad \text{and $K',K''$ are bounded on $\mathbb{R}$,}\\
        &K(\phi)>0,\qquad \phi\in[0,\Phi_F].
        \end{align}
\end{assumption}

\

Note that while physically $K(\phi)$ is only defined on $[0,\Phi_F]$, it is sometimes mathematically convenient to extend it as a function on $\mathbb{R}$. A simple example is $K(\phi)=k\phi+b$ with some constants $k,b$. The phase response function $K$ has a crucial influence on the dynamics. In fact, we shall see increasing or decreasing (in $\phi$) phase response functions $K$ lead to drastically different qualitative behaviors.

The firing rate $N(t)$ is defined via the out-going boundary flux at $\Phi_F$ in \eqref{def-N-t}, since a particle spikes when it reaches $\Phi_F$ \eqref{pco-eq-2:reset}-\eqref{pco-eq-3:spike-coupling}. As the velocity field itself also depends on $N(t)$, \eqref{def-N-t} is a self-consistent equation, from which we can derive an expression of $N(t)$ in terms of the boundary value $\rho(t,\Phi_F)$
\begin{equation}\label{expression-N-t-classical}
    N(t)=\frac{\rho(t,\Phi_F)}{1-K(\Phi_F)\rho(t,\Phi_F)}.\quad
\end{equation} We note that $N(t)$ is vital to the dynamics of $(\rho,N)$. To ensure physical values for the firing rate $N(t)\in[0,+\infty)$, by \eqref{expression-N-t-classical} the following constraint on $\rho(t,\Phi_F)$ is needed 
\begin{equation}\label{constaint-classical-rho}
    0\leq \rho(t,\Phi_F)<\frac{1}{K(\Phi_F)}.
\end{equation} The case $\rho(t,\Phi_F)\geq \frac{1}{K(\Phi_F)}$ is interpreted as $N(t)=+\infty$, the blow-up of $N(t)$. 

Due to the boundary condition \eqref{bc-rho-t}, the flux at $\phi=0$ matches that at $\phi=\Phi_F$, since a particle is reset at $\phi=0$ after a spike at $\phi=\Phi_F$ \eqref{pco-eq-2:reset}. This reset process gives the conservation of mass for the system \eqref{eq:rho-t-1}
\begin{equation*}   
\int_0^{\Phi_F}\rho(t,\phi)d\phi=\int_0^{\Phi_F}\rho_{\init}(\phi)d\phi=1,
\end{equation*} 
where we assume the initial data $\rho_{\init}(\phi)$ is a probability density on $[0,\Phi_F]$.  Note that as $K(0)$ is not necessarily equal to $K(\Phi_F)$, and thus \eqref{bc-rho-t} does not imply a periodic boundary condition in $\rho$.

Despite being derived formally in the nineties \cite{kuramoto1991collective,1993PhysRevE.48.1483}, the mathematical literature on \eqref{eq:rho-t-1}-\eqref{ic-rho-t} is scarce. In physics and engineering literature, most studies focus on local linearized analysis around steady states for various extensions of \eqref{eq:rho-t-1}-\eqref{ic-rho-t} (e.g. \cite{kuramoto1991collective,1993PhysRevE.48.1483,van1996partial,bressloff1999mean,de2010coexistence}). A pioneering work is \cite{mauroy2012global} by Mauroy and Sepulchre in 2012. They constructed a Lyapunov function and showed some global qualitative behaviors for \eqref{eq:rho-t-1}-\eqref{ic-rho-t} in certain regimes. In contrast, there is a much richer literature on \textit{phase-coupled} oscillators (e.g. \cite{strogatz2000kuramoto,acebron2005kuramoto,dorfler2014synchronization,ha2016collective,ko2023emerging}), where the interaction is in a smooth and in some cases more symmetric way compared to the \textit{pulse-coupled} oscillators considered here.

More recently, some closely related models from neuroscience have attracted many mathematical studies, from both PDE \cite{caceres2011analysis,Antonio_Carrillo_2015,roux2021towards} and probability \cite{delarue2015particle,AIHP1164,cormier2020hopf} perspectives, which also connect to models in finance \cite{hambly2019mckean} and physics \cite{delarue2022global}. These models can be regarded as noisy variants of pulse-coupled oscillators. A comprehensive understanding of their long-term behavior, especially beyond linearized regimes, remains largely elusive (see e.g. \cite{cormier2020hopf,caceres2024asymptotic} for recent advances). One of our initial motivations towards \eqref{eq:rho-t-1}-\eqref{ic-rho-t} is an asymptotic limit for the voltage-conductance equation, another PDE from neuroscience \cite{perthame2019derivation,SIMA2024}. In our opinion, the system \eqref{eq:rho-t-1}-\eqref{ic-rho-t} is of fundamental importance, as a prototype model to understand the pulse-coupled interactions. 

\subsection{Our work: reformulation and main results}
In this work, we revisit the mean-field equation \eqref{eq:rho-t-1}-\eqref{ic-rho-t} from a PDE perspective, aiming to derive a suitable framework for a rigorous PDE analysis. We aim to do so with a \textit{reformulation}, which reveals hidden structures, leads to fruitful results, and opens new directions. 

The reformulation consists of two steps. First, instead of working with the probability density $\rho(t,\phi)$, we consider the quantile function $Q(t,\eta)$, also called the pseudo-inverse of the cumulative distribution function $F(t,\phi)$ associated with $\rho$. These relations can be informally illustrated as (see Section \ref{subsec:derive-pseudo-inverse} for full rigorous details)
\begin{equation}
F(t,\phi)=\int_0^{\phi}\rho(t,\tilde{\phi})d\tilde{\phi},\qquad \phi\in[0,\Phi_F],\qquad\qquad\qquad     F(t,Q(t,\eta))=\eta,\qquad \eta\in[0,1].
\end{equation} 
The pseudo-inverse reformulation has been applied to many other PDEs (e.g. \cite{Niet2000,CARRILLO2021271,carrillo2022optimal,CARRILLO2020106883}), in particular in the presence of the Dirac masses. In the context of pulse-coupled oscillators, the pseudo-inverse $Q$ has been used in the pioneering work \cite{mauroy2012global} to construct a Lyapunov function. Yet a self-contained equation for $Q$ itself was not explored before.

Second, instead of working in the original timescale $t$, we introduce a new timescale $\tau$, called the dilated timescale, which relates to the firing rate $N(t)$ as follows
\begin{equation}\label{def-tau-sing-1}
    d\tau=N(t)dt.
\end{equation} 
The classical framework to study the particle system \eqref{pco-eq-1:no-spike}-\eqref{pco-eq-3:spike-coupling} focuses on the firing map \cite{mirollo1990synchronization}, that is, by evolving the system from a spike time to the next spike time, regardless of the elapsed time $t$ in between. The use of the dilated timescale $\tau$ can be understood as a continuous abstract analogy of this firing map. Note that if $N(t)$ is a sum of Dirac deltas at the spiking times of a finite number of particles, then the dilated time scale $\tau$ is a piecewise constant non-decreasing function of $t$. The idea of using a firing-rate related timescale was introduced more recently in \cite{taillefumier2022characterization,sadun2022global} and \cite{dou2022dilating}, for related models which can be seen as noisy variants of \eqref{eq:rho-t-1}-\eqref{ic-rho-t}. Here we adopt the terminology ``dilated timescale'' from \cite{dou2022dilating}. The primary motivation to introduce the new timescale in \cite{taillefumier2022characterization,sadun2022global,dou2022dilating} was to define solutions beyond the blow-up of $N(t)$. Nevertheless, this work shows that the dilated timescale is also useful even in the classical regime with finite $N(t)$.

With these two ingredients, we reformulate \eqref{eq:rho-t-1}-\eqref{ic-rho-t}, the system for $\rho$ in $t$ timescale, as the following system for $Q$ in $\tau$ timescale (see Section \ref{sec:formulation-classical})
\begin{align}\label{eq-Q-tau-classical1}
     &\p_{\tau}Q+\p_{\eta}Q=\frac{1}{N(\tau)}+{K}(Q), &&\tau>0,\ \eta\in(0,1),\\
     &Q(\tau,0)=0, &&\tau>0,\label{bc-Q-tau-classical1}\\
     &\frac{1}{N(\tau)}=\p_{\eta}Q(\tau,1)-{K}(\Phi_F), &&\tau>0,\label{def-N-tau-classical1}\\
     &Q(\tau=0,\eta)=Q_{\init}(\eta), &&\eta\in[0,1].\label{ic-q-tau-classical1}
    \end{align} 
Here, a counterpart of \eqref{constaint-classical-rho} to ensure $N(\tau)<+\infty$  is
\begin{equation}\label{constaint-classical-Q-tau1}
    K(\Phi_F)<\p_{\eta}Q(\tau,1)<+\infty.
\end{equation}   

Many advantages emerge with the reformulation \eqref{eq-Q-tau-classical1}-\eqref{bc-Q-tau-classical1}-\eqref{def-N-tau-classical1}-\eqref{ic-q-tau-classical1}.  
In the reformulation, the nonlinear coupling through the firing rate is isolated, as $1/N(\tau)$ appears in the right-hand side of \eqref{eq-Q-tau-classical1}. Indeed, we can further obtain an alternative characterization of $1/N(\tau)$, as a Lagrangian multiplier to ensure a hidden constraint $Q(\tau,1)=\Phi_F$ (Proposition \ref{prop:char-N-classical}). Moreover, at a technical level, the characteristics of \eqref{eq-Q-tau-classical1} $\p_{\tau}+\p_{\eta}$ are moving with a constant speed compared to \eqref{eq:rho-t-1}, which facilitates analysis.

We now state the main results concerning the PDE analysis for the system \eqref{eq-Q-tau-classical1}-\eqref{ic-q-tau-classical1} under Assumption \ref{as:global-K} avoiding additional technical assumptions and precise statements which are postponed to the relevant sections.

\begin{theorem}[Well-posedness, Section \ref{sec:wps-classical}]\label{thm:wps-formal}
    Under suitable compatibility conditions on the initial data, there exists a unique classical solution to \eqref{eq-Q-tau-classical1}-\eqref{ic-q-tau-classical1} on the maximal existence interval $[0,\tau^*)$, with $0<\tau^*\leq+\infty$. If $\tau^*<+\infty$, then
    \begin{equation}
            \lim_{\tau\rightarrow (\tau^*)^-}N(\tau)=+\infty.
    \end{equation}
    Furthermore, the solution depends continuously on the initial data on every time interval $[0,T]$ with $T<\tau^*$. Moreover, the solution has better regularity under additional suitable assumptions for the initial data.
\end{theorem}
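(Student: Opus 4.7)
The plan is to transform \eqref{eq-Q-tau-classical1}--\eqref{ic-q-tau-classical1} into a scalar fixed-point equation for $m(\tau):=1/N(\tau)$ on a small interval $[0,T]$, and then extend by continuation. Given a candidate $m\in C([0,T])$ bounded away from $0$, the PDE \eqref{eq-Q-tau-classical1} is linear transport along the unit-speed characteristics $\eta=\eta_0+\tau$. For $T<1$, no characteristic reaching $\eta=1$ has yet crossed $\eta=0$, so the reparametrization $\tilde Q(\tau,\eta_0):=Q(\tau,\eta_0+\tau)$ reduces the problem to
\begin{equation}
\p_\tau \tilde Q = m(\tau)+K(\tilde Q),\qquad \tilde Q(0,\eta_0)=Q_{\init}(\eta_0),
\end{equation}
which, by Assumption \ref{as:global-K}, has a unique $C^1$ solution depending smoothly on $m$. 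Differentiating in $\eta_0$ gives a linear ODE for $\p_{\eta_0}\tilde Q$, whose explicit solution yields
\begin{equation}
\p_\eta Q(\tau,1) = Q_{\init}'(1-\tau)\exp\!\Bigl(\int_0^\tau K'\bigl(\tilde Q(s,1-\tau)\bigr)\,ds\Bigr).
\end{equation}
Combined with \eqref{def-N-tau-classical1}, this defines a map $\mathcal{T}:m\mapsto \mathcal{T}(m)$ whose fixed points are solutions. The natural compatibility conditions $Q_{\init}(0)=0$, $Q_{\init}(1)=\Phi_F$, and $Q_{\init}'(1)>K(\Phi_F)$ ensure $\mathcal{T}(m)(0)=m(0)>0$.

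Next, I would apply Banach's fixed-point theorem in a closed ball of $C([0,T])$ centered at the constant function $m(0)$ and intersected with $\{m\geq \mu_0/2\}$ for some $\mu_0\leq m(0)$. Gr\"onwall applied to the ODE for $\tilde Q$, together with the bounds on $K,K'$ from Assumption \ref{as:global-K}, gives $\|\tilde Q[m_1]-\tilde Q[m_2]\|_\infty\leq CT\|m_1-m_2\|_\infty$, and the Lipschitz continuity of $K'$ then yields $\|\mathcal{T}(m_1)-\mathcal{T}(m_2)\|_\infty\leq CT\|m_1-m_2\|_\infty$; choosing $T$ small makes $\mathcal{T}$ a contraction, which furnishes local existence and uniqueness. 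Continuous dependence of $(Q,N)$ on $Q_{\init}$ on $[0,T]$ then follows by comparing the fixed-point problems associated with two initial data and applying the same Lipschitz estimates.

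For the blow-up criterion, I iterate the local existence result to the maximal interval $[0,\tau^*)$ and argue by contradiction: if $\tau^*<\infty$ but $\liminf_{\tau\to(\tau^*)^-}N(\tau)<\infty$, there exist $\mu_0>0$ and a sequence $\tau_n\uparrow \tau^*$ with $m(\tau_n)\geq\mu_0$. The representation formula above, combined with Gr\"onwall and the uniform bounds on $K,K',K''$, gives a priori $C^1$ bounds on $Q(\tau_n,\cdot)$ that are uniform in $n$. Applying the local existence result at $\tau_n$ then produces a uniform existence time, contradicting the maximality of $\tau^*$ for $n$ large. Higher regularity under stronger assumptions on $Q_{\init}$ is obtained by differentiating \eqref{eq-Q-tau-classical1} in $\eta$ and iterating the characteristic analysis, noting that each derivative of $\tilde Q$ satisfies a linear ODE with coefficients depending on lower-order derivatives.

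The hardest part I anticipate is obtaining clean a priori estimates for the blow-up criterion and propagating regularity past the time $\tau=1$, at which characteristics begin to originate at the boundary $\eta=0$ rather than from the initial data. Past this threshold, the representation of $\p_\eta Q(\tau,1)$ involves $m$ itself through the boundary trace, so the contraction estimate is no longer driven solely by $K'$; one must also impose compatibility at the corner $(\tau,\eta)=(0,0)$ up to sufficient order to ensure that $C^k$ regularity survives across this corner, so that the iterative extension and the bootstrap for higher regularity go through uniformly.
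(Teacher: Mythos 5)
Your local-existence argument via a fixed-point map along the unit-speed characteristics is essentially the paper's Step 1 of Proposition \ref{Prop:wps-mild-solution} (the fixed-point equation \eqref{fixed-point-1} for $f=\tilde N$), and the representation of $\p_\eta Q(\tau,1)$ agrees with \eqref{formula-peta-flow}. Where you diverge from the paper is in how the positivity of $1/N$ and the blow-up criterion are handled. You run the contraction inside the constrained set $\{m\geq \mu_0/2\}$ and then prove $N\to\infty$ at $\tau^*$ by a continuation/contradiction argument. The paper instead introduces the \emph{relaxed problem} (Section \ref{subsec:relax}), replacing $1/N(\tau)$ by an unconstrained $\tilde N(\tau)\in\mathbb{R}$: this relaxed problem is shown to have a \emph{global} classical solution (Theorem \ref{thm:wps-classical-relax}), $\tau^*$ is then defined as the first time $\tilde N$ hits zero, and $\lim_{\tau\to(\tau^*)^-}N(\tau)=+\infty$ follows at once from continuity of $\tilde N$, with no contradiction argument needed. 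Both routes are correct, but the paper's relaxation buys three things you do not get: (i) a painless blow-up criterion, since the hard part (a priori $C^1$ bounds on $Q$ and $\tilde N$, equations \eqref{growth-Qw1infty}--\eqref{growth-tilde-N}) is established once for the unconstrained problem and is manifestly independent of any lower bound on $m$; (ii) the continuous-dependence statement (your Theorem \ref{thm:stability-classical}) drops out of the relaxed problem's global stability (Theorem \ref{thm:stability-classical-relax}); (iii) the auxiliary $H$-equation and the integral equation for $\tilde N$ used in Section \ref{sec:global} naturally live on the relaxed problem. Your continuation argument requires verifying that the local existence time depends only on the $C^1$ norm of $Q(\tau_n,\cdot)$ and the lower bound $\mu_0$ on $m(\tau_n)$, uniformly in $n$; this is true, but it hinges on a uniform-in-$\tau$ a priori $W^{1,\infty}$ bound for $Q$ on $[0,\tau^*)$ of the type \eqref{growth-Qw1infty}, which you invoke somewhat implicitly. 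Finally, your explicit list of compatibility conditions omits the first-order compatibility $Q_{\init}'(1)-Q_{\init}'(0)=K(\Phi_F)-K(0)$ from Assumption \ref{as:wps}; you correctly recognize at the end that corner compatibility is needed for $C^1$ regularity across the characteristic $\eta=\tau$, but this condition should appear explicitly, and its propagation across $\tau=1,2,\dots$ is exactly what Proposition \ref{prop:tildeN-imbc} records.
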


The main strategy to show Theorem \ref{thm:wps-formal} is to introduce an auxiliary problem, where the constraint \eqref{constaint-classical-Q-tau1} is relaxed and the well-posedness is easier to handle. This is realized by allowing the inverse of the firing rate $1/N(\tau)$ to take unphysical negative values. We then identify the blow-up of classical solutions as the first time when $1/N(\tau)$ touches zero.

Next, we focus on understanding the global dynamics of the system \eqref{eq-Q-tau-classical1}-\eqref{ic-q-tau-classical1} depending on the nonlinear interaction through the phase response function $K$. 

\begin{theorem}[Key Stability, Section \ref{sec:dynamics-classical}]\label{thm:formal-4}
    Under suitable compatibility conditions on the initial data, for any given two classical solutions $(Q_1,N_1)$ and $(Q_2,N_2)$ to \eqref{eq-Q-tau-classical1}-\eqref{ic-q-tau-classical1}, we have
    \begin{equation}\label{estimate-BV-first-time}
            e^{k_{\min}\tau}\|\p_{\eta}Q_1(0)-\p_{\eta}Q_2(0)\|_{L^1(0,1)}\leq \|\p_{\eta}Q_1(\tau)-\p_{\eta}Q_2(\tau)\|_{L^1(0,1)}\leq e^{k_{\max}\tau}\|\p_{\eta}Q_1(0)-\p_{\eta}Q_2(0)\|_{L^1(0,1)}.
    \end{equation} 
    The estimates hold whenever the two classical solutions exist up to time $\tau$, and $k_{\min}$ and $k_{\max}$ are given by
    \begin{equation}
{k}_{\min}:=K'(0)+\int_0^{\Phi_F}(K''(\phi))_-d\phi \, \leq \, {k}_{\max}:=K'(0)+\int_0^{\Phi_F}(K''(\phi))_+d\phi.
\end{equation}
\end{theorem}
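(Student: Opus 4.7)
I would begin by differentiating the equation \eqref{eq-Q-tau-classical1} with respect to $\eta$: setting $u := \partial_\eta Q$, the bulk equation becomes
\[
\partial_\tau u + \partial_\eta u = K'(Q)\, u,
\]
which has the constant-speed characteristics $\eta = \eta_0 + \tau$. Evaluating the original $Q$-equation at the fixed boundary values $Q(\tau, 0) = 0$ and (using the Lagrange-multiplier characterization of $1/N$ indicated in the previous proposition) $Q(\tau, 1) = \Phi_F$, together with $\partial_\tau Q |_{\eta = 0, 1} = 0$, I would extract the boundary conditions
\[
u(\tau, 0) = \frac{1}{N(\tau)} + K(0), \qquad u(\tau, 1) = \frac{1}{N(\tau)} + K(\Phi_F).
\]
For two solutions, set $w := u_1 - u_2$, $D := Q_1 - Q_2$ and $z := K(Q_1) - K(Q_2)$. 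Rewriting $K'(Q_1) u_1 - K'(Q_2) u_2 = \partial_\eta z$, the difference solves
\[
\partial_\tau w + \partial_\eta w = \partial_\eta z,
\]
with two crucial structural identities: $w(\tau, 0) = w(\tau, 1) = 1/N_1(\tau) - 1/N_2(\tau)$ (the $K(0), K(\Phi_F)$ contributions cancel across solutions) and $z(\tau, 0) = z(\tau, 1) = 0$.

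\textbf{Boundary cancellation and the Kato identity.} I would then compute $\frac{d}{d\tau}\|w\|_{L^1}$ via a Kato-type identity (regularizing the sign function to justify the calculation). The transport contribution $-\int_0^1 \mathrm{sgn}(w)\,\partial_\eta w\, d\eta = |w(\tau, 0)| - |w(\tau, 1)|$ vanishes because $|w(\tau, 0)| = |w(\tau, 1)|$. What remains is the clean identity
\[
\frac{d}{d\tau}\|w(\tau)\|_{L^1(0, 1)} = \int_0^1 \mathrm{sgn}(w)\, \partial_\eta z \, d\eta.
\]
This reduction of the dynamics to a purely bulk term, free of the highly nonlinear boundary flux $N_i$, is the first key payoff of the reformulation.

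\textbf{Bulk estimate -- the main difficulty.} The hard step is to sandwich the right-hand side between $k_{\min}\|w\|_{L^1}$ and $k_{\max}\|w\|_{L^1}$. A naive pointwise splitting $K'(Q_1) u_1 - K'(Q_2) u_2 = K'(Q_1) w + (K'(Q_1) - K'(Q_2)) u_2$ leaves a stubborn cross-term controllable only by $\|K''\|_\infty \|u_2\|_\infty \|D\|_\infty$, which gives a strictly worse constant than $k_{\max}$. My plan is to decompose $[0, 1]$ into the (at most countably many) maximal intervals $I_k = (\eta_k, \eta_{k+1})$ on which $w$ has constant sign $s_k \in \{-1, +1\}$, and to exploit the twin telescoping identities
\[
\frac{d}{d\tau}\|w\|_{L^1} = \sum_k s_k\bigl(z(\eta_{k+1}) - z(\eta_k)\bigr), \qquad \|w\|_{L^1} = \sum_k s_k\bigl(D(\eta_{k+1}) - D(\eta_k)\bigr),
\]
where $D$ is monotone on each $I_k$ so that each summand on the right is non-negative. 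Introducing the auxiliary function $h_{\max}(\phi) := K(\phi) - k_{\max}\phi$, which is non-increasing on $[0, \Phi_F]$ since $K'(\phi) \leq k_{\max}$ there, and noting that $\gamma := h_{\max}(Q_1) - h_{\max}(Q_2) = z - k_{\max} D$ vanishes at $\eta = 0, 1$, a discrete summation by parts turns the inequality $\frac{d}{d\tau}\|w\|_{L^1} - k_{\max}\|w\|_{L^1} \leq 0$ into the condition $\sum_k s_k(\gamma(\eta_{k+1}) - \gamma(\eta_k)) \leq 0$, which will be extracted from the monotonicity of $h_{\max}$ combined with the fact that on each $I_k$ with $s_k = +1$ one has $Q_{i}(\eta_{k+1}) > Q_i(\eta_k)$ and $\Delta Q_1 > \Delta Q_2$. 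The lower bound is symmetric, using $h_{\min}(\phi) := K(\phi) - k_{\min}\phi$, which is non-decreasing on $[0, \Phi_F]$.

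\textbf{Conclusion and technicalities.} Once the differential inequalities $k_{\min}\|w\|_{L^1} \leq \frac{d}{d\tau}\|w\|_{L^1} \leq k_{\max}\|w\|_{L^1}$ are established, Gronwall's lemma immediately yields \eqref{estimate-BV-first-time}. The remaining technical issues -- justifying the Kato identity when $w$ is only $C^1$, dealing with non-transversal zeros or accumulation of zeros of $w$ via a density argument, and controlling boundary crossings of characteristics for $\tau > 1$ -- are standard but require care; I expect the telescoping step to be the main substantive obstacle, since it is where the global bounds $k_{\min}, k_{\max}$ (rather than $\|K'\|_\infty$ or pointwise values of $K'$) enter naturally.
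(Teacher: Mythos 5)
Your first step matches the paper's Lemma~\ref{lem:dIdtau} almost verbatim: differentiate in $\eta$ to eliminate $1/N(\tau)$, observe that the boundary values of $w=\p_\eta(Q_1-Q_2)$ at $\eta=0$ and $\eta=1$ coincide so the Kato flux terms cancel, and obtain $\frac{d}{d\tau}\|w\|_{L^1}=\int_0^1\p_\eta\bigl(K(Q_1)-K(Q_2)\bigr)\,\sign{w}\,d\eta$. The reduction to $\sum_k s_k\bigl(\gamma(\eta_{k+1})-\gamma(\eta_k)\bigr)\le 0$ via $h_{\max}(\phi)=K(\phi)-k_{\max}\phi$ is also algebraically correct, and you are right that this is where the actual content lies.

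The gap is in the claim that this sign condition ``will be extracted from the monotonicity of $h_{\max}$'' plus the per-interval facts. After Abel summation the quantity becomes $2\sum_k s_{k-1}\gamma(\eta_k)$ over interior zeros $\eta_k$ of $w$, and there is no term-by-term control: $s_{k-1}=+1$ means $D=Q_1-Q_2$ has a local maximum at $\eta_k$, but since $D$ only vanishes at the endpoints this local max need not be nonnegative, so $\gamma(\eta_k)=\int_{Q_2(\eta_k)}^{Q_1(\eta_k)}h'_{\max}$ can have either sign. Likewise on a single sign interval $(\eta_k,\eta_{k+1})$ with $s_k=+1$, you have $\gamma'=h'_{\max}(Q_1)\p_\eta Q_1-h'_{\max}(Q_2)\p_\eta Q_2$, and if $-h'_{\max}(Q_2)\gg -h'_{\max}(Q_1)$ (which is allowed: $h'_{\max}=K'-k_{\max}$ can vary arbitrarily within $[k_{\min}-k_{\max},0]$) then $\gamma'>0$ even though $\p_\eta Q_1>\p_\eta Q_2$; monotonicity of $h_{\max}$ and $\Delta Q_1>\Delta Q_2$ simply do not give the sign of a single summand. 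The sum is nonpositive only after a genuinely global rearrangement, and your sketch does not supply it. The paper's Lemma~\ref{lem:integral-ineq-bv} does this differently: it writes $K'(Q_i)=K'(0)+\int_0^{Q_i}K''(\phi)\,d\phi$ and applies Fubini to exchange the $\eta$- and $\phi$-integrations, producing a level-set function $r(\phi)=\int_{Q_1\ge\phi}f_1\,\sign{f_1-f_2}-\int_{Q_2\ge\phi}f_2\,\sign{f_1-f_2}$ for which one proves the pointwise bound $0\le r(\phi)\le I$; the bound $k_{\min}I\le I_K\le k_{\max}I$ then follows by splitting $K''$ into positive and negative parts. That Fubini/level-set step --- essentially integrating your telescoping sum against the threshold $\phi$ rather than against the sign intervals --- is the missing ingredient, and without it your outline does not close.
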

Here $(x)_-:=\min(x,0)$ and $(x)_+:=\max(x,0)$. Note that in general
\begin{equation*}
    k_{\min}\leq \min_{\phi\in[0,\Phi_F]}K'(\phi)\leq \max_{\phi\in[0,\Phi_F]}K'(\phi)\leq k_{\max}.
\end{equation*}
If $K$ is either convex or concave, then
\begin{equation*}
k_{\min}=\min_{\phi_\in[0,\Phi_F]}K'(\phi),\qquad k_{\max}=\max_{\phi_\in[0,\Phi_F]}K'(\phi)\,.
\end{equation*} 
While Theorem \ref{thm:formal-4} holds for general $K$, it has particularly interesting consequences on the long time behavior if $k_{\min}$ and $k_{\max}$ are of the same sign.
Notice that a particularly important case of $k_{\max}\geq k_{\min}>0$ is $K' > 0$ and $K$ is either concave or convex, and analogously, a particular case of $k_{\min}\leq k_{\max}<0$ is $K' < 0$ and $K$ is either concave or convex.

Such results can be generalized to other distances, including a modified $L^2$ distance (see Theorem \ref{thm:L2}-\ref{thm:P_first}). Both the statements and the proofs of these results crucially rely on the reformulation in terms of $Q$ and $\tau$. For the statements, the definitions of the distances used involve $Q$, and moreover we need to compare two solutions at the same time value in timescale $\tau$, \textit{not} in the original timescale $t$. For the proofs, a key step is to ``filter out'' the $1/N(\tau)$ term, which is possible as in the reformulated equations it is singled out as a constant in $\eta$ in~\eqref{eq-Q-tau-classical1}.

\begin{table}[H]
\centering
\renewcommand{\arraystretch}{1.3}{
\begin{tabular}{|c|ccc|}
\hline
 &
  \multicolumn{1}{c|}{$k_{\min}>0$} &
  \multicolumn{1}{c|}{$k_{\max}<0$} &
  General $K$ \\ \hline
$K$ large &
  \multicolumn{3}{c|}{Blow-up} \\ \hline
$K$ not large &
  \multicolumn{1}{c|}{\begin{tabular}[c]{@{}c@{}} \quad Blow-up $^{(a)}$ \quad  \\ \end{tabular}} &
  \multicolumn{1}{c|}{\begin{tabular}[c]{@{}c@{}} \quad Convergence $^{(b)}$ \quad \end{tabular}} &
 \quad Unique steady state \quad \\ \hline
\end{tabular}
}
\caption{{\small Qualitative behaviors of the solution in different regimes of $K$. Here ``$K$ large'' means that \eqref{ss-condition-first-time} is violated and ``$K$ not large'' means that \eqref{ss-condition-first-time} holds. We recall that throughout this paper Assumption \ref{as:global-K} is taken, which in particular imposes that $K>0$.   Blow-up $^{(a)}$: every solution blows up in finite time except the unique steady state. Convergence $^{(b)}$: for good initial data, the solution converges to the unique steady state exponentially in time. 
}
}
\label{tb:1}
\end{table}

Now we present results on the qualitative behaviors of the solution in different regimes of $K$, which are concisely summarized in Table \ref{tb:1} (with expanded statements to be given in Theorem \ref{thm:formal-5}-\ref{thm:formal-6} below). The regimes are identified by two factors: the signs of $k_{\min}$ and $k_{\max}$, and the size of $K$. The signs of $k_{\min}$ and $k_{\max}$, which relate to the monotonicity of $K$, already play an important role in Theorem \ref{thm:formal-4}. The size of $K$, which reflects the strength of the pulse-coupled interaction, is also crucial to the dynamics. For instance, it is directly linked to the existence of the steady state.

We first state the long time convergence result when $k_{\max}<0$ and $K$ is not large.

\begin{theorem}[Long time convergence, Section \ref{sec:global}]\label{thm:formal-5} 
\

(i) (Dichotomy on the steady state, Proposition \ref{prop:ss}) There exists a unique steady state to \eqref{eq-Q-tau-classical1}-\eqref{ic-q-tau-classical1} if
\begin{equation}\label{ss-condition-first-time}
        \int_0^{\Phi_F}\frac{1}{K(\phi)}d\phi> 1,
\end{equation} otherwise there is no steady state.

(ii) (Global existence and long time convergence to the steady state, Theorem \ref{thm:global-solu}-\ref{thm:convergence}) Suppose that $K'<0$ and \eqref{ss-condition-first-time}. Under suitable compatibility conditions on the initial data, and another assumption on the initial data relating to the size of $K$, the solution to \eqref{eq-Q-tau-classical1}-\eqref{ic-q-tau-classical1} is global with a uniform-in-time bound on the firing rate $N(\tau)$
\begin{equation}\label{bd-firing-rate-first-time}
            0<C_{\min}\leq N(\tau)\leq C_{\max}<+\infty,\qquad \tau\geq0,
\end{equation} where $C_{\min}$ and $C_{\max}$ are two explicit constants depending on the initial data.

Further suppose $k_{\max}<0$. Then the solution converges to the unique steady state exponentially in the long time.
\end{theorem}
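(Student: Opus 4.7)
A steady state $(Q_*, N_*)$ of \eqref{eq-Q-tau-classical1}--\eqref{ic-q-tau-classical1} must satisfy the autonomous ODE $Q_*'(\eta) = 1/N_* + K(Q_*)$ on $[0,1]$ with $Q_*(0) = 0$ and the hidden constraint $Q_*(1) = \Phi_F$ (which is then consistent with \eqref{def-N-tau-classical1}). Setting $\lambda = 1/N_* \geq 0$ and separating variables, existence of such a profile is equivalent to finding $\lambda$ with
\[
G(\lambda) := \int_0^{\Phi_F} \frac{d\phi}{\lambda + K(\phi)} = 1.
\]
Assumption~\ref{as:global-K} ensures $G$ is continuous and strictly decreasing on $[0, +\infty)$ with $\lim_{\lambda \to 0^+} G(\lambda) = \int_0^{\Phi_F} 1/K$ and $\lim_{\lambda \to +\infty} G(\lambda) = 0$. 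The intermediate value theorem then yields a unique positive root iff \eqref{ss-condition-first-time} holds, and standard ODE theory delivers the associated $Q_*$.

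\textbf{Part (ii), global existence.} By Theorem~\ref{thm:wps-formal}, blow-up at $\tau^* < +\infty$ is characterized by $1/N(\tau) = \partial_\eta Q(\tau, 1) - K(\Phi_F) \to 0^+$, so the task is to bound $1/N(\tau)$ away from $0$ uniformly in time. Differentiating \eqref{eq-Q-tau-classical1} in $\eta$, the function $W := \partial_\eta Q$ solves the linear transport equation
\[
\partial_\tau W + \partial_\eta W = K'(Q)\, W, \qquad W(\tau, 0) = \tfrac{1}{N(\tau)} + K(0),
\]
and integration along the characteristic $s \mapsto (s, s - \tau + 1)$ for $s \in [\tau-1, \tau]$, followed by the change of variable $q = Q(s, s-\tau+1)$ (monotone from $0$ to $\Phi_F$, with $dq/ds = 1/N(s) + K(q) > 0$), yields the delay identity
\[
\tfrac{1}{N(\tau)} + K(\Phi_F) \;=\; \left( \tfrac{1}{N(\tau - 1)} + K(0) \right)\, \exp\!\left( \int_0^{\Phi_F} \frac{K'(q)\, dq}{1/N(s(q)) + K(q)} \right),
\]
together with the compatibility relation $\int_0^{\Phi_F} dq / (1/N(s(q)) + K(q)) = 1$ inherited from $\int_{\tau - 1}^{\tau} ds = 1$. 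Since $K' < 0$ the exponential factor lies in $(0, 1)$, giving at once the crude upper bound $1/N(\tau) \leq 1/N(\tau-1) + (K(0) - K(\Phi_F))$; conversely, the compatibility relation forbids $1/N$ from being uniformly small on a unit interval, because otherwise the integral would approach $\int_0^{\Phi_F} 1/K > 1$, contradicting \eqref{ss-condition-first-time}. Iterating this bootstrap on consecutive unit intervals, and invoking the size hypothesis on the initial data to close the first step, I would extract explicit constants $0 < C_{\min} \leq C_{\max} < +\infty$ so that \eqref{bd-firing-rate-first-time} holds and $\tau^* = +\infty$.

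\textbf{Exponential convergence to the steady state.} With global existence in hand and assuming further $k_{\max} < 0$, the convergence step reduces to a direct application of Theorem~\ref{thm:formal-4} with $(Q_2, N_2) = (Q_*, N_*)$:
\[
\| \partial_\eta Q(\tau) - \partial_\eta Q_* \|_{L^1(0,1)} \;\leq\; e^{k_{\max} \tau}\, \| \partial_\eta Q(0) - \partial_\eta Q_* \|_{L^1(0,1)},
\]
which decays exponentially. Since $Q(\tau, 0) = 0 = Q_*(0)$, integrating in $\eta$ promotes this to an exponential $L^\infty$ decay of $Q(\tau, \cdot) - Q_*$, and feeding $\partial_\eta Q(\tau, 1) \to \partial_\eta Q_*(1) = 1/N_* + K(\Phi_F)$ into the definition $1/N(\tau) = \partial_\eta Q(\tau, 1) - K(\Phi_F)$ yields the exponential convergence $N(\tau) \to N_*$.

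\textbf{Main obstacle.} The delicate step is unmistakably the global existence part: Theorem~\ref{thm:formal-4} delivers only an $L^1$ control on $\partial_\eta Q$, whereas blow-up prevention requires the \emph{pointwise} bound $\partial_\eta Q(\tau, 1) > K(\Phi_F) + c$ for some $c > 0$. Bridging this gap via the characteristic delay identity and the compatibility relation — and pinning down the precise size hypothesis on the initial data that makes the inductive argument close uniformly in $\tau$ — is where most of the technical work will concentrate.
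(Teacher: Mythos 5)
Parts (i) and the convergence step are correct and essentially match the paper's route: (i) is a monotonicity/intermediate-value argument (the paper parametrizes by $N$ and tracks the first hitting time $\eta_N$ to $\Phi_F$ rather than working with $\lambda=1/N$ directly, but the content is the same, cf.~Proposition~\ref{prop:ss}), and the convergence is exactly the application of Theorem~\ref{thm:BV} with $Q_2=Q^*$ once global existence is secured (Theorem~\ref{thm:convergence}).

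The gap is in the global existence argument, and you correctly flag it as the delicate step. Your delay identity for $W=\partial_\eta Q$ is right, but the two conclusions you draw from it do not close: the additive bound $1/N(\tau)\leq 1/N(\tau-1)+(K(0)-K(\Phi_F))$ grows linearly in $\tau$ and gives no uniform control, and the claim that the compatibility relation ``forbids $1/N$ from being uniformly small on a unit interval'' does not preclude $1/N(\tau)\to 0$ as $\tau\to(\tau^*)^-$ at a single time, since $1/N$ could stay large elsewhere on $[\tau^*-1,\tau^*]$ while keeping $\int_0^{\Phi_F}dq/(1/N(s(q))+K(q))=1$. What actually closes the bootstrap is a sharper, $K'<0$--specific estimate on the exponential factor: if $1/N\geq c_0$ on $[\tau-1,\tau]$ then $E(\tau)\geq\exp\bigl(\int_0^{\Phi_F}K'(q)\,dq/(c_0+K(q))\bigr)=(c_0+K(\Phi_F))/(c_0+K(0))$, so your identity gives $1/N(\tau)\geq (c_0+K(0))E(\tau)-K(\Phi_F)\geq c_0$; the symmetric estimate with $1/N\leq C_0$ gives $1/N(\tau)\leq C_0$. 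The correct invariance constants are $c_0=\min_\eta H_{\init}(\eta)$ and $C_0=\max_\eta H_{\init}(\eta)$ with $H_{\init}=\partial_\eta Q_{\init}-K(Q_{\init})$, and the missing ``size hypothesis'' on the data is precisely $H_{\init}>0$ (Assumption~\ref{as:H}, equivalently $\rho_{\init}<1/K$). The paper reaches the same bounds by a cleaner route: it works with the auxiliary quantity $H=\partial_\eta Q-K(Q)$, which solves $(\partial_\tau+\partial_\eta)H=K'(Q)\bigl(H-H(\tau,1)\bigr)$ with $H(\tau,0)=H(\tau,1)=1/N(\tau)$ (Proposition~\ref{prop:eq-H}); integrating along characteristics yields an integral equation expressing $1/N(\tau)$ as a \emph{convex combination} of its values on $[\tau-1,\tau]$ when $K'\leq 0$ (Proposition~\ref{prop:eq-tildeN}), and a maximum-principle argument on consecutive unit intervals gives the uniform two-sided bound directly (Lemmas~\ref{lem:init-tildeN}--\ref{lem:tau-tau-1}, Proposition~\ref{prop:bd-relaxed-tildeN}, Theorem~\ref{thm:global-solu}).
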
 

For the global existence, the proof relies on several auxiliary structures, including an integral equation for $1/N(\tau)$ (Proposition \ref{prop:eq-H}-\ref{prop:eq-tildeN}), which may be of their own interest. When $k_{\max}<0$, Theorem \ref{thm:formal-4} implies a contraction of the distance between arbitrary two solutions, provided that they exist. Therefore, the long time convergence to the steady state can be readily deduced once the global existence is available.
On the contrary, finite time blow-ups can be proved in two regimes -- when $k_{\min}>0$ or when \eqref{ss-condition-first-time} is violated. The latter means that the size of $K$ is large.  

\begin{theorem}[Blow-up, Section \ref{sec:blowup}]\label{thm:formal-6}  

\

        (i)  Suppose $k_{\min}>0$. Then every solution to \eqref{eq-Q-tau-classical1}-\eqref{ic-q-tau-classical1} blows up in finite time, except if the solution were the unique steady state, when it exists. 
         
        (ii) Suppose \eqref{ss-condition-first-time} is violated. Then  every solution to \eqref{eq-Q-tau-classical1}-\eqref{ic-q-tau-classical1} blows up in finite time. 
\end{theorem}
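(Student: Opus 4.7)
My plan is to prove the two parts by different methods: part (ii) by a direct ODE argument along the characteristics of the dilated-time equation \eqref{eq-Q-tau-classical1}, and part (i) by exploiting the expansive lower bound of Theorem~\ref{thm:formal-4} via self-comparison of the solution with its own time translates.

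For (ii), I argue by contradiction and assume a classical solution $(Q,N)$ exists on an interval containing $[0,1]$. Since characteristics of \eqref{eq-Q-tau-classical1} travel at unit speed in $\eta$, the one starting at $(\tau,\eta)=(0,0)$ is parametrised by $\eta=\tau$. Let $Q_0(\tau):=Q(\tau,\tau)$. The boundary condition $Q(\tau,0)=0$ (together with the compatibility condition $Q_{\init}(0)=0$) gives $Q_0(0)=0$, while the hidden constraint $Q(\tau,1)=\Phi_F$ gives $Q_0(1)=\Phi_F$. Along the characteristic,
\[
\frac{dQ_0}{d\tau}=\frac{1}{N(\tau)}+K(Q_0)>K(Q_0)>0,
\]
since $N(\tau)\in(0,+\infty)$ for a classical solution and $K>0$ by Assumption~\ref{as:global-K}. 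Separating variables and integrating from $0$ to $1$,
\[
1=\int_0^1 d\tau=\int_0^{\Phi_F}\frac{dQ_0}{\frac{1}{N(\tau)}+K(Q_0)}<\int_0^{\Phi_F}\frac{d\phi}{K(\phi)},
\]
which strictly contradicts the hypothesis $\int_0^{\Phi_F}\frac{d\phi}{K(\phi)}\leq 1$. Hence the maximal existence time satisfies $\tau^*\leq 1$, and Theorem~\ref{thm:wps-formal} yields $N(\tau)\to+\infty$ as $\tau\to(\tau^*)^-$.

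For (i), let $(Q,N)$ be any classical solution and suppose for contradiction that it is global. By the autonomy of the system, for every fixed $s>0$ the pair $(Q(\cdot+s,\cdot),N(\cdot+s))$ is again a global classical solution, so Theorem~\ref{thm:formal-4} applied to this pair and $(Q,N)$ yields, for every $\tau\geq 0$,
\[
e^{k_{\min}\tau}\|\p_\eta Q(s,\cdot)-\p_\eta Q(0,\cdot)\|_{L^1(0,1)}\leq \|\p_\eta Q(\tau+s,\cdot)-\p_\eta Q(\tau,\cdot)\|_{L^1(0,1)}.
\]
Because $Q(\tau,\cdot)$ is an increasing quantile function with $Q(\tau,0)=0$ and $Q(\tau,1)=\Phi_F$, one has $\p_\eta Q(\tau,\cdot)\geq 0$ with $\|\p_\eta Q(\tau,\cdot)\|_{L^1(0,1)}=\Phi_F$, so the right-hand side is at most $2\Phi_F$, uniformly in $\tau$. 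Since $k_{\min}>0$, letting $\tau\to+\infty$ forces $\|\p_\eta Q(s,\cdot)-\p_\eta Q(0,\cdot)\|_{L^1(0,1)}=0$ for every $s>0$. Consequently $\p_\eta Q$ is independent of $\tau$; combined with the Dirichlet condition $Q(\tau,0)=0$, integration in $\eta$ forces $Q$ itself to be independent of $\tau$, so $Q$ is a steady state. Proposition~\ref{prop:ss} then identifies $Q$ with the unique steady state (whose existence is equivalent to \eqref{ss-condition-first-time} holding); if no steady state exists, this produces the required contradiction.

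The main technical obstacle I anticipate in (ii) is ensuring that the characteristic beginning exactly at the boundary corner $(\tau,\eta)=(0,0)$ admits a rigorous ODE separation of variables, which needs the $C^1$-regularity of $Q_0$ on $[0,1]$ together with the positivity and local boundedness of $1/N(\tau)$ on $[0,1-\delta]$; both should follow from the classical-solution framework of Theorem~\ref{thm:wps-formal} and the stated compatibility conditions. In (i), the comparable technical point is to verify that Theorem~\ref{thm:formal-4} can be applied to the time-shifted pair, which amounts to checking that the compatibility conditions on $Q(s,\cdot)$ propagate along a global classical solution -- something that should be automatic from the well-posedness statement.
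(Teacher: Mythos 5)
Your proof is correct and follows essentially the same strategy as the paper: part (i) exploits the expansion estimate of Theorem~\ref{thm:formal-4} against the uniform bound $\|\p_\eta Q_1-\p_\eta Q_2\|_{L^1}\leq 2\Phi_F$ (the paper first argues that at most one initial datum yields a global solution and then uses time-shifts, while you fold these into a single self-comparison step), and part (ii) runs along the characteristic $\eta=\tau$ exactly as in the paper's Theorem~\ref{thm:bl-char}, differing only in that you separate variables directly instead of invoking the ODE comparison principle against $q_\infty$. The paper's version also extracts the slightly sharper bound $\tau^*\leq\int_0^{\Phi_F}K(\phi)^{-1}d\phi$, but that is not needed for the statement.
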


In Section \ref{sec:blowup}, we also obtain some explicit estimates on the blow-up time $\tau^*$. For the first scenario when $k_{\min}>0$, the finite time blow-up is a consequence of Theorem \ref{thm:formal-4}. Indeed, when $k_{\min}>0$, \eqref{estimate-BV-first-time} implies that the distance between two solutions expands exponentially, if they globally exist. However, we can directly show that such a distance shall be uniformly bounded $\|\p_{\eta}Q_1(\tau)-\p_{\eta}Q_2(\tau)\|_{L^1(0,1)}\leq 2\Phi_F$ (see later \eqref{bdd-BV}), which leads to a contradiction. For the second scenario, we note that it gives a sharp criteria for the blow-up of \textit{every} solution, since otherwise \eqref{ss-condition-first-time} holds and by Theorem \ref{thm:formal-5}-(i) there exists a steady state, which is naturally a global solution.

The convergence when $k_{\max}<0$ and the blow-up when $k_{\min}>0$ provide two cases to understand how the phase response function $K$ shapes the dynamics of pulse-coupled oscillators. Indeed, for the mean-field equation \eqref{eq:rho-t-1}-\eqref{ic-rho-t} some partial results have been shown in \cite{mauroy2012global}, 
see later Corollary \ref{cor:BV}, Remark \ref{rmk:51} and \ref{rmk:62} for more detailed discussions on the comparison between our results and theirs. The conditions $k_{\max}<0$/$k_{\min}>0$ avoid the convexity or concavity assumption on $K$ needed in \cite{mauroy2012global}. At the particle system level, dynamics with monotone phase response function $K$ are studied in  \cite{mauroy2008clustering,mauroy2009erratum} for $K'<0$ together with convexity/concavity assumptions on $K$, and in \cite{mirollo1990synchronization} for $K'>0$. 
Our results supersedes these previous works for classical solutions of the mean-field equation \eqref{eq:rho-t-1}-\eqref{ic-rho-t}. In particular, the blow-up of every solution, when \eqref{ss-condition-first-time} is violated, is new at the mean-field level. 

Summarizing, the main contribution of this work, is to build a rigorous framework to analyze the mean-field equation for pulse-coupled oscillators, based on the new \textit{reformulation}: the pseudo-inverse $Q$ and the dilated timescale $\tau$. Such a framework is \textit{natural} in our opinion as new structures are revealed and fruitful results can be formulated in a convenient way and improved from existing literature. It is worth emphasizing that the key stability Theorem \ref{thm:formal-4}, which gives much information on the qualitative behavior, relies deeply on the reformulation both in its statement and proof. There is much more unknown concerning the mean-field equation, including but not restricted to the dynamics of general non-monotone $K$ and beyond the blow-up, let alone its generalizations motivated from various fields of science and engineering.  

The rest of this paper is arranged as follows: In Section \ref{sec:formulation-classical}, we derive the reformulation and prove its basic properties. Section \ref{sec:wps-classical} is devoted to studying the well-posedness, where we prove Theorem \ref{thm:wps-formal}. Section \ref{sec:dynamics-classical} focuses on proving the key stability estimate, establishing Theorem \ref{thm:formal-4}. In Section \ref{sec:global}, we investigate global existence and long-time convergence to the steady state, providing the proof for Theorem \ref{thm:global-solu}. Section \ref{sec:blowup} is dedicated to the analysis of blow-up and the proof of Theorem \ref{thm:formal-6}. 


\section{Pseudo-inverse formulation in dilated timescale}\label{sec:formulation-classical}

In this section, we introduce a new formulation for the mean-field system of pulse-coupled oscillators, whose analysis is the main goal of this work.

We start with the mean-field system \eqref{eq:rho-t-1}-\eqref{ic-rho-t} for the distribution in the phase variable $\rho(t,\phi)$. Recall that Assumption \ref{as:global-K} is supposed throughout this paper. Moreover, to ensure physical values for the firing rate, the constraint \eqref{constaint-classical-rho} is imposed. We define classical solutions to \eqref{eq:rho-t-1}-\eqref{ic-rho-t} as follows.
\begin{definition}[Classical solution to \eqref{eq:rho-t-1}-\eqref{ic-rho-t}]\label{def:classical-rho} Given $0<T\leq +\infty$, initial data $\rho_{\init}(\phi)$ which is a $C^1$ probability density with $\rho_{\init}(\Phi_F)<1/K(\Phi_F)$, we say $(\rho,N)$ is a classical solution to \eqref{eq:rho-t-1}-\eqref{ic-rho-t} on $[0,T)$ if 
\begin{enumerate}
    \item $\rho(t,\phi)\in C^{1}([0,T)\times[0,\Phi_F])$ and $N(t)\in C[0,T)$ are non-negative.
    \item Equations \eqref{eq:rho-t-1}-\eqref{ic-rho-t} are satisfied in the classical sense for $t\in[0,T)$.
    \item The constraint \eqref{constaint-classical-rho} $0\leq \rho(t,\Phi_F)<{1}/{K(\Phi_F)}$ holds for all $t\in[0,T)$.
\end{enumerate}
\end{definition}

The new formulation for \eqref{eq:rho-t-1}-\eqref{ic-rho-t} consists of two key ingredients: the pseudo-inverse function and the time-dilation, which are introduced next. 


\subsection{Reformulation in terms of pseudo-inverses}\label{subsec:derive-pseudo-inverse}

Given a probability distribution $\mu$ on $[0,\Phi_F]$, we define its (right-continuous) cumulative function by
\begin{equation}    
F(\phi)=\mu\bigl([0,\phi]\bigr),\quad \phi\in[0,\Phi_F].
\end{equation}
In general $F$ may not be either continuous or strictly increasing, but we can always define its pseudo-inverse via
\begin{equation}
    Q(\eta)=\inf \{\phi\geq0: F(\phi)\geq \eta\},\qquad \eta\in[0,1].
\end{equation}
Here we focus on the classical case when $\mu$ has a positive density function $\rho$, and then $Q$ is indeed the regular inverse of $F$. See Figure \ref{fig:enter-label} for an illustration of the general case. 

\begin{figure}[htbp]
    \centering
    \includegraphics[width=0.8\linewidth]{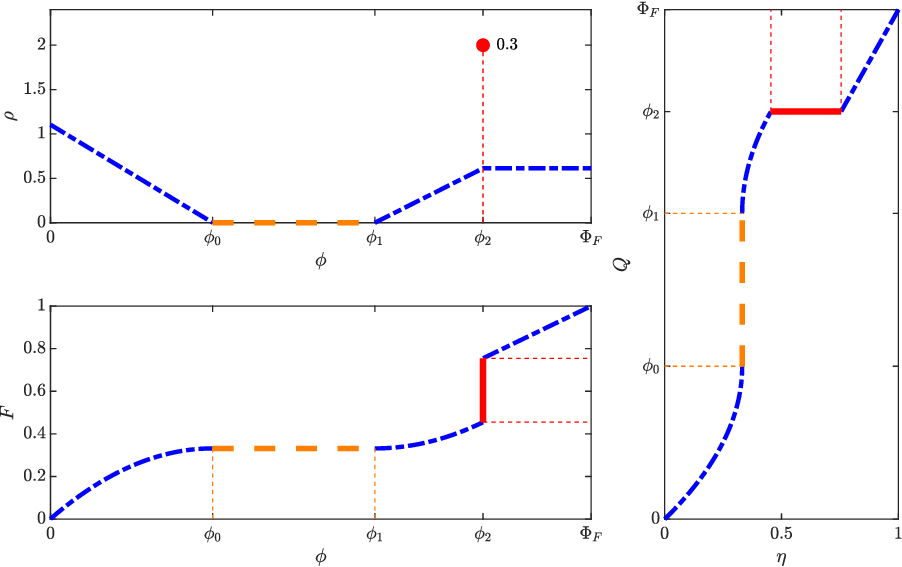}
    \caption{Illustration of the relations between the probability distribution $\mu$ (or its density $\rho$), the cumulative function $F$ and the pseudo-inverse of the cumulative function $Q$. Upper left: Plot of the density $\rho$ with a red circle indicating a Dirac mass part of $\mu$ (here the size of the Dirac mass is $0.3$). Lower left: Plot of the cumulative function $F$. Right: Plot of the pseudo-inverse $Q$. In all three figures, the red solid line indicates the Dirac mass part, which manifests as a jump in $F$ and a constant part in $Q$. The orange dashed line represents the vacuum part, where the density is zero, resulting in a flat segment in $F$ that induces a jump in $Q$. And the blue dash-dotted line indicates the regular part where there is no Dirac mass, and the density is positive.}
    \label{fig:enter-label}
\end{figure}

We carry on such reformulations for \eqref{eq:rho-t-1}-\eqref{ic-rho-t}, assuming that $\rho(t,\phi)$ is positive everywhere. We define for each $t$ the cumulative function via
\begin{equation}\label{def-F-t}  
F(t,\phi)=\int_0^{\phi}\rho(t,\tilde{\phi})d\tilde{\phi},\qquad \phi\in[0,\Phi_F].
\end{equation} 
Then for each $t$ we can define the pseudo-inverse for the cumulative function by
\begin{equation}\label{def-Q-t}
    Q(t,\eta)=\inf\{\phi\geq 0: F(t,\phi)\geq \eta\},\qquad \eta\in[0,1].
\end{equation} 
Due to the positivity of  $\rho$, $F(t,\cdot)$ is indeed strictly increasing and $C^1$. Therefore $Q(t,\cdot)$ is actually its $C^1$ inverse, satisfying
\begin{equation}\label{relation-FQ}
    F(t,Q(t,\eta))=\eta,\qquad \eta\in[0,1],
\end{equation}
\begin{equation}\label{pFpphi}
    \p_{\phi}F(t,\phi)=\rho(t,\phi),\quad \phi\in[0,\Phi_F],
\end{equation}
and
\begin{equation}\label{tmp-pqpe}
    \p_{\eta}Q(t,\eta)=\frac{1}{\p_{\phi}F(t,Q)}=\frac{1}{\rho(t,Q(t,\eta))},\quad \eta\in[0,1].
\end{equation}
Using the chain rule and the boundary conditions, it is not difficult to obtain
\begin{equation}\label{eq:q-t-1}
    \p_tQ=(1+{K}(Q)N(t))-N(t)\p_{\eta}Q,\quad t>0,\eta\in(0,1).
\end{equation}
The definition of $Q$ \eqref{def-Q-t} implies a boundary condition at $\eta=0$
\begin{equation}\label{bc:q-t-1}
    Q(t,0)=0,\quad t>0.
\end{equation}
To close the system, we need to represent $N(t)$ in terms of $Q$. Thanks to the assumption that $\rho$ is positive everywhere, we have
\begin{equation}\label{Q1-PhiF}
    Q(t,\eta=1)=\Phi_F,\quad t\geq 0.
\end{equation} Therefore using \eqref{tmp-pqpe} we get
\begin{equation*}\rho(t,\Phi_F)=\rho(t,Q(t,\eta=1))=\frac{1}{\p_{\eta}Q(t,1)}.
\end{equation*} Hence by \eqref{expression-N-t-classical} we obtain
\begin{equation}\label{def-N-Q-t}
    N(t)=\frac{\rho(t,\Phi_F)}{1-{K}(\Phi_F)\rho(t,\Phi_F)}=\frac{1}{\p_{\eta}Q(t,1)-{K}(\Phi_F)}.
\end{equation}
To summarize we have derived from \eqref{eq:rho-t-1}-\eqref{ic-rho-t} a system for the pseudo-inverse function $Q(t,\eta)$
\begin{equation}\label{system-Q-classical}
    \begin{aligned}
        &\p_tQ+N(t)\p_{\eta}Q=1+{K}(Q)N(t),\quad &&t>0,\ \eta\in(0,1),\\[5pt]
    &Q(t,0)=0,\quad &&t>0,\\
     &N(t)=\frac{1}{\p_{\eta}Q(t,1)-{K}(\Phi_F)},\quad &&t>0,
    \end{aligned}
\end{equation}
which is complemented by the initial data
\begin{equation}\label{ic-q-classical}
    Q(t=0,\eta)=Q_{\init}(\eta),\quad \eta\in[0,1].
\end{equation}
We need an additional constraint on $\p_{\eta}Q(t,1)$ to ensure that $0<N(t)<+\infty$ in \eqref{def-N-Q-t}
\begin{equation}\label{constaint-classical-Q-t}
    K(\Phi_F)<\p_{\eta}Q(t,1)<+\infty.
\end{equation} Here $\p_{\eta}Q(t,1)>K(\Phi_F)$ is the same as \eqref{constaint-classical-rho}, which ensures $N(t)<+\infty$. We also impose $\p_{\eta}Q(t,1)<+\infty$, which gives $N(t)>0$ and $\rho(t,\Phi_F)>0$, aligning with our assumption that $\rho$ is positive everywhere.

\subsection{Time-dilation and some basic properties}

Next, we rewrite the system in the dilated timescale $\tau$ with
\begin{equation}\label{def-tau-sing}
    d\tau=N(t)dt.
\end{equation} 
This change of time is invertible when $0<N<+\infty$. By dividing the equation \eqref{eq:q-t-1} by $N(t)$, we can derive the system in $\tau$ \eqref{eq-Q-tau-classical1}-\eqref{constaint-classical-Q-tau1}, which we recall here for convenience.
\begin{align}\label{eq-Q-tau-classical}
        &\p_{\tau}Q+\p_{\eta}Q=\frac{1}{N(\tau)}+{K}(Q), &&\tau>0,\ \eta\in(0,1),\\
    &Q(\tau,0)=0, &&\tau>0,\label{bc-Q-tau-classical}\\
     &\frac{1}{N(\tau)}=\p_{\eta}Q(\tau,1)-{K}(\Phi_F), &&\tau>0,\label{def-N-tau-classical}\\
     &Q(\tau=0,\eta)=Q_{\init}(\eta), &&\eta\in[0,1].\label{ic-q-tau-classical}
    \end{align} with the constraint to ensure that $0<N(\tau)<+\infty$ given by
\begin{equation}\label{constaint-classical-Q-tau}
    K(\Phi_F)<\p_{\eta}Q(\tau,1)<+\infty.
\end{equation} 
We shall show that the new timescale $\tau$ defined via \eqref{def-N-tau-classical} facilitates the estimates in later sections as the quantities of interest, in particular $Q$, are drifted with unit speed. 

Equation \eqref{eq-Q-tau-classical} can be interpreted as a nonlinear transport equation with an inflow boundary condition \eqref{bc-Q-tau-classical} at $\eta=0$. Its nonlinearity arises from two factors. Firstly the phase response function $K(Q)$ can be nonlinear. The second source of nonlinearity is the term $1/N(\tau)$. As in \eqref{def-N-tau-classical}, $1/N(\tau)$ depends on the boundary derivative, and the constraint \eqref{constaint-classical-Q-tau} is imposed to ensure its positivity. This term, $1/N(\tau)$, reflects the pulse-coupled interaction between oscillators, and contributes to a major challenge of this system.

\begin{remark}\label{rmk:dilated}
    The idea of introducing a firing-rate-dependent timescale has been used by \cite{taillefumier2022characterization,sadun2022global} and \cite{dou2022dilating} for different but related models. Their main motivation was to define generalized solutions allowing the blow-up of $N$. Formally, when $N$ is large, a small time interval in $t$ will be mapped into a long one by \eqref{def-tau-sing}. That is why $\tau$ is called the dilated timescale in \cite{dou2022dilating}, actually there they define $d\tau=(N(t)+1)dt$ instead of $N(t)dt$. We follow the same terminology here even if we focus on  classical solutions when $N\in(0,+\infty)$. We will show that under certain conditions the firing rate of the system \eqref{eq-Q-tau-classical}-\eqref{ic-q-tau-classical} for classical solutions can blow-up in finite time. 
\end{remark}

Similar to Definition \ref{def:classical-rho}, we define the classical solution to \eqref{eq-Q-tau-classical}-\eqref{ic-q-tau-classical}. 

\begin{definition}[Classical solution to \eqref{eq-Q-tau-classical}-\eqref{ic-q-tau-classical}]\label{def:classical-q-tau} Given $0<T\leq +\infty$, initial data $Q_{\init}(\eta)$ which is an increasing $C^1$ function with $Q_{\init}(0)=0,Q_{\init}(1)=\Phi_F$ and $\frac{d}{d\eta}Q_{\init}(\eta=1)>K(\Phi_F)$. We say $(Q,N)$ is a classical solution to \eqref{eq-Q-tau-classical}-\eqref{ic-q-tau-classical} on $[0,T)$ if 
\begin{enumerate}
    \item $Q(\tau,\eta)\in C^{1}([0,T)\times[0,1])$ is increasing in $\eta$ and $N(\tau)\in C[0,T)$ is positive.
    \item Equations  \eqref{eq-Q-tau-classical}-\eqref{ic-q-tau-classical} are satisfied in the classical sense for $\tau\in[0,T)$.
    \item The constraint \eqref{constaint-classical-Q-tau} holds for all $\tau\in[0,T)$.
\end{enumerate}
\end{definition}

We shall check that the feature of a pseudo-inverse can be preserved by system \eqref{eq-Q-tau-classical}-\eqref{constaint-classical-Q-tau}. Precisely we shall show that starting with a pseudo-inverse $Q_{\init}(\eta)$, at each time $\tau$ the solution $Q(\tau,\eta)$ is a pseudo-inverse to some probability distribution on $[0,\Phi_F]$. As a necessary condition, we first check that the monotonicity in $\eta$ can be preserved.

\begin{proposition}\label{prop:keep-mono}
        Suppose $Q(\tau,\eta)\in C^{1}([0,T)\times[0,1])$ and $N(\tau)>0$ satisfy \eqref{eq-Q-tau-classical}-\eqref{bc-Q-tau-classical} on $[0,T)$, with an initial data \eqref{ic-q-tau-classical} $Q_{\init}(\eta)$ increasing in $\eta$. Then $Q(\tau,\eta)$ is increasing in $\eta$, for any $\tau\in[0,T)$. Furthermore, for $\tau\in[0,T)$ we have
    \begin{equation}\label{ptau-bc-0}
    \p_{\eta}Q(\tau,0)=\frac{1}{N(\tau)}+K(0).
    \end{equation}
    If additionally \eqref{def-N-tau-classical} holds, we have for $\tau\in[0,T)$
    \begin{equation}\label{ptau-diff-0-1}
        \p_{\eta}Q(\tau,1)-\p_{\eta}Q(\tau,0)=K(\Phi_F)-K(0).
    \end{equation}
\end{proposition}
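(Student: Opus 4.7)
The plan is to address the three assertions in order. For \eqref{ptau-bc-0}, I note that $Q(\tau,0)=0$ for every $\tau \in [0,T)$, so differentiation in $\tau$ gives $\partial_\tau Q(\tau,0)=0$. Evaluating \eqref{eq-Q-tau-classical} at $\eta=0$ and using $K(Q(\tau,0))=K(0)$ then yields $\partial_\eta Q(\tau,0) = 1/N(\tau) + K(0)$, which is the desired identity. Formula \eqref{ptau-diff-0-1} follows at once: rewriting \eqref{def-N-tau-classical} as $\partial_\eta Q(\tau,1) = 1/N(\tau) + K(\Phi_F)$ and subtracting \eqref{ptau-bc-0}, the $1/N(\tau)$ terms cancel, leaving $K(\Phi_F)-K(0)$.

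The substantive part is the preservation of monotonicity. My plan is to use the method of characteristics. Equation \eqref{eq-Q-tau-classical} has straight-line characteristics of unit slope in $(\tau,\eta)$, and along each of them $Q$ satisfies the scalar ODE $dQ/ds = 1/N(\tau_0+s) + K(Q)$. For a point $(\tau,\eta)\in[0,T)\times[0,1]$, the backward characteristic reaches either the initial line at $(0,\eta-\tau)$ if $\eta\geq\tau$, or the inflow boundary at $(\tau-\eta,0)$ if $\eta<\tau$. Setting $U(s;\eta_0):=Q(s,\eta_0+s)$ for characteristics launched from the initial line and invoking the standard smooth-dependence-on-initial-data theorem (applicable because $K'$ is bounded by Assumption \ref{as:global-K}), I get
\[
\partial_{\eta_0} U(s;\eta_0) = Q_{\init}'(\eta_0)\,\exp\!\left(\int_0^s K'\bigl(U(r;\eta_0)\bigr)\,dr\right) > 0.
\]
Since $\partial_\eta Q(\tau,\eta) = \partial_{\eta_0} U(\tau;\eta-\tau)$, this yields strict positivity of $\partial_\eta Q$, and hence strict monotonicity, on the upper triangle $\eta\geq\tau$.

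For the lower triangle $\eta<\tau$, the same variational argument applies to the characteristics launched from the boundary, parameterised by the launch time $\tau_0$. The tangent vector to this family at the boundary is the already-established $\partial_\eta Q(\tau_0,0) = 1/N(\tau_0)+K(0)>0$, and that positivity is propagated along each characteristic by the same Gronwall-type exponential factor. Combining the two regions gives $\partial_\eta Q(\tau,\eta)>0$ throughout $[0,T)\times[0,1]$, so $Q(\tau,\cdot)$ is strictly increasing for every $\tau\in[0,T)$.

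The main obstacle I anticipate is a mild regularity gap: Definition \ref{def:classical-q-tau} only requires $Q\in C^{1}$, so directly differentiating the PDE in $\eta$ to obtain a linear transport equation for $\partial_\eta Q$ is not automatically justified. The remedy is to remain at the level of the characteristic ODE, whose smooth dependence on initial/boundary data needs only the Lipschitz character of $K$ guaranteed by Assumption \ref{as:global-K}; alternatively, one can regularise the initial data, work with $C^{2}$ solutions, and pass to the limit.
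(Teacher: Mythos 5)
Your proof is correct and follows essentially the same route as the paper: the two boundary identities are obtained by evaluating the PDE at $\eta=0,1$ exactly as the paper does, and the monotonicity is obtained by propagating $\partial_\eta Q$ along the unit-speed characteristics with a Gronwall factor $\exp(\int K')$. The paper first derives the linear transport equation $(\partial_\tau+\partial_\eta)Z = K'(Q)Z$ for $Z=\partial_\eta Q$ assuming $C^2$ regularity and then remarks that the $C^1$ case can be handled by characteristics; you simply carry out that characteristic argument directly, which is fine.

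Two small points of care. First, the hypothesis only asserts $Q_{\init}$ is increasing (and the paper's own proof says it ``preserves the non-negativity of $\frac{d}{d\eta}Q_{\init}$''), so in the upper triangle $\eta\geq\tau$ the correct conclusion from your formula $\partial_{\eta_0}U(s;\eta_0)=Q_{\init}'(\eta_0)\exp\bigl(\int_0^s K'(U)\,dr\bigr)$ is $\partial_\eta Q\geq 0$, not strict positivity; strictness holds automatically in the lower triangle since the boundary value $1/N+K(0)$ is strictly positive. Second, in the lower triangle you describe the computation as a variational argument ``parameterised by the launch time $\tau_0$''; naively differentiating the characteristic ODE in the launch time would require $1/N$ to be $C^1$, which is not assumed. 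The clean way (used implicitly in the paper's Lemma 3.2 and formula \eqref{formula-peta-flow}) is to vary the \emph{starting time} $\tau_s=\tau-\eta$ of the flow map $\Psi^f_{\tau_s\to\tau}(0)$ for fixed terminal time $\tau$: this dependence is absolutely continuous with a.e.\ derivative $-(K(0)+f(\tau_s))\exp(\int K')$, involving $f=1/N$ but not $f'$, and since both $\partial_\eta Q$ and that expression are continuous the identity holds everywhere. Your final result — $\partial_\eta Q(\tau,\eta)$ equals the boundary slope $1/N(\tau-\eta)+K(0)$ times the exponential factor — is exactly right; only the intermediate justification should be phrased in terms of the flow-map dependence on its start time rather than a variational ODE in $\tau_0$. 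You do flag the regularity issue at the end, so the gap is acknowledged, but it is worth noting that the density/regularisation remedy you propose is the more straightforward fix.
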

\begin{proof}
    The regularity of $Q$ ensures that the equation holds at $\eta=0$, resulting in
    \begin{align}\notag
        \p_{\tau}Q(\tau,0)&=\frac{1}{N(\tau)}+K(Q(\tau,0))-\p_{\eta}Q(\tau,0)\\&=\frac{1}{N(\tau)}+K(0)-\p_{\eta}Q(\tau,0),\label{tmp-1}
    \end{align} where the boundary condition \eqref{bc-Q-tau-classical} is used. Note that \eqref{bc-Q-tau-classical} also implies $\p_{\tau}Q(\tau,0)=0$, that combined with \eqref{tmp-1} leads to \eqref{ptau-bc-0}.
    Now suppose additionally $Q\in C^{2}([0,T)\times[0,1])$. Taking the derivative w.r.t $\eta$ in equation \eqref{eq-Q-tau-classical}, we have
    \begin{equation*}
        (\p_{\tau}+\p_{\eta})(\p_{\eta}Q)=K'(Q)\p_{\eta}{Q}.
    \end{equation*} Therefore we conclude that $Z(\tau,\eta):=\p_{\eta}Q(\tau,\eta)$ is a classical solution to
    \begin{equation}\label{eq-petaQ-Z}
        \begin{aligned}
            &(\p_{\tau}+\p_{\eta})Z=K'(Q)Z,\quad &&\tau\in(0,T),\eta\in(0,1),\\
            &Z(\tau,0)=\frac{1}{N(\tau)}+K(0)>0,\quad &&\tau\in(0,T),\\
            &Z(0,\eta)=\frac{d}{d\eta}Q_{\init}(\eta),\quad &&\eta\in[0,1],
        \end{aligned}
    \end{equation} which preserves the non-negativity of $\frac{d}{d\eta}Q_{\init}(\eta)$.

   For general case when $Q$ is not necessarily $C^2$, we can work with characteristics to show that $Z$ is a mild solution (in the sense of characteristics, c.f. Definition \ref{def:mild-1-external-regular} and \eqref{formula-peta-flow-recall} in Section \ref{sec:wps-classical}) of \eqref{eq-petaQ-Z}. And we can conclude the same result by working with the characteristics ODEs. For simplicity we omit the details here.
    
    Finally, \eqref{ptau-diff-0-1} follows from combining \eqref{ptau-bc-0} with \eqref{def-N-tau-classical}.
\end{proof}

\begin{remark}\label{rmk:initial-boundary-compatible}
 The $C^1$ regularity for classical solutions needs 
 that \eqref{ptau-diff-0-1} is satisfied at $\tau=0$, thereby imposing an implicit constraint on the initial data. Indeed, this means compatibility with the boundary condition for the first order derivative, which is in accordance with the $C^1$ requirement for a classical solution.
\end{remark}

We also need $Q(\tau,1)\leq \Phi_F$ for $Q(\tau,\cdot)$ to be a pseudo-inverse corresponding to a measure supported in $[0,\Phi_F]$. As \eqref{eq-Q-tau-classical} is a first order equation, one (inflow) boundary condition \eqref{bc-Q-tau-classical} at $\eta=0$ is enough. Yet in the previous derivation \eqref{Q1-PhiF} we have also used 
\begin{equation}\label{eq:q1-phiF}
    Q(\tau,1)=\Phi_F,\quad \tau>0,
\end{equation} which follows from the definition of the pseudo-inverse of the probability density function $\rho$ with $\rho(\tau,\Phi_F)>0$. Although for the self-contained system of $Q$, \eqref{eq:q1-phiF} is not explicitly imposed, we check next that it is ensured by the definition of $N(\tau)$ in \eqref{def-N-tau-classical}.

\begin{proposition}\label{prop:char-N-classical}
    Suppose $Q(\tau,\eta)\in C^{1}([0,T)\times[0,1])$ and $N(\tau)\in C[0,T)$ satisfies \eqref{eq-Q-tau-classical} and constraint \eqref{constaint-classical-Q-tau} on $[0,T)$, with an initial condition \eqref{ic-q-tau-classical} satisfying $Q_{\init}(1)=\Phi_F$. Then \eqref{def-N-tau-classical} and \eqref{eq:q1-phiF} are equivalent. 
\end{proposition}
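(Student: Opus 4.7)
The plan is to evaluate the PDE \eqref{eq-Q-tau-classical} at the boundary $\eta=1$ and read off an ODE for $Q(\tau,1)$. By the $C^1$ regularity of $Q$ up to the boundary, the equation holds at $\eta = 1$, giving
\begin{equation}
\partial_\tau Q(\tau,1) = \tfrac{1}{N(\tau)} + K(Q(\tau,1)) - \partial_\eta Q(\tau,1), \qquad \tau \in [0,T).
\end{equation}
This single identity will carry both directions of the equivalence.

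For the implication \eqref{def-N-tau-classical} $\Rightarrow$ \eqref{eq:q1-phiF}, I would substitute the expression $1/N(\tau) = \partial_\eta Q(\tau,1) - K(\Phi_F)$ into the display above, which cancels the $\partial_\eta Q(\tau,1)$ terms and leaves
\begin{equation}
\partial_\tau Q(\tau,1) = K(Q(\tau,1)) - K(\Phi_F).
\end{equation}
Setting $g(\tau) := Q(\tau,1) - \Phi_F$, this becomes $g'(\tau) = K(g(\tau) + \Phi_F) - K(\Phi_F)$, with $g(0) = Q_{\init}(1) - \Phi_F = 0$. Since $K \in C^1$ with bounded derivative by Assumption~\ref{as:global-K}, the right-hand side is Lipschitz in $g$, so uniqueness for this scalar ODE together with the obvious trivial solution forces $g \equiv 0$, hence $Q(\tau,1) = \Phi_F$ on $[0,T)$.

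The converse is immediate: if $Q(\tau,1) \equiv \Phi_F$ then $\partial_\tau Q(\tau,1) = 0$, and evaluating the boundary identity above at $\eta = 1$ collapses to $\partial_\eta Q(\tau,1) = 1/N(\tau) + K(\Phi_F)$, which is exactly \eqref{def-N-tau-classical} (note that the constraint \eqref{constaint-classical-Q-tau} guarantees this reading makes sense with $N(\tau) \in (0,+\infty)$). I do not expect any serious obstacle here; the only mild subtlety is ensuring that the PDE genuinely holds pointwise at the closed endpoint $\eta = 1$, which is exactly what the assumed $C^1$ regularity of $Q$ on $[0,T) \times [0,1]$ provides.
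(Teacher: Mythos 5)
Your proof is correct and follows essentially the same route as the paper's: evaluate the PDE at $\eta=1$, then use substitution in one direction and ODE uniqueness (starting from the constant solution $\Phi_F$) in the other. The only cosmetic difference is that you introduce the shifted variable $g(\tau)=Q(\tau,1)-\Phi_F$, whereas the paper argues uniqueness for $Q(\tau,1)$ directly.
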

\begin{proof}
     The regularity of $Q$ ensures that the equation is satisfied at $\eta=1$, which gives
     \begin{equation}\label{eq-at-eta-1-classical}
         \p_{\tau}Q(\tau,1)=\frac{1}{N(\tau)}+K(Q(\tau,1))-\p_{\eta}Q(\tau,1).
     \end{equation}
     If \eqref{eq:q1-phiF} holds, as initially $Q_{\init}(1)=\Phi_F$, we have
     \begin{align*}
         0\equiv \p_{\tau}Q(\tau,1)&=\frac{1}{N(\tau)}+K(Q(\tau,1))-\p_{\eta}Q(\tau,1)\\&=\frac{1}{N(\tau)}+K(\Phi_F)-\p_{\eta}Q(\tau,1),
     \end{align*} where in the last step we use again \eqref{eq:q1-phiF}. This yields \eqref{def-N-tau-classical}.

     On the other hand, if \eqref{def-N-tau-classical} holds, substituting it into \eqref{eq-at-eta-1-classical} results in
     \begin{equation}\label{tmp-ODE}
         \frac{d}{d\tau} Q(\tau,1)=K(Q(\tau,1))-K(\Phi_F),
     \end{equation}which is an ODE for $Q(\tau,1)$, together with an initial condition $Q(\tau=0,1)=Q_{\init}(1)=\Phi_F$. As $K$ is $C^1$, the unique solution to this ODE is the constant solution $\Phi_F$, which gives \eqref{eq:q1-phiF}.
\end{proof}

\begin{remark}
    In view of Proposition \ref{prop:char-N-classical}, we may interpret $1/N(\tau)$ as a Lagrangian multiplier to ensure the additional boundary condition \eqref{eq:q1-phiF}. 
\end{remark}

We can further extend Proposition \ref{prop:char-N-classical}  to the following result.

\begin{corollary}\label{cor:5-equivalent} 
        Suppose $Q(\tau,\eta)\in C^{1}([0,T)\times[0,1])$ and $N(\tau)\in C[0,T)$ satisfy \eqref{eq-Q-tau-classical} on $[0,T)$ with initially $Q(\tau=0,0)=0,Q(\tau=0,1)=\Phi_F$. 
        For the following five statements,
        \begin{enumerate}
            \item \eqref{bc-Q-tau-classical}, i.e. $Q(\tau,\eta=0)\equiv0,$ for $\tau\in[0,T)$,
            \item \eqref{eq:q1-phiF}, i.e. $Q(\tau,\eta=1)\equiv\Phi_F,$ for $\tau\in[0,T)$,
            \item \eqref{ptau-bc-0}, i.e. $ \p_{\eta}Q(\tau,\eta=0)=\frac{1}{N(\tau)}+K(0),$ for $\tau\in[0,T)$,
            \item \eqref{def-N-tau-classical}, i.e. $ \p_{\eta}Q(\tau,\eta=1)=\frac{1}{N(\tau)}+K(\Phi_F),$ for $\tau\in[0,T)$,
            \item \eqref{ptau-diff-0-1}, i.e. $\p_{\eta}Q(\tau,1)-\p_{\eta}Q(\tau,0)=K(\Phi_F)-K(0)$ for $\tau\in[0,T)$,
        \end{enumerate}
        we have 1. $\Leftrightarrow$ 3. and 2. $\Leftrightarrow$ 4., and every two of 3. 4. 5. can deduce the other. 
        
        In particular, if $(Q,N)$ is a classical solution to \eqref{eq-Q-tau-classical}-\eqref{ic-q-tau-classical}, then all of 1.-5. hold.
\end{corollary}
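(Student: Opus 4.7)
The plan is to reduce everything to two distinct boundary ODEs for $Q(\tau,0)$ and $Q(\tau,1)$ obtained by evaluating the transport equation \eqref{eq-Q-tau-classical} at the two endpoints, plus a completely algebraic observation about statements 3, 4, 5.

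First I will handle the purely algebraic part: the three identities in statements 3, 4, 5 are linear relations among the three quantities $\partial_\eta Q(\tau,0)$, $\partial_\eta Q(\tau,1)$, and $1/N(\tau)$. Since statement 5 is literally the difference of statement 4 and statement 3, knowing any two of them forces the third. No PDE input is needed here.

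Next, for the equivalence $1 \Leftrightarrow 3$, I will evaluate \eqref{eq-Q-tau-classical} at $\eta=0$ to obtain the identity
\begin{equation}
\partial_\tau Q(\tau,0) = \tfrac{1}{N(\tau)} + K(Q(\tau,0)) - \partial_\eta Q(\tau,0).
\end{equation}
The direction $1 \Rightarrow 3$ is then immediate by setting $\partial_\tau Q(\tau,0)=0$ and using $K(Q(\tau,0))=K(0)$, which is exactly the calculation already performed in Proposition \ref{prop:keep-mono}. For $3 \Rightarrow 1$, substituting statement 3 into the identity above eliminates the terms involving $N$ and yields the autonomous ODE $\frac{d}{d\tau}Q(\tau,0) = K(Q(\tau,0)) - K(0)$ with initial condition $Q(0,0)=0$; since $K \in C^1$ is Lipschitz on bounded sets by Assumption \ref{as:global-K}, the unique solution is the constant $0$, giving statement 1. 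The equivalence $2 \Leftrightarrow 4$ is handled by the same mechanism at $\eta=1$: this is literally the content of Proposition \ref{prop:char-N-classical}, so I will just cite it rather than rewrite the argument.

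Finally, for the closing assertion, if $(Q,N)$ is a classical solution then statement 1 holds by \eqref{bc-Q-tau-classical} and statement 4 holds by \eqref{def-N-tau-classical}; applying $1 \Leftrightarrow 3$ gives statement 3, $4 \Leftrightarrow 2$ gives statement 2, and the algebraic relation among 3, 4, 5 then yields statement 5. I do not foresee a real obstacle here; the only care needed is to ensure, when passing from statement 3 (respectively 4) back to the boundary condition, that the resulting ODE is indeed autonomous in $Q(\tau,0)$ (respectively $Q(\tau,1)$) so that uniqueness pins it to a constant — this is exactly what happens because the $1/N(\tau)$ terms cancel after the substitution.
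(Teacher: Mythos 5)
Your argument is correct and follows essentially the same route as the paper's proof: the paper also observes that $2\Leftrightarrow 4$ is Proposition \ref{prop:char-N-classical}, that $1\Leftrightarrow 3$ is the analogous computation at $\eta=0$ (evaluate \eqref{eq-Q-tau-classical}, cancel $1/N$, and pin the resulting autonomous ODE for $Q(\tau,0)$ to the constant zero by uniqueness), and that statements 3, 4, 5 are pairwise linear combinations. You merely spell out what the paper leaves as "can be proved similarly" and "a linear combination of the other two," and you make the closing "in particular" sentence explicit, which the paper leaves to the reader.
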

\begin{proof}
    Indeed, 2. $\Leftrightarrow$ 4. is Proposition \ref{prop:char-N-classical}, and 1. $\Leftrightarrow$ 3. can be proved similarly. And each one of 3.-5. is a linear combination of the other two.
\end{proof}
\begin{remark}
Corollary \ref{cor:5-equivalent} suggests some possibility of an equivalent reformulation of \eqref{eq-Q-tau-classical}-\eqref{ic-q-tau-classical}. Indeed, any two of 1.-5. can deduce the rest except the pair 1. 3. or 2. 4. . Totally we have 8 choices. Here we choose 1. and 4., i.e. \eqref{bc-Q-tau-classical} and \eqref{def-N-tau-classical}, which might be natural in view of the characteristic of equation \eqref{eq-Q-tau-classical}: i) \eqref{bc-Q-tau-classical} is an inflow boundary condition is imposed at $\eta=0$, and \eqref{def-N-tau-classical} says that the nonlinearity $1/N$ depends on the out-going flux at $\eta=1$.
\end{remark}

\subsection{Equivalent formulations}
Now we return to how the $Q$ formulation in $\tau$ \eqref{eq-Q-tau-classical}-\eqref{ic-q-tau-classical} corresponds to the $\rho$ formulation in $t$ \eqref{eq:rho-t-1}-\eqref{ic-rho-t}. We have focused on the classical case when $0<N(t)<+\infty$, which implies that the change of timescale \eqref{def-tau-sing} and its inverse $dt=\frac{1}{N(\tau)}d\tau$ are well-defined. We have also assumed that $0<\rho<\infty$, which in terms of pseudo-inverse \eqref{tmp-pqpe} means $0<\p_{\eta}Q<\infty$. Under these assumptions we have derived the equivalent system for $Q$ in $\tau$ timescale from the system of $\rho$ in $t$ timescale. We give a formal summary of this equivalence as follows.

\begin{proposition}\label{prop:equil-rho-Q}
Suppose $(\rho,N)$ is a classical solution to \eqref{eq:rho-t-1}-\eqref{ic-rho-t} on $[0,T)$. Additionally suppose $\rho$ is positive everywhere which implies $N$ is also positive. For each time $t$, denote the pseudo-inverse associated with $\rho(t,\cdot)$ as $Q(t,\cdot)$. Then $Q$ satisfies \eqref{system-Q-classical}. And through an invertible change of timescale \eqref{def-tau-sing}, we obtain a classical solution to \eqref{eq-Q-tau-classical}-\eqref{ic-q-tau-classical}, on time interval $[0,\tau_T)$ with $\tau_T=\int_0^{T}N(t)dt$.

On the other hand, suppose $(Q,N)$ is a classical solution to \eqref{eq-Q-tau-classical}-\eqref{ic-q-tau-classical}. Additionally assume that the spatial derivative $\p_{\eta}Q$ is $C^{1}$ and is positive everywhere. Then we can invert the above process to obtain a positive classical solution to \eqref{eq:rho-t-1}-\eqref{ic-rho-t}. 
\end{proposition}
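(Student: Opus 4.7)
The plan is to treat the two directions separately, since each essentially amounts to verifying that the derivations already laid out in Section~2.1 can be read in both directions, provided one is careful about regularity and the mapping of time intervals.

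Direction 1 (from $\rho$ to $Q$): I would first define the cumulative function $F(t,\phi)$ by \eqref{def-F-t}. Since $\rho\in C^1([0,T)\times[0,\Phi_F])$ is strictly positive, $F$ is $C^1$ jointly and strictly increasing in $\phi$ for each fixed $t$, sending $[0,\Phi_F]$ bijectively onto $[0,1]$. The implicit function theorem applied to $F(t,Q(t,\eta))=\eta$ then produces $Q\in C^{1}([0,T)\times[0,1])$, with $\partial_\eta Q = 1/\rho(t,Q)$ given by \eqref{tmp-pqpe} and $\partial_t Q$ obtained analogously by differentiating in $t$. The algebraic manipulation already carried out between \eqref{eq:q-t-1} and \eqref{def-N-Q-t}, together with the boundary values $Q(t,0)=0$ and $Q(t,1)=\Phi_F$ (which follow from $\rho$ being a probability density that is positive at the endpoints), gives \eqref{system-Q-classical}. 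For the change of timescale, define $\tau(t)=\int_0^t N(s)\,ds$; since $N\in C[0,T)$ is strictly positive (because $0<\rho(t,\Phi_F)<1/K(\Phi_F)$ combined with \eqref{expression-N-t-classical}), $\tau$ is a $C^1$ strictly increasing bijection from $[0,T)$ onto $[0,\tau_T)$ with $\tau_T=\int_0^T N(s)\,ds$. Dividing \eqref{eq:q-t-1} by $N(t)$ and composing with $t(\tau)$ produces \eqref{eq-Q-tau-classical}, while \eqref{constaint-classical-Q-tau} follows from \eqref{constaint-classical-rho} via \eqref{def-N-Q-t}.

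Direction 2 (from $Q$ to $\rho$): Under the additional assumption that $\partial_\eta Q$ is $C^1$ and positive, I would invert the timescale by setting $t(\tau)=\int_0^\tau 1/N(s)\,ds$, which is a $C^1$ strictly increasing bijection from $[0,\tau^*)$ onto some interval $[0,T)$ since $N\in C[0,\tau^*)$ is positive. By Proposition~\ref{prop:keep-mono} together with Corollary~\ref{cor:5-equivalent}, $Q(\tau,\cdot)$ is strictly increasing with $Q(\tau,0)=0$ and $Q(\tau,1)=\Phi_F$; hence for each $\tau$ it admits an inverse $F(\tau,\cdot):[0,\Phi_F]\to[0,1]$. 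The implicit function theorem, using $\partial_\eta Q\in C^1$ and strictly positive, gives $F\in C^1$ jointly in $(\tau,\phi)$ and, crucially, $\rho(\tau,\phi):=\partial_\phi F(\tau,\phi)=1/\partial_\eta Q(\tau,F(\tau,\phi))$ belongs to $C^1$ and is strictly positive. Reversing the chain-rule computation of Section~2.1 then recovers \eqref{eq:rho-t-1}, the firing-rate formula \eqref{expression-N-t-classical}, and, via items~3 and~4 of Corollary~\ref{cor:5-equivalent} applied at $\eta=0$ and $\eta=1$, the matching boundary condition \eqref{bc-rho-t}. Pulling everything back along $\tau(t)$ yields a classical solution in the sense of Definition~\ref{def:classical-rho}.

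The main obstacle is purely bookkeeping: the chain-rule manipulations are routine, but one must verify at each step that the stated regularity is preserved and that the time-interval endomorphisms $t\leftrightarrow\tau$ are genuine $C^1$ bijections on the advertised intervals. The hypothesis that $\partial_\eta Q$ is $C^1$ in direction~2 is used in a critical way: without it, $\rho=1/\partial_\eta Q(\tau,F)$ is only continuous and the reverse construction would not land in the class of classical solutions of Definition~\ref{def:classical-rho}. I would therefore take care, in the write-up, to single out exactly where each regularity and positivity hypothesis is used, rather than invoking the general implicit function theorem abstractly.
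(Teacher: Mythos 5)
The paper does not supply a separate proof of this proposition; it is presented as a ``formal summary'' of the derivation already carried out in Section~\ref{subsec:derive-pseudo-inverse} (equations \eqref{def-F-t}--\eqref{constaint-classical-Q-tau}) and the subsequent time-dilation, followed by a remark explaining why the $C^1$ hypothesis on $\partial_\eta Q$ is needed in the reverse direction. Your write-up carefully formalizes exactly that chain of reasoning in both directions --- implicit function theorem for the pseudo-inverse, chain-rule manipulations, the $C^1$ bijection $t\leftrightarrow\tau$, and the use of Corollary~\ref{cor:5-equivalent} to recover \eqref{bc-rho-t} --- so it matches the paper's intended argument and correctly identifies where each regularity hypothesis enters.
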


\begin{remark}
    Note that $\rho$ connects to the spatial derivative $\p_{\eta}Q$ via \eqref{tmp-pqpe}. Hence in the second part of the statement we need $\p_{\eta}Q$ to be $C^{1}$, in order to ensure the corresponding $\rho$ is $C^1$ as required for a classical solution.
    
     Later in Theorem \ref{thm:regularity-classical} we shall see that the $C^2$ regularity of $Q$ can be ensured, as long as the initial data is in $C^2$ and is compatible with the boundary conditions in a certain sense.
\end{remark}

\begin{remark}\label{rmk:initial-boundary-rho}
 Let us reinterpret \eqref{ptau-diff-0-1} in the $\rho$ formulation \eqref{eq:rho-t-1}-\eqref{ic-rho-t}. By \eqref{def-N-t}-\eqref{bc-rho-t} we have
    \begin{align*}
        (1+K(0)N(t))\rho(t,0)=N(t),\qquad \mbox{and} \qquad
        (1+K(\Phi_F)N(t))\rho(t,\Phi_F)=N(t).
    \end{align*} Hence dividing the above equations by $N(t)$ we derive
    \begin{align}
    \frac{1}{N(t)}+K(0)=\frac{1}{\rho(t,0)},\qquad \mbox{and} \qquad
    \frac{1}{N(t)}+K(\Phi_F)=\frac{1}{\rho(t,\Phi_F)},
    \end{align}
a combination of which gives
    \begin{equation}
        \frac{1}{\rho(t,0)}-\frac{1}{\rho(t,\Phi_F)}=K(0)-K(\Phi_F).
    \end{equation} Using \eqref{tmp-pqpe}, we see at least formally the above three equations correspond to \eqref{ptau-bc-0}, \eqref{def-N-tau-classical} and \eqref{ptau-diff-0-1}. Proposition \ref{prop:keep-mono} tells us that \eqref{ptau-bc-0}-\eqref{ptau-diff-0-1} are implicitly preserved although not explicitly stated by dynamics of $Q$.
\end{remark}


\section{Well-posedness via a relaxed problem}\label{sec:wps-classical}

In this section, we study the well-posedness and regularity for classical solutions to the reformulation \eqref{eq-Q-tau-classical}-\eqref{ic-q-tau-classical}. The key ingredient is to introduce a relaxed problem, which will also be useful in later analysis in Section \ref{sec:global}.

To state the well-posedness result for classical solutions, we need the following assumption on initial data.
\begin{assumption}\label{as:wps}
    We assume that the initial data $Q_{\init}\in C^1[0,1]$ is non-decreasing and satisfies $Q_{\init}(0)=0,Q_{\init}(1)=\Phi_F$ as well as  
    \begin{equation}\label{first-order-in-assumption}
                    \frac{d}{d\eta} Q_{\init}(1)>K(\Phi_F),\qquad
            \frac{d}{d\eta} Q_{\init}(1)-\frac{d}{d\eta} Q_{\init}(0)=K(\Phi_F)-K(0).
    \end{equation}
\end{assumption}
\begin{remark}\label{rmk:As2-Ninit}
    Assumption \ref{as:wps} requires that the initial data is compatible with the boundary conditions up to the first order derivative, c.f. Corollary \ref{cor:5-equivalent}. We note that \eqref{first-order-in-assumption} is equivalent to that there exists some $N_{\init}>0$ such that
        \begin{align*}
            \frac{d}{d\eta} Q_{\init}(1)=\frac{1}{N_{\init}}+K(\Phi_F),\qquad
                        \frac{d}{d\eta} Q_{\init}(0)=\frac{1}{N_{\init}}+K(0).
        \end{align*}
\end{remark}
For the phase response function $K$, we recall Assumption \ref{as:global-K} is supposed throughout this paper.

Now we state the main results of this section. First we have the existence and uniqueness of a classical solution to \eqref{eq-Q-tau-classical}-\eqref{ic-q-tau-classical}, up to a maximal existence time $\tau^*$, which can be viewed as the first blow-up time of $N(\tau)$.

\begin{theorem}\label{thm:wps-blow-up-classical}
Suppose the initial data satisfies Assumption \ref{as:wps}. Then there exists a unique classical solution $(Q,N)$ to \eqref{eq-Q-tau-classical}-\eqref{ic-q-tau-classical} as in Definition \ref{def:classical-q-tau} on time interval $[0,\tau^*)$, where $0<\tau_*\leq+\infty$ is the maximal existence time.

In addition, $\tau_*$ can be viewed as the first blow-up time of $N(\tau)$. More precisely, if $\tau^*<+\infty$ then
        \begin{equation}\label{limit-N-blowup}
            \lim_{\tau\rightarrow (\tau^*)^-}N(\tau)=+\infty.
        \end{equation}
\end{theorem}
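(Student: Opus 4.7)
The strategy, foreshadowed in the introduction, is to introduce a \emph{relaxed} problem in which $\mu(\tau) := 1/N(\tau)$ is treated as a real-valued function not constrained to be positive. Concretely, the relaxed system is
\begin{equation*}
    \partial_\tau Q + \partial_\eta Q = \mu(\tau) + K(Q), \qquad Q(\tau,0) = 0, \qquad \mu(\tau) = \partial_\eta Q(\tau, 1) - K(\Phi_F),
\end{equation*}
with $Q(0,\eta) = Q_{\init}(\eta)$. By Remark~\ref{rmk:As2-Ninit}, Assumption~\ref{as:wps} furnishes $\mu(0) = 1/N_{\init} > 0$, so any sufficiently short-time solution of the relaxed system automatically satisfies $\mu > 0$ and is therefore a classical solution in the sense of Definition~\ref{def:classical-q-tau}. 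Dropping the positivity constraint renders the problem a standard quasilinear transport system with unit-speed characteristics, so the usual fixed-point toolbox applies.

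For local existence and uniqueness, the plan is to use the method of characteristics. Given a candidate $\mu \in C([0,\tau_0])$, along each characteristic $\eta = \eta_0 + (\tau - \tau_0)$ the PDE reduces to the ODE
\begin{equation*}
    \tfrac{d}{ds} Q(s, \eta_0 + s - \tau_0) = \mu(s) + K(Q(s, \eta_0 + s - \tau_0))
\end{equation*}
with data $Q_{\init}(\eta_0)$ when the characteristic emanates from $\tau = 0$ and $0$ when it emanates from $\eta = 0$, producing a unique continuous solution $Q^\mu$. Formally differentiating in $\eta$, the derivative $Z := \partial_\eta Q^\mu$ satisfies the linear transport equation $(\partial_\tau + \partial_\eta) Z = K'(Q^\mu)\, Z$ with initial data $Q_{\init}'$ and inflow data $Z(\tau, 0) = \mu(\tau) + K(0)$, obtained by evaluating the main equation at $\eta = 0$ together with $Q(\tau, 0) = 0$. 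This yields an explicit characteristic representation of $Z^\mu(\tau, 1)$. Defining $\Phi(\mu)(\tau) := Z^\mu(\tau, 1) - K(\Phi_F)$, a solution of the relaxed system is precisely a fixed point of $\Phi$. Using the boundedness of $K, K', K''$ (Assumption~\ref{as:global-K}) and the compatibility \eqref{first-order-in-assumption}, I would verify that for $\tau_0$ small, $\Phi$ is a contraction on a closed ball (in some weighted sup norm) of $C([0,\tau_0])$ centered at the constant $1/N_{\init}$; Banach's theorem then yields a unique fixed point. The compatibility condition along the characteristic emanating from the corner $(0,0)$ ensures that $\partial_\eta Q$ is continuous across that characteristic, producing a genuinely $C^1$ solution.

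Since $\mu(0) > 0$, continuity gives $\mu > 0$ on some interval $[0, \tau_1]$, on which $N := 1/\mu$ is a positive continuous function; monotonicity of $Q(\tau, \cdot)$ then follows from the linear transport representation of $Z$ with positive boundary and initial data, cf.\ Proposition~\ref{prop:keep-mono}. Let $\tau^* \in (0, +\infty]$ be the supremum of times such that a classical solution exists; uniqueness makes this well-defined. For the blow-up criterion, note that $Z$ along characteristics obeys $\tfrac{d}{ds} Z = K'(Q) Z$, so $|Z|$ grows at most at rate $\|K'\|_\infty$; combined with the characteristic length being at most $1$, this yields, for $\tau \geq 1$, $|Z(\tau, 1)| \leq e^{\|K'\|_\infty} |\mu(\tau - 1) + K(0)|$, with analogous control for $\tau < 1$ in terms of $\|Q_{\init}'\|_\infty$. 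A step-by-step induction on unit time intervals shows that $\mu$ remains bounded on any finite time interval, so the relaxed solution itself never blows up in finite time. The only possible obstruction to prolonging the classical solution is that $\mu$ reaches $0$; hence, if $\tau^* < +\infty$, then by maximality and continuity of $\mu$, we must have $\mu(\tau) \to 0^+$ as $\tau \to (\tau^*)^-$, i.e., $N(\tau) \to +\infty$, which is \eqref{limit-N-blowup}.

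The main technical obstacle, in my view, is establishing the contraction estimate for $\Phi$: one must carefully control the dependence of $Z^\mu(\tau, 1)$ on $\mu$, which enters both through the inflow data $\mu(\tau - 1) + K(0)$ (when $\tau \geq 1$) and through the coefficient $K'(Q^\mu)$ integrated along the characteristic. The Lipschitz dependence of $Q^\mu$ on $\mu$ (affine after integration along characteristics) combined with the $C^2$ smoothness of $K$ should close the argument, with the short characteristic length providing the contractive small parameter.
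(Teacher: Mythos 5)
Your proposal is correct and follows essentially the same strategy as the paper: introduce the relaxed problem with $\mu=1/N$ unconstrained in sign, solve it globally via characteristics and a Banach fixed point with the $W^{1,\infty}$ growth estimate controlling the iteration, and then identify the maximal classical interval as the interval on which $\mu$ stays positive, with $\mu(\tau^*)=0$ (hence $N\to+\infty$) whenever $\tau^*<\infty$. The only cosmetic difference is that you pose the fixed point directly as $\Phi(\mu)=\partial_\eta Q^\mu(\cdot,1)-K(\Phi_F)$, whereas the paper derives an equivalent operator from the constraint $Q(\tau,1)\equiv\Phi_F$ (these coincide by Proposition~\ref{prop:char-tildeN-classical}), and the paper first treats mild solutions in $W^{1,\infty}$ and then upgrades to $C^1$ using the compatibility condition you also invoke.
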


Next, we show that the solution depends continuously on the initial data in $C^1$ norm, up to the first blow-up time.
\begin{theorem}\label{thm:stability-classical}
    
    Let $Q_{\init,\eps}$ be a family of initial data indexed by $\eps\geq0$, satisfying Assumption \ref{as:wps}, and such that
    \begin{equation}
        Q_{\init,\eps}(\cdot)\rightarrow Q_{\init,0}(\cdot),\qquad \text{in $C^1[0,1]$,}\qquad \text{as $\eps\rightarrow0^+.$}
    \end{equation}
    Denote the corresponding classical solution to \eqref{eq-Q-tau-classical}-\eqref{ic-q-tau-classical} as $(Q_{\eps},N_{\eps})$ with maximal existence times $\tau^*_{\eps}$, as in Theorem \ref{thm:wps-blow-up-classical}. In particular, $Q_0,N_0$ and $\tau^*_0$ correspond to the initial data $Q_{\init,0}$. Then we have
    \begin{equation}\label{suplimit-blow-up-time}
        \liminf_{\eps\rightarrow0^+}\tau^*_{\eps}\geq \tau^*_0.
    \end{equation} Moreover, for every $0<T<\tau^*_0$, we have
    \begin{equation}\label{convergence-N}
        N_{\eps}(\tau)\rightarrow N_0(\tau)\qquad \text{in $C[0,T]$,}\qquad \text{as $\eps\rightarrow0^+,$}
    \end{equation} and
    \begin{equation}\label{convergence-Q}
         Q_{\eps}(T,\cdot)\rightarrow Q_0(T,\cdot)\qquad \text{in $C^1[0,1]$,}\qquad \text{as $\eps\rightarrow0^+.$}
    \end{equation}
\end{theorem}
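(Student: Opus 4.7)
The plan is to reduce this stability statement to continuous dependence for the relaxed problem that was introduced to prove Theorem \ref{thm:wps-blow-up-classical}, exploiting the fact that in the relaxed problem the firing rate quantity $1/N(\tau)$ is allowed to take any real value (including negative) and the system is globally well-posed. I will denote by $(\tilde{Q}_\eps, \tilde{n}_\eps)$ the solution of the relaxed problem, where $\tilde{n}_\eps$ plays the role of $1/N_\eps$. The classical solution $(Q_\eps,N_\eps)$ coincides with $(\tilde{Q}_\eps, 1/\tilde{n}_\eps)$ on every time interval where $\tilde{n}_\eps$ remains strictly positive, and the maximal existence time $\tau^*_\eps$ is the first zero of $\tilde{n}_\eps$.

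The first step is to establish (or invoke from the preceding well-posedness analysis) that the relaxed problem enjoys continuous dependence on the initial data in $C^1$ on every finite time interval. Concretely: for each fixed $T>0$, whenever $Q_{\init,\eps}\to Q_{\init,0}$ in $C^1[0,1]$, the corresponding relaxed solutions satisfy $\tilde{Q}_\eps\to \tilde{Q}_0$ in $C^1([0,T]\times[0,1])$ and $\tilde{n}_\eps\to \tilde{n}_0$ in $C[0,T]$. The natural way to prove this is to fix $T$, write the equation along its (trivial, unit-speed) characteristics, differentiate in $\eta$ to obtain a linear transport equation for $\p_\eta Q$ along the same characteristics (as in the proof of Proposition \ref{prop:keep-mono}), and close a Banach fixed-point estimate on the $C^1$ norm by controlling $\tilde{n}_\eps$ through the boundary formula \eqref{def-N-tau-classical}. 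The unit-speed characteristics guarantee that the boundary trace $\p_\eta Q(\tau,1)$ is determined by the solution on a strip of finite length, which keeps the Gronwall-type estimates uniform in $\eps$.

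Granted this, the second step is to fix $T<\tau^*_0$. By definition of classical solution and continuity of $N_0$ on $[0,\tau^*_0)$, there exists $c>0$ such that $\tilde{n}_0(\tau)=1/N_0(\tau)\geq c$ on $[0,T]$. Continuous dependence of the relaxed problem then gives $\|\tilde{n}_\eps-\tilde{n}_0\|_{C[0,T]}\to 0$, so $\tilde{n}_\eps(\tau)\geq c/2>0$ on $[0,T]$ for all sufficiently small $\eps$. On this interval the constraint \eqref{constaint-classical-Q-tau} is satisfied, hence the relaxed solution is actually the classical solution: $\tau^*_\eps>T$, $N_\eps=1/\tilde{n}_\eps$ and $Q_\eps=\tilde{Q}_\eps$ on $[0,T]$. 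Since $T<\tau^*_0$ is arbitrary, this yields \eqref{suplimit-blow-up-time}, and it immediately transfers the $C[0,T]$ convergence of $\tilde{n}_\eps$ to the convergence \eqref{convergence-N} of $N_\eps$, together with \eqref{convergence-Q} from the $C^1$ convergence of $\tilde{Q}_\eps$.

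The main obstacle is the $C^1$ continuous dependence for the relaxed problem. The nonlinearity $\tilde{n}_\eps$ is recovered from the boundary derivative $\p_\eta Q(\tau,1)$, so closing a contraction in $C^1$ requires propagating $C^1$ control of $Q$ via the linearized equation for $Z=\p_\eta Q$ in \eqref{eq-petaQ-Z} while simultaneously controlling the boundary trace; the standard strategy is to work on a short time interval, establishing local Lipschitz dependence in $C^1$ by a fixed-point argument for the map $\tilde{n}\mapsto \p_\eta Q(\cdot,1)-K(\Phi_F)$, and then iterate using uniform bounds in $\eps$ provided by the underlying relaxed dynamics. Once this local step is in hand, a standard chaining argument up to $T$ yields the required uniform $C^1$ convergence.
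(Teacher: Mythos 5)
Your proposal is correct and takes essentially the same route as the paper: reduce to the relaxed problem, invoke its continuous dependence in $C^1$ (the paper's Theorem \ref{thm:stability-classical-relax}, proved via the fixed-point argument you sketch for $\tilde{N}$), use the positive lower bound of $\tilde{N}_0$ on $[0,T]$ to deduce positivity of $\tilde{N}_\eps$ for small $\eps$, and transfer to $(Q_\eps,N_\eps)$ via $N_\eps=1/\tilde{N}_\eps$.
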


Finally, we can show $C^2$ regularity of $Q$, provided that the initial data satisfies the following additional assumption.
\begin{assumption}\label{as:second-order}
    We assume $Q_{\init}(\eta)\in C^2[0,1]$ with
    \begin{equation}\label{second-compatible}
        \frac{d^2}{d\eta^2}Q_{\init}(1)-\frac{d^2}{d\eta^2}Q_{\init}(0)=K'(\Phi_F)\frac{d}{d\eta}Q_{\init}(1)-K'(0)\frac{d}{d\eta}Q_{\init}(0).
    \end{equation}
\end{assumption}
\begin{remark}
    Equation \eqref{second-compatible} is a second order condition for the initial data to be compatible with the boundary conditions (c.f. the first order condition \eqref{first-order-in-assumption} in Assumption \ref{as:wps}). It will be explained later in Section \ref{subsubsec:C2}.
\end{remark}
\begin{theorem}\label{thm:regularity-classical}
    In the same setting as Theorem \ref{thm:wps-blow-up-classical}, suppose the initial data additionally satisfies Assumption \ref{as:second-order}. Then we have
    $
        N(\tau)\in C^1[0,\tau^*)
    $
    and $Q\in C^2([0,\tau^*)\times[0,1])$.

\end{theorem}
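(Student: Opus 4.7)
The plan is to bootstrap from the $C^1$ regularity of $(Q, N)$ given by Theorem \ref{thm:wps-blow-up-classical} to the stated $C^2$/$C^1$ regularity by differentiating the PDE along characteristics twice. The key observation is that the characteristics of \eqref{eq-Q-tau-classical} are the straight lines $\eta = \eta_0 + \tau$, and that $Z := \partial_\eta Q$ satisfies the \emph{linear} transport equation $(\partial_\tau + \partial_\eta) Z = K'(Q)\, Z$ already derived in the proof of Proposition \ref{prop:keep-mono}. Throughout, Assumption \ref{as:second-order} will play the role of a second-order corner compatibility condition at $(\tau,\eta)=(0,0)$ for the problem governing $W := \partial_\eta^2 Q$.

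The first step is to upgrade $N$ from $C^0$ to $C^1[0, \tau^*)$. Integrating the $Z$-equation along the characteristic reaching $(\tau, 1)$ and using the boundary identity $Z(\tau, 1) = 1/N(\tau) + K(\Phi_F)$ from Corollary \ref{cor:5-equivalent}, one obtains an explicit formula for $1/N(\tau)$: for $\tau \in [0,1]$ the characteristic traces back to $(0, 1-\tau)$ on the initial line, yielding
\[
\frac{1}{N(\tau)} + K(\Phi_F) = Q_{\init}'(1-\tau)\, \exp\!\left(\int_0^\tau K'\bigl(Q(s,\, 1 - \tau + s)\bigr)\, ds\right),
\]
while for $\tau \geq 1$ it traces back to $(\tau - 1, 0)$ on the boundary, giving an analogous formula with $Q_{\init}'(1-\tau)$ replaced by $1/N(\tau - 1) + K(0)$. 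Since $K \in C^2$, $Q \in C^1$, and $Q_{\init} \in C^2$ by Assumption \ref{as:second-order}, differentiating under the integral makes the first-slab expression $C^1$ in $\tau$; a slab-by-slab induction of width $1$ then propagates $C^1$-regularity of $N$ up to $\tau^*$.

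The second step is to prove that $W = \partial_\eta^2 Q$ exists and is continuous. Formally differentiating the $Z$-equation in $\eta$ yields the linear transport problem
\[
(\partial_\tau + \partial_\eta) W = K''(Q)\, Z^2 + K'(Q)\, W, \qquad W(0, \eta) = Q_{\init}''(\eta),
\]
whose boundary datum at $\eta=0$ is obtained by differentiating $Z(\tau, 0) = 1/N(\tau) + K(0)$ in $\tau$ and substituting into the $Z$-equation at $\eta = 0$:
\[
W(\tau, 0) = K'(0)\!\left(\frac{1}{N(\tau)} + K(0)\right) - \frac{d}{d\tau}\frac{1}{N(\tau)},
\]
which is continuous on $[0, \tau^*)$ thanks to Step 1. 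A short direct computation of $(d/d\tau)(1/N)(0^+)$ from the explicit first-slab formula of Step 1, together with Remark \ref{rmk:As2-Ninit}, shows that the matching condition $\lim_{\tau \to 0^+} W(\tau, 0) = Q_{\init}''(0)$ is precisely the identity \eqref{second-compatible}; this is exactly where Assumption \ref{as:second-order} is needed. Solving the $W$-problem via characteristics then produces $W \in C^0([0, \tau^*) \times [0,1])$, and a regularization argument (approximate $Q_{\init}$ by fully smooth data still obeying \eqref{second-compatible}, apply Theorem \ref{thm:stability-classical}, and pass to the limit) identifies this $W$ with the classical $\partial_\eta Z$.

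With $W \in C^0$ and $N \in C^1$, the $Z$-equation rewrites as $\partial_\tau Z = K'(Q) Z - W$, so $Z \in C^1$; the original PDE $\partial_\tau Q = 1/N(\tau) + K(Q) - \partial_\eta Q$ then gives $\partial_\tau Q \in C^1$, and hence $Q \in C^2([0,\tau^*) \times [0,1])$. The main obstacle is in Step 2: the boundary datum for $W$ contains $(1/N)'(\tau)$, which forces $N \in C^1$ \emph{first} and rules out a direct simultaneous differentiation. This is why Step 1 is based on the explicit characteristic representation, whose right-hand side depends only on $Q \in C^1$ together with $Q_{\init}''$ (in the first slab) or with $N$ at a strictly earlier time (for $\tau \geq 1$), breaking the apparent circularity.
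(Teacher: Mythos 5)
Your proposal is substantively correct and follows the same underlying mechanism as the paper (integrate along characteristics, use the explicit representation of $\p_\eta Q$, invoke the second-order corner compatibility), but you work directly with the original problem while the paper routes through the relaxed problem: Theorem~\ref{thm:regularity-classical} is deduced from Theorem~\ref{thm:regularity-classical-relax}, which in turn is proved by Proposition~\ref{Prop:C2} ``following the exact blueprint'' of Proposition~\ref{prop:regularity-classical}. Your first-slab formula for $1/N$ is exactly the fixed-point identity \eqref{tcor-fp2} with $f=\tilde N=1/N$, and your corner computation showing $\lim_{\tau\to0^+}W(\tau,0)=Q_{\init}''(0)\iff\eqref{second-compatible}$ is correct and is precisely how the paper motivates Assumption~\ref{as:second-order} via \eqref{rw-tmp-z}--\eqref{dynamical-second-order}.

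There is, however, a logical gap in your Step~1 as written. You obtain $C^1$ regularity of $1/N$ on the open interior of each slab $[n,n+1]$ from the respective characteristic formula, and then assert that ``a slab-by-slab induction of width 1 propagates $C^1$-regularity up to $\tau^*$.'' But nothing in Step~1 checks that the one-sided derivatives $(1/N)'(n^\pm)$ agree at the slab interfaces. A direct computation at $\tau=1$ using your two formulas shows that
\[
(1/N)'(1^-)-(1/N)'(1^+)=E\Bigl[-Q_{\init}''(0)-(1/N)'(0^+)+K'(0)\,Q_{\init}'(0)\Bigr],\qquad E:=\exp\!\Bigl(\int_0^1 K'(Q(s,s))\,ds\Bigr),
\]
and, combined with $(1/N)'(0^+)=-Q_{\init}''(1)+K'(\Phi_F)Q_{\init}'(1)$, the vanishing of this jump is \emph{exactly} condition \eqref{second-compatible}. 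So Assumption~\ref{as:second-order} is already indispensable in Step~1, not only in Step~2 as your argument suggests; without making that matching explicit, your claim ``$N\in C^1[0,\tau^*)$'' at the end of Step~1 is not justified, and Step~2 then rests on an unproven premise. The fix is either to carry out this interface matching (noting, as the paper does via \eqref{dynamical-second-order}, that the second-order compatibility condition propagates along the flow and hence ensures matching at every $\tau=n$), or to use overlapping slabs and restart at, say, $\tau=1/2$ exactly as in Step~3 of Proposition~\ref{prop:regularity-classical}, which sidesteps the interface comparison. With this repair your proof becomes a valid, self-contained alternative to the paper's, one that avoids the detour through the mild solution of the relaxed system.
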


\subsection{The relaxed problem}\label{subsec:relax}

\subsubsection{Definition and basic propetries}
A main challenge of \eqref{eq-Q-tau-classical} is the term $1/N(\tau)$, which is required to be positive in \eqref{constaint-classical-Q-tau}. To study the well-posedness, we first consider an auxiliary problem where this positive constraint is removed. More precisely, we introduce $\tilde{N}(\tau)\in\mathbb{R}$ in place of $1/N(\tau)>0$. The relaxed system is given as follows
    \begin{align}\label{tilde-eq-classical}
        &\p_{\tau}Q+\p_{\eta}Q=\tilde{N}(\tau)+{K}(Q), &&\tau>0,\ \eta\in(0,1),\\
    &Q(\tau,0)=0, &&\tau>0,\label{tilde-bc-classical}\\
     &{\tilde{N}(\tau)}=\p_{\eta}Q(\tau,1)-{K}(\Phi_F), &&\tau>0,\label{def-tildeN-tau-classical}\\
     &Q(\tau=0,\eta)=Q_{\init}(\eta), &&\eta\in[0,1].\label{tilde-ic-classical}
    \end{align} 
Compared to the original system \eqref{eq-Q-tau-classical}, the only difference is that we use $\tilde{N}(\tau)$, which is allowed to take any value in $\mathbb{R}$, to replace $1/N(\tau)>0$. Hence, under such a relaxation, we no longer need the constraint \eqref{constaint-classical-Q-tau} on $\p_{\eta}Q$. While a negative $\tilde{N}$ is not physical, it is convenient to study the well-posedness for the relaxed problem \eqref{tilde-eq-classical}-\eqref{tilde-bc-classical}-\eqref{def-tildeN-tau-classical}-\eqref{tilde-ic-classical} as a first step.

The definition of a classical solution to  \eqref{tilde-eq-classical}-\eqref{tilde-ic-classical} is given as follows.

\begin{definition}[Classical solution to the relaxed problem \eqref{tilde-eq-classical}-\eqref{tilde-ic-classical}]\label{def:tilde-classical-q-tau} Given $0<T\leq +\infty$, and $C^1$ initial data $Q_{\init}(\eta)$ with $Q_{\init}(0)=0,Q_{\init}(1)=\Phi_F$. We say $(Q,\tilde{N})$ is a classical solution to \eqref{tilde-eq-classical}-\eqref{tilde-ic-classical} on $[0,T)$ if 
\begin{enumerate}
    \item $Q(\tau,\eta)\in C^{1}([0,T)\times[0,1])$ and ${\tilde{N}}(\tau)\in C[0,T)$.
    \item Equations  \eqref{tilde-eq-classical}-\eqref{tilde-ic-classical} are satisfied in the classical sense for $\tau\in[0,T)$. 
\end{enumerate}
\end{definition}

We have an equivalent characterization of $\tilde{N}$, whose proof is the same as Proposition \ref{prop:char-N-classical} for $1/N(\tau)$. 

\begin{proposition}\label{prop:char-tildeN-classical}
    Suppose $Q(\tau,\eta)\in C^{1}([0,T)\times[0,1])$ and $\tilde{N}(\tau)\in C[0,T)$ satisfies \eqref{tilde-eq-classical}, with an initial condition \eqref{tilde-ic-classical} satisfying $Q_{\init}(1)=\Phi_F$. Then the expression \eqref{def-tildeN-tau-classical} for $\tilde{N}$  on $[0,T)$ is equivalent to 
    \begin{equation}\label{tilde-bc-classical-eta1}
        Q(\tau,1)\equiv \Phi_F,\qquad \tau\in[0,T).
    \end{equation}
\end{proposition}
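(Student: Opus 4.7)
The plan is to mirror exactly the proof strategy of Proposition \ref{prop:char-N-classical}, since the relaxed system \eqref{tilde-eq-classical} differs from \eqref{eq-Q-tau-classical} only in that $1/N(\tau)$ is replaced by $\tilde{N}(\tau)\in\mathbb{R}$, and the argument used there never actually needed positivity. The key observation is that the $C^1$ regularity of $Q$ allows us to evaluate the PDE \eqref{tilde-eq-classical} at the endpoint $\eta=1$, yielding
\begin{equation*}
\partial_\tau Q(\tau,1) = \tilde{N}(\tau) + K(Q(\tau,1)) - \partial_\eta Q(\tau,1),
\end{equation*}
which holds for all $\tau\in[0,T)$ regardless of which of the two conditions we assume.

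For the forward implication, I would assume \eqref{tilde-bc-classical-eta1}, so that $Q(\tau,1)\equiv\Phi_F$ forces $\partial_\tau Q(\tau,1)\equiv 0$. Substituting both into the boundary-evaluated equation above gives $0=\tilde{N}(\tau)+K(\Phi_F)-\partial_\eta Q(\tau,1)$, which is exactly \eqref{def-tildeN-tau-classical}. For the reverse implication, I would plug the expression \eqref{def-tildeN-tau-classical} for $\tilde{N}$ into the boundary-evaluated equation, producing the closed ODE
\begin{equation*}
\frac{d}{d\tau} Q(\tau,1) = K(Q(\tau,1)) - K(\Phi_F),
\end{equation*}
with initial value $Q(0,1)=Q_{\init}(1)=\Phi_F$. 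Since $K\in C^2(\mathbb{R})$ under Assumption \ref{as:global-K}, the right-hand side is Lipschitz and the constant function $\Phi_F$ is clearly a solution, so Picard–Lindelöf uniqueness gives $Q(\tau,1)\equiv \Phi_F$, i.e.\ \eqref{tilde-bc-classical-eta1}.

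I do not anticipate any genuine obstacle here: there is no sign constraint to worry about (that was the whole point of the relaxation), and neither direction uses anything about the interior dynamics beyond the $C^1$ regularity already built into Definition \ref{def:tilde-classical-q-tau}. The only place one must be mildly careful is to note that the argument truly requires only $K\in C^1$ (for ODE uniqueness) rather than $C^2$, so Assumption \ref{as:global-K} is more than enough. The proof should therefore be essentially a two-line repetition of the one for Proposition \ref{prop:char-N-classical}, with $1/N(\tau)$ replaced throughout by $\tilde{N}(\tau)$.
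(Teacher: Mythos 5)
Your proposal is correct and coincides with the paper's own argument: the paper simply states that the proof is identical to that of Proposition \ref{prop:char-N-classical}, which is exactly the two-step argument (evaluate the PDE at $\eta=1$, then either read off \eqref{def-tildeN-tau-classical} from $\partial_\tau Q(\tau,1)\equiv 0$, or derive the scalar ODE and apply uniqueness) that you reproduce with $1/N$ replaced by $\tilde N$.
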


For later use, we also list the following properties of the relaxed problem \eqref{tilde-eq-classical}-\eqref{tilde-ic-classical}, whose proof is similar to Corollary \ref{cor:5-equivalent} for the original problem \eqref{eq-Q-tau-classical}-\eqref{ic-q-tau-classical}.
\begin{proposition}\label{prop:tildeN-imbc}
Let $(Q,\tilde{N})$ be a classical solution \eqref{tilde-eq-classical}-\eqref{tilde-ic-classical}. Then it satisfies \eqref{tilde-bc-classical-eta1} and  
\begin{equation}
    \p_{\eta}Q(\tau,0)=\tilde{N}(\tau)+K(0),\qquad         \p_{\eta}Q(\tau,1)-\p_{\eta}Q(\tau,0)=K(\Phi_F)-K(0).
\end{equation} 
\end{proposition}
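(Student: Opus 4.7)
The plan is to mirror the argument used for the original problem in Proposition \ref{prop:keep-mono} and Corollary \ref{cor:5-equivalent}, exploiting the fact that the only difference between the relaxed problem and the original problem is the renaming of $1/N(\tau)$ into the unconstrained quantity $\tilde{N}(\tau)$. Since all three identities are purely local boundary relations, they follow from evaluating the PDE at $\eta=0$ and $\eta=1$ together with Proposition \ref{prop:char-tildeN-classical}.

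First I would establish \eqref{tilde-bc-classical-eta1}. By Definition \ref{def:tilde-classical-q-tau}, the classical solution satisfies \eqref{def-tildeN-tau-classical} and the initial condition $Q_{\init}(1)=\Phi_F$. Proposition \ref{prop:char-tildeN-classical} then directly gives $Q(\tau,1)\equiv\Phi_F$ on $[0,T)$.

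Next I would derive the boundary identity at $\eta=0$. Because $Q$ is $C^1$ on $[0,T)\times[0,1]$, equation \eqref{tilde-eq-classical} extends continuously up to the boundary, so evaluating at $\eta=0$ and using the boundary condition $Q(\tau,0)=0$ (which forces $\p_\tau Q(\tau,0)=0$) yields
\begin{equation*}
0=\p_\tau Q(\tau,0)=\tilde{N}(\tau)+K(Q(\tau,0))-\p_\eta Q(\tau,0)=\tilde{N}(\tau)+K(0)-\p_\eta Q(\tau,0),
\end{equation*}
which is exactly $\p_\eta Q(\tau,0)=\tilde{N}(\tau)+K(0)$. Finally, subtracting this identity from \eqref{def-tildeN-tau-classical}, rewritten as $\p_\eta Q(\tau,1)=\tilde{N}(\tau)+K(\Phi_F)$, yields the third identity $\p_\eta Q(\tau,1)-\p_\eta Q(\tau,0)=K(\Phi_F)-K(0)$.

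There is no real obstacle here: each step is a direct evaluation at the boundary justified by the $C^1$ regularity assumed in Definition \ref{def:tilde-classical-q-tau}, together with a single application of Proposition \ref{prop:char-tildeN-classical}. The proof is essentially a transcription of the argument in Corollary \ref{cor:5-equivalent} with $1/N(\tau)$ replaced by $\tilde{N}(\tau)$, and no sign or positivity assumption on $\tilde{N}$ is needed at any point.
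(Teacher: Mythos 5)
Your proof is correct and matches the paper's approach: the paper indicates that Proposition \ref{prop:tildeN-imbc} follows by the same argument as Corollary \ref{cor:5-equivalent}, which in turn rests on evaluating the PDE at $\eta=0$ (as in Proposition \ref{prop:keep-mono}) and invoking Proposition \ref{prop:char-tildeN-classical}, exactly as you do. Your observation that no positivity of $\tilde{N}$ is required is also accurate and consistent with the relaxed setting.
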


\subsubsection{Well-posedness and regularity: statements}
For the relaxed problem \eqref{tilde-eq-classical}-\eqref{tilde-ic-classical}, we can prove its \textit{global} well-posedness and regularity.

\begin{theorem}\label{thm:wps-classical-relax}
Suppose the initial data satisfies Assumption \ref{as:wps}. Then there exists a unique classical solution $(Q,\tilde{N})$ to the relaxed system \eqref{tilde-eq-classical}-\eqref{tilde-ic-classical} as in Definition \ref{def:tilde-classical-q-tau}. The solution is global, i.e. it exists on the time interval $[0,+\infty)$.
\end{theorem}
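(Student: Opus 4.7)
The plan is to solve the relaxed system \eqref{tilde-eq-classical}--\eqref{tilde-ic-classical} by the method of characteristics, exploiting the constant unit speed of the transport operator $\p_\tau+\p_\eta$. For any given continuous candidate $\tilde N$, the values of $Q$ and $Z:=\p_\eta Q$ along every characteristic line are determined by ODEs driven by $\tilde N$ and by the data, and the self-consistency relation \eqref{def-tildeN-tau-classical} becomes a closed equation for $\tilde N$ that I will solve by Banach fixed point locally and then extend globally by a priori estimates whose only ingredient is the boundedness of $K'$.

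First I would differentiate \eqref{tilde-eq-classical} in $\eta$ (or equivalently work with the mild formulation) to obtain $(\p_\tau+\p_\eta)Z=K'(Q)Z$; note that $\tilde N$ drops out here because it depends on $\tau$ only. Along the characteristic through $(\tau,\eta)$, parameterized by $s\mapsto(s,\eta-\tau+s)$, $Q$ and $Z$ satisfy the decoupled ODE chain $\frac{d}{ds}Q=\tilde N(s)+K(Q)$ and $\frac{d}{ds}Z=K'(Q)Z$. For $\eta>\tau$ the characteristic originates at $s=0$ with values $Q=Q_{\init}(\eta-\tau)$ and $Z=Q_{\init}'(\eta-\tau)$; for $\eta<\tau$ it originates at $s=\tau-\eta$ with $Q(\tau-\eta,0)=0$ and, using the PDE restricted to $\eta=0$ combined with $\p_\tau Q(\cdot,0)=0$, with $Z(\tau-\eta,0)=\tilde N(\tau-\eta)+K(0)$. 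Evaluating $Z$ at $\eta=1$ gives an explicit delay equation for $\tilde N$,
\[
\tilde N(\tau)+K(\Phi_F)=E(\tau)\cdot\begin{cases}\dfrac{d}{d\eta}Q_{\init}(1-\tau)&\text{if }\tau<1,\\[4pt]\tilde N(\tau-1)+K(0)&\text{if }\tau\geq 1,\end{cases}
\]
where $E(\tau)=\exp\bigl(\int K'(Q)\,ds\bigr)$ is computed along the relevant characteristic.

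For local existence I would then fix a small $T>0$ and define the operator $\mathcal{T}:\tilde N\mapsto\p_\eta Q[\tilde N](\cdot,1)-K(\Phi_F)$ on a closed ball in $C[0,T]$ centered at the admissible initial value $\tilde N(0)=\tfrac{d}{d\eta}Q_{\init}(1)-K(\Phi_F)$ provided by Assumption \ref{as:wps} (see Remark \ref{rmk:As2-Ninit}). Since $K\in C^2$ with bounded $K',K''$ by Assumption \ref{as:global-K}, Gronwall applied to the two characteristic ODEs gives Lipschitz dependence of $Q$ and $Z$ on $\tilde N$ in $C^0$, so $\mathcal{T}$ is a contraction on the ball provided $T$ is small enough. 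The compatibility identity \eqref{first-order-in-assumption} precisely ensures that $Z$ built from the initial data on $\eta>\tau$ and from the boundary data on $\eta<\tau$ agrees across the separating characteristic $\eta=\tau$ emanating from the corner $(\tau,\eta)=(0,0)$, which guarantees that the resulting $Q$ is genuinely $C^1$ up to that characteristic. Banach fixed point then yields a unique classical solution on $[0,T]$.

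To extend to all time and exclude blow-up I would combine the displayed delay equation with the uniform bound $E(\tau)\leq e^{\|K'\|_\infty}$, which is independent of $\tilde N$ since $K'$ is globally bounded. This gives
\[
|\tilde N(\tau)|\leq e^{\|K'\|_\infty}\bigl(|\tilde N(\tau-1)|+|K(0)|\bigr)+|K(\Phi_F)|,\qquad\tau\geq 1,
\]
forcing at most exponential growth in $\tau$. Together with the trivial bound $\|Q\|_\infty\leq \Phi_F+\tau\bigl(\|\tilde N\|_{L^\infty[0,\tau]}+\|K\|_\infty\bigr)$ along characteristics and the analogous exponential bound for $Z$, this controls the full $C^1$ norm of $Q$ on every bounded time interval, so the local solution extends to $[0,+\infty)$. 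Uniqueness on each interval follows either from the contraction in the fixed point, or from a direct Gronwall argument along characteristics on the difference of two solutions, using once more that $\tilde N_1-\tilde N_2$ is just $\p_\eta(Q_1-Q_2)$ at $\eta=1$. The main obstacle is precisely the closed-loop nature of the self-consistency: $\tilde N(\tau)$ is determined by $\p_\eta Q$ at $(\tau,1)$, which is in turn constructed from $\tilde N$ through the characteristic ODEs. Removing the positivity constraint on $1/N$ is what makes this feedback benign, and boundedness of $K'$ turns a potential nonlinear blow-up into mere exponential growth.
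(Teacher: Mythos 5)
Your proof is correct and takes essentially the same route as the paper: both solve along characteristics of $\partial_\tau+\partial_\eta$, reduce the self-consistency relation \eqref{def-tildeN-tau-classical} to a fixed-point/delay equation for $\tilde N$ (your explicit delay form for $\tau\geq 1$ is exactly the paper's \eqref{fixed-point-1-tmp} after one unit of propagation from the boundary, cf.\ Propositions \ref{prop:eq-H}--\ref{prop:eq-tildeN}), use Banach fixed point locally with the contraction coming from the unit-speed structure, invoke the compatibility \eqref{first-order-in-assumption} to glue $\partial_\eta Q$ across the characteristic emanating from $(0,0)$, and get global existence from the $e^{\|K'\|_\infty}$ bound per unit $\tau$. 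The only presentational difference is that the paper first constructs a $W^{1,\infty}$ mild solution (Proposition \ref{Prop:wps-mild-solution}) and then upgrades to classical (Proposition \ref{prop:regularity-classical}), whereas you argue directly in $C^1$; you also leave implicit the verification that $\mathcal{T}$ preserves the ball and that the contraction constant can be chosen uniformly as the solution is continued, but these are standard and are filled by the growth estimates you state.
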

\begin{theorem}\label{thm:stability-classical-relax}    
    Let $Q_{\init,\eps}$ be a family of initial data indexed by $\eps\geq0$, satisfying Assumption \ref{as:wps}, and such that
    \begin{equation}
        Q_{\init,\eps}(\cdot)\rightarrow Q_{\init,0}(\cdot),\qquad \text{in $C^1[0,1]$,}\qquad \text{as $\eps\rightarrow0^+.$}
    \end{equation}
    Denote the corresponding classical solution to the relaxed problem \eqref{tilde-eq-classical} as $(Q_{\eps},\tilde{N}_{\eps})$, as in Theorem \ref{thm:wps-classical-relax}. In particular, $Q_0$ and $\tilde{N}_0$ correspond to the initial data $Q_{\init,0}$. Then we have for every $0<T<+\infty$
    \begin{equation}\label{convergence-tilde}
        \tilde{N}_{\eps}(\tau)\rightarrow \tilde{N}_0(\tau)\qquad \text{in $C[0,T]$,}\qquad \text{as $\eps\rightarrow0^+,$}
    \end{equation} and
    \begin{equation}\label{convergence-Q-relax}
         Q_{\eps}(T,\cdot)\rightarrow Q_0(T,\cdot)\qquad \text{in $C^1[0,1]$,}\qquad \text{as $\eps\rightarrow0^+.$}
    \end{equation}
\end{theorem}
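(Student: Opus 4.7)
The plan is to represent both $Q_\eps$ and $Z_\eps := \p_\eta Q_\eps$ as mild solutions along the characteristics of the unit-speed transport operator $\p_\tau + \p_\eta$ in \eqref{tilde-eq-classical}, subtract the formulas corresponding to indices $\eps$ and $0$, and close a Gronwall loop on $[0,T]$. The convergences \eqref{convergence-tilde}--\eqref{convergence-Q-relax} then follow from a single quantitative stability estimate of the form $\sup_{[0,T]}(\|Q_\eps-Q_0\|_\infty+\|Z_\eps-Z_0\|_\infty) \leq C_T \|Q_{\init,\eps}-Q_{\init,0}\|_{C^1}$.

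First I would set up uniform $\eps$-independent bounds. Fix $T>0$. Since $Q_{\init,\eps}\to Q_{\init,0}$ in $C^1[0,1]$, the family is uniformly bounded in $C^1$; inspecting the existence proof of Theorem \ref{thm:wps-classical-relax} (which constructs $Q_\eps$ on arbitrary bounded time intervals) yields a constant $M_T$ with $\|Q_\eps\|_{L^\infty}+\|Z_\eps\|_{L^\infty}+\|\tilde{N}_\eps\|_{L^\infty}\leq M_T$ on $[0,T]\times[0,1]$, and this bound depends only on $T$, $K$, and the common $C^1$ bound of the $Q_{\init,\eps}$. Then Assumption \ref{as:global-K} gives Lipschitz constants for $K$ and $K'$ on the range of all $Q_\eps$ uniformly in $\eps$.

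Second, I would write down the Duhamel representations. For $\eta\geq\tau$, integration of \eqref{tilde-eq-classical} along the characteristic through $(0,\eta-\tau)$ gives
\begin{equation*}
Q_\eps(\tau,\eta)=Q_{\init,\eps}(\eta-\tau)+\int_0^\tau\bigl[\tilde{N}_\eps(s)+K(Q_\eps(s,\eta-\tau+s))\bigr]ds,
\end{equation*}
and for $\eta<\tau$ the analogous formula starting from the boundary $Q_\eps(\tau-\eta,0)=0$ replaces $Q_{\init,\eps}(\eta-\tau)$ by $0$ and integrates from $\tau-\eta$ to $\tau$. Differentiating \eqref{tilde-eq-classical} in $\eta$, or invoking the argument used for \eqref{eq-petaQ-Z} in Proposition \ref{prop:keep-mono}, $Z_\eps$ solves $(\p_\tau+\p_\eta)Z_\eps=K'(Q_\eps)Z_\eps$ with boundary $Z_\eps(\tau,0)=\tilde{N}_\eps(\tau)+K(0)$ (Proposition \ref{prop:tildeN-imbc}), so along characteristics
\begin{equation*}
Z_\eps(\tau,\eta)=Z_\eps^{\mathrm{in}}(\tau,\eta)\exp\!\left(\int_{\sigma(\tau,\eta)}^{\tau}K'(Q_\eps(s,\eta-\tau+s))ds\right),
\end{equation*}
where $(Z_\eps^{\mathrm{in}},\sigma)$ equals $(\tfrac{d}{d\eta}Q_{\init,\eps}(\eta-\tau),0)$ for $\eta\geq\tau$ and $(\tilde{N}_\eps(\tau-\eta)+K(0),\tau-\eta)$ for $\eta<\tau$. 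The key observation is that $\tilde{N}_\eps(\tau)=Z_\eps(\tau,1)-K(\Phi_F)$, so differences of $\tilde{N}$ are dominated by sup-norm differences of $Z$.

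Third, I would subtract the representations for $\eps$ and $0$, use the Lipschitz bounds on $K, K'$ and the uniform boundedness of $Q_\eps,Z_\eps$, and use the elementary estimate $|e^a-e^b|\leq e^{\max(a,b)}|a-b|$ for the exponential factor in $Z$. This yields, with $a(\tau):=\|Q_\eps(\tau)-Q_0(\tau)\|_{L^\infty}$, $b(\tau):=\|Z_\eps(\tau)-Z_0(\tau)\|_{L^\infty}$ and $c(\tau):=|\tilde{N}_\eps(\tau)-\tilde{N}_0(\tau)|\leq b(\tau)$,
\begin{equation*}
a(\tau)\leq \|Q_{\init,\eps}-Q_{\init,0}\|_{\infty}+C_T\int_0^\tau\bigl(a(s)+b(s)\bigr)ds,
\end{equation*}
\begin{equation*}
b(\tau)\leq C_T\|Q_{\init,\eps}-Q_{\init,0}\|_{C^1}+C_T\sup_{[0,\tau]}c+C_T\int_0^\tau\bigl(a(s)+b(s)\bigr)ds.
\end{equation*}
Adding and applying Gronwall's inequality to $a+b$ on $[0,T]$ gives $a(T)+b(T)\lesssim_T\|Q_{\init,\eps}-Q_{\init,0}\|_{C^1}$, which implies \eqref{convergence-tilde} (via $c\leq b$) and the sup-norm half of \eqref{convergence-Q-relax}. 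The $\p_\tau$ part of $C^1[0,1]$ convergence at time $T$ then follows from equation \eqref{tilde-eq-classical} itself, and continuity of the maps $\tau\mapsto Q_\eps(\tau,\cdot)$ in $C^1$ lifts the time-$T$ convergence to uniform convergence on $[0,T]$ if desired.

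The main obstacle is the nonlocal boundary coupling: the source $\tilde{N}_\eps(\tau)$ in the PDE equals the unknown's derivative at the opposite endpoint $\eta=1$, making the system nonlocal and self-referential. The observation that resolves it is that $c(\tau)\leq b(\tau)$, so the $\tilde{N}$-differences are absorbed into the $Z$-differences at the same time, and no separate equation for $\tilde{N}$ is needed. The supremum of $c$ appearing from the boundary-characteristic formula for $Z$ (when $\eta<\tau$) is handled by using the running supremum in Gronwall; alternatively, one can exploit the delay structure $\tilde{N}_\eps(\tau)$ depends only on $\tilde{N}_\eps$ at time $\tau-1$ (for $\tau\geq 1$) to iterate interval by interval. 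Either way, all arguments are quantitative and local in time, so no maximal-existence issues arise since $\tilde{N}_\eps$ does not blow up by Theorem \ref{thm:wps-classical-relax}.
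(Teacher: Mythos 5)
Your framework is close to the paper's: both rely on the explicit characteristic/Duhamel formulas for $Q_\eps$ and $Z_\eps:=\p_\eta Q_\eps$, on Assumption~\ref{as:global-K} to get uniform Lipschitz constants, and on the observation that $|\tilde N_\eps-\tilde N_0|=|Z_\eps(\cdot,1)-Z_0(\cdot,1)|\leq\|Z_\eps-Z_0\|_{L^\infty}$ to tame the nonlocal boundary coupling. The paper packages exactly this characteristic formula as the fixed-point map $\mathcal F(f,Q_{\init})$ in Corollary~\ref{cor:stability-mild} and obtains stability from the Banach contraction; you aim for a direct Gronwall loop. Where the proposal goes wrong is in the claim that the term $C_T\sup_{[0,\tau]}c$ is handled by a ``running supremum in Gronwall.''

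The inequality $b(\tau)\leq C_T\|Q_{\init,\eps}-Q_{\init,0}\|_{C^1}+C_T\sup_{[0,\tau]}c+C_T\int_0^\tau(a+b)\,ds$ cannot be closed by Gronwall unless $C_T<1$: since $c\leq b$, writing $\Psi(\tau):=\sup_{[0,\tau]}(a+b)$ gives $\Psi(\tau)\leq D+C_T\Psi(\tau)+C_T\int_0^\tau\Psi\,ds$, and the $C_T\Psi(\tau)$ on the right absorbs into the left only when $C_T<1$. That constant contains factors like $e^{\|K'\|_\infty}$ (from the exponential weight in \eqref{formula-peta-flow}) and the $K$-Lipschitz constants, so it is not small in general. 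This is precisely why the paper's proof does not attempt a global-in-time Gronwall: it chooses a small $\Delta\tau$ (see \eqref{cond-delta-tau}) so that $\mathcal F(\cdot,Q_{\init})$ is a contraction with rate $\alpha<1$, derives the local stability constant $C/(1-\alpha)$ in \eqref{stability-key} from the contraction, and then iterates in time using the $W^{1,\infty}$-growth estimate \eqref{growth-Qw1infty} to keep the step size uniformly bounded below.

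Your mentioned fallback, exploiting the delay structure, is in fact the correct route and essentially recovers the paper's iteration. Concretely: for $\tau\in[0,1)$, the characteristic through $(\tau,1)$ emanates from the initial line ($\eta=1\geq\tau$), so $c(\tau)$ is given explicitly in terms of $\frac{d}{d\eta}Q_{\init,\eps}(1-\tau)$ and of $Q_\eps$-history only, with no self-reference to $\tilde N_\eps$. Similarly, for $\eta<\tau<1$, the boundary datum $Z_\eps(\tau-\eta,0)=\tilde N_\eps(\tau-\eta)+K(0)$ is evaluated at a time $\tau-\eta<1$ where the same reduction applies. This eliminates the problematic $\sup c$ term on $[0,1)$, after which a genuine Gronwall argument closes; you then re-initialize (as in Step 3 of the proof of Proposition~\ref{prop:regularity-classical}) and iterate to any finite $T$. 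I would recommend rewriting the argument with this as the primary route and dropping the assertion that the sup term is absorbed by Gronwall as stated.
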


\begin{theorem}\label{thm:regularity-classical-relax}
    In the same setting as Theorem \ref{thm:wps-classical-relax}, suppose the initial data additionally satisfies Assumption \ref{as:second-order}. Then we have
    $
        \tilde{N}(\tau)\in C^1[0,+\infty)
$ and 
   $
        Q\in C^2([0,+\infty)\times[0,1])$.
\end{theorem}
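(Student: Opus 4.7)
The strategy is to differentiate \eqref{tilde-eq-classical} in $\eta$ to obtain a linear transport equation for $W := \p_\eta Q$, and then to bootstrap $C^1$ regularity of both $W$ and $\tilde{N}$ by characteristics, with Assumption \ref{as:second-order} playing the role of a compatibility condition at the corner $(0,0)$. Formally $W$ satisfies
\begin{equation}
(\p_\tau + \p_\eta) W = K'(Q)\, W, \qquad W(\tau, 0) = \tilde{N}(\tau) + K(0), \qquad W(0, \eta) = Q_{\init}'(\eta),
\end{equation}
and since $K' \in C^1$ and $Q \in C^1$ by Theorem \ref{thm:wps-classical-relax}, integrating along the characteristics $\eta - \tau = \text{const}$ yields the two-branch representation
\begin{equation}
W(\tau, \eta) = \begin{cases} Q_{\init}'(\eta - \tau)\, \mathcal{E}(0, \tau;\, \eta - \tau), & \eta \geq \tau, \\ \bigl(\tilde{N}(\tau - \eta) + K(0)\bigr)\, \mathcal{E}(\tau - \eta, \tau;\, 0), & \eta < \tau, \end{cases}
\end{equation}
where $\mathcal{E}(s_0, s_1; \eta_0) := \exp\bigl(\int_{s_0}^{s_1} K'(Q(s,\, \eta_0 + s - s_0))\, ds\bigr)$ is continuously differentiable in each of its arguments.

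I then bootstrap in time in strips of width $1$, the time needed for a characteristic to traverse $[0, 1]$. On the first strip $\tau \in [0, 1]$ the backward characteristic from $(\tau, 1)$ terminates at $(0, 1 - \tau)$, so the initial branch gives $W(\tau, 1) \in C^1([0, 1])$ from $Q_{\init} \in C^2$, and hence $\tilde{N} \in C^1([0, 1])$ via \eqref{def-tildeN-tau-classical}. With this improved regularity of $\tilde{N}$, both branches are $C^1$ on $[0, 1] \times [0, 1]$, and the only remaining matching issue is across the diagonal $\eta = \tau$ issuing from $(0, 0)$. A direct computation of $\p_\tau W$ from each branch at the origin (using Proposition \ref{prop:tildeN-imbc}) reduces this matching to $\tilde{N}'(0^+) = K'(0)\, Q_{\init}'(0) - Q_{\init}''(0)$, while differentiating the explicit formula for $W(\tau, 1)$ at $\tau = 0$ from the initial branch, valid there, gives the independent identity $\tilde{N}'(0^+) = K'(\Phi_F)\, Q_{\init}'(1) - Q_{\init}''(1)$. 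Equating these two expressions is exactly \eqref{second-compatible}, so $W \in C^1([0, 1] \times [0, 1])$.

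The inductive step is then short. Assuming $W \in C^1([0, n] \times [0, 1])$ and $\tilde{N} \in C^1([0, n])$, for $\tau \in [n, n+1]$ the backward characteristic from $(\tau, 1)$ hits $(\tau - 1, 0)$ with $\tau - 1 \in [n - 1, n]$, so the second branch of the representation expresses $W(\tau, 1)$ as a $C^1$ function of $\tau$ through the inductive hypothesis on $\tilde{N}$; thus $\tilde{N} \in C^1([n, n+1])$, and the $C^1$ matching at $\tau = n$ is already built into the inductive hypothesis because the characteristic crossing $\eta = 1$ at $\tau = n$ is not special. Propagating this $C^1$ boundary datum forward through the two branches gives $W \in C^1([0, n+1] \times [0, 1])$. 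Iterating produces global $C^1$ regularity of $W$ and $\tilde{N}$; finally from \eqref{tilde-eq-classical} we obtain $\p_\tau Q = \tilde{N}(\tau) + K(Q) - W \in C^1$, which combined with $\p_\eta Q = W \in C^1$ yields $Q \in C^2([0, +\infty) \times [0, 1])$.

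The main obstacle is the corner compatibility at $(0, 0)$, where the characteristic representation of $W$ truly switches between two different branches; without Assumption \ref{as:second-order} the first derivatives of $W$ would jump across the diagonal $\eta = \tau$, destroying $C^2$ regularity of $Q$. A secondary technical point is to verify that one does \emph{not} need any new compatibility condition at each subsequent time slice $\tau = n$: the characteristic crossing $\eta = 1$ at $\tau = n$ is an ordinary internal characteristic in the extended strip, so no further corner analysis is required and the induction closes cleanly on the single second-order condition \eqref{second-compatible}.
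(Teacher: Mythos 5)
Your proof is correct and follows essentially the same blueprint that the paper indicates (the paper omits the $C^2$ proof, stating only that it ``follows the exact blueprint of Proposition~\ref{prop:regularity-classical}''): you differentiate the equation to get a transport equation for $W=\p_\eta Q$, propagate $C^1$ regularity along characteristics in time strips, and identify Assumption~\ref{as:second-order} as exactly the compatibility condition that makes the derivative of $W$ match across the characteristic issuing from the corner $(0,0)$. Your derivation of the corner condition, by computing $\tilde N'(0^+)$ once from the lower-branch/PDE relation at $(0,0)$ and once from differentiating the initial-branch formula for $W(\tau,1)$ at $\tau=0$, is equivalent to the paper's dynamical derivation in Section~\ref{subsubsec:C2}, and your strip-by-strip induction replaces the paper's restart-at-$\tau=1/2$ iteration without changing the substance.
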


We postpone the proofs of Theorem \ref{thm:wps-classical-relax}-\ref{thm:regularity-classical-relax} to Section \ref{subsec:mild-relax} and \ref{subsec:regular-relax}.  In the following, we explore the connections between the relaxed problem and the original problem.
\subsubsection{Connections to the original problem}\label{subsubsec:relation-relax-original}

The classical solution to the relaxed problem \eqref{tilde-eq-classical}-\eqref{tilde-ic-classical} (Definition \ref{def:tilde-classical-q-tau}) connects to the original one \eqref{eq-Q-tau-classical}-\eqref{ic-q-tau-classical} (Definition \ref{def:classical-q-tau}) as in the following lemma.

\begin{lemma}\label{lem:relation-relaxed-original}
    (i) Suppose $(Q,N)$ is a classical solution to the original problem \eqref{eq-Q-tau-classical}-\eqref{ic-q-tau-classical} on the time interval $[0,\tau)$. Then with $\tilde{N}:=1/N$ the pair $(Q,\tilde{N})$ is a classical solution to the relaxed problem \eqref{tilde-eq-classical}-\eqref{tilde-ic-classical} on the same time interval $[0,\tau)$.

    (ii) Suppose $(Q,\tilde{N})$ is a classical solution to the relaxed problem \eqref{tilde-eq-classical}-\eqref{tilde-ic-classical} on $[0,\tau)$. Moreover assume that $Q_{\init}(\eta)$ is increasing in $\eta$, and  that there exists $0<\tau^*\leq \tau$ such that $\tilde{N}>0$ on $[0,\tau^*)$. Then with $N:=1/\tilde{N}$ the pair $(Q,N)$ gives a classical solution to the original problem \eqref{eq-Q-tau-classical}-\eqref{ic-q-tau-classical}, on the time interval $[0,\tau^*)$.
\end{lemma}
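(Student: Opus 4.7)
The plan is to check, for each direction, the three items of the relevant definition of a classical solution (Definition \ref{def:classical-q-tau} versus Definition \ref{def:tilde-classical-q-tau}) by direct substitution of the algebraic relation $\tilde N = 1/N$. The two directions are essentially symmetric, with the additional subtlety on the reverse side that one must reconstruct the monotonicity of $Q$ in $\eta$, which is required by Definition \ref{def:classical-q-tau} but not by Definition \ref{def:tilde-classical-q-tau}.

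For part (i), I would start by noting that since $(Q,N)$ is classical in the sense of Definition \ref{def:classical-q-tau}, we have $N\in C[0,\tau)$ with $N>0$, and the constraint \eqref{constaint-classical-Q-tau} in particular yields $\partial_\eta Q(\tau,1)-K(\Phi_F)>0$. Hence $\tilde N:=1/N$ is well-defined, continuous, and equals $\partial_\eta Q(\tau,1)-K(\Phi_F)$ by \eqref{def-N-tau-classical}, giving \eqref{def-tildeN-tau-classical}. Substituting $1/N=\tilde N$ in \eqref{eq-Q-tau-classical} yields \eqref{tilde-eq-classical}, while \eqref{bc-Q-tau-classical}, \eqref{ic-q-tau-classical} coincide with \eqref{tilde-bc-classical}, \eqref{tilde-ic-classical}. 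Each item of Definition \ref{def:tilde-classical-q-tau} is then immediate.

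For part (ii), the assumption $\tilde N>0$ on $[0,\tau^*)$ together with $\tilde N\in C[0,\tau^*)$ makes $N:=1/\tilde N$ a well-defined, continuous, strictly positive function on $[0,\tau^*)$. Inverting the substitution transforms \eqref{tilde-eq-classical} into \eqref{eq-Q-tau-classical} and \eqref{def-tildeN-tau-classical} into \eqref{def-N-tau-classical}; the boundary and initial data agree. For the constraint \eqref{constaint-classical-Q-tau}, I would read it directly off \eqref{def-tildeN-tau-classical}: $\partial_\eta Q(\tau,1)=\tilde N(\tau)+K(\Phi_F)$ lies strictly above $K(\Phi_F)$ since $\tilde N>0$, and is finite by the $C^1$ regularity of $Q$.

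The one nontrivial item to verify is that $Q(\tau,\cdot)$ is increasing on $[0,\tau^*)$. Since we already have $(Q,N)$ satisfying \eqref{eq-Q-tau-classical}--\eqref{bc-Q-tau-classical} on $[0,\tau^*)$ with $N>0$, and $Q_{\init}$ is increasing by assumption, Proposition \ref{prop:keep-mono} applies verbatim and yields $\partial_\eta Q\geq 0$ (in fact strict positivity follows from the explicit characteristic formula in \eqref{eq-petaQ-Z} since the inflow data $1/N+K(0)$ is strictly positive). This closes the verification of Definition \ref{def:classical-q-tau} and finishes the lemma. I do not expect any significant technical obstacle; the only point requiring care is to notice that the monotonicity step is not circular, because Proposition \ref{prop:keep-mono} only requires the original form of the equation together with $N>0$, both of which are already in hand.
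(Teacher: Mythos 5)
Your proposal is correct and follows essentially the same route as the paper: direct verification against the two definitions for both directions, with the single nontrivial step in part (ii) being the appeal to Proposition \ref{prop:keep-mono} (which, as you rightly observe, applies because it only requires \eqref{eq-Q-tau-classical}--\eqref{bc-Q-tau-classical} together with $N>0$, all of which are already established). The paper's own proof is considerably terser but contains exactly these ingredients; your extra detail on the constraint \eqref{constaint-classical-Q-tau} and on why the monotonicity argument is not circular is a faithful unpacking of the same argument.
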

\begin{proof}
    For (i), it directly follows from Definition \ref{def:classical-q-tau} and \ref{def:tilde-classical-q-tau}. For (ii), it is similar. We just note that the positivity of $\tilde{N}$ on $[0,\tau^*)$ allows $N:=1/\tilde{N}$ to be well-defined, and that the two assumptions: $Q_{\init}(\eta)$ is increasing in $\eta$, and  that there exists $0<\tau^*\leq \tau$ such that $\tilde{N}>0$ on $[0,\tau^*)$, are enough to ensure that $Q(s,\eta)$ is increasing in $\eta$ at each time $s\in[0,\tau^*)$, thanks to Proposition \ref{prop:keep-mono}.
\end{proof}

Note that in the second part of Lemma \ref{lem:relation-relaxed-original}, $\tau^*$ can be smaller than $\tau$. We can recover the original problem from the relaxed one only when $\tilde{N}$ stays positive.

\begin{remark}
    Both $\tilde{N}$ and $1/N$ can be understood as some Lagrange multipliers to ensure the additional boundary condition $Q(\tau,1)=\Phi_F$, as shown in Proposition \ref{prop:char-N-classical} and \ref{prop:char-tildeN-classical}. However, in the original problem we require $1/N$ to be positive \eqref{constaint-classical-Q-tau}, while here $\tilde{N}$ is allowed to take any values in $\mathbb{R}$, including negative ones.

    By allowing negative values for $\tilde{N}$, the relaxed problem can be continued after the blow-up of $N$. However, such a continuation might no longer represent the dynamics of pulse-coupled oscillators. Indeed, in the relaxed problem $Q$ can be non-monotone after $\tilde{N}$ being negative. This implies that $Q$ no longer correspond to a pseudo-inverse for a probability distribution. See Figure \ref{fig:sec32} for an illustration.  
\end{remark}
\begin{figure}[htbp]
    \centering    
    \includegraphics[width=1.0\linewidth]{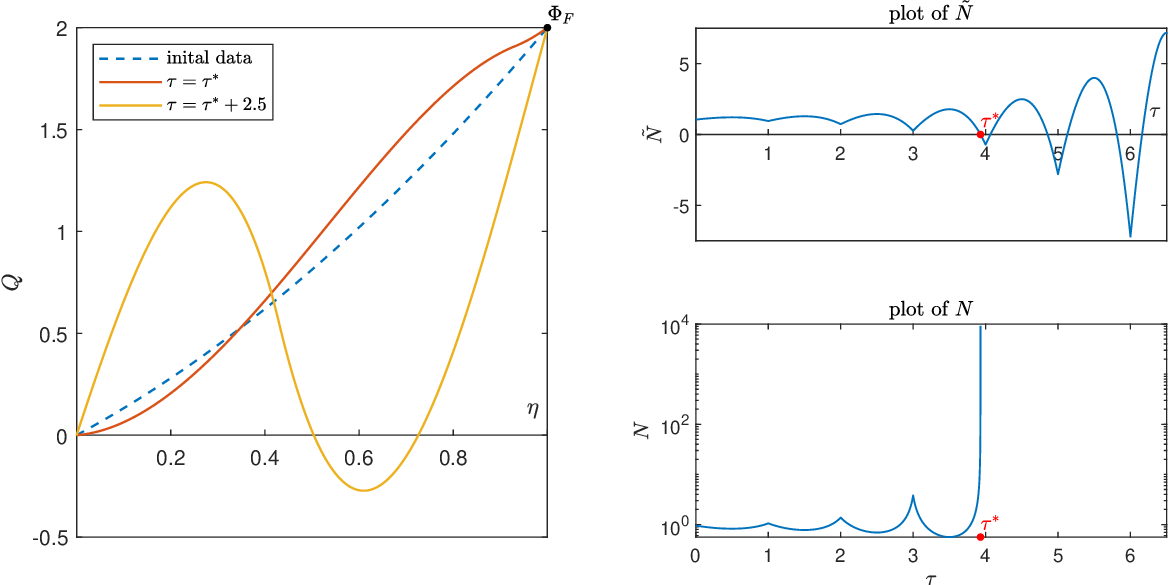}
    \caption{Illustration of the relaxed problem \eqref{tilde-eq-classical}-\eqref{tilde-ic-classical} and its relation to the original problem \eqref{eq-Q-tau-classical}-\eqref{ic-q-tau-classical}. Right: Plots of $\tilde{N}$ and $N$ in log scale. In the original problem, the firing rate $N$ blows up at $\tau^*$. In the relaxed problem, allowed to take negative values, $\tilde{N}$ is continued after $\tau^*$. Left: Profiles of $Q$ at different times.  Note that at $\tau=\tau^*+2.5$, $Q$ is non-monotone, which implies that it no longer correspond to a pseudo-inverse. Here $K(\phi)=0.75\phi+0.2$.}
    \label{fig:sec32}
\end{figure}

\subsection{Well-posedness of the original problem}
Theorem \ref{thm:wps-classical-relax} shows that the solution to the relaxed problem \eqref{tilde-eq-classical}-\eqref{tilde-ic-classical} is global, but it may not correspond to the physical dynamics of the pulse-coupled oscillator whenever $\tilde{N}$ touches zero or becomes negative. Nevertheless, we can use the well-posedness results for the relaxed problem (Theorem \ref{thm:wps-classical-relax}-\ref{thm:regularity-classical-relax}) to prove
 the well-posedness of the original problem (Theorem \ref{thm:wps-blow-up-classical}-\ref{thm:regularity-classical}).

First, we use Theorem \ref{thm:wps-classical-relax} to derive the maximal existence and blow-up criteria of the original problem \eqref{eq-Q-tau-classical}-\eqref{ic-q-tau-classical}, and therefore prove Theorem \ref{thm:wps-blow-up-classical}.

\begin{proof}[Proof of Theorem \ref{thm:wps-blow-up-classical}]
\ \newline
\indent
    \textit{1. Uniqueness.} Suppose there are two solutions to the original problem. Then by Lemma \ref{lem:relation-relaxed-original}-(i) they are also solutions to the relaxed problem. Therefore, they must be the same, as the uniqueness of the relaxed problem is given in Theorem \ref{thm:wps-classical-relax}.

    \textit{2. Construct the maximal classical solution.}  Let $(Q,\tilde{N})$ be the solution to the relaxed problem with the same initial data, whose global existence is ensured by Theorem \ref{thm:wps-classical-relax}. Now we aim to find the first time $\tilde{N}(\tau)$ touching zero,  defined via
    \begin{equation}\label{def-tau*-aszero}
        \tau^*:=\inf\{\tau\geq 0:\, \tilde{N}(\tau)=0\}.
    \end{equation} 
    
    We have $\tau^*>0$ and $\tilde{N}(\tau)>0$ for all $\tau$ in $[0,\tau^*)$, which follows from the continuity and that initially $\tilde{N}(0)>0$ by Assumption \ref{as:wps} (c.f. Remark \ref{rmk:As2-Ninit}). Hence by Lemma \ref{lem:relation-relaxed-original}-(ii) we can construct a classical solution to the original problem on $[0,\tau^*)$, with the same $Q$ and $N=1/\tilde{N}$. 

    When $\tau^*=+\infty$, we have constructed the global classical solution to the original problem.
    
    When otherwise $0<\tau^*<\infty$, we have $\tilde{N}(\tau^*)=0$, hence we derive that the firing rate blows up at $\tau^*$
    \begin{equation}
        \lim_{\tau\rightarrow(\tau^*)^{-}}N(\tau)= \lim_{\tau\rightarrow(\tau^*)^{-}}\frac{1}{\tilde{N}(\tau)}=+\infty.
    \end{equation} In this case, we have constructed a classical solution on $[0,\tau^*)$ with $\tau^*$ being the first blow-up time of the firing rate $N(\tau)$.
\end{proof}We summarize the construction in the proof of Theorem \ref{thm:wps-blow-up-classical} as the following corollary.

\begin{corollary}\label{cor:constructed-original}
Suppose the initial data satisfies Assumption \ref{as:wps}. Then there exists a unique global classical solution $(Q,\tilde{N})$ to the relaxed problem \eqref{tilde-eq-classical}-\eqref{tilde-ic-classical}. Define $\tau^*>0$ as in \eqref{def-tau*-aszero}. Then, restricting to $[0,\tau^*)$, with $N:=1/\tilde{N}$, $(Q,N)$ gives a classical solution to the original problem \eqref{eq-Q-tau-classical}-\eqref{ic-q-tau-classical} with $\tau^*$ as its maximal existence time.
\end{corollary}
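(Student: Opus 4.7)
The corollary essentially packages the constructive argument already used in the proof of Theorem \ref{thm:wps-blow-up-classical}, so the plan is to point to each component rather than redo the work. First, I would invoke Theorem \ref{thm:wps-classical-relax} to obtain the unique global classical solution $(Q,\tilde{N})$ to the relaxed problem \eqref{tilde-eq-classical}--\eqref{tilde-ic-classical} on $[0,+\infty)$. Then $\tau^*$ defined by \eqref{def-tau*-aszero} makes sense, and I would argue it is strictly positive by combining two facts: $\tilde{N}$ is continuous on $[0,+\infty)$ by Definition \ref{def:tilde-classical-q-tau}, and $\tilde{N}(0)=\frac{d}{d\eta}Q_{\init}(1)-K(\Phi_F)>0$ by Assumption \ref{as:wps} (as reformulated in Remark \ref{rmk:As2-Ninit}). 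Consequently $\tilde{N}(\tau)>0$ on $[0,\tau^*)$.

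Second, I would apply Lemma \ref{lem:relation-relaxed-original}-(ii) on $[0,\tau^*)$. Its hypotheses are met: $Q_{\init}$ is non-decreasing by Assumption \ref{as:wps}, and $\tilde{N}>0$ on $[0,\tau^*)$ by the definition of $\tau^*$. The conclusion gives that $(Q,N)$ with $N:=1/\tilde{N}$ is a classical solution to the original problem \eqref{eq-Q-tau-classical}--\eqref{ic-q-tau-classical} on $[0,\tau^*)$ in the sense of Definition \ref{def:classical-q-tau}.

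Third, I need to verify that $\tau^*$ is indeed the \emph{maximal} existence time. If $\tau^*=+\infty$, this is trivial. If $0<\tau^*<+\infty$, continuity of $\tilde{N}$ forces $\tilde{N}(\tau^*)=0$, so $N(\tau)=1/\tilde{N}(\tau)\to +\infty$ as $\tau\to(\tau^*)^-$. By the constraint \eqref{constaint-classical-Q-tau} required in Definition \ref{def:classical-q-tau}, namely $K(\Phi_F)<\p_{\eta}Q(\tau,1)<+\infty$, no classical solution can be extended through $\tau^*$. To rule out the possibility of a different classical solution living on a longer interval, I would use uniqueness: any such solution would, by Lemma \ref{lem:relation-relaxed-original}-(i), yield a classical solution to the relaxed problem, which by the uniqueness part of Theorem \ref{thm:wps-classical-relax} must agree with $(Q,\tilde{N})$ on their common interval of existence; then the blow-up criterion just established shows the longer interval is impossible.

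I do not anticipate a genuine obstacle here, since the heavy lifting (global well-posedness of the relaxed problem and the relaxed-to-original correspondence) has been carried out in Theorem \ref{thm:wps-classical-relax} and Lemma \ref{lem:relation-relaxed-original}. The only point requiring care is the equivalence between $\tilde{N}(\tau^*)=0$ and genuine blow-up of $N$, which is immediate from $N=1/\tilde{N}$ together with the positivity constraint built into the definition of a classical solution to the original problem.
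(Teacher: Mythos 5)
Your proposal reproduces precisely the construction the paper carries out inside the proof of Theorem \ref{thm:wps-blow-up-classical} (which the corollary is explicitly stated to summarize): invoke Theorem \ref{thm:wps-classical-relax} for the global relaxed solution, use Assumption \ref{as:wps} via Remark \ref{rmk:As2-Ninit} and continuity to get $\tau^*>0$ and $\tilde N>0$ on $[0,\tau^*)$, transfer via Lemma \ref{lem:relation-relaxed-original}-(ii), and establish maximality by the blow-up of $N=1/\tilde N$ together with uniqueness through Lemma \ref{lem:relation-relaxed-original}-(i). This matches the paper's argument step for step, so no further comparison is needed.
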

Together with Theorem \ref{thm:stability-classical-relax}-\ref{thm:regularity-classical-relax}, we can obtain the continuous dependence on initial data and regularity before the blow-up, and thus prove Theorem \ref{thm:stability-classical}-\ref{thm:regularity-classical}.
\begin{proof}[Proof of Theorem \ref{thm:stability-classical}]
    We follow the construction in Corollary \ref{cor:constructed-original}.

    Let $(Q_{\eps},\tilde{N}_{\eps})$ be the corresponding solution to the relaxed problem \eqref{tilde-eq-classical}-\eqref{tilde-ic-classical}. In particular, $\tau_0^*$ is the first time that $\tilde{N}_0$ touches zero and $\tilde{N}_0(\tau)>0$ for all $0\leq\tau<\tau_0^*$.  Then for every $0<T<\tau^*_0$, $\tilde{N}_0$ has a uniform positive lower bound on $[0,T]$, which allows us to use \eqref{convergence-tilde} in Theorem \ref{thm:stability-classical-relax} to deduce that $\tilde{N}_{\eps}(\tau)>0$ on $[0,T]$ for $\eps$ small enough. Hence  we derive
    \begin{equation}
                \liminf_{\eps\rightarrow0^+}\tau^*_{\eps}\geq T,
    \end{equation} for every $0<T<\tau^*_0$,  which implies \eqref{suplimit-blow-up-time}.

    Then by the construction $N_{\eps}=1/{\tilde{N}}_{\eps}$, \eqref{convergence-N} follows from \eqref{convergence-tilde} and the uniform positive lower bound on $[0,T]$. And \eqref{convergence-Q} follows from \eqref{convergence-Q-relax}.

\end{proof}
\begin{proof}[Proof of Theorem \ref{thm:regularity-classical}]
The proof is direct, as the solution to the original problem is constructed via a restriction in time by Corollary \ref{cor:constructed-original}, which will inherit the regularity for the relaxed problem in Theorem \ref{thm:regularity-classical-relax}.
\end{proof}

Finally, we note that when a classical solution to the original problem blows up, the profile $Q$ still has a well-defined limit towards the blow-up time $\tau^*$. This is  a direct consequence of the construction in Corollary \ref{cor:constructed-original}.

\begin{corollary}\label{cor:pre-blow-up-Q}
    Let $(Q,N)$ be a classical solution to \eqref{eq-Q-tau-classical}-\eqref{ic-q-tau-classical} on the maximal existence interval $[0,\tau^*)$ as in Theorem \ref{thm:wps-blow-up-classical}. If $\tau^*<\infty$, then the pre-blow-up profile
    \begin{equation}
Q((\tau^*)^-,\cdot):=\lim_{\tau\rightarrow(\tau^*)^{-}}Q(\tau,\cdot),\qquad \text{where the limit is in $C^1[0,1]$,}
    \end{equation} is a well-defined  function. Moreover, it satisfies $
\p_{\eta}Q((\tau^*)^-,1)=K(\Phi_F)$.
\end{corollary}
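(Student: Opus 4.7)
The proof plan is to leverage the construction in Corollary \ref{cor:constructed-original}, which shows that any classical solution $(Q,N)$ of the original problem on $[0,\tau^*)$ arises as the restriction of a global classical solution $(Q,\tilde N)$ of the relaxed problem, where $\tau^*$ is defined as the first time at which $\tilde N$ vanishes. The key point is that the relaxed solution does not blow up: it is globally well-posed and smooth, so the pre-blow-up limit for the original problem is nothing but the evaluation of this relaxed solution at $\tau=\tau^*$.

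More concretely, first I would invoke Corollary \ref{cor:constructed-original} to obtain the unique global classical solution $(Q,\tilde N)$ to the relaxed problem \eqref{tilde-eq-classical}-\eqref{tilde-ic-classical} that coincides with $(Q,1/N)$ on $[0,\tau^*)$, with $\tau^*$ as in \eqref{def-tau*-aszero}. By Theorem \ref{thm:wps-classical-relax} and Definition \ref{def:tilde-classical-q-tau}, $Q\in C^1([0,+\infty)\times[0,1])$ and $\tilde N\in C[0,+\infty)$. Since $[0,1]$ is compact and $\tau^*<+\infty$, both $Q(\tau,\cdot)$ and $\partial_\eta Q(\tau,\cdot)$ are uniformly continuous in $\tau$ on a neighborhood of $\tau^*$ as maps into $C[0,1]$. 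Consequently the limit
\begin{equation*}
Q((\tau^*)^-,\cdot)=\lim_{\tau\to(\tau^*)^-}Q(\tau,\cdot)
\end{equation*}
exists in $C^1[0,1]$ and equals the value $Q(\tau^*,\cdot)$ of the relaxed solution at $\tau^*$.

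Next I would identify the boundary derivative at $\eta=1$ in the limit. By the definition \eqref{def-tau*-aszero} of $\tau^*$ together with the continuity of $\tilde N$, one has $\tilde N(\tau^*)=0$. Using relation \eqref{def-tildeN-tau-classical} for the relaxed solution, this yields
\begin{equation*}
\partial_\eta Q(\tau^*,1)=\tilde N(\tau^*)+K(\Phi_F)=K(\Phi_F),
\end{equation*}
which is exactly the claimed identity $\partial_\eta Q((\tau^*)^-,1)=K(\Phi_F)$, given that the left-hand side equals $\partial_\eta Q(\tau^*,1)$ by the $C^1$ convergence established above.

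There is essentially no hard obstacle here: the proof reduces to transferring the $C^1$ regularity of the global relaxed solution to a statement about the pre-blow-up limit of the restricted original solution, and then reading off the boundary derivative from the identity $\tilde N=1/N\to 0$ via \eqref{def-tildeN-tau-classical}. The only subtlety worth flagging is to emphasize that $\tilde N(\tau^*)=0$ (as opposed to merely $\liminf\tilde N=0$), which requires continuity of $\tilde N$ up to and including $\tau^*$; this is already guaranteed by Definition \ref{def:tilde-classical-q-tau} and the global existence statement of Theorem \ref{thm:wps-classical-relax}.
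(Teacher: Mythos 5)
Your proof is correct and follows exactly the paper's intended route: the paper simply states that the corollary is "a direct consequence of the construction in Corollary \ref{cor:constructed-original}", and you fill in those details — extending by the global relaxed solution, using its $C^1$ joint regularity on the compact set $[0,\tau^*]\times[0,1]$ to get the $C^1[0,1]$ limit, and reading off the boundary derivative from $\tilde N(\tau^*)=0$ together with \eqref{def-tildeN-tau-classical}.
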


\subsection{Relaxed problem: mild solution}\label{subsec:mild-relax}

In what follows, we study the well-posedness and regularity for the relaxed problem and provide proofs for Theorem \ref{thm:wps-classical-relax}-\ref{thm:regularity-classical-relax}. We take a ``bottom-up'' approach, starting with simpler problems for which the existence is more directly available, then step by step move to the full relaxed problem \eqref{tilde-eq-classical}-\eqref{tilde-ic-classical} and recover its regularity.

We start with a further simplified problem, replacing $\tilde{N}(\tau)$ in \eqref{tilde-eq-classical} by an external source $f(\tau)$. 
\begin{equation}\label{simplified-1-external-regular}
    \begin{aligned}
        &\p_{\tau}Q+\p_{\eta}Q=f(\tau)+{K}(Q),\qquad\qquad &&\tau>0,\ \eta\in(0,1),\\
    &Q(\tau,0)=0,\qquad\qquad &&\tau>0,\\
     &Q(\tau=0,\eta)=Q_{\init}(\eta),\qquad\qquad &&\eta\in[0,1].
    \end{aligned}
\end{equation} Equation \eqref{simplified-1-external-regular} is still nonlinear due to a possibly nonlinear $K(Q)$. But we can already solve it via the characteristics when $f$ is given. This motivates us to define the flow map and the mild solution as follows.

For a given (locally bounded) function $f$, we introduce the flow map $\Psi_{\tau_s\rightarrow\tau}^f(x)$, starting at time $\tau_s$ with initial position $x$, defined via the solution to
\begin{equation}\label{flow-map-extra-f-K}
    \begin{aligned}
        \frac{d}{d\tau}\Psi_{\tau_s\rightarrow\tau}^f(x)&={K}(\Psi_{\tau_s\rightarrow\tau}^f(x))+f(\tau),\quad \tau>\tau_s,\\
        \Psi_{\tau_s\rightarrow\tau_s}^f(x)&=x,\quad x\geq0.
    \end{aligned}
\end{equation}
The flow map is well-defined thanks to Assumption \ref{as:global-K} on $K$. Then we can define the mild solution to \eqref{simplified-1-external-regular}.
\begin{definition}[Mild solution to external source problem \eqref{simplified-1-external-regular}]\label{def:mild-1-external-regular}
Given a pointwise-defined initial data $Q_{\init}(\eta)$ with $Q_{\init}(0)=0$ and $f(\tau)$ a locally bounded function in time, a mild solution to \eqref{simplified-1-external-regular} is a pointwisely defined function $Q(\tau,\eta)$ given by
\begin{equation}\label{expression-mild}
    Q(\tau,\eta)=\begin{dcases}\Psi_{\tau-\eta\rightarrow\tau}^f(0),\qquad  \tau\geq \eta\geq 0,\\
    \Psi_{0\rightarrow\tau}^f(Q_{\init}(\eta-\tau)), \qquad 0\leq \tau\leq \eta\leq 1,
    \end{dcases}
\end{equation} where $\Psi^f_{\tau_s\rightarrow\tau}$ is the flow map associated with $f$ as defined in \eqref{flow-map-extra-f-K}.
\end{definition}

Now we define the mild solution to the full relaxed problem \eqref{tilde-eq-classical}-\eqref{tilde-ic-classical}. Compared to Definition \ref{def:mild-1-external-regular} we need to determine the correct external source $\tilde{N}$. 

\begin{definition}[Mild solution to the relaxed problem \eqref{tilde-eq-classical}]\label{def:mild-2-full-regular}
Given initial data $Q_{\init}(\eta)$ satisfying $Q_{\init}(0)=0$ and $Q_{\init}(1)=\Phi_F$, a mild solution to \eqref{tilde-eq-classical}-\eqref{tilde-ic-classical} is a pair $(Q,\tilde{N})$ where $\tilde{N}$ is a locally bounded function and $Q$ is the corresponding mild solution to \eqref{simplified-1-external-regular} with $f(\tau) = {\tilde N(\tau)}$ as the external source, such that
\begin{equation}\label{sec32-condition-equiv-tau>0-}
    Q(\tau,1)\equiv \Phi_F,\qquad \forall \tau>0.
\end{equation}
\end{definition} 

Compared to Definition \ref{def:tilde-classical-q-tau} of the classical solution, we require less regularity. Here we use \eqref{sec32-condition-equiv-tau>0-} to determine $\tilde{N}$ instead of \eqref{def-tildeN-tau-classical}. Note that the condition \eqref{sec32-condition-equiv-tau>0-} does not involve $\p_{\eta}Q$ in contrast to \eqref{def-tildeN-tau-classical}. We know these two conditions are equivalent for the classical solution by Proposition \ref{prop:char-tildeN-classical}. In particular a classical solution in Definition \ref{def:tilde-classical-q-tau} is a mild solution here.

As a first step towards Theorem \ref{thm:wps-classical-relax}, we prove the well-posedness to \eqref{tilde-eq-classical}-\eqref{tilde-ic-classical} in terms of the mild solution.

\begin{proposition}[Existence and Uniqueness for the mild solution to the relaxed problem]\label{Prop:wps-mild-solution}
    Given initial data with the regularity $Q_{\init}(\eta)\in W^{1,\infty}(0,1)$ satisfying $Q_{\init}(0)=0$ and $Q_{\init}(1)=\Phi_F$, there exists a unique mild solution $(Q,\tilde{N})$ to the relaxed problem \eqref{tilde-eq-classical} in the sense of Definition \ref{def:mild-2-full-regular}, which is global in time. 
    Moreover, for each $\tau>0$, $\p_{\eta}Q(\tau,\cdot)$ is in $ W^{1,\infty}(0,1)$ with growth estimates
    \begin{align}\label{growth-Qw1infty}
                \|\p_{\eta}Q(\tau,\cdot)\|_{L^{\infty}(0,1)}&\leq e^{K_1\tau}\left(\|\frac{d}{d\eta}Q_{\init}(\eta)\|_{L^{\infty}(0,1)}+C\tau+C\right),\qquad \tau>0,\\ \label{growth-tilde-N}
                \|\tilde{N}\|_{L^{\infty}(0,\tau)}&\leq e^{K_1\tau}\left(\|\frac{d}{d\eta}Q_{\init}(\eta)\|_{L^{\infty}(0,1)}+C\tau+C\right),\qquad\tau> 0,
    \end{align} where $K_1:=\|K'\|_{L^{\infty}(\mathbb{R})}>0$ and $C>0$ are constants independent of initial data.
    \end{proposition}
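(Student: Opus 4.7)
\textbf{The plan} is to reduce the existence of a mild solution $(Q,\tilde N)$ to a fixed-point problem for $\tilde N\in L^\infty$, exploiting the characteristic formula~\eqref{expression-mild}. For any locally bounded $\tilde N$, Assumption~\ref{as:global-K} (with $K,K'$ bounded) guarantees via Cauchy--Lipschitz that the flow map $\Psi^{\tilde N}$ defined in~\eqref{flow-map-extra-f-K} is well defined, Lipschitz in its initial position with constant $e^{K_1(\tau-\tau_s)}$, and continuous in the time endpoints. Substituting into~\eqref{expression-mild} yields a Lipschitz $Q[\tilde N]:[0,+\infty)\times[0,1]\to\mathbb R$ satisfying Definition~\ref{def:mild-1-external-regular} with $f=\tilde N$. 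The remaining condition $Q[\tilde N](\tau,1)=\Phi_F$ asserts that the characteristic reaching $(\tau,1)$ --- starting from $Q_{\init}(1-\tau)$ at time $0$ if $\tau\le 1$, and from $0$ at time $\tau-1$ if $\tau\ge 1$ --- must arrive at $\Phi_F$.

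\textbf{Next,} I would recast this non-local constraint as a pointwise equation for $\tilde N$. Since $Q[\tilde N](\tau,1)$ is Lipschitz in $\tau$, it is differentiable a.e., and setting the derivative to zero using the explicit formulas
\[
\partial_\tau \Psi^{\tilde N}_{\tau_s\to\tau}(x)=K(\Psi^{\tilde N}_{\tau_s\to\tau}(x))+\tilde N(\tau),\qquad \partial_{\tau_s}\Psi^{\tilde N}_{\tau_s\to\tau}(x)=-\bigl(K(x)+\tilde N(\tau_s)\bigr)\exp\!\Bigl(\int_{\tau_s}^{\tau} K'(\Psi^{\tilde N}_{\tau_s\to r}(x))\,dr\Bigr)
\]
produces an identity $\tilde N(\tau)=\mathcal G[\tilde N](\tau)$ holding for a.e.\ $\tau>0$, where $\mathcal G[\tilde N](\tau)$ involves only $Q_{\init}'$ (defined a.e.\ since $Q_{\init}\in W^{1,\infty}$) together with $\tilde N$ restricted to $[\max(0,\tau-1),\tau]$ and the associated characteristic. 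On a short interval $[0,T_0]$, with $T_0$ depending only on $K$, $\Phi_F$ and $\|Q_{\init}'\|_{L^\infty}$, the Lipschitz dependence of $\Psi^{\tilde N}$ on $\tilde N$ (standard Gronwall for the linear-in-$\tilde N$ forcing) and the boundedness of $K''$ make $\mathcal G$ a strict contraction on $L^\infty(0,T_0)$, giving local existence and uniqueness by Banach's theorem.

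\textbf{To extend globally} and prove the growth bounds~\eqref{growth-Qw1infty}--\eqref{growth-tilde-N}, I would track $\partial_\eta Q$ along characteristics. Formal differentiation of~\eqref{tilde-eq-classical} yields $(\partial_\tau+\partial_\eta)\partial_\eta Q = K'(Q)\,\partial_\eta Q$, while the boundary condition~\eqref{tilde-bc-classical} combined with the PDE evaluated at $\eta=0$ gives the inflow value $\partial_\eta Q(\tau,0)=\tilde N(\tau)+K(0)$. Gronwall along characteristics then delivers $|\partial_\eta Q(\tau,\eta)|\le \bigl(|\tilde N(\tau-\eta)|+K(0)\bigr)e^{K_1\eta}$ for $\eta\le\tau$ and $|\partial_\eta Q(\tau,\eta)|\le\|Q_{\init}'\|_{L^\infty}e^{K_1\tau}$ for $\eta\ge\tau$. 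Evaluating at $\eta=1$ and combining with $\tilde N(\tau)=\partial_\eta Q(\tau,1)-K(\Phi_F)$ --- valid at the mild level by the construction of $\mathcal G$ --- closes a self-referential Gronwall inequality for $\|\tilde N\|_{L^\infty(0,\tau)}$, from which~\eqref{growth-tilde-N} follows, and then~\eqref{growth-Qw1infty}. These a priori bounds prevent $\|\tilde N\|_{L^\infty}$ from blowing up in finite time, so the local solution extends to $[0,+\infty)$. \textbf{The main obstacle} is justifying the passage from the non-local constraint $Q[\tilde N](\tau,1)\equiv\Phi_F$ to the pointwise equation $\tilde N=\mathcal G[\tilde N]$ under only $W^{1,\infty}$ regularity of $Q_{\init}$: this requires a careful a.e.\ chain-rule differentiation of the Lipschitz composition $\tau\mapsto \Psi^{\tilde N}_{0\to\tau}(Q_{\init}(1-\tau))$ (resp.\ $\tau\mapsto \Psi^{\tilde N}_{\tau-1\to\tau}(0)$) and identification of its a.e.\ derivative with the expressions above; once this is done, the contraction-plus-Gronwall machinery is standard.
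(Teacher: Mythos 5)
Your proposal follows essentially the same route as the paper's proof: write the constraint $Q[\tilde N](\tau,1)\equiv\Phi_F$ along characteristics, differentiate it a.e.\ in $\tau$ to obtain a fixed-point equation $\tilde N=\mathcal{G}[\tilde N]$, run Banach's fixed-point theorem on a short interval whose length depends only on $K$ and $\|Q_{\init}'\|_{L^\infty}$, then control $\partial_\eta Q$ and $\tilde N$ via the explicit characteristic formula to get the $e^{K_1\tau}$-type a priori bounds, and iterate in steps of size $1$ to go global. The identification $\tilde N(\tau)=\partial_\eta Q(\tau,1)-K(\Phi_F)$, the Lipschitz-in-$f$ stability of the flow map, and the self-referential estimate linking $\tilde N$ on $[\tau-1,\tau]$ to its values one unit earlier are all exactly the ingredients the paper uses.
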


The following classical properties of the flow map $\Psi_{\tau_s\rightarrow\tau}^f(x)$ will be useful for the proof of Proposition \ref{Prop:wps-mild-solution}. Recall we take Assumption \ref{as:global-K} for $K$ throughout this paper. 
\begin{lemma}[Classical properties of the flow map]\label{lem:flow-map-property}
    Let $f$ be a locally bounded function. Then the flow map $\Psi_{\tau_s\rightarrow\tau}^f(x)$ defined in \eqref{flow-map-extra-f-K} has the following properties.
    \begin{enumerate}
        \item (Dependence on $x$) It is $C^1$ w.r.t. the initial position $x$ and the derivative is given by
        \begin{equation}
            \frac{\p}{\p x}(\Psi^f_{\tau_s\rightarrow\tau}(x))=\exp\left(\int_{\tau_s}^{\tau}K'(\Psi^f_{\tau_s\rightarrow t}(x))dt\right).
        \end{equation}
        \item (Dependence on $\tau_s$) It is absolutely continuous w.r.t. the starting time $\tau_s$, and the almost everywhere derivative is given by
        \begin{equation}
            \frac{\p}{\p \tau_s}(\Psi^f_{\tau_s\rightarrow\tau}(x))=-\Bigl(K(x)+f(\tau_s)\Bigr)\exp\left(\int_{\tau_s}^{\tau}K'(\Psi^f_{\tau_s\rightarrow t}(x))dt\right).
        \end{equation}
        \item (Stability in $f$) For two different external sources $f_1,f_2$, we have the following stability estimate
        \begin{equation}
            |\Psi^{f_1}_{\tau_s\rightarrow\tau}(x)-\Psi^{f_2}_{\tau_s\rightarrow\tau}(x)|\leq e^{C\Delta\tau}\Delta \tau\|f_1-f_2\|_{L^{\infty}(\tau_s,\tau)},\qquad \Delta \tau:=\tau-\tau_s\geq0,
        \end{equation} with a constant $C>0$ depending only on $K$.
    \end{enumerate}
\end{lemma}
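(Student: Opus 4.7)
The plan is to prove the three items by standard ODE techniques built around the integral formulation of \eqref{flow-map-extra-f-K}, so my main work is just to identify which classical result to invoke at each step rather than to invent new machinery. Throughout I will write $\Psi(\tau) := \Psi^f_{\tau_s\to\tau}(x)$ when $\tau_s, x$ are fixed, and use that $K \in C^2$ with $K'$ bounded (Assumption \ref{as:global-K}) guarantees global existence and Lipschitz dependence on initial data.

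For item 1, I would first observe that since $K$ is $C^1$ with bounded derivative and $f$ is locally bounded, the classical Cauchy--Lipschitz theory yields $C^1$ dependence on the initial condition $x$. Once this is known, differentiating the integral form $\Psi(\tau) = x + \int_{\tau_s}^{\tau}\bigl(K(\Psi(t)) + f(t)\bigr)dt$ with respect to $x$ shows that $J(\tau) := \partial_x \Psi^f_{\tau_s\to\tau}(x)$ satisfies the linear variational equation $J'(\tau) = K'(\Psi(\tau))J(\tau)$ with $J(\tau_s) = 1$. Solving this scalar ODE gives the stated exponential formula.

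For item 2, my approach is to use the flow (semigroup) identity $\Psi^f_{\tau_s\to\tau}(x) = \Psi^f_{\tau_s+h\to\tau}\bigl(\Psi^f_{\tau_s\to\tau_s+h}(x)\bigr)$, rewritten as
\begin{equation}
\Psi^f_{\tau_s+h\to\tau}(x) - \Psi^f_{\tau_s\to\tau}(x) \;=\; \Psi^f_{\tau_s+h\to\tau}(x) - \Psi^f_{\tau_s+h\to\tau}\bigl(\Psi^f_{\tau_s\to\tau_s+h}(x)\bigr).
\end{equation}
I apply the mean value theorem together with item 1 to the right-hand side, use the definition of $\Psi^f_{\tau_s\to\tau_s+h}(x) - x$ as an integral, and then pass to the limit $h\to 0$ via Lebesgue differentiation: for a.e. $\tau_s$ in the local boundedness class of $f$, $\frac{1}{h}\int_{\tau_s}^{\tau_s+h}\bigl(K(\Psi^f_{\tau_s\to t}(x)) + f(t)\bigr)\,dt \to K(x) + f(\tau_s)$. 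This yields the claimed a.e.\ derivative formula. Absolute continuity in $\tau_s$ follows because the expression $|\Psi^f_{\tau_s+h\to\tau}(x) - \Psi^f_{\tau_s\to\tau}(x)|$ is controlled by $\bigl(\|K\|_\infty + \|f\|_\infty\bigr)|h|$ times the (bounded) $\partial_x$-Jacobian from item 1.

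For item 3, I set $\Delta(\tau) := \Psi^{f_1}_{\tau_s\to\tau}(x) - \Psi^{f_2}_{\tau_s\to\tau}(x)$, which satisfies $\Delta(\tau_s) = 0$ and
\begin{equation}
\Delta(\tau) = \int_{\tau_s}^{\tau}\Bigl(K(\Psi^{f_1}_{\tau_s\to t}(x)) - K(\Psi^{f_2}_{\tau_s\to t}(x))\Bigr)dt + \int_{\tau_s}^{\tau}(f_1(t) - f_2(t))\,dt.
\end{equation}
Using $\|K'\|_\infty \le K_1$ gives $|\Delta(\tau)| \le K_1\int_{\tau_s}^{\tau}|\Delta(t)|\,dt + \Delta\tau\, \|f_1-f_2\|_{L^\infty(\tau_s,\tau)}$, and Grönwall's inequality closes the estimate with $C = K_1$. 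The main ``obstacle'', if any, is the mildly technical item 2: one must be careful that $f$ is only assumed locally bounded (not continuous), so the derivative in $\tau_s$ is a.e.\ rather than everywhere, and the rigorous passage to the limit needs the Lebesgue differentiation theorem applied to the locally bounded integrand $K(\Psi^f_{\tau_s\to t}(x)) + f(t)$. The other two items reduce to routine variational/Grönwall arguments.
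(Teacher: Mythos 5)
Your proposal is correct and takes the route the paper implicitly intends: the paper simply remarks that ``The proof of Lemma \ref{lem:flow-map-property} is classical'' and gives no details, and the argument you sketch --- the variational equation for $\partial_x\Psi$ (item~1), the semigroup decomposition plus the mean value theorem and Lebesgue differentiation for $\partial_{\tau_s}\Psi$ (item~2, where you are right that the a.e.\ statement is exactly what the locally-bounded-$f$ hypothesis permits, and the sign works out as stated), and the integral form plus Gr\"onwall for stability in $f$ (item~3, with $C=\|K'\|_{L^\infty}$) --- is precisely the standard proof one would supply.
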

The proof of Lemma \ref{lem:flow-map-property} is classical. 

\begin{proof}[Proof of Proposition \ref{Prop:wps-mild-solution}]
\ \newline \indent
\textit{Step 1. $\tau\in[0,1)$, reduce to a fixed point problem.} Denote the solution to the simplified problem \eqref{simplified-1-external-regular} with external source $f$ as $Q^f$. Then by Definition \ref{def:mild-1-external-regular} for $\tau\in[0,1)$, $Q^f(\tau,1)= \Psi_{0\rightarrow \tau}^f(Q_{\rm init}(1-\tau))$. Therefore on this time interval \eqref{sec32-condition-equiv-tau>0-} is equivalent to
\begin{equation}\label{tmp-mildwps1}
        \Psi_{0\rightarrow \tau}^f(Q_{\init}(1-\tau))\equiv \Phi_F,\quad \tau\in[0,1).
\end{equation}Due to the regularity assumed for $Q_{\init}$, $K$ and $f$, we note the map $\tau\rightarrow\Psi_{0\rightarrow \tau}^f(Q_{\rm init}(1-\tau))$ is absolutely continuous in $\tau$. Hence as $Q_{\init}(1)=\Phi_F$, taking the derivative in $\tau$, we see \eqref{tmp-mildwps1} is equivalent to

\begin{equation}\label{tmp-cond-1}
    \frac{d}{d\tau}\left(\Psi_{0\rightarrow \tau}^f(Q_{\init}(1-\tau))\right)\equiv 0,\quad \tau\in[0,1),
\end{equation} where the derivative is understood in the almost everywhere sense. 

 We calculate
\begin{align}\label{tmp-cal-1}
    \frac{d}{d\tau}\left(\Psi_{0\rightarrow \tau}^f(Q_{\init}(1-\tau))\right)&=\frac{\partial}{\p\tau}\Psi_{0\rightarrow \tau}^f(x)|_{x=Q_{\init}(1-\tau)}-\frac{\p}{\p x}\Psi_{0\rightarrow \tau}^f(Q_{\init}(1-\tau))\frac{d}{d\eta}Q_{\init}(1-\tau),
\end{align} where  we use the chain rule for the second term, relying on the regularity of $Q_{\init}$.

To treat the first term on the right hand side of \eqref{tmp-cal-1}, by definition of the flow map we have
\begin{align}
    \frac{\partial}{\p\tau}\Psi_{0\rightarrow \tau}^f(x)|_{x=Q_{\init}(1-\tau)}&=f(\tau)+K(\Psi_{0\rightarrow \tau}^f(Q_{\init}(1-\tau)))\\&=f(\tau)+K(Q^f(\tau,1)).\label{tmp-sec32-1}
\end{align} For the second term we use Lemma \ref{lem:flow-map-property}-1 to derive
\begin{align}
        \frac{\p}{\p x}(\Psi^f_{0\rightarrow\tau}(Q_{\init}(1-\tau)))&=\exp\left(\int_0^{\tau}K'(\Psi^f_{0\rightarrow s}(Q_{\init}(1-\tau)))ds\right)\\&=\exp\left(\int_0^{\tau}K'(Q^f(s,1-
        \tau+s))ds\right).\label{tmp-sec32-2}
\end{align}

Plugging the above expressions \eqref{tmp-sec32-1}-\eqref{tmp-sec32-2} into \eqref{tmp-cal-1}, we deduce that  \eqref{tmp-cond-1} is equivalent to the following \textit{fixed point problem}, for $\tau\in[0,1)$
\begin{equation}\label{fixed-point-1} 
    \begin{aligned}f(\tau)&=\exp\left(\int_0^{\tau}K'(Q^f(s,1-
        \tau+s))ds\right)\frac{d}{d\eta}Q_{\init}(1-\tau)-K(Q^f(\tau,1))\\&=:\mathcal{F}(f)(\tau).
\end{aligned}
\end{equation}
Recall here that $Q^f$ is the solution to the simplified problem \eqref{simplified-1-external-regular} with external source $f$, as defined in \eqref{def:mild-1-external-regular}. More explicitly we have for $0\leq s\leq \tau\leq 1$
\begin{align}\label{Qf-1}
    Q^f(s,1-\tau+s)=\Psi_{0\rightarrow s}^f(Q_{\init}(1-\tau)),\\
    Q^{f}(\tau,1)=\Psi_{0\rightarrow\tau}^f(Q_{\init}(1-\tau)).\label{Qf-2}
\end{align}

\textit{Step 2. $\tau\in[0,\Delta\tau)$, locally solve the fixed point problem.} Next, we show that the fixed point problem \eqref{fixed-point-1} can be locally solved via the Banach fixed point theorem. More precisely, we shall prove the following estimate, for all $0<\Delta\tau\leq 1$,
\begin{equation}\label{mild-wps-key}
    \|\mathcal{F}(f_1)-\mathcal{F}(f_2)\|_{L^{\infty}(0,\Delta \tau)}\leq C\Delta\tau\left(1+\left\|\frac{d}{d\eta}Q_{\init}(\eta)\right\|_{L^{\infty}(0,1)}\right)\|f_1-f_2\|_{L^{\infty}(0,\Delta \tau)},
\end{equation} where $C>0$ is a constant independent of the initial data. The estimate \eqref{mild-wps-key} implies that $\mathcal{F}$ is a contraction on $L^{\infty}(0,\Delta \tau)$ with $\Delta\tau$ given by, e.g.
\begin{equation}\label{cond-delta-tau}
    \Delta \tau=\min\left(1,\frac{1}{2C\left(1+\|\frac{d}{d\eta}Q_{\init}(\eta)\|_{L^{\infty}(0,1)}\right)}\right).
\end{equation} Hence, by the Banach fixed point theorem, on $(0,\Delta \tau)$ we can obtain a unique $f=:\tilde{N}$ as a fixed point to \eqref{fixed-point-1}, therefore a unique mild solution on that time interval.

Now we prove \eqref{mild-wps-key}. First we apply Lemma \ref{lem:flow-map-property}-3 to \eqref{Qf-1}-\eqref{Qf-2} to obtain
\begin{align}
     |Q^{f_1}(s,1-\tau+s)-Q^{f_2}(s,1-\tau+s)|&\leq Cs\|f_1-f_2\|_{L^{\infty}(0,s)}\\&\leq C\Delta\tau\|f_1-f_2\|_{L^{\infty}(0,\Delta\tau)},\qquad 0\leq s\leq \tau\leq \Delta\tau \leq 1,\label{tmp-Qf-1}\intertext{and}
    |Q^{f_1}(\tau,1)-Q^{f_2}(\tau,1)|&\leq C\tau\|f_1-f_2\|_{L^{\infty}(0,\tau)}\\&\leq C \Delta\tau\|f_1-f_2\|_{L^{\infty}(0,\Delta\tau)},\qquad 0\leq \tau\leq \Delta\tau \leq 1.\label{tmp-Qf-2}
\end{align}
Next we estimate the difference $\mathcal{F}(f_1)-\mathcal{F}(f_2)$. For the first term in \eqref{fixed-point-1}, we calculate
\begin{multline}
       \left|\exp\left(\int_0^{\tau}K'(Q^{f_1}(s,1-
        \tau+s))ds\right)\frac{d}{d\eta}Q_{\init}(1-\tau)-\exp\left(\int_0^{\tau}K'(Q^{f_2}(s,1-
        \tau+s))ds\right)\frac{d}{d\eta}Q_{\init}(1-\tau)\right|\leq \\ \|\frac{d}{d\eta}Q_{\init}(\eta)\|_{L^{\infty}} \left|\exp\left(\int_0^{\tau}K'(Q^{f_1}(s,1-
        \tau+s))ds\right)-\exp\left(\int_0^{\tau}K'(Q^{f_2}(s,1-
        \tau+s))ds\right)\right|\\ \leq C\|\frac{d}{d\eta}Q_{\init}(\eta)\|_{L^{\infty}}\left|\int_0^{\tau}K'(Q^{f_1}(s,1-
        \tau+s))ds-\int_0^{\tau}K'(Q^{f_2}(s,1-
        \tau+s))ds\right|.\label{tmp-Qf-1024}
\end{multline}
And using \eqref{tmp-Qf-1}, we have
\begin{align}
        \left|\int_0^{\tau}K'(Q^{f_1}(s,1-
        \tau+s))ds-\int_0^{\tau} \right.&\left. K'(Q^{f_2}(s,1-
        \tau+s))ds\right|\\&\leq C\tau \sup_{s\in[0,\tau]}| K'(Q^{f_1}(s,1-
        \tau+s))ds-K'(Q^{f_2}(s,1-
        \tau+s))|\\&\leq C\tau^2\|f_1-f_2\|_{L^{\infty}(0,\tau)}\\&\leq C\Delta\tau \|f_1-f_2\|_{L^{\infty}(0,\Delta\tau)},\qquad 0<\tau\leq \Delta\tau\leq 1.
\end{align} Hence together with \eqref{tmp-Qf-1024}, we derive for $\tau\leq \Delta\tau$
\begin{align}
     \left|\exp\left(\int_0^{\tau}K'(Q^{f_1}(s,1-
        \tau+s))ds\right)\frac{d}{d\eta}Q_{\init}(1-\tau)-\right.&\left.\exp\left(\int_0^{\tau}K'(Q^{f_2}(s,1-
        \tau+s))ds\right)\frac{d}{d\eta}Q_{\init}(1-\tau)\right|\\
        &\leq  C\Delta\tau  \|\frac{d}{d\eta}Q_{\init}(\eta)\|_{L^{\infty}}\|f_1-f_2\|_{L^{\infty}(0,\Delta\tau)}.\label{mildwps-key-1}
\end{align}

For the second term in \eqref{fixed-point-1}, we use \eqref{tmp-Qf-2} to derive for $\tau\leq \Delta\tau$
\begin{equation}\label{mildwps-key-2}
    |K(Q^{f_1}(\tau,1))-K(Q^{f_2}(\tau,1))|\leq  C \|{K'}\|_{\infty} \Delta\tau\|f_1-f_2\|_{L^{\infty}(0,\Delta\tau)} \leq  C\Delta\tau \|f_1-f_2\|_{L^{\infty}(0,\Delta\tau)}.
\end{equation}

Finally, as both terms in \eqref{fixed-point-1} are estimated, we combine \eqref{mildwps-key-1} and \eqref{mildwps-key-2} to obtain \eqref{mild-wps-key}. 

For later uses, we note that since \eqref{tmp-mildwps1} holds when $f$ solves the fixed point problem \eqref{fixed-point-1}, we can replace $Q^f(\tau,1)$ by $\Phi_F$ to derive
\begin{equation}\label{fixed-point-1-tmp} 
    f(\tau)=\exp\left(\int_0^{\tau}K'(Q^f(s,1-
        \tau+s))ds\right)\frac{d}{d\eta}Q_{\init}(1-\tau)-K(\Phi_F).
\end{equation}

\textit{Step 3. A Priori Growth Estimates.} In Step 2 we have shown that we can locally obtain a unique mild solution, on a small time interval depending on $\|\frac{d}{d\eta}Q_{\init}(\eta)\|_{L^{\infty}(0,1)}$, as in \eqref{cond-delta-tau}. To obtain a global solution, now we control the growth of the $L^{\infty}$ norm of the spatial derivative.

We first work for $\tau\in[0,1]$. For the desired solution, $f=\tilde{N}$ solves the fixed point problem \eqref{fixed-point-1} and therefore \eqref{fixed-point-1-tmp}, which allows us to estimate as follows
\begin{align}
        |\tilde{N}(\tau)|=|f(\tau)|&\leq\left|\exp\left(\int_0^{\tau}K'(Q^f(s,1-
        \tau+s))ds\right)\frac{d}{d\eta}Q_{\init}(1-\tau)\right|+\left|K(\Phi_F)\right|\\&\leq  e^{K_1\tau}\|\frac{d}{d\eta}Q_{\init}(\eta)\|_{L^{\infty}(0,1)}+ C,\qquad \tau\in[0,1],\label{S3-1}
\end{align} where we have used the boundedness of $K'$ with $K_1=\|K'\|_{L^{\infty}(\mathbb{R})}$. Then we calculate $\p_{\eta}Q$, using Definition \ref{def:mild-1-external-regular} and Lemma \ref{lem:flow-map-property}

\begin{equation}\label{formula-peta-flow}
    \p_{\eta}Q(\tau,\eta)=\begin{dcases}
        (K(0)+f(\tau-\eta))\exp\left(\int_{\tau-\eta}^{\tau}K'(Q^f(s,s-\tau+\eta))ds\right),\quad 0\leq\eta< \tau,\\
        \frac{d}{d\eta}Q_{\init}(\eta-\tau) \exp\left(\int_{0}^{\tau}K'(Q^f(s,s-\tau+\eta))ds\right),\quad\tau< \eta\leq 1.
    \end{dcases}
\end{equation} Note that at this stage whether $\p_{\eta}Q(\tau,\eta)$ is continuous at $\eta=\tau$ does not come into play as we look for the $L^{\infty}$ weak derivative.  For $0\leq \tau<\eta$, we estimate using \eqref{S3-1}
\begin{align}
    |\p_{\eta}Q(\tau,\eta)|&\leq (K(0)+|f(\tau-\eta)|)e^{K_1\eta}
\leq \left(C+e^{K_1(\tau-\eta)}\|\frac{d}{d\eta}Q_{\init}(\eta)\|_{L^{\infty}}\right)e^{K_1\eta}\leq e^{K_1\tau}\|\frac{d}{d\eta}Q_{\init}(\eta)\|_{L^{\infty}}+C.
\end{align}
For $\tau<\eta<1$, it is direct to obtain
\begin{align}
    |\p_{\eta}Q(\tau,\eta)|\leq\left|\frac{d}{d\eta}Q_{\init}(\eta-\tau) \right|e^{K_1\tau}\leq e^{K_1\tau}\|\frac{d}{d\eta}Q_{\init}(\eta)\|_{L^{\infty}}.
\end{align} All together we have
\begin{equation}\label{MS-1}
     \|\p_{\eta}Q(\tau,\cdot)\|_{L^{\infty}(0,1)}\leq e^{K_1\tau}\|\frac{d}{d\eta}Q_{\init}(\eta)\|_{L^{\infty}(0,1)}+C,\quad \tau\in[0,1].
\end{equation}
For $\tau>1$ we similarly have
\begin{equation}
    \|\p_{\eta}Q(\tau,\cdot)\|_{L^{\infty}(0,1)}\leq  e^{K_1}\|\p_{\eta}Q(\tau-1,\cdot)\|_{L^{\infty}(0,1)}+C,
\end{equation} 
which iteratively yields
\begin{equation}\label{MS-2}
        \|\p_{\eta}Q(\tau,\cdot)\|_{L^{\infty}(0,1)}\leq  e^{K_1[\tau]}\left(\|\p_{\eta}Q(\tau-[\tau],\cdot)\|_{L^{\infty}(0,1)}+C[\tau]\right),
\end{equation} where we use $[\tau]$ to denote the integer part of $\tau$. Combining \eqref{MS-2} and \eqref{MS-1} we obtain the desired estimate \eqref{growth-Qw1infty}.

For $\tilde{N}$, similar to \eqref{S3-1} we have for $\tau>1$
\begin{equation}
    |\tilde{N}(\tau)|\leq e^{K_1}\|\p_{\eta}Q(\tau-1,\cdot)\|_{L^{\infty}(0,1)}+C,
\end{equation} combining which with \eqref{growth-Qw1infty} gives \eqref{growth-tilde-N}.

\textit{Step 4. Towards a global solution.} In Step 2 we have shown the local well-posedness on $[0,\Delta\tau]$, with a timestep related to the $L^{\infty}$ norm of the spatial derivative \eqref{cond-delta-tau}. The bound \eqref{growth-Qw1infty} ensures that this $L^{\infty}$ norm, despite may grow exponentially as time goes to infinity, is uniformly bounded on every finite time interval $[0,T]$, which allows us to extend the solution towards $\tau=T$ for every $T<+\infty$. Therefore, we can obtain a global solution.

\end{proof}
As a corollary of the construction in Proposition \ref{Prop:wps-mild-solution}, we deduce the continuous dependence on initial data for the mild solution.
\begin{corollary}\label{cor:stability-mild}
    Let $Q_{\init,\eps}$ be a family of initial data in $W^{1,\infty}(0,1)$ indexed by $\eps\geq0$ satisfying $Q_{\init,\eps}(0)=0,Q_{\init,\eps}(1)=\Phi_F$ such that
    \begin{equation}\label{stability-mild-cond}
        Q_{\init,\eps}(\cdot)\rightarrow Q_{\init,0}(\cdot),\qquad \text{in $W^{1,\infty}(0,1)$,}\qquad \text{as $\eps\rightarrow0^+.$}
    \end{equation}
    Denote the corresponding mild solution to the relaxed problem \eqref{tilde-eq-classical} as $(Q_{\eps},\tilde{N}_{\eps})$, as constructed in Proposition \ref{Prop:wps-mild-solution}. In particular, $Q_0$ and $\tilde{N}_0$ correspond to the initial data $Q_{\init,0}$. Then we have for every $0<T<+\infty$
    \begin{equation}\label{convergence-tilde-mild}
        \tilde{N}_{\eps}(\tau)\rightarrow \tilde{N}_0(\tau)\qquad \text{in $L^{\infty}(0,T)$,}\qquad \text{as $\eps\rightarrow0^+,$}
    \end{equation} and
    \begin{equation}\label{convergence-Q-relax-mild}
         Q_{\eps}(T,\cdot)\rightarrow Q_0(T,\cdot)\qquad \text{in $W^{1,\infty}(0,1)$,}\qquad \text{as $\eps\rightarrow0^+.$}
    \end{equation}
\end{corollary}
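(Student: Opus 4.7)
The plan is to extract continuous dependence from the contraction argument already established in Proposition \ref{Prop:wps-mild-solution}, combined with the uniform growth estimates \eqref{growth-Qw1infty}--\eqref{growth-tilde-N}. Fix $T>0$. The hypothesis \eqref{stability-mild-cond} yields a uniform bound $M:=\sup_{\eps\geq 0}\|\tfrac{d}{d\eta}Q_{\init,\eps}\|_{L^\infty(0,1)}<+\infty$, and consequently, via \eqref{growth-Qw1infty}--\eqref{growth-tilde-N}, uniform bounds $\sup_{\eps}\|\p_\eta Q_\eps(\tau,\cdot)\|_{L^\infty(0,1)}\leq M_T$ and $\sup_{\eps}\|\tilde N_\eps\|_{L^\infty(0,T)}\leq M_T$ on $[0,T]$. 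In particular, by the criterion \eqref{cond-delta-tau} applied uniformly in $\eps$ and shifted to any starting time in $[0,T]$, there is a uniform time step $\Delta\tau_0=\Delta\tau_0(M_T)>0$ on which the fixed-point map $\mathcal{F}$ associated with any of the translated initial data $Q_\eps(\tau_0,\cdot)$ is a contraction of ratio $\tfrac12$ on $L^\infty$.

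First I would establish the local stability statement on $[0,\Delta\tau_0]$. Denote by $\mathcal{F}_\eps$ the map in \eqref{fixed-point-1} corresponding to initial datum $Q_{\init,\eps}$; its fixed point is $\tilde N_\eps$. A standard three-term estimate gives
\begin{equation}
\|\tilde N_\eps-\tilde N_0\|_{L^\infty(0,\Delta\tau_0)} \leq \|\mathcal{F}_\eps(\tilde N_\eps)-\mathcal{F}_\eps(\tilde N_0)\|_{L^\infty} + \|\mathcal{F}_\eps(\tilde N_0)-\mathcal{F}_0(\tilde N_0)\|_{L^\infty} \leq \tfrac{1}{2}\|\tilde N_\eps-\tilde N_0\|_{L^\infty(0,\Delta\tau_0)} + \omega(\eps),
\end{equation}
hence $\|\tilde N_\eps-\tilde N_0\|_{L^\infty(0,\Delta\tau_0)}\leq 2\omega(\eps)$, where $\omega(\eps):=\|\mathcal{F}_\eps(\tilde N_0)-\mathcal{F}_0(\tilde N_0)\|_{L^\infty(0,\Delta\tau_0)}$. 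The quantity $\omega(\eps)$ is controlled from the explicit expression for $\mathcal{F}$ using the $W^{1,\infty}$ convergence \eqref{stability-mild-cond}: the factor $\tfrac{d}{d\eta}Q_{\init,\eps}(1-\tau)$ converges in $L^\infty$ by assumption, while the flow terms involve $\Psi^{\tilde N_0}_{0\to s}(Q_{\init,\eps}(1-\tau))$, which converge uniformly thanks to Lemma \ref{lem:flow-map-property}-1 (the $C^1$ dependence on the starting position) together with the $L^\infty$ convergence of $Q_{\init,\eps}\to Q_{\init,0}$. Thus $\omega(\eps)\to 0$, proving \eqref{convergence-tilde-mild} on $[0,\Delta\tau_0]$. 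Plugging this back into the flow-map representation \eqref{expression-mild} and its derivative \eqref{formula-peta-flow}, and invoking Lemma \ref{lem:flow-map-property}-3 (stability in $f$), we obtain the $W^{1,\infty}$ convergence $Q_\eps(\Delta\tau_0,\cdot)\to Q_0(\Delta\tau_0,\cdot)$ at the endpoint.

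Second, I would iterate this argument in at most $\lceil T/\Delta\tau_0\rceil$ steps. Because the time step $\Delta\tau_0$ is determined by the uniform bound $M_T$ which is valid throughout $[0,T]$, each successive application produces another local convergence statement provided the $W^{1,\infty}$ convergence of the ``initial data'' $Q_\eps(\tau_0,\cdot)\to Q_0(\tau_0,\cdot)$ at the left endpoint of the step is available; but that is precisely what the previous step furnished. After finitely many iterations, we reach $\tau=T$, yielding \eqref{convergence-tilde-mild} on all of $[0,T]$ and the endpoint $W^{1,\infty}$ convergence \eqref{convergence-Q-relax-mild}.

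The main obstacle, and the reason the proof is non-trivial despite following a standard contraction pattern, is ensuring that the iteration constants do not degenerate as $\tau$ increases. This is exactly the role of the \emph{a priori} growth estimates \eqref{growth-Qw1infty}--\eqref{growth-tilde-N} proved in Proposition \ref{Prop:wps-mild-solution}: they guarantee that $\Delta\tau_0$ can be chosen uniformly in $\eps$ and uniformly along the iteration over $[0,T]$. Without this uniformity, one could only conclude local stability near $\tau=0$. All other ingredients—the contraction property of $\mathcal{F}_\eps$, the smooth dependence of the flow on data, and the passage from $\tilde N_\eps\to\tilde N_0$ to convergence of $Q_\eps$ via \eqref{expression-mild} and \eqref{formula-peta-flow}—are either built into Proposition \ref{Prop:wps-mild-solution} or provided by Lemma \ref{lem:flow-map-property}.
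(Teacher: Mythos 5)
Your proof follows essentially the same route as the paper: it exploits the fixed-point characterization from Proposition \ref{Prop:wps-mild-solution}, performs the three-term decomposition $\tilde N_\eps-\tilde N_0 = [\mathcal F_\eps(\tilde N_\eps)-\mathcal F_\eps(\tilde N_0)] + [\mathcal F_\eps(\tilde N_0)-\mathcal F_0(\tilde N_0)]$ on a short interval, absorbs the first term by contraction, propagates to $Q_\eps$ and $\p_\eta Q_\eps$ through the flow-map formulas \eqref{expression-mild} and \eqref{formula-peta-flow}, and iterates uniformly across $[0,T]$ using the growth estimates \eqref{growth-Qw1infty}--\eqref{growth-tilde-N}. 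The only stylistic difference is that the paper proves a quantitative Lipschitz estimate $\|\mathcal{F}(f,Q_{\init}^1)-\mathcal{F}(f,Q_{\init}^2)\|_{L^{\infty}}\leq C\|Q_{\init}^1-Q_{\init}^2\|_{W^{1,\infty}}$ and arrives at the explicit bound $\|f_1-f_2\|\leq\frac{C}{1-\alpha}\|Q_{\init}^1-Q_{\init}^2\|_{W^{1,\infty}}$, whereas you only argue qualitatively that the modulus $\omega(\eps)=\|\mathcal F_\eps(\tilde N_0)-\mathcal F_0(\tilde N_0)\|$ tends to zero; both yield the claimed convergence.
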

\begin{proof}
\ \newline \indent    
    \textit{Step 1. Local convergence of $\tilde{N}$.}
    We shall use that $f=\tilde{N}$ satisfies the fixed point problem \eqref{fixed-point-1}
    \begin{equation}\label{fixed-point-initial} 
    \begin{aligned}f(\tau)&=\exp\left(\int_0^{\tau}K'(Q^f(s,1-
        \tau+s))ds\right)\frac{d}{d\eta}Q_{\init}(1-\tau)-K(Q^f(\tau,1))\\&=:\mathcal{F}(f,Q_{\init}),%
\end{aligned}
\end{equation} where we extend the notation of $\mathcal{F}$ to $\mathcal{F}(f,Q_{\init})$ for the dependence on $Q_{\init}$. Note that in \eqref{fixed-point-initial} $Q^f$ depends on both the external source $f$ and the initial data $Q_{\init}$.

We shall estimate how the fixed point $f$ of \eqref{fixed-point-initial} depend on $Q_{\init}$. Similar to the calculations behind \eqref{mild-wps-key}, it is tedious but straightforward to show for $0<\Delta\tau<1$
\begin{equation}\label{tmp-stability}
    \|\mathcal{F}(f,Q_{\init}^1)-\mathcal{F}(f,Q_{\init}^2)\|_{L^{\infty}(0,\Delta\tau)}\leq C\|Q_{\init}^1-Q_{\init}^2\|_{W^{1,\infty}(0,1)},
\end{equation} where the constant $C>0$ depends on $\|f\|_{L^\infty(0,1)}$ and $\max(\|Q_{\init}^1\|_{W^{1,\infty}(0,1)},\|Q_{\init}^2\|_{W^{1,\infty}(0,1)})$.

For $\Delta\tau$ small enough (depending on $\|Q_{\init}^i\|_{W^{1,\infty}(0,1)}$) we know from \eqref{mild-wps-key} that $\mathcal{F}(\cdot,Q_{\init}^i)$ are contractions on $L^{\infty}(0,\Delta\tau)$ with a rate, says, $0<\alpha<1$, for both $i=1,2$.

Now we estimate the distance between two fixed points given by
\begin{equation}
    f_i=\mathcal{F}(f_i,Q_{\init}^i),\qquad i=1,2.
\end{equation} We write
\begin{align*}
       f_1-f_2&= \mathcal{F}(f_1,Q_{\init}^1)-\mathcal{F}(f_2,Q_{\init}^2)\\&=\mathcal{F}(f_1,Q_{\init}^1)-\mathcal{F}(f_2,Q_{\init}^1)+\mathcal{F}(f_2,Q_{\init}^1)-\mathcal{F}(f_2,Q_{\init}^2),
\end{align*}
and derive using the contraction property of $f$ and \eqref{tmp-stability}
\begin{align}
    \|f_1-f_2\|_{L^{\infty}(0,\Delta\tau)}&\leq\|\mathcal{F}(f_1,Q_{\init}^1)-\mathcal{F}(f_2,Q_{\init}^1)\|_{L^{\infty}(0,\Delta\tau)}+\|\mathcal{F}(f_2,Q_{\init}^1)-\mathcal{F}(f_2,Q_{\init}^2)\|_{L^{\infty}(0,\Delta\tau)}\\&\leq \alpha \|f_1-f_2\|_{L^{\infty}(0,\Delta\tau)}+C\|Q_{\init}^1-Q_{\init}^2\|_{W^{1,\infty}(0,1)},
\end{align} which gives
\begin{equation}\label{stability-key}
    \|f_1-f_2\|_{L^{\infty}(0,\Delta\tau)}\leq\frac{C}{1-\alpha}\|Q_{\init}^1-Q_{\init}^2\|_{W^{1,\infty}(0,1)}.
\end{equation}

We apply \eqref{stability-key} to $Q_{\init,\eps}$ to derive that
\begin{equation}\label{stability-first-local}
     \tilde{N}_{\eps}(\tau)\rightarrow \tilde{N}_0(\tau)\qquad \text{in $L^{\infty}(0,\Delta\tau)$,}\qquad \text{as $\eps\rightarrow0^+.$}
\end{equation}

Note that here constants in \eqref{stability-key} $C>0,\alpha\in(0,1),\Delta\tau>0$ can be uniformly chosen, due to that the $W^{1,\infty}$ norm of $Q_{\init,\eps}$ is uniformly bounded thanks to \eqref{stability-mild-cond}, and that the $L^{\infty}$ norm of $\tilde{N}$ is controlled by \eqref{growth-tilde-N} in Proposition \ref{Prop:wps-mild-solution}. This justifies the application of \eqref{stability-key} to obtain \eqref{stability-first-local}.

    \textit{Step 2. Local convergence for $Q$.} With \eqref{stability-first-local}, using the expressions \eqref{expression-mild} and \eqref{formula-peta-flow} for $Q$ and $\p_{\eta}Q$, we see for every $0<\tau<\Delta\tau$
    \begin{equation}\label{stability-second-local}
                 Q_{\eps}(\tau,\cdot)\rightarrow Q_0(\tau,\cdot)\qquad \text{in $W^{1,\infty}(0,1)$,}\qquad \text{as $\eps\rightarrow0^+.$}
    \end{equation}

    \textit{Step 3. Towards infinity.} With the result in the second step, we can start at a new time, choose $Q(\tau,\cdot)$ as a new initial data and repeat the previous arguments.

    Similar to the proof of Proposition \ref{Prop:wps-mild-solution}, the only constraint on the choice of the time step is the growth of the $W^{1,\infty}$ norm of $Q(\tau,\cdot)$, which is under control thanks to \eqref{growth-Qw1infty}. Therefore we can repeat the above procedure towards infinity (that is, \eqref{convergence-tilde-mild} and \eqref{convergence-Q-relax-mild} hold for all $T<+\infty$). 
    
\end{proof}

We state the following characterization of $\tilde{N}$, which will be useful for the later proof of Proposition \ref{prop:regularity-classical}.

\begin{corollary}\label{cor:fp-mild}
    Let $(Q,\tilde{N})$ be a mild solution to \eqref{tilde-eq-classical}-\eqref{tilde-ic-classical}. Then $f(\tau)=\tilde{N}(\tau)$ satisfies the following fixed point equation for $\tau\in[0,1)$
    \begin{align}
        \label{tcor-fp1}
    f(\tau)&=\exp\left(\int_0^{\tau}K'(Q^f(s,1-
        \tau+s))ds\right)\frac{d}{d\eta}Q_{\init}(1-\tau)-K(\Phi_F),\\&=\exp\left(\int_0^{\tau}K'(\Psi_{0\rightarrow s}^f(Q_{\init}(1-\tau)))ds\right)\frac{d}{d\eta}Q_{\init}(1-\tau)-K(\Phi_F).\label{tcor-fp2}
    \end{align}
    Here $Q^f$ denotes the solution to the simplified problem \eqref{simplified-1-external-regular} with external source $f$. 
\end{corollary}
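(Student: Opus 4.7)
The plan is to reuse the computation already carried out in Step 1 of the proof of Proposition \ref{Prop:wps-mild-solution}. By Definition \ref{def:mild-2-full-regular}, $(Q,\tilde N)$ is a mild solution exactly when $Q = Q^{\tilde N}$ (the mild solution to the external-source problem \eqref{simplified-1-external-regular} with $f=\tilde N$) and the pseudo-inverse constraint $Q(\tau,1)\equiv \Phi_F$ holds. So I just need to translate this constraint into the fixed-point equation.

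First I would restrict to $\tau\in[0,1)$ and use the formula \eqref{expression-mild} in Definition \ref{def:mild-1-external-regular} to write
\begin{equation*}
Q(\tau,1) = \Psi^{\tilde N}_{0\rightarrow \tau}\bigl(Q_{\init}(1-\tau)\bigr).
\end{equation*}
The pseudo-inverse constraint then becomes $\Psi^{\tilde N}_{0\rightarrow \tau}(Q_{\init}(1-\tau))\equiv \Phi_F$ on $[0,1)$. Since $Q_{\init}\in W^{1,\infty}(0,1)$, $K\in C^2$, and $\tilde N$ is locally bounded, the map $\tau\mapsto \Psi^{\tilde N}_{0\rightarrow \tau}(Q_{\init}(1-\tau))$ is absolutely continuous, so I may differentiate in $\tau$ almost everywhere and obtain
\begin{equation*}
0 = \frac{\partial}{\partial \tau}\Psi^{\tilde N}_{0\rightarrow \tau}(x)\Big|_{x=Q_{\init}(1-\tau)} - \frac{\partial}{\partial x}\Psi^{\tilde N}_{0\rightarrow \tau}\bigl(Q_{\init}(1-\tau)\bigr)\,\frac{d}{d\eta}Q_{\init}(1-\tau).
\end{equation*}

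Next I apply the two identities in Lemma \ref{lem:flow-map-property}. The $\tau$-derivative of the flow gives $\tilde N(\tau)+K(\Psi^{\tilde N}_{0\rightarrow \tau}(Q_{\init}(1-\tau)))$, and here I use $Q(\tau,1)=\Phi_F$ again to replace the $K$-term by $K(\Phi_F)$. The $x$-derivative gives the exponential factor $\exp\left(\int_0^{\tau}K'(\Psi^{\tilde N}_{0\rightarrow s}(Q_{\init}(1-\tau)))ds\right)$. Solving for $\tilde N(\tau)$ yields
\begin{equation*}
\tilde N(\tau) = \exp\!\left(\int_0^{\tau}K'\bigl(\Psi^{\tilde N}_{0\rightarrow s}(Q_{\init}(1-\tau))\bigr)ds\right)\frac{d}{d\eta}Q_{\init}(1-\tau) - K(\Phi_F),
\end{equation*}
which is \eqref{tcor-fp2}. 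Translating the flow back to $Q^{\tilde N}$ via $\Psi^{\tilde N}_{0\rightarrow s}(Q_{\init}(1-\tau)) = Q^{\tilde N}(s,1-\tau+s)$ gives \eqref{tcor-fp1}.

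There is no real obstacle; the only mild care needed is to justify the a.e.\ differentiation in $\tau$ (which follows from absolute continuity of the flow map in its starting-time argument, item~2 of Lemma \ref{lem:flow-map-property}, combined with Lipschitz regularity of $Q_{\init}$), and to note that the identity $Q(\tau,1)=\Phi_F$ is used twice: once to justify differentiating to get the zero on the left-hand side, and once to replace $K(Q(\tau,1))$ by $K(\Phi_F)$. In essence, this corollary simply extracts and records equation \eqref{fixed-point-1-tmp} from the proof of Proposition \ref{Prop:wps-mild-solution} for later reference.
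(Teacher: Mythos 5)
Your proof is correct and takes essentially the same approach as the paper. The paper's own proof of Corollary~\ref{cor:fp-mild} is a two-sentence citation of \eqref{fixed-point-1-tmp} and \eqref{Qf-1} from Step~1 of the proof of Proposition~\ref{Prop:wps-mild-solution}; you simply re-derive that step (differentiate the pseudo-inverse constraint $\Psi^{\tilde N}_{0\rightarrow\tau}(Q_{\init}(1-\tau))\equiv\Phi_F$ a.e.\ using Lemma~\ref{lem:flow-map-property} and use the constraint again to replace $K(Q(\tau,1))$ by $K(\Phi_F)$), which is the identical mathematical content.
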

\begin{proof}    Here \eqref{tcor-fp1} is just \eqref{fixed-point-1-tmp}. And then \eqref{tcor-fp2} is obtained by plugging in \eqref{Qf-1}.
\end{proof}

\subsection{Relaxed problem: regularity}\label{subsec:regular-relax}

We proceed to improve the regularity of the mild solution obtained in the previous section, under more requirements on initial data.

\subsubsection{From mild to classical: Proof of Theorem \ref{thm:wps-classical-relax}-\ref{thm:stability-classical-relax}}
\begin{proposition}\label{prop:regularity-classical}
Suppose $Q_{\init}\in C^1[0,1]$ with $Q_{\init}(0)=0,Q_{\init}(1)=\Phi_F$ and suppose additionally
\begin{equation}\label{compatible-S33}
                \frac{d}{d\eta} Q_{\init}(1)-\frac{d}{d\eta} Q_{\init}(0)=K(\Phi_F)-K(0).
\end{equation}
Then the mild solution obtained in Proposition \ref{Prop:wps-mild-solution} is classical, in the sense of Definition \ref{def:tilde-classical-q-tau}, and thus it satisfies the properties in Proposition \ref{prop:tildeN-imbc}.
\end{proposition}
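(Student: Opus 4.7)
The plan is to upgrade the mild solution $(Q,\tilde{N})$ from Proposition \ref{Prop:wps-mild-solution} to a classical one in two stages: promote $\tilde{N}$ from $L^{\infty}_{\mathrm{loc}}$ to $C$, then promote $Q$ to $C^1$. The compatibility condition \eqref{compatible-S33} will play the crucial role of matching derivatives across the characteristic $\eta=\tau$ emanating from the corner $(0,0)$.

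First I would prove $\tilde{N}\in C[0,1)$ directly from the fixed-point identity \eqref{tcor-fp1} of Corollary \ref{cor:fp-mild}: with $Q_{\init}\in C^1$ and the flow map $\Psi^{\tilde N}_{0\to\cdot}(\cdot)$ jointly continuous in its arguments (Lemma \ref{lem:flow-map-property}), the right-hand side of \eqref{tcor-fp1} is a continuous function of $\tau\in[0,1)$, hence so is $\tilde{N}$. With $\tilde{N}$ now continuous, Lemma \ref{lem:flow-map-property} upgrades to joint $C^1$ regularity of $\Psi^{\tilde N}_{\tau_s\to\tau}(x)$ in all three variables, so the two pieces of the representation \eqref{expression-mild} become $C^1$ on $\{\eta<\tau\}$ and $\{\eta>\tau\}$ respectively, and a direct computation using the two formulas shows that the transport equation \eqref{tilde-eq-classical} holds classically in each half-domain.

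The main obstacle is the matching of $\partial_\eta Q$ across the diagonal $\eta=\tau$. Using Lemma \ref{lem:flow-map-property}(1)--(2) one computes
\[
\partial_\eta Q(\tau,\tau^-) = \bigl(K(0)+\tilde N(0)\bigr)E(\tau),\qquad \partial_\eta Q(\tau,\tau^+) = \frac{dQ_{\init}}{d\eta}(0)\,E(\tau),
\]
where $E(\tau):=\exp\bigl(\int_0^\tau K'(\Psi^{\tilde N}_{0\to t}(0))\,dt\bigr)$. Equality of the two limits reduces to $K(0)+\tilde N(0)=\tfrac{d Q_{\init}}{d\eta}(0)$. Reading $\tilde N(0)=\tfrac{dQ_{\init}}{d\eta}(1)-K(\Phi_F)$ off \eqref{tcor-fp1} at $\tau=0$, this coincides exactly with the assumed identity \eqref{compatible-S33}. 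Matching of $\partial_\tau Q$ is then automatic from $\partial_\tau Q+\partial_\eta Q=\tilde{N}+K(Q)$, giving $Q\in C^1([0,1)\times[0,1])$.

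To extend to all of $[0,+\infty)$, I would iterate: on some interval $[0,\tau_0]$ with $\tau_0<1$, the constructed $C^1$ solution satisfies $\partial_\eta Q(\tau_0,1)-\partial_\eta Q(\tau_0,0)=K(\Phi_F)-K(0)$, which one reads off by computing $\partial_\eta Q(\tau_0,0)=K(0)+\tilde N(\tau_0)$ directly from \eqref{expression-mild} and $\partial_\eta Q(\tau_0,1)=K(\Phi_F)+\tilde N(\tau_0)$ by differentiating $Q(\tau,1)\equiv\Phi_F$ in $\tau$ and substituting into the transport equation at $\eta=1$. Thus $Q(\tau_0,\cdot)$ qualifies as new initial data satisfying Assumption \ref{as:wps}, and the previous argument restarts to propagate $C^1$ regularity indefinitely, while the $L^\infty$ growth bound \eqref{growth-Qw1infty} guarantees that a uniform restart time step can be chosen on any finite interval. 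Proposition \ref{prop:tildeN-imbc} then yields the remaining boundary identities.
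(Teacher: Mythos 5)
Your proof is correct and follows essentially the same route as the paper's: continuity of $\tilde N$ is read off the fixed-point identity \eqref{tcor-fp2}, the $C^1$ regularity of $Q$ hinges on matching $\p_\eta Q$ across the characteristic $\eta=\tau$ via the identity $K(0)+\tilde N(0)=\frac{d}{d\eta}Q_{\init}(0)$, which is exactly \eqref{compatible-S33} combined with \eqref{tcor-fp1} at $\tau=0$, and the restart from $\tau_0<1$ propagates the argument using the preserved compatibility relation $\p_\eta Q(\tau_0,1)-\p_\eta Q(\tau_0,0)=K(\Phi_F)-K(0)$. A minor slip: the restarted data satisfies the hypotheses of Proposition \ref{prop:regularity-classical} rather than the full Assumption \ref{as:wps} (which additionally demands monotonicity and $\frac{d}{d\eta}Q_{\init}(1)>K(\Phi_F)$, neither assumed nor needed for the relaxed problem), but this does not affect the argument.
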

Compared to Proposition \ref{Prop:wps-mild-solution}, two more conditions on initial data are imposed: $C^1$ regularity instead of $W^{1,\infty}$, and the compatibility condition \eqref{compatible-S33}, which is \eqref{first-order-in-assumption} in Assumption \ref{as:wps}. Proposition \ref{prop:regularity-classical} claims that with these two additional conditions, the mild solution (Definition \ref{def:mild-2-full-regular}) as constructed in Proposition \ref{Prop:wps-mild-solution} is indeed a classical solution (Definition \ref{def:tilde-classical-q-tau}).

\begin{proof}[Proof of Proposition \ref{prop:regularity-classical}]

We first show the desired regularity for $\tau\in[0,1)$.

\textit{Step 1. Continuity of $\tilde{N}$.} We already know $\tilde{N}(\tau)$ is locally bounded in Proposition \ref{Prop:wps-mild-solution}. To show the continuity, we shall use the fixed point equation \eqref{tcor-fp2} in Corollary \ref{cor:fp-mild}. For convenience, we still use the notation of the external source $f$ with $f=\tilde{N}$. Note that as long as $f$ is locally bounded and $Q_{\init}$ is continuous, then $\Psi_{0\rightarrow s}^f(Q_{\init}(1-\tau))$ is also continuous with respect to $\tau$ by Lemma \ref{lem:flow-map-property}. Therefore we can readily check that the right-hand side in \eqref{tcor-fp2} is continuous in $\tau$, when we further know $\frac{d}{d\eta}Q_{\init}$ is continuous.  This gives the continuity of $f=\tilde{N}(\tau)$ on $[0,1)$.

\textit{Step 2. Regularity of $Q$.} Now we proceed to show that $Q(\tau,\eta)$ is $C^1([0,1)\times[0,1])$. By Definition \ref{def:mild-1-external-regular} for the mild solution, we see $Q^f$ is continuous when the initial data and the external source $f=\tilde{N}$ is continuous. 

Moreover, we can compute the spatial derivative as \eqref{formula-peta-flow}
\begin{equation}\label{formula-peta-flow-recall}
    \p_{\eta}Q(\tau,\eta)=\begin{dcases}
        (K(0)+f(\tau-\eta))\exp\left(\int_{\tau-\eta}^{\tau}K'(Q^f(s,s-\tau+\eta))ds\right),\quad 0\leq\eta< \tau,\\
        \frac{d}{d\eta}Q_{\init}(\eta-\tau) \exp\left(\int_{0}^{\tau}K'(Q^f(s,s-\tau+\eta))ds\right),\quad\tau< \eta\leq 1.
    \end{dcases}
\end{equation} For this piecewise expression, we need to check the continuity at $\tau=\eta$. Here the compatible condition \eqref{compatible-S33} comes into play. We derive, using \eqref{tcor-fp2} at $\tau=0$ and \eqref{compatible-S33}
\begin{align}
    K(0)+f(0)&=K(0)+\frac{d}{d\eta}Q_{\init}(1)-K(\Phi_F)=\frac{d}{d\eta}Q_{\init}(0),\label{compatible-tmp-pf}
\end{align} 
which implies that the expression \eqref{formula-peta-flow-recall} is continuous at $\eta=\tau$. Furthermore, together with the continuity of $Q^f$ we can check that \eqref{formula-peta-flow-recall} gives a continuous function on $[0,1)\times[0,1]$.
For the temporal derivative, by definition \eqref{expression-mild} we compute
$
    \p_{\tau}Q=K(Q)+\tilde{N}(\tau)-\p_{\eta}Q,
$ 
which gives both the regularity of $\p_{\tau}Q$ as the right hand side is a sum of three $C^1$ functions, and that the equation \eqref{tilde-eq-classical} is satisfied in the classical sense. From \eqref{tcor-fp1} and \eqref{formula-peta-flow-recall}, we also recover the relation \eqref{def-tildeN-tau-classical}.

\textit{Step 3. Beyond $[0,1)$.} Above, we have worked for $\tau\in[0,1)$. To extend the results for all $\tau\geq0$, it suffices to show that the conditions on initial data holds for $Q(\tau,\cdot)$ when $\tau\in[0,1)$. 

Indeed, we have already shown that $\p_{\eta}Q$ is $C^1$ and that the boundary values of $Q$ are prescribed in the construction of the mild solution. It only remains to check the compatibility condition \eqref{compatible-S33}, for which we compute as follows, using \eqref{formula-peta-flow-recall} and \eqref{tcor-fp1} 
\begin{align}
    \p_{\eta}Q(\tau,1)=\frac{d}{d\eta}Q_{\init}(1-\tau) \exp\left(\int_{0}^{\tau}K'(Q^f(s,s+\tau-1))ds\right)=f(\tau)+K(\Phi_F) =\p_{\eta}Q(\tau,0)-K(0)+K(\Phi_F).
\end{align}
Hence, we can start from e.g. $\tau=1/2$, and extend the arguments in Step 1-2 to obtain the regularity for  $\tau\in[1/2,3/2)$.  We can conclude the result for all $\tau\geq 0$ by iteration.

\end{proof}
\begin{remark}\label{rmk:classical-subtle}
Note that if the compatibility condition \eqref{compatible-S33} is initially violated, then there will be a discontinuity in $\p_{\eta}Q$, which propagates along the characteristic $\tau=\eta$ till $\tau=1$ since the compatibility condition in Proposition \ref{prop:tildeN-imbc} is not met. And then this discontinuity will appear again at $\eta=0$.
\end{remark}

Now we have enough preparations to prove Theorem \ref{thm:wps-classical-relax} and Theorem \ref{thm:stability-classical-relax}.
\begin{proof}[Proof of Theorem \ref{thm:wps-classical-relax}]
\ \newline \indent    
    \textit{1. Global existence.} The mild solution constructed in Proposition \ref{Prop:wps-mild-solution} is classical by Proposition \ref{prop:regularity-classical}, which gives the global existence.
    
    \textit{2. Uniqueness.} For two classical solutions with a same initial data, we can directly check thanks to Proposition \ref{prop:char-tildeN-classical} that they are also two mild solutions. Hence by Proposition \ref{Prop:wps-mild-solution} they must be the same.
\end{proof}

\begin{proof}[Proof of Theorem \ref{thm:stability-classical-relax}]
    It directly follows from Corollary \ref{cor:stability-mild}. Note that the convergence in $L^{\infty}(0,T)$ becomes in $C[0,T]$ etc., thanks to the regularity by Proposition \ref{prop:regularity-classical}.
\end{proof}

\subsubsection{From $C^1$ to $C^2$: Proof of Theorem \ref{thm:regularity-classical-relax}} \label{subsubsec:C2}

\begin{proposition}\label{Prop:C2}
In the same setting of Proposition \ref{prop:regularity-classical}, suppose additionally  $Q_{\init}\in C^2[0,1]$ with
 \begin{equation}\label{second-compatible-recall}
        \frac{d^2}{d\eta^2}Q_{\init}(1)-\frac{d^2}{d\eta^2}Q_{\init}(0)=K'(\Phi_F)\frac{d}{d\eta}Q_{\init}(1)-K'(0)\frac{d}{d\eta}Q_{\init}(0).
    \end{equation}
    Then we have better regularity for the solution: $ \tilde{N}(\tau)\in C^1[0,+\infty)$ and     $Q\in C^2([0,+\infty)\times[0,1])$.
\end{proposition}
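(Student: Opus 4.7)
The plan is to bootstrap one order of regularity on top of Proposition \ref{prop:regularity-classical}, which already yields $\tilde{N} \in C[0, +\infty)$ and $Q \in C^1([0, +\infty) \times [0, 1])$. Mirroring the structure of that proof, I first establish the result on the slab $\tau \in [0, 1)$ and then iterate forward in time.

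For the regularity of $\tilde{N}$ on $[0, 1)$, I would differentiate the fixed-point relation \eqref{tcor-fp1} of Corollary \ref{cor:fp-mild} in $\tau$. With $Q_{\init} \in C^2$ the factor $\frac{d}{d\eta} Q_{\init}(1 - \tau)$ is $C^1$ in $\tau$; with $Q^f \in C^1$ (from Proposition \ref{prop:regularity-classical}) and $K \in C^2$ (Assumption \ref{as:global-K}), Leibniz's rule justifies the differentiation of $\int_0^\tau K'(Q^f(s, 1 - \tau + s))\, ds$, producing a continuous expression for $\tilde{N}'(\tau)$. Hence $\tilde{N} \in C^1[0, 1)$.

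For the regularity of $Q$, I would pass to the derived problem for $Z := \p_\eta Q$. Differentiating \eqref{tilde-eq-classical} in $\eta$ gives the linear transport equation
\begin{equation}
(\p_\tau + \p_\eta)\, Z = K'(Q)\, Z, \qquad Z(\tau, 0) = \tilde{N}(\tau) + K(0), \qquad Z(0, \eta) = \tfrac{d}{d\eta} Q_{\init}(\eta).
\end{equation}
Solving by characteristics produces two pieces, from the initial and the boundary data respectively, each $C^1$ on its side of the line $\eta = \tau$. They match to first order across this line if and only if the corner-compatibility condition at $(0, 0)$ holds:
\begin{equation}
\tilde{N}'(0) + \tfrac{d^2}{d\eta^2} Q_{\init}(0) = K'(0)\, \tfrac{d}{d\eta} Q_{\init}(0).
\end{equation}
Since $\tilde{N}(\tau) = Z(\tau, 1)$, evaluating the transport equation at $\eta = 1$ and $\tau = 0$ gives $\tilde{N}'(0) = K'(\Phi_F) \frac{d}{d\eta} Q_{\init}(1) - \frac{d^2}{d\eta^2} Q_{\init}(1)$; substituting reduces the corner condition precisely to the hypothesis \eqref{second-compatible-recall}. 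Therefore $Z \in C^1([0, 1) \times [0, 1])$, and combining with $\p_\tau Q = \tilde{N} + K(Q) - \p_\eta Q \in C^1$ yields $Q \in C^2([0, 1) \times [0, 1])$.

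To extend to all $\tau \geq 0$, I restart the argument at any $\tau_0 \in (0, 1)$ with initial data $Q(\tau_0, \cdot)$. The first-order compatibility \eqref{compatible-S33} is preserved as in Step 3 of Proposition \ref{prop:regularity-classical}. For the second-order compatibility, subtracting the PDE for $Z$ at $\eta = 0$ and $\eta = 1$ and using that $Z(\tau, 1) - Z(\tau, 0) = K(\Phi_F) - K(0)$ is independent of $\tau$ gives
\begin{equation}
\p_\eta Z(\tau, 1) - \p_\eta Z(\tau, 0) = K'(\Phi_F)\, Z(\tau, 1) - K'(0)\, Z(\tau, 0),
\end{equation}
which is exactly \eqref{second-compatible-recall} at time $\tau$. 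So the hypothesis propagates, and iterating over time intervals of unit length yields $\tilde{N} \in C^1[0, +\infty)$ and $Q \in C^2([0, +\infty) \times [0, 1])$. The main obstacle is the corner-compatibility check: it is the only place the specific algebraic form of \eqref{second-compatible-recall} is used, and without it the one-sided derivatives of $Z$ across $\eta = \tau$ fail to match, producing a jump in $\p^2_{\eta\eta} Q$ propagating along that characteristic --- an analog of the phenomenon described in Remark \ref{rmk:classical-subtle}.
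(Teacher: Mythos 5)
Your proof is correct and executes precisely the blueprint the paper invokes (the paper states the proof ``follows the exact blueprint of Proposition~\ref{prop:regularity-classical}'' and omits details): one extra derivative of the fixed-point relation to get $\tilde{N}\in C^1$, a $C^1$-matching of the two characteristic pieces of $Z=\p_\eta Q$ via the corner condition at $(0,0)$, and propagation of the second-order compatibility exactly as in the paper's display \eqref{dynamical-second-order}. One small slip: $\tilde{N}(\tau)=Z(\tau,1)-K(\Phi_F)$ by \eqref{def-tildeN-tau-classical}, not $Z(\tau,1)$, but since the discrepancy is a constant it does not affect your formula for $\tilde{N}'(0)$ or anything downstream.
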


Let us explain how a second order condition \eqref{second-compatible-recall} arises. It might be better to view $\p_{\eta}Q(\tau,\eta)=:Z(\tau,\eta)$ as the solution to the following first order system, as derived at least formally in \eqref{eq-petaQ-Z} for the proof of Proposition \ref{prop:keep-mono},
\begin{equation}\label{eq-petaQ-Z-recall}
        \begin{aligned}
            &(\p_{\tau}+\p_{\eta})Z=K'(Q)Z,\quad &&\tau>0,\eta\in(0,1),\\
            &Z(\tau,0)=\tilde{N}(\tau)+K(0)>0,\quad &&\tau>0,\\
            &Z(0,\eta)=\frac{d}{d\eta}Q_{\init}(\eta),\quad &&\eta\in[0,1].
        \end{aligned}
    \end{equation}
In terms of $Z$, we rewrite the previous first order compatibility condition in Proposition \ref{prop:tildeN-imbc} as
\begin{equation}
    Z(\tau,1)-Z(\tau,0)=K(\Phi_F)-K(0).
\end{equation} Taking derivatives w.r.t $\tau$, we obtain
\begin{equation}\label{rw-tmp-z}
    \p_{\tau}Z(\tau,1)-\p_{\tau}Z(\tau,0)=0.
\end{equation} Now we use the equation \eqref{eq-petaQ-Z-recall} to replace the temporal derivatives in \eqref{rw-tmp-z} by the spatial ones 
\begin{align}
        \p_{\eta}Z(\tau,1)-\p_{\eta}Z(\tau,0)&=K'(Q(\tau,1))Z(\tau,1)-K'(Q(\tau,0))Z(\tau,0)\\&=K'(\Phi_F)Z(\tau,1)-K'(0)Z(\tau,0).\label{dynamical-second-order}
\end{align} Set $\tau=0$ in \eqref{dynamical-second-order} and we obtain \eqref{second-compatible-recall}.

The proofs of Proposition \ref{Prop:C2} and Theorem \ref{thm:regularity-classical-relax} follow the exact blueprint of Proposition \ref{prop:regularity-classical}, so we skip them for the sake of brevity.


\section{Contraction/Expansion estimates}\label{sec:dynamics-classical}

In this section, we prove stability estimates in different norms for classical solutions of the $Q$-formulation \eqref{eq-Q-tau-classical}-\eqref{ic-q-tau-classical}. Later on, in Sections \ref{sec:global} and \ref{sec:blowup}, it will help us to distinguish different qualitative behaviors: asymptotic equilibration and blow-up in finite time. Using the $Q$-formulation, we shall derive key estimates for the dynamics of classical solutions, which are in the following form
\begin{equation}\label{estimate-formal}
    e^{k_{\min}\tau}\|Q_1(0)-Q_2(0)\|\leq \|Q_1(\tau)-Q_2(\tau)\|\leq e^{k_{\max}\tau}\|Q_1(0)-Q_2(0)\|.
\end{equation} 
Here $Q_1,Q_2$ are two classical solutions to \eqref{eq-Q-tau-classical}-\eqref{ic-q-tau-classical}, and $k_{\min},k_{\max}$ relate to $K(\phi)$ via
\begin{equation}\label{def-kmin_kmax}   
{k}_{\min}:=K'(0)+\int_0^{\Phi_F}(K''(\phi))_-d\phi,\qquad {k}_{\max}:=K'(0)+\int_0^{\Phi_F}(K''(\phi))_+d\phi.
\end{equation}
The estimates hold as long as the classical solutions exist, and $\|\cdot\|$ is a suitable norm to be specified. A particular class of phase response functions $K$ will simplify some of our main results:
\begin{assumption}\label{as:K-cc}
    $K\in C^2[0,\Phi_F]$ is either convex on $[0,\Phi_F]$ or concave on $[0,\Phi_F]$.
\end{assumption}
\noindent In other words, Assumption \ref{as:K-cc} is equivalent to $K''(\phi)$ does not change sign on $[0,\Phi_F]$.

\begin{remark}\label{rmk:Kcc}
If Assumption \ref{as:K-cc} holds, then
\begin{equation*}
k_{\min}=\min_{\phi_\in[0,\Phi_F]}K'(\phi),\qquad k_{\max}=\max_{\phi_\in[0,\Phi_F]}K'(\phi)\,.
\end{equation*} Note that in general
\begin{equation}\label{ineq-kminkmax}
    k_{\min}\leq \min_{\phi\in[0,\Phi_F]}K'(\phi)\leq \max_{\phi\in[0,\Phi_F]}K'(\phi)\leq k_{\max}.
\end{equation}
\end{remark} 

In Section \ref{subsec:BV} we shall present and explain the estimate \eqref{estimate-formal} when $\|\cdot\|$ is a BV norm. Section \ref{subsec:L2} is devoted to the case when $\|\cdot\|$ is a variant of the $L^2$ norm. These two cases can be unified into a framework in Section \ref{subsec:unifying}, utilizing the perspective that $N(\tau)$ is a ``Lagrangian multiplier'' for the constraint $Q(\tau,\eta=1)=\Phi_F$ (see Proposition \ref{prop:char-N-classical}).

\subsection{A BV estimate}\label{subsec:BV}

Let $(Q_1,N_1)$, $(Q_2,N_2)$ be two classical solutions to \eqref{eq-Q-tau-classical}-\eqref{ic-q-tau-classical} with different initial data. Then we have \eqref{eq-Q-tau-classical} holds in the existence time interval of $Q_1,Q_2$, i.e. for $i=1,2$,
\begin{equation*}
      \begin{aligned}
        &\p_{\tau}Q_i+\p_{\eta}Q_i=\frac{1}{N_i(\tau)}+{K}(Q_i).
    \end{aligned}
\end{equation*} If we directly compare $Q_1$ and $Q_2$ by subtracting their respective equations, the term $1/N_1 - 1/N_2$ arises. This term is difficult to control due to the nonlocal dependence of $1/N$ on the boundary derivative of $Q$, as in \eqref{def-N-tau-classical}, but it is also simple as it is a constant in $\eta$. Taking the derivative w.r.t $\eta$ in the equation \eqref{eq-Q-tau-classical}, then $1/N$ disappears and we obtain an equation for $\p_{\eta}Q$
\begin{equation}\label{eq-peta}
    (\p_\tau+\p_{\eta})(\p_{\eta}Q)=K'(Q)\p_{\eta}Q,\quad \tau>0,\eta\in(0,1).
\end{equation}
Note that \eqref{eq-peta}, as an equation for $\p_{\eta}Q$, is still nonlocal due to $Q=\int_0^{\eta}\p_{\eta}Qd\tilde{\eta}$ in $K'(Q)$. 

\begin{theorem}\label{thm:BV}
    Let $(Q_1,N_1)$ and $(Q_2,N_2)$ be two classical solutions to \eqref{eq-Q-tau-classical}-\eqref{ic-q-tau-classical}. Then we have
    \begin{equation}\label{estimate-BV}
            e^{k_{\min}\tau}\|\p_{\eta}Q_1(0)-\p_{\eta}Q_2(0)\|_{L^1(0,1)}\leq \|\p_{\eta}Q_1(\tau)-\p_{\eta}Q_2(\tau)\|_{L^1(0,1)}\leq e^{k_{\max}\tau}\|\p_{\eta}Q_1(0)-\p_{\eta}Q_2(0)\|_{L^1(0,1)}.
    \end{equation}
    The estimates hold whenever the two classical solutions exist up to time $\tau$. Here $k_{\min}$ and $k_{\max}$ are defined in \eqref{def-kmin_kmax}.
\end{theorem}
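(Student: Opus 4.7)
The plan is to exploit the remark from the paper that differentiating \eqref{eq-Q-tau-classical} in $\eta$ filters out $1/N(\tau)$: setting $W_i := \partial_\eta Q_i$, one has $(\partial_\tau+\partial_\eta)W_i = K'(Q_i)W_i$. Writing $D := W_1 - W_2$ and $G(\tau,\eta) := K(Q_1) - K(Q_2)$, the difference satisfies $(\partial_\tau+\partial_\eta)D = \partial_\eta G$ with two structural features that drive the estimate: first, $G(\tau,0) = G(\tau,1) = 0$ because $Q_i(\tau,0)=0$ and $Q_i(\tau,1)=\Phi_F$; and second, by Corollary~\ref{cor:5-equivalent} one has $D(\tau,0) = D(\tau,1) = 1/N_1(\tau) - 1/N_2(\tau)$, so the boundary flux cancels in any $L^1$ computation.

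Next I would differentiate $\phi(\tau) := \|D(\tau)\|_{L^1(0,1)}$ via the Kato-type identity $\partial_\tau|D| + \partial_\eta|D| = \mathrm{sgn}(D)\,\partial_\eta G$ (which holds a.e.\ since $D$ is $C^1$), so that after integrating in $\eta$ and using $|D(\tau,0)| = |D(\tau,1)|$ one obtains $\phi'(\tau) = \int_0^1 \mathrm{sgn}(D)\,\partial_\eta G\,d\eta$. The key simplification is to change variables $u = Q_i(\eta)$ separately in each piece of $\partial_\eta G = K'(Q_1)W_1 - K'(Q_2)W_2$; using $du = W_i\,d\eta$ this collapses the weight $W_i$ and gives the clean representation
\begin{equation*}
\phi'(\tau) = \int_0^{\Phi_F} K'(u)\bigl[\sigma_1(u) - \sigma_2(u)\bigr]\,du,\qquad \sigma_i(u) := \mathrm{sgn}(D)\bigl(Q_i^{-1}(u)\bigr).
\end{equation*}
Decomposing $K'(u) = K'(0) + \int_0^u K''(s)\,ds$ and Fubini-swapping the order of integration then yields
\begin{equation*}
\phi'(\tau) = K'(0)\,\phi(\tau) + \int_0^{\Phi_F} K''(s)\bigl[M_1(s) - M_2(s)\bigr]\,ds,\qquad M_i(s) := \int_s^{\Phi_F} \sigma_i(u)\,du,
\end{equation*}
where the constant term uses the identity $\int_0^{\Phi_F}(\sigma_1 - \sigma_2)\,du = \int_0^1 \mathrm{sgn}(D)\,D\,d\eta = \phi(\tau)$ (obtained by reversing the same change of variables).

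The heart of the argument — and the main obstacle — will be the pointwise bound $0 \leq M_1(s) - M_2(s) \leq \phi(\tau)$ for every $s \in [0,\Phi_F]$. To prove it I would introduce $\eta_- := \min(Q_1^{-1}(s),Q_2^{-1}(s))$ and $\eta_+ := \max(Q_1^{-1}(s),Q_2^{-1}(s))$, and split
\begin{equation*}
M_1(s) - M_2(s) = \int_{\eta_+}^1 |D(\eta)|\,d\eta \;+\; \epsilon \int_{\eta_-}^{\eta_+} \mathrm{sgn}(D)\,W_{j^*}\,d\eta,
\end{equation*}
with $\epsilon \in \{\pm 1\}$ and $j^* \in \{1,2\}$ determined by which preimage is larger. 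Setting $\Delta := \bigl|\int_0^{\eta_+} D\,d\eta\bigr| = |(Q_1 - Q_2)(\eta_+)|$, the second term has absolute value at most $\int_{\eta_-}^{\eta_+} W_{j^*}\,d\eta = \Delta$, and the constraint $\int_0^1 D\,d\eta = 0$ (forced by $Q_i(\tau,1)=\Phi_F$) furnishes simultaneously $\int_{\eta_+}^1 |D|\,d\eta \geq \Delta$ (giving the lower bound $\geq 0$) and $\Delta \leq \int_0^{\eta_+} |D|\,d\eta$ (giving the upper bound $\leq \phi(\tau)$). Splitting $K'' = (K'')_+ + (K'')_-$ and inserting $0 \leq M_1 - M_2 \leq \phi(\tau)$ then delivers $k_{\min}\,\phi(\tau) \leq \phi'(\tau) \leq k_{\max}\,\phi(\tau)$, and Grönwall's inequality concludes both inequalities in \eqref{estimate-BV}.
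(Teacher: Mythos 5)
Your proof is correct and follows essentially the same route as the paper's (Lemma~\ref{lem:dIdtau} plus Lemma~\ref{lem:integral-ineq-bv}): differentiate $\|\p_\eta Q_1-\p_\eta Q_2\|_{L^1}$ in $\tau$ using boundary cancellation to kill the $1/N$ terms, expand $K'(u)=K'(0)+\int_0^u K''$, Fubini, and show the resulting kernel-weighted quantity lies between $0$ and $I(\tau)$. Your change of variables $u=Q_i(\eta)$ collapsing the weights $W_i$ and the explicit scalar $\Delta=|(Q_1-Q_2)(\eta_+)|$ are a cleaner way of writing what the paper does with the interval $\{\eta: Q_i(\eta)\geq\phi\}$ and the quantity $r(\phi)$, but the content is identical. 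One caveat: you assert the Kato identity holds ``since $D$ is $C^1$,'' yet for classical solutions in the sense of Definition~\ref{def:classical-q-tau} one only has $Q_i\in C^1$ so $D=\p_\eta Q_1-\p_\eta Q_2$ is merely continuous; the paper closes this gap by first proving the estimate under the $C^2$ regularity granted by Assumption~\ref{as:second-order} and then passing to general classical solutions via the $C^1$-stability of Theorem~\ref{thm:stability-classical}, and your argument should be supplemented by the same density step.
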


Before proving Theorem \ref{thm:BV}, we will discuss its consequences on the asymptotic behavior of classical solutions and implications in particular cases.

For instance, when $k_{\min}>0$, \eqref{estimate-BV} implies that $\|\p_{\eta}Q_1(\tau)-\p_{\eta}Q_2(\tau)\|_{L^1(0,1)}$ grows exponentially. However, it shall be uniformly bounded since
\begin{equation}\label{bdd-BV}
\begin{aligned}
        \|\p_{\eta}Q_1(\tau)-\p_{\eta}Q_2(\tau)\|_{L^1(0,1)}&\leq  \|\p_{\eta}Q_1(\tau)\|_{L^1(0,1)}+\|\p_{\eta}Q_2(\tau)\|_{L^1(0,1)}\\&=\int_0^{1}\p_{\eta}Q_1(\tau,\eta)d\eta+\int_0^{1}\p_{\eta}Q_2(\tau,\eta)d\eta\\&=Q_1(\tau,1)-Q_1(\tau,0)+Q_2(\tau,1)-Q_2(\tau,0)=2\Phi_F,
\end{aligned}
\end{equation} where we use $Q_i$ is increasing in space in the second line, and use boundary condition \eqref{bc-Q-tau-classical} and Proposition \ref{prop:char-N-classical} in the last line. This leads to a contradiction with the lower bound in \eqref{estimate-BV} when $k_{\min}>0$ and $Q_1(0)\neq Q_2(0)$. As a consequence, at least one of $Q_i$ blows up in finite time (for more discussion on this see later Theorem \ref{thm:blow-up-increasingK}).

On the other hand if $k_{\max}<0$, \eqref{estimate-formal} implies exponential convergence to the steady state, provided that both the global solution and the steady state exist.

In view of Remark \ref{rmk:Kcc}, $k_{\min}>0$ implies that $K'>0$ on $[0,\Phi_F]$. They are equivalent if Assumption \ref{as:K-cc} holds. The same can be said between $k_{\max}<0$ and $K'<0$ on $[0,\Phi_F]$.

\begin{remark}
    By \eqref{tmp-pqpe}, we can rewrite the distance in \eqref{estimate-BV} using the density function $\rho$ and denoting $\phi_1 = Q_1(\eta) $, $\phi_2 = Q_2(\eta)$
    \begin{align}\label{tmp-bv-rho-1}
        \|\p_{\eta}Q_1-\p_{\eta}Q_2\|_{L^1(0,1)}&=\int_0^{1}\left|\frac{1}{\rho_1(Q_1(\eta))}-\frac{1}{\rho_2(Q_2(\eta))}\right|d\eta =\int_0^{1}\left|\frac{\rho_1(\phi_1)-\rho_2(\phi_2)}{\rho_1(\phi_1)\rho_2(\phi_2)}\right|d\eta.
    \end{align}
    This can be understood as a modified weighted $L^1$ distance (between probability density $\rho$). 
\end{remark}
\begin{remark}
Thanks to the boundary condition \eqref{bc-Q-tau-classical} $Q_1(\tau,0)=Q_2(\tau,0)=0$, the norm in \eqref{estimate-BV} controls the Wasserstein distance between the measures $\rho_1$ and $\rho_2$ of any exponent. Actually, we have
\begin{equation}\label{L-infty-BV}
    |Q_1(\tau,\eta)-Q_2(\tau,\eta)|\leq \int_0^{\eta}|\p_{\eta}Q_1(\tau,\tilde{\eta})-\p_{\eta}Q_2(\tau,\tilde{\eta})|d\tilde{\eta}\leq \|\p_{\eta}Q_1(\tau)-\p_{\eta}Q_2(\tau)\|_{L^1(0,1)}.
\end{equation}
 The link to optimal transport is built on the following classical isometry between probability measures and their pseudo-inverses (see e.g. \cite[Theorem 2.18, Chapter 2.2]{MR1964483}) 
    \begin{equation}\label{isometry}
        \|Q_1-Q_2\|_{L^p(0,1)}=W_p(\mu_1,\mu_2),
    \end{equation}
where $W_p$ is the $p$-Wasserstein distance between measures, for all $1\leq p\leq \infty$, see e.g. \cite{MR1964483,MR3409718}.
Taking the supremum in $\eta$ on the left-hand side of \eqref{L-infty-BV}, we conclude that 
$$
W_1(\rho_1,\rho_2)\leq W_p(\rho_1,\rho_2)\leq W_\infty(\rho_1,\rho_2)\leq \|\p_{\eta}Q_1(\tau)-\p_{\eta}Q_2(\tau)\|_{L^1(0,1)}
$$
for all $1\leq p\leq \infty$.
\end{remark}

Let us point out that Theorem \ref{thm:BV} relies crucially on the dilated timescale $\tau$. Indeed, to obtain contraction/expansion estimates, we compare two solutions at the same $\tau$ time value. For two different firing rate $N_1$,$N_2$, by definition \eqref{def-tau-sing}, the same $\tau$ time value in general corresponds to different times in timescale $t$. More precisely, $Q_1(\tau)-Q_2(\tau)$ corresponds to  $Q_1(t_1)-Q_2(t_2)$ with
\begin{equation}
    t_1=\int_0^{\tau}\frac{1}{N_1(\tilde{\tau})}d\tilde{\tau},\qquad  t_2=\int_0^{\tau}\frac{1}{N_2(\tilde{\tau})}d\tilde{\tau},
\end{equation} which are not the same in general. Nevertheless, we can still derive a simple result in $t$ from Theorem \ref{thm:BV} when $Q_2$ is a steady state, i.e. does not depend on time.

\begin{corollary}[See also {\cite[Theorem 1]{mauroy2012global}}]\label{cor:BV}
    Suppose there exists a steady state $Q^*(\eta)$ to \eqref{eq-Q-tau-classical}-\eqref{ic-q-tau-classical}. 
    Let $(Q,N)$ be a classical solution to \eqref{eq-Q-tau-classical}-\eqref{ic-q-tau-classical}. With abuse of notation, we also use $Q(t,\eta)$ and $N(t)$ to denote their counterparts in $t$ timescale through the change of variable \eqref{def-tau-sing}, which give a solution to the system in $t$ \eqref{eq:q-t-1}-\eqref{ic-q-classical}. Then we have
    \begin{equation}\label{estimate-BV-t}
            e^{k_{\min}\tau(t)}\|\p_{\eta}Q(0)-\p_{\eta}Q^*\|_{L^1(0,1)}\leq \|\p_{\eta}Q(t)-\p_{\eta}Q^*\|_{L^1(0,1)}\leq e^{k_{\max}\tau(t)}\|\p_{\eta}Q(0)-\p_{\eta}Q^*\|_{L^1(0,1)},
    \end{equation} where
    \begin{equation}
        \tau(t)=\int_0^{t}N(s)ds,
    \end{equation} and $k_{\min}$ and $k_{\max}$ are defined in \eqref{def-kmin_kmax}.
\end{corollary}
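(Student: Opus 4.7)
The plan is to derive Corollary \ref{cor:BV} as a straightforward consequence of Theorem \ref{thm:BV}, using that a steady state looks the same in both timescales and that the change of variable between $\tau$ and $t$ is invertible for classical solutions.

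First I would set $Q_1(\tau,\eta) := Q(\tau,\eta)$ (the given classical solution, in the $\tau$ timescale) and $Q_2(\tau,\eta) := Q^*(\eta)$. The latter is a steady state, hence trivially a classical solution to \eqref{eq-Q-tau-classical}-\eqref{ic-q-tau-classical}. Since both are classical solutions existing on the relevant interval of $\tau$, Theorem \ref{thm:BV} directly yields
\begin{equation}
e^{k_{\min}\tau}\|\p_{\eta}Q(0,\cdot)-\p_{\eta}Q^*\|_{L^1(0,1)}\leq \|\p_{\eta}Q(\tau,\cdot)-\p_{\eta}Q^*\|_{L^1(0,1)}\leq e^{k_{\max}\tau}\|\p_{\eta}Q(0,\cdot)-\p_{\eta}Q^*\|_{L^1(0,1)}.
\end{equation}

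Next I would invoke the equivalence between the $\tau$- and $t$-formulations (Proposition \ref{prop:equil-rho-Q} and the definition \eqref{def-tau-sing}). For a classical solution, $N$ is positive and continuous in either timescale, so the map $t \mapsto \tau(t) = \int_0^t N(s)\,ds$ is a strictly increasing $C^1$-bijection from $[0,T)$ onto $[0,\tau(T))$, and $Q(t,\eta) = Q(\tau(t),\eta)$ under the stated abuse of notation. Since $Q^*$ does not depend on time, the $L^1$-norm $\|\p_{\eta}Q(t,\cdot)-\p_{\eta}Q^*\|_{L^1(0,1)}$ equals $\|\p_{\eta}Q(\tau(t),\cdot)-\p_{\eta}Q^*\|_{L^1(0,1)}$, and substituting $\tau = \tau(t)$ into the inequality above gives \eqref{estimate-BV-t}.

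There is essentially no hard step here; the only subtlety is making sure the change of variable is justified on the full time interval over which the solution exists, which is fine because $N(t) \in (0,+\infty)$ by the definition of a classical solution (Definition \ref{def:classical-rho}, together with Lemma \ref{lem:relation-relaxed-original} which identifies $N = 1/\tilde N$ on the classical existence interval). In particular one need not worry about $\tau(t)$ being finite: as long as $(Q,N)$ is a classical solution up to time $t$, $\tau(t)$ is a finite positive number and both bounds in \eqref{estimate-BV-t} are meaningful. No further work is needed.
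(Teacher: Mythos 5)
Your proof is correct and follows the same approach as the paper: apply Theorem~\ref{thm:BV} with $Q_1 = Q$ and $Q_2 = Q^*$, then change back to the $t$ timescale using $d\tau = N(t)\,dt$. The extra remarks you add about the invertibility of $t \mapsto \tau(t)$ are accurate but not needed beyond what the paper states.
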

\begin{proof}
    Take $Q_1(\tau,\eta)=Q(\tau,\eta)$ and $Q_2(\tau,\eta)=Q^*(\eta)$ in Theorem \ref{thm:BV}, and then we have
    \begin{equation}
            e^{k_{\min}\tau}\|\p_{\eta}Q(0)-\p_{\eta}Q^*\|_{L^1(0,1)}\leq \|\p_{\eta}Q(\tau)-\p_{\eta}Q^*\|_{L^1(0,1)}\leq e^{k_{\max}\tau}\|\p_{\eta}Q(0)-\p_{\eta}Q^*\|_{L^1(0,1)},
    \end{equation} 
    Changing back to timescale $t$ via \eqref{def-tau-sing} we obtain \eqref{estimate-BV-t}. Note that $d\tau=N(t)dt$ gives the formula of $\tau(t)$.
\end{proof}

Indeed, Corollary \ref{cor:BV} recovers in particular a main result obtained in the pioneering work \cite{mauroy2012global} under Assumption \ref{as:K-cc}. They directly calculated in $t$ timescale without working with an equation for $Q$ explicitly. The proof of Theorem \ref{thm:BV} below provides the general argument for estimating the distance between two general solutions by working with the equation for $Q$ in the dilated timescale $\tau$. This generic property for two solutions unveils a more generic structure of the equation with respect this distance. The original use of the BV distance $\|\p_{\eta}Q(t)-\p_{\eta}Q^*\|_{L^1(0,1)}$ between pseudo-inverses in \cite{mauroy2012global} was inspired by their earlier work on the particle system \cite{mauroy2008clustering,mauroy2009erratum}. We refer to \cite[Section IV.B]{mauroy2012global} for this discussion.\\

We now turn to the proof of Theorem \ref{thm:BV}. Its proof is divided into two steps. In Lemma \ref{lem:dIdtau} we derive an identity for the time derivative of $\|\p_{\eta}Q_1(\tau)-\p_{\eta}Q_2(\tau)\|_{L^1(0,1)}$ and in Lemma \ref{lem:integral-ineq-bv} we estimate it. Then we can conclude by the Gronwall inequality. We recall the definition of sign function as it will appear in the following lemmas.
    \begin{equation}\label{def-sign}
            \sign{x}:=\begin{cases}1,\,\,\,\,\quad x>0,\\0,\,\,\,\,\quad x=0,\\-1,\quad x<0.
    \end{cases}
    \end{equation}

\begin{lemma}\label{lem:dIdtau}
    Let $(Q_1,N_1)$ and $(Q_2,N_2)$ be two classical solutions to \eqref{eq-Q-tau-classical}-\eqref{ic-q-tau-classical}. Suppose two classical solutions exist up to time $\tau$. Given
    \begin{equation}\label{def-Itau}
    \begin{aligned}
        I(\tau)&:=\int_0^{1}\Bigl|\p_{\eta}Q_1(\tau,\eta)-\p_{\eta}Q_2(\tau,\eta)\Bigr|d\eta=\|\p_{\eta}Q_1(\tau)-\p_{\eta}Q_2(\tau)\|_{L^1(0,1)},
    \end{aligned}
    \end{equation}
we have
    \begin{equation}\label{dI-dtau}
         I(\tau)-I(0)=\int_0^{\tau}I_K(s)ds,
    \end{equation} 
where $I_K(\tau)$ is defined as
    \begin{equation}\label{def-IK}
        \begin{aligned}            I_K(\tau):=\int_{0}^1\p_{\eta}\Bigl(K(Q_1(\tau,\eta))-K(Q_2(\tau,\eta))\Bigr){\rm sign }\left({\p_\eta}Q_1(\tau,\eta)-{\p_\eta}Q_2(\tau,\eta)\right)d\eta.
        \end{aligned}
    \end{equation}
\end{lemma}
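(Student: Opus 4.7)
The plan is to derive a transport equation for the difference $W(\tau,\eta):=\partial_\eta Q_1(\tau,\eta)-\partial_\eta Q_2(\tau,\eta)$ and then use a sign-multiplier argument. Applying \eqref{eq-peta} to each classical solution and noting that $K'(Q_i)\partial_\eta Q_i=\partial_\eta K(Q_i)$ by the chain rule, I would subtract to obtain
\begin{equation*}
(\partial_\tau+\partial_\eta)W=\partial_\eta\bigl(K(Q_1)-K(Q_2)\bigr),\qquad \tau>0,\ \eta\in(0,1).
\end{equation*}
The right-hand side is continuous on $[0,\tau]\times[0,1]$ because both $Q_i$ are $C^1$ and $K\in C^2$. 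The whole identity \eqref{dI-dtau} should follow by ``multiplying by $\mathrm{sign}(W)$'' and integrating in $\eta$, once the boundary terms from the $\partial_\eta W$ contribution are shown to cancel.

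To handle the non-smoothness of the sign function, I would introduce a smooth odd non-decreasing approximation $S_\varepsilon\in C^1(\mathbb{R})$ with $|S_\varepsilon|\leq 1$, $S_\varepsilon(0)=0$, and $S_\varepsilon(x)\to\mathrm{sign}(x)$ pointwise as $\varepsilon\to 0^+$, together with its primitive $\Phi_\varepsilon(x):=\int_0^x S_\varepsilon(y)\,dy$, which satisfies $\Phi_\varepsilon(x)\to |x|$ and $0\leq \Phi_\varepsilon(x)\leq |x|$. Multiplying the equation for $W$ by $S_\varepsilon(W)$ yields
\begin{equation*}
(\partial_\tau+\partial_\eta)\Phi_\varepsilon(W)=S_\varepsilon(W)\,\partial_\eta\bigl(K(Q_1)-K(Q_2)\bigr).
\end{equation*}
Integrating in $\eta\in(0,1)$ and then in time from $0$ to $\tau$, the transport term produces the boundary contribution $\int_0^\tau\bigl[\Phi_\varepsilon(W(s,1))-\Phi_\varepsilon(W(s,0))\bigr]ds$. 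This is where the crucial cancellation happens: by Proposition \ref{prop:keep-mono} (equivalently Corollary \ref{cor:5-equivalent}, item 5), both $Q_1$ and $Q_2$ satisfy $\partial_\eta Q_i(\tau,1)-\partial_\eta Q_i(\tau,0)=K(\Phi_F)-K(0)$, so $W(\tau,1)=W(\tau,0)$ and the boundary contribution vanishes identically in $\tau$.

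It remains to pass to the limit $\varepsilon\to 0^+$. On the time interval considered, $W$ and $\partial_\eta(K(Q_1)-K(Q_2))$ are bounded and continuous, hence dominated convergence applies: $\int_0^1\Phi_\varepsilon(W(\tau,\cdot))\,d\eta\to I(\tau)$ and the right-hand side converges to $\int_0^\tau I_K(s)\,ds$, using that $S_\varepsilon(W)\cdot\partial_\eta(K(Q_1)-K(Q_2))\to \mathrm{sign}(W)\cdot\partial_\eta(K(Q_1)-K(Q_2))$ pointwise with $|S_\varepsilon|\leq 1$ and the factor uniformly bounded. This delivers \eqref{dI-dtau}.

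The main obstacle, in my view, is not the regularization (which is routine once $W$ has continuous boundary traces) but ensuring that the boundary contribution really vanishes. That hinges entirely on the first-order compatibility between the two solutions encoded in \eqref{ptau-diff-0-1}, which is why the step that invokes Proposition \ref{prop:keep-mono} is central. All other aspects — the chain-rule rewriting of $K'(Q_i)\partial_\eta Q_i$, the smooth sign approximation, and the dominated convergence — are standard.
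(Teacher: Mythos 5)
Your proof is essentially the paper's proof, and in particular it identifies correctly the load-bearing observation: the boundary term produced by the transport part vanishes because \eqref{ptau-diff-0-1} holds for \emph{both} solutions, forcing $W(\tau,1)=W(\tau,0)$. The paper organizes the regularization slightly differently (it first differentiates the $L^1$ norm in time via the scalar identity $|f(\tau)|-|f(0)|=\int_0^\tau f'(s)\,\mathrm{sign}(f(s))\,ds$ and Fubini, then substitutes the equation and moves the $\partial_\eta(\partial_\eta Q_1-\partial_\eta Q_2)$ piece to the boundary), whereas you multiply the transport equation for $W$ directly by the smoothed sign and integrate in space–time; these are two ways of packaging the same cancellation, and your version is arguably cleaner.

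There is, however, one genuine gap. You write down the equation $(\partial_\tau+\partial_\eta)W=\partial_\eta\bigl(K(Q_1)-K(Q_2)\bigr)$ by ``applying \eqref{eq-peta}'', and you then apply the chain rule $(\partial_\tau+\partial_\eta)\Phi_\varepsilon(W)=S_\varepsilon(W)(\partial_\tau+\partial_\eta)W$. Both of these steps require $W=\partial_\eta Q_1-\partial_\eta Q_2$ to be classically differentiable, i.e.\ $Q_i\in C^2$. But a classical solution in the sense of Definition \ref{def:classical-q-tau} is only $C^1$, and Theorem \ref{thm:regularity-classical} gives $C^2$ only under the extra second-order compatibility in Assumption \ref{as:second-order}. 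Your remark that ``the right-hand side is continuous because $Q_i$ are $C^1$ and $K\in C^2$'' addresses only the source term, not the regularity of $W$ itself. The paper fills this gap explicitly: it first carries out the computation for initial data satisfying Assumption \ref{as:second-order} (so $Q_i\in C^2$), and then recovers the general $C^1$ case by approximating the initial data in $C^1[0,1]$ and invoking the continuous dependence result Theorem \ref{thm:stability-classical} to pass to the limit in \eqref{dI-dtau}. You should add this reduction, or alternatively justify the identity via characteristics using the explicit formula \eqref{formula-peta-flow-recall} for $\partial_\eta Q$; as written, the proposal silently assumes more regularity than is available.
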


\begin{proof}
We will primarily work in the case when the initial data of $Q_1$ and $Q_2$ satisfy Assumption \ref{as:second-order}. This ensures the solution $Q_1,Q_2$ are $C^2$ by Theorem \ref{thm:regularity-classical}, which is convenient for some intermediate calculations involving second derivatives of $Q_i$. In general when the initial data may not satisfy Assumption \ref{as:second-order}, we can use a density argument. More precisely, we can approximate the initial data in the $C^1[0,1]$ norm, by a sequence of initial data that satisfy Assumption \ref{as:second-order}. With \eqref{dI-dtau} proved for those initial data, we can pass to the limit and obtain the result for the original initial data, using Theorem \ref{thm:stability-classical}.

First we justify the differentiation in time for the $L^1$ norm, treating the issue that the absolute value function $|x|$ is not differentiable in the classical sense at $x=0$.

Note that the following holds for a $C^1$ function $f(\tau)$
\begin{equation}\label{tmp-lemma1-0}
        |f(\tau)|-|f(0)|=\int_0^{\tau}f'(s)\sign{f(s)}ds,
\end{equation}
with the sign function defined in \eqref{def-sign}. This can be verified by, e.g., using a family of regularized functions to approximate the absolute value $|x|$.

We extend this argument to include spatial dependence. Consider a two-dimensional $C^1$ function $f(\tau,\eta)$. We derive
\begin{align*}
    \int_0^{1}|f(\tau,\eta)|d\eta-\int_0^{1}|f(0,\eta)|d\eta&=\int_0^{1}\left(\int_0^{\tau}\frac{\p }{\p \tau}f(s,\eta)\sign{f(t,\eta)}ds\right)d\eta\\&=\int_0^{\tau}\left(\int_0^{1}\frac{\p }{\p \tau}f(s,\eta)\sign{f(s,\eta)}d\eta\right)ds,
\end{align*} 
by Fubini's theorem. Taking $f(\tau,\eta)=\p_{\eta}Q_1(\tau,\eta)-\p_{\eta}Q_2(\tau,\eta)$, we obtain
\begin{equation}\label{tmp-lemma1-0.5}
    I(\tau)-I(0)=\int_0^{\tau}J(s)ds,
\end{equation} where $I(\tau)$ is defined in \eqref{def-Itau} and $J(\tau)$ is given by
\begin{equation}\label{def-Jtau}       J(\tau)=\int_{0}^1\p_{\tau}\Bigl(\p_{\eta}Q_1(\tau,\eta)-\p_{\eta}Q_2(\tau,\eta)\Bigr){\rm sign }\left({\p_\eta}Q_1(\tau,\eta)-{\p_\eta}Q_2(\tau,\eta)\right)d\eta.
\end{equation}
Then it remains to show that $J(\tau)$ coincides with $I_K(\tau)$ defined in \eqref{def-IK}. 
Taking the derivative w.r.t $\eta$ in the equation \eqref{eq-Q-tau-classical}, we have
$
    (\p_{\tau}+\p_{\eta})(\p_{\eta}Q_i)=\p_{\eta}(K(Q_i)),
$ for $i=1,2$ which gives
$
    \p_{\tau}(\p_{\eta}Q_i)=\p_{\eta}(K(Q_i))-\p_{\eta}(\p_{\eta}Q_i).
$ Hence, we compute
\begin{equation}\label{tmp-lemma1}
    \p_{\tau}(\p_{\eta}Q_1-\p_{\eta}Q_2)=\p_{\eta}(K(Q_1)-K(Q_2))-\p_{\eta}(\p_{\eta}Q_1-\p_{\eta}Q_2)
\end{equation}
Substitute \eqref{tmp-lemma1} into the definition of $J(\tau)$ in \eqref{def-Jtau}, and we derive
\begin{align}\notag
J(\tau)=&\,\int_{0}^1\p_{\eta}\Bigl(K(Q_1)-K(Q_2)\Bigr){\rm sign }\left({\p_\eta}Q_1(\tau,\eta)-{\p_\eta}Q_2(\tau,\eta)\right)d\eta\\ \notag&-\int_{0}^1\p_{\eta}\Bigl(\p_{\eta}Q_1-\p_{\eta}Q_2\Bigr){\rm sign }\left({\p_\eta}Q_1(\tau,\eta)-{\p_\eta}Q_2(\tau,\eta)\right)d\eta\\[5pt]= &\, I_K(\tau)- \int_{0}^1\p_{\eta}\Bigl(\p_{\eta}Q_1-\p_{\eta}Q_2\Bigr){\rm sign }\left({\p_\eta}Q_1(\tau,\eta)-{\p_\eta}Q_2(\tau,\eta)\right)d\eta,\label{tmp-lemma1-2}
\end{align} where in the last line we use the definition of $I_K$ \eqref{def-IK}. 

Our conclusion is equivalent to show that the last term in \eqref{tmp-lemma1-2} is zero. We calculate (or using \eqref{tmp-lemma1-0})
\begin{align}\notag
    \int_0^{1}\p_\eta\left(\p_{\eta}Q_1-\p_{\eta}Q_2\right)\sign{{\p_\eta}Q_1-{\p_\eta}Q_2}d\eta&=\int_0^1\p_{\eta}|\p_{\eta}Q_1-\p_{\eta}Q_2|d\eta\\&=|\p_{\eta}Q_1-\p_{\eta}Q_2|(\tau,1)-|\p_{\eta}Q_1-\p_{\eta}Q_2|(\tau,0).\label{tmp-lemma1-3}
\end{align} 
To show the right hand side of \eqref{tmp-lemma1-3} is zero, it suffices to prove (a stronger result)
\begin{equation}\label{claim-eq-bd}
    \p_{\eta}Q_1(\tau,1)-\p_{\eta}Q_2(\tau,1)=\p_{\eta}Q_1(\tau,0)-\p_{\eta}Q_2(\tau,0).
\end{equation}Actually, we have the following formulas, for $i=1,2$,
\begin{align}\label{tmp-lem1-a}
    \frac{1}{N_i}+K(0)-\p_{\eta}Q_i(\tau,0)&=0,\\\frac{1}{N_i}+K(\Phi_F)-\p_{\eta}Q_i(\tau,1)&=0.\label{tmp-lem1-b}
\end{align} 
Here \eqref{tmp-lem1-a} is just \eqref{ptau-bc-0} in Proposition \ref{prop:keep-mono} and \eqref{tmp-lem1-b} is just the definition of $N$ in \eqref{def-N-tau-classical}. 
Using \eqref{tmp-lem1-a} and \eqref{tmp-lem1-b} we derive
\begin{equation*}
    \p_{\eta}Q_1(\tau,0)-\p_{\eta}Q_2(\tau,0)=\frac{1}{N_1}-\frac{1}{N_2}=\p_{\eta}Q_1(\tau,1)-\p_{\eta}Q_2(\tau,1),
\end{equation*} which gives \eqref{claim-eq-bd}. Hence by \eqref{tmp-lemma1-3} the last term in \eqref{tmp-lemma1-2} is zero and we have $J(\tau)=I_K(\tau)$. Then by \eqref{tmp-lemma1-0.5} we conclude the proof.
\end{proof}

We have derived \eqref{dI-dtau}. The next step is to control $I_K(\tau)$ in terms of $I(\tau)$. This is trivial when $K(Q)=kQ+b$ is an affine function, in which case
\begin{align*}
    I_K(\tau)&=k\int_{0}^1\p_{\eta}\Bigl(Q_1(\tau,\eta)-Q_2(\tau,\eta)\Bigr){\rm sign }\left({\p_\eta}Q_1(\tau,\eta)-{\p_\eta}Q_2(\tau,\eta)\right)d\eta\\&=k\int_{0}^1|{\p_\eta}Q_1(\tau,\eta)-{\p_\eta}Q_2(\tau,\eta)|d\eta=kI(\tau).
\end{align*}

For general $K$, we have the following Lemma \ref{lem:integral-ineq-bv}. It does not rely on any temporal information for the solution of the PDE \eqref{eq-Q-tau-classical}. Hence we state it as an inequality for spatial functions $Q(\eta)$ such that $Q$ is increasing with $Q(0)=0$ and $Q(1)=\Phi_F$. We recall that those properties are features of a pseudo-inverse and hold for a solution to \eqref{eq-Q-tau-classical}-\eqref{ic-q-tau-classical}, as in Proposition \ref{prop:keep-mono}, the boundary condition \eqref{bc-Q-tau-classical} and Proposition \ref{prop:char-N-classical}.

\begin{lemma}
    \label{lem:integral-ineq-bv} Let $Q_1,Q_2\in C^1[0,1]$ two be non-decreasing functions with 
\begin{equation}\label{lem-IK-BD-Q}
    Q_1(0)=Q_2(0)=0,\qquad  Q_1(1)=Q_2(1)=\Phi_F.
\end{equation}
Given 
\begin{equation}
\begin{aligned}
                    I:=\int_0^{1}|{\p_\eta}Q_1(\eta)-{\p_\eta}Q_2(\eta)|d\eta=\int_0^{1}({\p_\eta}Q_1-{\p_\eta}Q_2)\sign{{\p_\eta} Q_1-{\p_\eta} Q_2}d\eta
\end{aligned}
\end{equation} 
and 
\begin{equation}\label{def-IK}
    I_K:=\int_0^{1}\p_{\eta}(K(Q_1)-K(Q_2))\sign{{\p_\eta} Q_1-{\p_\eta} Q_2}d\eta.
\end{equation} 
Then we have
\begin{equation}\label{est_Ik}
    k_{\min}I\leq I_K\leq k_{\max} I,
\end{equation} where
 \begin{align*}
{k}_{\min}:=K'(0)+\int_0^{\Phi_F}(K''(\phi))_-d\phi,\qquad {k}_{\max}:=K'(0)+\int_0^{\Phi_F}(K''(\phi))_+d\phi.
    \end{align*}
as defined in \eqref{def-kmin_kmax}
\end{lemma}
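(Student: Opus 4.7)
The plan is to reduce the estimate for $I_K$ to a sharp pointwise bound on an auxiliary family of quantities indexed by the phase level $\psi\in[0,\Phi_F]$. I would start from the layer-cake representation
\begin{equation}
K'(\phi)=K'(0)+\int_0^{\Phi_F}K''(\psi)\,\chi_{\psi<\phi}\,d\psi
\end{equation}
applied with $\phi=Q_i(\eta)$ for $i=1,2$, use the elementary identity $\chi_{\psi<Q_i(\eta)}\,\p_\eta Q_i(\eta)=\p_\eta(Q_i(\eta)-\psi)_+$, and swap orders via Fubini. This rewrites
\begin{equation}
I_K=K'(0)\,I+\int_0^{\Phi_F}K''(\psi)\,D(\psi)\,d\psi,\quad D(\psi):=\int_0^1\p_\eta\bigl[(Q_1-\psi)_+-(Q_2-\psi)_+\bigr]\sign{\p_\eta Q_1-\p_\eta Q_2}\,d\eta.
\end{equation}
This peels off the affine part of $K$ and reduces the problem to controlling the scalar function $D$.

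The heart of the argument, and what I expect to be the main obstacle, is the sharp pointwise estimate $0\leq D(\psi)\leq I$ for every $\psi\in[0,\Phi_F]$, since the sign of $D(\psi)$ is not obvious at the integrand level. Since both $D(\psi)$ and $I$ are invariant under the swap $Q_1\leftrightarrow Q_2$, setting $\eta_i^*:=\inf\{\eta:Q_i(\eta)\geq\psi\}$ I may assume without loss of generality that $\eta_1^*\leq\eta_2^*$. Using that $\chi_{Q_i>\psi}\,\p_\eta Q_i=\chi_{(\eta_i^*,1]}\,\p_\eta Q_i$ (plateaus of $Q_i$ contribute nothing) together with the sign identity $Vh=|V|$, where $V:=\p_\eta Q_1-\p_\eta Q_2$ and $h:=\sign{V}$, the integral splits as
\begin{equation}
D(\psi)=\int_{\eta_1^*}^{\eta_2^*}\p_\eta Q_1\cdot h\,d\eta+\int_{\eta_2^*}^{1}|V|\,d\eta.
\end{equation}
The first term is bounded below by $-(Q_1(\eta_2^*)-\psi)$ since $\p_\eta Q_1\geq 0$ and $h\geq -1$, while the boundary identity $Q_1(1)=Q_2(1)=\Phi_F$ combined with $Q_1(\eta_2^*)-Q_2(\eta_2^*)=Q_1(\eta_2^*)-\psi\geq 0$ telescopes to $\int_{\eta_2^*}^{1}V\,d\eta=-(Q_1(\eta_2^*)-\psi)$, whence $\int_{\eta_2^*}^{1}|V|\,d\eta\geq Q_1(\eta_2^*)-\psi$; these two bounds cancel exactly and $D(\psi)\geq 0$. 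The upper bound $D(\psi)\leq I$ will follow from the symmetric decomposition
\begin{equation}
I-D(\psi)=\int_0^{\eta_1^*}|V|\,d\eta-\int_{\eta_1^*}^{\eta_2^*}\p_\eta Q_2\cdot h\,d\eta,
\end{equation}
by the analogous cancellation, now using $Q_i(0)=0$ and $Q_2(\eta_1^*)\leq\psi$.

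With this pointwise estimate in hand, the conclusion is routine: splitting $K''(\psi)=(K''(\psi))_++(K''(\psi))_-$, the sign conditions $(K'')_+\geq 0\geq(K'')_-$ together with $0\leq D\leq I$ give $0\leq (K'')_+(\psi)D(\psi)\leq(K'')_+(\psi)I$ and $(K'')_-(\psi)I\leq(K'')_-(\psi)D(\psi)\leq 0$. Integrating in $\psi$ and adding $K'(0)I$ yields exactly $k_{\min}I\leq I_K\leq k_{\max}I$ by the definitions of $k_{\min}$ and $k_{\max}$, completing the proof.
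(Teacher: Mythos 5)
Your proof is correct and follows essentially the same route as the paper's: the layer‑cake/Fubini rewriting that peels off the affine part and produces $\int_0^{\Phi_F}K''(\psi)D(\psi)\,d\psi$, followed by the pointwise bound $0\le D(\psi)\le I$ obtained by splitting the integral at the level‑hitting indices and telescoping against the boundary conditions, is exactly the paper's argument (their $r(\phi)$ coincides with your $D(\psi)$, as the paper itself notes in Remark~\ref{rmk:pf-rw-r}). The only cosmetic difference is an opposite WLOG labeling of which $\eta_i$ is larger, which amounts to a $Q_1\leftrightarrow Q_2$ swap under which both $D$ and $I$ are invariant.
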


\begin{proof}
Let us use the shorthand notation $f_1(\eta):=\p_{\eta}Q_1(\eta)$, $f_2(\eta):=\p_{\eta}Q_2(\eta)$.
Then $I$ and $I_K$ can be written as
\begin{align}\label{tmp-Ik}
    I=\int_0^{1}|f_1(\eta)-f_2(\eta)|d\eta,
\qquad
\mbox{and}
\qquad
        I_K=\int_0^{1}\bigl(K'(Q_1)f_1-K'(Q_2)f_2\bigr)\sign{f_1-f_2}d\eta.
\end{align}
Now we rewrite $I_K$. Using
\begin{equation}\label{tmp-NL-K}
    K'(Q_i(\eta))=K'(0)+\int_0^{Q_i(\eta)}K''(\phi)d\phi,\quad i=1,2,
\end{equation}
we deduce from \eqref{tmp-Ik}
\begin{align}
        I_K&=\int_0^{1}K'(0)(f_1-f_2)\sign{f_1-f_2}d\eta+R=K'(0)\int_0^{1}|f_1-f_2|d\eta+R=K'(0)I+R, \label{tmp-IK-2}
\end{align}
where $R$ is given by
\begin{equation}\label{def-R}
    R:=\int_0^{1}\left[\left(\int_0^{Q_1}K''(\phi)d\phi\right)f_1-\left(\int_0^{Q_2}K''(\phi)d\phi\right)f_2\right]\sign{f_1-f_2}d\eta.
\end{equation}
We use Fubini theorem to rewrite $R$. For the first term we have
\begin{align*}
    \int_0^{1}\left(\int_0^{Q_1}K''(\phi)d\phi\right)f_1\sign{f_1-f_2}d\eta=\int_0^{\Phi_F}K''(\phi)\left(\int_{Q_1(\eta)\geq \phi}f_1(\eta)\sign{f_1-f_2}(\eta)d\eta\right)d\phi.
\end{align*}
Note that for $\phi\in(0,\Phi_F)$, the set $\{\eta:Q_1(\eta)\geq \phi\}$ is a (non-empty) interval since $Q_1$ is non-decreasing (and satisfies~\eqref{lem-IK-BD-Q}). Similarly we use the Fubini theorem to treat the second term in \eqref{def-R}, and arrive at
\begin{align}\label{tmp-R}
    R=\int_0^{\Phi_F}K''(\phi)r(\phi)d\phi,
\end{align} where
\begin{equation}\label{def-r}
    r(\phi)=\int_{Q_1(\eta)\geq \phi}f_1(\eta)\sign{f_1-f_2}(\eta)d\eta-\int_{Q_2(\eta)\geq \phi}f_2(\eta)\sign{f_1-f_2}(\eta)d\eta.
\end{equation}
Combining \eqref{tmp-IK-2} and \eqref{tmp-R}, we actually have
\begin{equation}\label{rewrite-IK}
    I_K=K'(0)I+\int_0^{\Phi_F}K''(\phi)r(\phi)d\phi.
\end{equation}
We claim that 
\begin{equation}\label{claim-rphi}
   0\leq r(\phi)\leq I,\quad \forall \phi\in[0,\Phi_F].
\end{equation} Indeed, if \eqref{claim-rphi} holds, we conclude
\begin{equation}
    k_{\min}I=\left(K'(0) +\int_0^{\Phi_F}(K''(\phi))_- d\phi\right) I  \leq  I_K\leq \left(K'(0) +\int_0^{\Phi_F}(K''(\phi))_+ d\phi \right) I= k_{\max}I
\end{equation}

Finally, we prove \eqref{claim-rphi}. For a fixed $\phi\in[0,\Phi_F]$, let $\eta_1,\eta_2\in[0,1]$ such that
\begin{equation}\label{def-eta1-eta2}
    Q_i(\eta_i)=\phi,\qquad i=1,2,
\end{equation} whose existence is ensured by \eqref{lem-IK-BD-Q} the continuity of $Q_i$ (here we do not need them to be unique). Without loss of generality, we assume $\eta_1\geq \eta_2$. Then we deduce
\begin{align}
    r(\phi)&=\int_{\eta_1}^{1}f_1(\eta)\sign{f_1-f_2}(\eta)d\eta-\int_{\eta_2}^{1}f_2(\eta)\sign{f_1-f_2}(\eta)d\eta\\&=\int_{\eta_1}^{1}|f_1-f_2|d\eta-\int_{\eta_2}^{\eta_1}f_2(\eta)\sign{f_1-f_2}(\eta)d\eta.\label{tmp-pf-42-2024}
\end{align} 
We first estimate 
\begin{align*}
    r(\phi)&\geq  \int_{\eta_1}^{1}(f_1-f_2)d\eta-\int_{\eta_2}^{\eta_1}f_2d\eta,
\end{align*} where we use $f_2\geq0$ since $Q_2$ is non-decreasing. Then we conclude
\begin{align*}
    r(\phi)&\geq \int_{\eta_1}^{1}f_1d\eta-\int_{\eta_2}^{1}f_2d\eta=Q_1(1)-Q_1(\eta_1)-(Q_2(1)-Q_2(\eta_2))=0,
\end{align*} 
where we use \eqref{lem-IK-BD-Q} and \eqref{def-eta1-eta2}.
Similarly we can prove $r(\phi)\leq I$ taking into account \eqref{tmp-pf-42-2024} and noting
\begin{align*}
    I-r(\phi)&=\int_{0}^{\eta_1}|f_1-f_2|d\eta+\int_{\eta_2}^{\eta_1}f_2(\eta)\sign{f_1-f_2}(\eta)d\eta\\& \geq \int_{0}^{\eta_1}(f_2-f_1)d\eta-\int_{\eta_2}^{\eta_1}f_2d\eta= -\int_{0}^{\eta_1}f_1d\eta+\int_{0}^{\eta_2}f_2d\eta=-(Q_1(\eta_1)-Q_1(0))+Q_2(\eta_2)-Q_2(0)=0,
    \end{align*}
where we use $f_2\geq0$, \eqref{lem-IK-BD-Q} and \eqref{def-eta1-eta2} again.
\end{proof}

\begin{remark}\label{rmk:pf-rw-r}
Using $\p_{\eta}Q_1=f_1$ we have $\mathbb{I}_{Q_1(\eta)\geq \phi} f_1 = \p_{\eta}(h_\phi(Q_1(\eta)))$,
where $h_\phi(q)=(q-\phi)_+$. This allows to rewrite $r(\phi)$ in \eqref{def-r} as
\begin{equation}
    r(\phi)=\int_0^{1}\p_{\eta}(h_\phi(Q_1)-h_\phi(Q_2))\sign{{\p_\eta} Q_1-{\p_\eta} Q_2}d\eta,
\end{equation} 
which is in the same form of $I_K$ \eqref{def-IK}, with the function $K(q)$ replaced by $h_{\phi}(q)$. 
Note that the identity
\begin{align}
    \p_\eta(h_\phi(Q_1)-h_\phi(Q_2))\sign{{\p_\eta}(Q_1-Q_2)}&=\p_\eta(h_\phi(Q_1)-h_\phi(Q_2))\sign{\p_\eta(h_\phi(Q_1)-h_\phi(Q_2))}\\&=|\p_\eta(h_\phi(Q_1)-h_\phi(Q_2))|
\end{align}
holds in two cases i) $Q_1\geq \phi$ and $Q_2\geq \phi$ or ii)   $Q_1\leq \phi$ and $Q_2\leq \phi$. 
\end{remark}


\subsection{A $L^2$ estimate}\label{subsec:L2}

In Theorem \ref{thm:BV}, the distance involved
a spatial derivative, which assumes strong regularity for solutions. Moreover, this distance may not be useful for establishing the stability of generic empirical measures with different mass configurations. Though in this work, we focus on classical solutions (which exclude empirical measures), it is still desirable to find estimates in a distance with weaker regularity assumptions.

The derivative $\p_{\eta}$ in Theorem \ref{thm:BV} was used to remove the $1/N$ term in \eqref{eq-Q-tau-classical}, which is a constant in space. Another way to circumvent the challenge due to the $1/N$ term, without resorting to derivatives, is to subtract the integral average. Let
\begin{equation}
    P_0:=\id-\int_0^1d\eta.
\end{equation}
More precisely, for an integrable function $f$ on $[0,1]$, we define another function $P_0f$ via
\begin{equation}
    (P_0f)(\eta):=f(\eta)-\int_0^{1}f(\tilde{\eta})d\tilde{\eta},\quad \eta\in[0,1].
\end{equation} 
Then clearly $P_0f=P_0(f+c)$ for any constant $c$. In this way, we introduce the following distance
\begin{equation}\label{distance-L2-variant}
    \|P_0Q_1(\tau,\cdot)-P_0Q_2(\tau,\cdot)\|_{L^2(0,1)}=\left(\int_0^1(P_0Q_1(\tau,\eta)-P_0Q_2(\tau,\eta))^2d\eta\right)^{1/2},
\end{equation}
where
\begin{equation}
    P_0Q_i(\tau,\eta)=Q_i(\tau,\eta)-\int_0^{1}Q_i(\tau,\tilde{\eta})d\tilde{\eta},\qquad i=1,2.
\end{equation}
For a pesudo-inverse $Q$, $\int_0^{1}Q(\eta)d\eta$ corresponds to the first moment of its corresponding probability measure. And $P_0Q$ is the pesudo-inverse for the translation of the probability measure such that the first moment is zero, see Figure~\ref{fig:sec42}.

\begin{figure}[htbp]
    \centering    \includegraphics[width=1.0\linewidth]{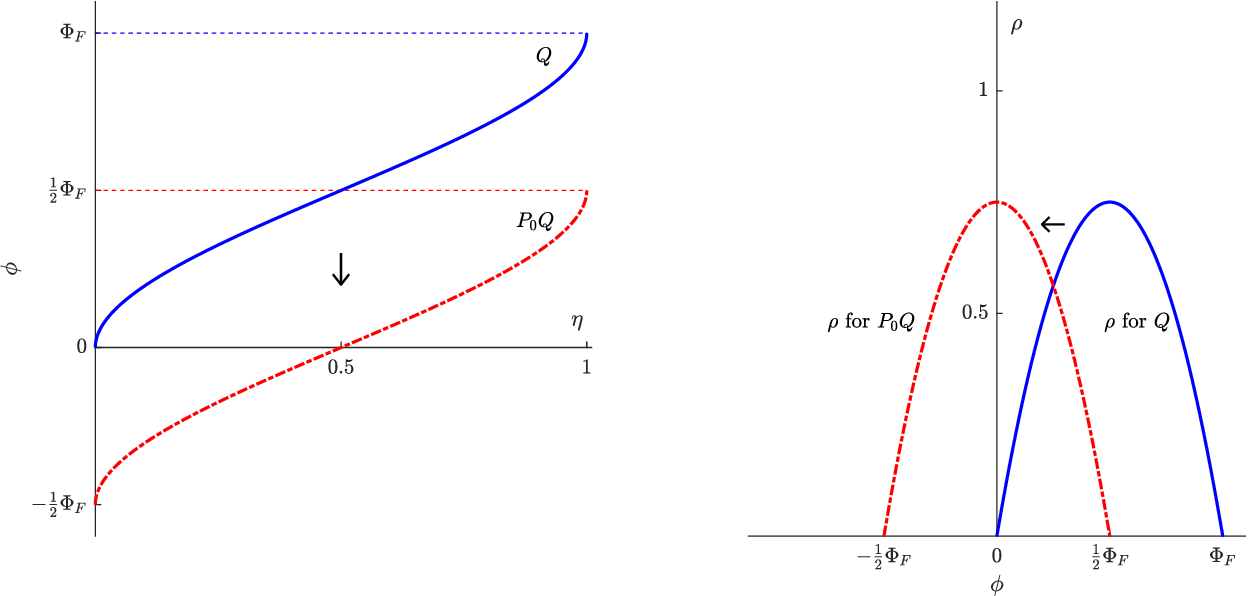}
    \caption{Illustration of $P_0Q$ for a pseudo-inverse $Q$. The red dashed-dotted line represents $P_0Q$, while the blue solid line corresponds to $Q$. Right: Plots of $Q$ and $P_0Q$. Left: Plots of the corresponding probability density $\rho$ for $Q$ and $P_0Q$. Here $\int_0^1Qd\eta=\frac{1}{2}\Phi_F$. And $P_0Q$ is the pesudo-inverse for a probability measure supported on $[-\Phi_F/2,\Phi_F/2]$ whose first moment is zero.}
    \label{fig:sec42}
\end{figure}

An interpretation of the distance \eqref{distance-L2-variant}, linking it to the Wasserstein distance in optimal transport, is given in the next result. 

\begin{proposition}\label{prop:wasserstein-interpret}
    Let $\mu_1,\mu_2$ be two probability measures on $\mathbb{R}$ (with finite second moments), and let $Q_1(\eta),Q_2(\eta)$ be their corresponding pseudo-inverse functions. Then for $P_0=\id-\int_0^1d\eta$
    \begin{align}\label{l2-inter-1}
        \|P_0Q_1-P_0Q_2\|_{L^2(0,1)}&=\min_{c\in\mathbb{R}}\|Q_1-Q_2-c\|_{L^2(0,1)}=\min_{c\in\mathbb{R}}W_2(\mu_1,T_c^{\#}\mu_2).
    \end{align} 
    Here $T_c^{\#}\mu_2$ is the push-forward of $\mu_2$ under the translation map $T_c:=\id+c$. 
\end{proposition}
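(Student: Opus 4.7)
The plan is to split the claimed chain of equalities into two independent identities and prove each separately, using only elementary Hilbert-space geometry for the first and the classical $L^2$--Wasserstein isometry \eqref{isometry} for the second.

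First I would handle the identity $\|P_0Q_1-P_0Q_2\|_{L^2(0,1)}=\min_{c\in\mathbb{R}}\|Q_1-Q_2-c\|_{L^2(0,1)}$. Observe that $P_0$ is a linear operator, so $P_0Q_1-P_0Q_2=P_0(Q_1-Q_2)=(Q_1-Q_2)-\int_0^1(Q_1-Q_2)\,d\tilde\eta$. The claim thus reduces to the standard fact that, for any $f\in L^2(0,1)$, the real-valued function $c\mapsto\|f-c\|_{L^2(0,1)}^2=\int_0^1 f^2-2c\int_0^1 f+c^2$ is minimized at $c^\star=\int_0^1 f$, with minimum $\|f-c^\star\|_{L^2}=\|P_0f\|_{L^2}$. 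Applying this with $f=Q_1-Q_2$ gives the first equality. (Equivalently, $P_0$ is the orthogonal projection in $L^2(0,1)$ onto the subspace of functions with zero mean.)

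Next I would prove $\min_{c\in\mathbb{R}}\|Q_1-Q_2-c\|_{L^2(0,1)}=\min_{c\in\mathbb{R}}W_2(\mu_1,T_c^{\#}\mu_2)$. The key step is to identify the pseudo-inverse of $T_c^{\#}\mu_2$. If $F_2$ is the cumulative distribution function of $\mu_2$, then the cumulative distribution function of $T_c^{\#}\mu_2$ at $x$ is $(T_c^{\#}\mu_2)((-\infty,x])=\mu_2((-\infty,x-c])=F_2(x-c)$, so by the definition of pseudo-inverse,
\begin{equation}
Q_{T_c^{\#}\mu_2}(\eta)=\inf\{x:F_2(x-c)\ge\eta\}=c+\inf\{y:F_2(y)\ge\eta\}=c+Q_2(\eta).
\end{equation}
Then applying the classical isometry \eqref{isometry} with $p=2$ (recalled in the paragraph preceding \eqref{isometry}) yields
\begin{equation}
W_2(\mu_1,T_c^{\#}\mu_2)=\|Q_1-Q_{T_c^{\#}\mu_2}\|_{L^2(0,1)}=\|Q_1-Q_2-c\|_{L^2(0,1)}.
\end{equation}
Taking the infimum over $c\in\mathbb{R}$ gives the second equality.

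This is mostly an unpacking of definitions; there is no real obstacle. The one place to be careful is the identification $Q_{T_c^{\#}\mu_2}=Q_2+c$, which requires handling the pseudo-inverse (not a true inverse) rigorously via the distribution function, as done above. The finite-second-moment assumption on $\mu_1,\mu_2$ ensures both $Q_1,Q_2\in L^2(0,1)$ so that the $L^2$ norms and the quantities $W_2(\mu_1,T_c^{\#}\mu_2)$ are finite and the isometry \eqref{isometry} applies.
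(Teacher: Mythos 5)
Your proof is correct and follows essentially the same approach as the paper: the first equality via the observation that $P_0$ is the $L^2$-orthogonal projection onto mean-zero functions (equivalently, minimizing the quadratic in $c$), and the second via the isometry \eqref{isometry} together with the identification $Q_{T_c^{\#}\mu_2}=Q_2+c$. Your proof merely spells out the verification of that last identification through the cumulative distribution function, whereas the paper invokes it "by definition"; this is sound but not a different route.
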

\begin{proof}
    For the first equality in \eqref{l2-inter-1}, it follows from the fact that $P_0=\id-\int_0^1d\eta$ is the orthogonal projection to the orthogonal complement of constants, $\text{span}\{1\}^{\perp}$, in the Hilbert space $L^2(0,1)$ with the usual inner product. The minimizer is given by $c=\int_0^1(Q_1(\eta)-Q_2(\eta))d\eta$. Using \eqref{isometry}
 and taking into account that by definition $Q_2+c$ coincides with the pseudo-inverse function of $T_c^{\#}\mu_2$, we conclude the desired identity.
 \end{proof}

We now show the following result parallel to Theorem \ref{thm:BV}. 

\begin{theorem}\label{thm:L2}
    Let $(Q_1,N_1)$ and $(Q_2,N_2)$ be two classical solutions to \eqref{eq-Q-tau-classical}-\eqref{ic-q-tau-classical}. Then we have
\begin{equation}\label{dissaption-L2}
    \frac{d}{d\tau}\int_0^{1}(P_0(Q_1)-P_0(Q_2))^2d\eta=2\int_0^{1}P_0(Q_1-Q_2)P_0(K(Q_1)-K(Q_2))d\eta.
\end{equation}    
In particular, if $K$ is a linear function, i.e. $K(Q)=kQ+b$, we have 
\begin{equation}\label{k-affine-L2-classical}
\begin{aligned}
            \frac{d}{d\tau}\int_0^{1}(P_0(Q_1)-P_0(Q_2))^2d\eta=2k\int_0^{1}(P_0(Q_1)-P_0(Q_2))^2d\eta
\end{aligned}
\end{equation} which implies
    \begin{equation}\label{estimate-L2-linear}
         \|P_0Q_1(\tau)-P_0Q_2(\tau)\|_{L^2(0,1)}= e^{k\tau}\|P_0Q_1(0)-P_0Q_2(0)\|_{L^2(0,1)}.
    \end{equation}
    This identity holds whenever the two classical solutions exist up to time $\tau$. 
\end{theorem}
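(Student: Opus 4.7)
The plan is to mimic the strategy used for Theorem \ref{thm:BV}: produce an identity in which the troublesome $1/N$ term is removed. In Theorem \ref{thm:BV} this was done by differentiating in $\eta$ (since $1/N$ is constant in $\eta$); here we use instead the projection $P_0=\id-\int_0^1 d\eta$, which annihilates constants in $\eta$ as well. Specifically, I will subtract the equations \eqref{eq-Q-tau-classical} for $Q_1$ and $Q_2$ to obtain
\begin{equation*}
\p_\tau(Q_1-Q_2)+\p_\eta(Q_1-Q_2)=\Bigl(\tfrac{1}{N_1(\tau)}-\tfrac{1}{N_2(\tau)}\Bigr)+K(Q_1)-K(Q_2),
\end{equation*}
then apply $P_0$. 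The constant-in-$\eta$ term $\tfrac{1}{N_1}-\tfrac{1}{N_2}$ drops out. Moreover $P_0$ commutes with $\p_\tau$, and since $Q_1,Q_2$ share the boundary values $0$ at $\eta=0$ and $\Phi_F$ at $\eta=1$ (Proposition \ref{prop:char-N-classical} and the boundary condition \eqref{bc-Q-tau-classical}), one has $\int_0^1\p_\eta(Q_1-Q_2)\,d\eta=0$, which gives $P_0\p_\eta(Q_1-Q_2)=\p_\eta(Q_1-Q_2)=\p_\eta P_0(Q_1-Q_2)$. Writing $W:=P_0(Q_1-Q_2)=P_0Q_1-P_0Q_2$, this yields the transport identity
\begin{equation*}
\p_\tau W+\p_\eta W=P_0\bigl(K(Q_1)-K(Q_2)\bigr).
\end{equation*}

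Next I multiply this equation by $W$ and integrate over $\eta\in[0,1]$. The time term gives $\tfrac{1}{2}\tfrac{d}{d\tau}\|W\|_{L^2}^2$, while the transport term becomes a boundary contribution $\tfrac{1}{2}\bigl[W^2\bigr]_0^1$. This is the step that makes the choice of projector $P_0$ pay off: using again the shared boundary values $Q_1(\tau,0)=Q_2(\tau,0)=0$ and $Q_1(\tau,1)=Q_2(\tau,1)=\Phi_F$, we have $(Q_1-Q_2)(\tau,0)=(Q_1-Q_2)(\tau,1)=0$, and therefore
\begin{equation*}
W(\tau,0)=-\int_0^1(Q_1-Q_2)d\eta=W(\tau,1),
\end{equation*}
so the boundary term vanishes. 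This produces exactly \eqref{dissaption-L2}.

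The linear case is then immediate: for $K(Q)=kQ+b$, we have $K(Q_1)-K(Q_2)=k(Q_1-Q_2)$, and the linearity of $P_0$ gives $P_0(K(Q_1)-K(Q_2))=kW$, so \eqref{dissaption-L2} reduces to
\begin{equation*}
\frac{d}{d\tau}\|W(\tau)\|_{L^2(0,1)}^2=2k\|W(\tau)\|_{L^2(0,1)}^2,
\end{equation*}
whose explicit integration yields \eqref{estimate-L2-linear}. The only non-routine step is the boundary cancellation; everything else is bookkeeping. Regularity is not an issue since $(Q_i,N_i)$ are classical solutions, so all the integrations by parts and interchanges of $\p_\tau$ with integration are legitimate on the joint existence interval.
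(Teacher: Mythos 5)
Your proof is correct and follows essentially the same path as the paper: apply $P_0$ to eliminate the $1/N_1-1/N_2$ term, obtain a pure transport equation for $W:=P_0(Q_1-Q_2)$, and close with an energy identity whose boundary term vanishes because $Q_1,Q_2$ share endpoint values $0$ and $\Phi_F$. The only cosmetic difference is that you subtract the two equations before projecting, so $P_0$ acts as the identity on $\p_\eta(Q_1-Q_2)$, whereas the paper projects each equation first and tracks the commutator $[P_0,\p_\eta]Q_i=-\Phi_F$, which then cancels upon subtraction.
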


\begin{proof}
    First, we show \eqref{dissaption-L2} implies \eqref{k-affine-L2-classical} when $K(Q)=kQ+b$, in which case we have
    \begin{equation}
        K(Q_1)-K(Q_2)=kQ_1+b-(kQ_2+b)=k(Q_1-Q_2).
    \end{equation} Then as $P_0=\id-\int_0^1d\eta$ is a linear operator, it commutes with the multiplication by a scalar $k\in\mathbb{R}$
    \begin{equation}
        P_0(K(Q_1)-K(Q_2))=P_0(k(Q_1-Q_2))=kP_0(Q_1-Q_2).
    \end{equation} Substitute this into \eqref{dissaption-L2} and we obtain \eqref{k-affine-L2-classical}, which can be rewritten as an ODE
    \begin{equation}
        \frac{d}{d\tau}S(\tau)=2kS(\tau),
    \end{equation} with $S(\tau)=\|P_0Q_1(\tau)-P_0Q_2(\tau)\|_{L^2(0,1)}^2$. Solving this ODE we obtain \eqref{estimate-L2-linear}.
    
    It remains to prove \eqref{dissaption-L2}. As preparations, we study the conmutation properties of the linear operator $P_0=\id-\int_0^1d\eta$ with respect to $\p_{\tau}$ and $\p_{\eta}$. Since $P_0$ only acts on $\eta$ direction, it commutes with $\p_{\tau}$ as follows
    \begin{equation}\label{P-commute-tau}
    \begin{aligned}
        P_0(\p_{\tau}Q)&=\p_{\tau}Q(\tau,\eta)-\int_0^1\p_{\tau}Q(\tau,\tilde{\eta})d\eta=\p_{\tau}Q(\tau,\eta)-\p_{\tau}\int_0^1Q(\tau,\tilde{\eta})d\eta\\&=\p_{\tau}\left(Q(\tau,\eta)-\int_0^1Q(\tau,\tilde{\eta})d\eta\right)=\p_{\tau}(P_0Q).
    \end{aligned}
    \end{equation} 
For $\p_{\eta}$ we can compute the commutator, we first realize that
    \begin{equation}\label{tmp-3.2-1}
            \begin{aligned}
        P_0(\p_{\eta}Q)&=\p_{\eta}Q(\tau,\eta)-\int_0^1\p_{\eta}Q(\tau,\tilde{\eta})d\eta\\&=\p_{\eta}Q(\tau,\eta)-(Q(\tau,1)-Q(\tau,0))=\p_{\eta}Q(\tau,\eta)-\Phi_F,
    \end{aligned}
    \end{equation}
where we use the boundary condition \eqref{bc-Q-tau-classical} and \eqref{eq:q1-phiF} in Proposition \ref{prop:char-N-classical}. And on the other hand, we calculate
    \begin{equation}\label{tmp-3.2-2}
        \begin{aligned}
        \p_{\eta}(P_0Q)&=\p_{\eta}\left(Q(\tau,\eta)-\int_0^1Q(\tau,\tilde{\eta})d\eta\right)=\p_{\eta}Q(\tau,\eta).
    \end{aligned}
    \end{equation} 
Combining \eqref{tmp-3.2-1} and \eqref{tmp-3.2-2} we obtain the commutator between $P_0$ and $\p_{\eta}$ as
    \begin{equation}\label{P-commutator-eta}
        P_0(\p_{\eta}Q)-\p_{\eta}(P_0Q)=-\Phi_F.
    \end{equation}
Next, we apply the linear operator $P_0$ to both sides in equation \eqref{eq-Q-tau-classical} to deduce
    \begin{equation}\label{tmp-3.2-3}
        P_0(\p_{\tau}Q_i)+P_0(\p_{\eta}Q_i)=P_0(K(Q_i)),
    \end{equation} 
for $i=1,2$. Using \eqref{P-commute-tau} and \eqref{P-commutator-eta} in \eqref{tmp-3.2-3}, we get
    \begin{equation}\label{tmp-3.2-4}
        \p_{\tau}(P_0Q_i)+\p_{\eta}(P_0Q_i)+\Phi_F=P_0(K(Q_i)).
    \end{equation}
Hence, subtracting \eqref{tmp-3.2-4} for $i=2$ from that for $i=1$, we derive
    \begin{equation}\label{tmp-3.2-5}
            \p_{\tau}(P_0(Q_1-Q_2))+\p_{\eta}(P_0(Q_1-Q_2))=P_0(K(Q_1)-K(Q_2)).
    \end{equation}
Now, we multiply both side of \eqref{tmp-3.2-5} by ${2}P_0(Q_1-Q_2)$ to obtain
    \begin{equation}
        \p_{\tau}((P_0(Q_1-Q_2))^2)+\p_{\eta}((P_0(Q_1-Q_2))^2)=2P_0(K(Q_1)-K(Q_2))P_0(Q_1-Q_2),
    \end{equation}
and integrating in $\eta$ yields
    \begin{equation}\label{tmp-3.2-6}
    \begin{aligned}
        \frac{d}{d\tau}\int_0^{1}(P_0(Q_1-Q_2))^2d\eta+(P_0(Q_1-Q_2))^2|_{\eta=0}^{\eta=1}=2\int_0^{1}P_0(Q_1-Q_2)P_0(K(Q_1)-K(Q_2))d\eta.
    \end{aligned}
    \end{equation} 
In view of \eqref{tmp-3.2-6}, to prove \eqref{distance-L2-variant}, it all reduces to show
    \begin{equation}
        (P_0(Q_1-Q_2))^2|_{\eta=0}^{\eta=1}=0.
    \end{equation}
Indeed we shall show a slightly stronger result
    \begin{equation}\label{tmp-3.2-7}
                (P_0(Q_1-Q_2))|_{\eta=0}^{\eta=1}=0.
    \end{equation}
Thanks to \eqref{bc-Q-tau-classical} at $\eta=0$ and \eqref{eq:q1-phiF} at $\eta=1$, we have
    \begin{align*}
        P_0Q_i(\tau,\eta=0)&=Q_i(\tau,\eta=0)-\int_0^{1}Q_i(\tau,\tilde{\eta})d\tilde{\eta}=-\int_0^{1}Q_i(\tau,\tilde{\eta})d\tilde{\eta},\intertext{and}
        P_0Q_i(\tau,\eta=1)&=Q_i(\tau,\eta=1)-\int_0^{1}Q_i(\tau,\tilde{\eta})d\tilde{\eta}=\Phi_F-\int_0^{1}Q_i(\tau,\tilde{\eta})d\tilde{\eta},
    \end{align*}
for $i=1,2$. Hence
    \begin{align*}
        P_0(Q_1-Q_2)(\tau,\eta=0)&=-\int_0^{1}(Q_1(\tau,\tilde{\eta})-Q_2(\tau,\tilde{\eta}))d\tilde{\eta}=P_0(Q_1-Q_2)(\tau,\eta=1).
    \end{align*}
Therefore \eqref{tmp-3.2-7} is proved. Combining \eqref{tmp-3.2-7} and \eqref{tmp-3.2-6} we obtain \eqref{dissaption-L2}.
\end{proof}

\begin{remark}
    Unlike \eqref{estimate-BV} in Theorem \ref{thm:BV}, which works for a general class of $K$, \eqref{estimate-L2-linear} only holds for linear $K(Q)=kQ+b$. The gap is that we do not find an analogy of Lemma \ref{lem:integral-ineq-bv} to control the right hand side in \eqref{dissaption-L2}. We will discuss more on this in a general framework in Section \ref{subsec:unifying}.
\end{remark}

The next results justifies that \eqref{distance-L2-variant} can be viewed as a distance.

\begin{corollary}\label{cor:positive-L2}
    Let two $Q_1,Q_2\in C[0,1]$  with  
\begin{equation}\label{tmp-cor-L2-cond}
    Q_1(0)=Q_2(0)=0,\qquad  Q_1(1)=Q_2(1)=\Phi_F.
\end{equation} If
\begin{equation}\label{tmp-cor-L2-cond2}
    \|P_0Q_1-P_0Q_2\|_{L^2(0,1)}=0,
\end{equation} then
\begin{equation}\label{tmp-cor-L2-rst}
    Q_1(\eta)\equiv Q_2(\eta),\qquad \forall \eta\in[0,1].
\end{equation}
\end{corollary}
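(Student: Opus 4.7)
The plan is very short. Writing out the definition of $P_0$, I observe that
\begin{equation*}
P_0 Q_1(\eta) - P_0 Q_2(\eta) = (Q_1(\eta) - Q_2(\eta)) - c, \qquad c := \int_0^1 (Q_1(\tilde\eta) - Q_2(\tilde\eta)) \, d\tilde\eta,
\end{equation*}
so the assumption \eqref{tmp-cor-L2-cond2} says $Q_1 - Q_2 \equiv c$ almost everywhere on $(0,1)$. Since $Q_1, Q_2$ are continuous on $[0,1]$ by hypothesis, the difference $Q_1 - Q_2$ is a continuous function equal a.e.\ to the constant $c$, hence $Q_1(\eta) - Q_2(\eta) = c$ for every $\eta \in [0,1]$.

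It then remains to identify the constant $c$. Evaluating the identity $Q_1(\eta) - Q_2(\eta) = c$ at the boundary $\eta = 0$ and using \eqref{tmp-cor-L2-cond}, namely $Q_1(0) = Q_2(0) = 0$, forces $c = 0$. (Evaluating at $\eta = 1$ with $Q_1(1) = Q_2(1) = \Phi_F$ gives the same conclusion.) This yields \eqref{tmp-cor-L2-rst}.

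There is no genuine obstacle here; the only subtlety worth flagging is that the boundary values $Q_1(0), Q_2(0)$ are required to pin down the constant, which is precisely why the hypothesis \eqref{tmp-cor-L2-cond} is needed. Without it, $\|P_0 Q_1 - P_0 Q_2\|_{L^2}$ would merely control $Q_1 - Q_2$ up to an additive constant, consistent with the optimal transport interpretation given in Proposition \ref{prop:wasserstein-interpret}: the quantity $\|P_0 Q_1 - P_0 Q_2\|_{L^2(0,1)}$ is the $W_2$ distance between $\mu_1$ and translates of $\mu_2$, so it only vanishes when $\mu_2$ is a translate of $\mu_1$, and the common boundary normalization \eqref{tmp-cor-L2-cond} rules out any nontrivial translation.
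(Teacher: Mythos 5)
Your proof is correct and follows the same line as the paper's: deduce $Q_1 - Q_2 \equiv c$ a.e.\ from the vanishing $L^2$ norm, upgrade to everywhere by continuity, and then use the boundary values from \eqref{tmp-cor-L2-cond} to force $c=0$. The only cosmetic difference is that you evaluate at $\eta=0$ while the paper uses $\eta=1$; the extra remark connecting to Proposition \ref{prop:wasserstein-interpret} is a nice touch but not needed.
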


\begin{proof}
 By \eqref{tmp-cor-L2-cond2} we know $Q_1=Q_2+c$ almost everywhere in $[0,1]$ with the constant $c=\int_0^1(Q_1(\eta)-Q_2(\eta))d\eta$. Then it follows from the continuity of $Q_1$ and $Q_2$ that
    \begin{equation*}
        Q_1(\eta)\equiv Q_2(\eta)+c,\qquad \forall \eta\in[0,1].
    \end{equation*} In particular it holds at $\eta=1$. Then by \eqref{tmp-cor-L2-cond} we know $c=0$.
\end{proof}


\subsection{A unified framework}\label{subsec:unifying}

Now we unify Theorem \ref{thm:BV} and \ref{thm:L2} into a general framework, which may better illustrate how we use the equation structure of \eqref{eq-Q-tau-classical}, which we recall here for convenience,
\begin{equation}
    \p_{\tau}Q+\p_{\eta}Q=\frac{1}{N(\tau)}+{K}(Q).
\end{equation} A first crucial step in both Theorem \ref{thm:BV} and \ref{thm:L2} is to derive an modulated equation, such that the challenging term $1/N(\tau)$ does not show up explicitly. The idea is to utilize the fact that in terms of spatial dependence, $1/N(\tau)$ is just a constant. To remove it, we can apply an operator $P$, whose kernel contains constants, to equation \eqref{eq-Q-tau-classical}. For Theorem \ref{thm:BV} the choice $P=\p_{\eta}$. For Theorem \ref{thm:L2}, $P=\id-\int_0^1d\eta$. 

Here we try to discuss a general linear operator $P$ acting on the spatial variable $\eta$. It maps a function $f$ on $[0,1]$ to another function on $[0,1]$, denoted as $Pf$. For a space-time function $f(\tau,\eta)$, $Pf$ naturally extends to mean the spatial operation being applied to each time slide $\tau$, or more formally
\begin{equation}
    (Pf)(\tau,\eta):=(Pf(\tau,\cdot))(\eta).
\end{equation}
We do not specify the precise space or regularity for $P$ here to focus on more algebraic structures. To get rid of $1/N(\tau)$, we require that its kernel $\ker P$ contains constants. Then we can apply $P$ to equation \eqref{eq-Q-tau-classical} to derive

\begin{equation}\label{first-step-general}
    P(\p_{\tau}Q)+P(\p_{\eta}Q)=P(K(Q)).
\end{equation}

To proceed, we need more assumptions on $P$, roughly meaning it goes well with a solution $Q$. We summarize all assumptions as follows.

\begin{assumption}\label{as:P}
    For the linear spatial operator $P$, we assume
    
    (i) Its kernel contains all constants, i.e. $\text{span}\{1\}\subseteq\ker P$.
    
    (ii) For any classical solution $Q$, the commutator between $P$ and $\p_{\eta}$
    \begin{equation}
        [P,\p_{\eta}]Q:=P(\p_{\eta}Q)-\p_{\eta}(PQ)\equiv C(\eta),
    \end{equation} gives a fixed spatial function.

    (iii) For any classical solution $Q$, the following
    \begin{equation}
        (PQ)(\eta=1)-(PQ)(\eta=0)\equiv C,
    \end{equation} is a fixed constant.
\end{assumption}

It is not difficult to check that both $P=\p_{\eta}$ in Theorem \ref{thm:BV} and $P=\id-\int_0^1d\eta$ in Theorem \ref{thm:L2} satisfy Assumption \ref{as:P}. An analogous computation as in Theorem \ref{thm:L2} implies the following general result.

\begin{theorem}\label{thm:P_first}
    Suppose $P$ is a linear operator satisfying Assumption \ref{as:P} and $f$ is a $C^1$ function.  Let $Q_1,Q_2$ be two classical solutions to \eqref{eq-Q-tau-classical}. Then we have
\begin{equation}\label{dissaption-general}
    \frac{d}{d\tau}\int_0^{1}f(P(Q_1-Q_2))d\eta=\int_0^{1}f'(P(Q_1-Q_2))P(K(Q_1)-K(Q_2))d\eta.
\end{equation}    
\end{theorem}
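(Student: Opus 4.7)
The plan is to follow exactly the same blueprint as the proof of Theorem \ref{thm:L2}, but working with a general $P$ instead of the specific operator $\id-\int_0^1 d\eta$ and with a general $C^1$ function $f$ in place of the quadratic $x \mapsto x^2$. The four properties we need are: (a) $P$ kills the constant $1/N(\tau)$; (b) $P$ commutes with $\p_\tau$; (c) the commutator $[P,\p_\eta]$ applied to a classical solution is a fixed function of $\eta$; (d) $P(Q_1-Q_2)$ takes the same value at $\eta=0$ and $\eta=1$. Items (a), (c), (d) are exactly Assumption~\ref{as:P}(i)--(iii), and (b) is automatic because $P$ only acts on the spatial variable.

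First I would apply $P$ to equation \eqref{eq-Q-tau-classical} for $i=1,2$. Using (a) to discard the $1/N(\tau)$ contribution, (b) to move $P$ past $\p_\tau$, and (c) to write $P(\p_\eta Q_i)=\p_\eta(PQ_i)+C(\eta)$, I obtain
\begin{equation}
\p_\tau(PQ_i)+\p_\eta(PQ_i)+C(\eta)=P(K(Q_i)),\qquad i=1,2.
\end{equation}
Subtracting the two equations eliminates the fixed function $C(\eta)$ and gives the clean transport-type identity
\begin{equation}
\p_\tau\bigl(P(Q_1-Q_2)\bigr)+\p_\eta\bigl(P(Q_1-Q_2)\bigr)=P\bigl(K(Q_1)-K(Q_2)\bigr).
\end{equation}

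Next I would compose with $f$ via the chain rule: multiplying the preceding identity by $f'(P(Q_1-Q_2))$ and recognising both sides as total derivatives produces
\begin{equation}
\p_\tau\bigl(f(P(Q_1-Q_2))\bigr)+\p_\eta\bigl(f(P(Q_1-Q_2))\bigr)=f'\bigl(P(Q_1-Q_2)\bigr)\,P\bigl(K(Q_1)-K(Q_2)\bigr).
\end{equation}
Integrating over $\eta\in(0,1)$ and exchanging $\p_\tau$ with the integral, the spatial derivative term contributes only the boundary term $f(P(Q_1-Q_2))\bigl|_{\eta=0}^{\eta=1}$. By Assumption~\ref{as:P}(iii), $(PQ_i)(\eta=1)-(PQ_i)(\eta=0)$ is a constant independent of $i$, so the differences cancel and $P(Q_1-Q_2)$ takes the \emph{same} value at $\eta=0$ and $\eta=1$; hence the boundary term vanishes and \eqref{dissaption-general} follows.

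The potentially delicate point is the boundary cancellation: it is the only place where the specific structure of classical solutions enters (through (iii)), and in the two concrete instances $P=\p_\eta$ and $P=\id-\int_0^1 d\eta$ it rests on the boundary conditions \eqref{bc-Q-tau-classical} together with $Q_i(\tau,1)=\Phi_F$ from Proposition~\ref{prop:char-N-classical}. Everything else is bookkeeping provided the relevant quantities are regular enough for the interchange of $\p_\tau$ with the integral, which is ensured by the $C^1$ regularity of classical solutions (and, if needed, by a density argument approximating the initial data by more regular ones satisfying Assumption~\ref{as:second-order}, exactly as in Lemma~\ref{lem:dIdtau}).
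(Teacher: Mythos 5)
Your proposal is correct and is exactly the argument the paper has in mind: the text preceding Theorem~\ref{thm:P_first} states only that the result follows by ``an analogous computation as in Theorem~\ref{thm:L2},'' and you have carried out precisely that generalization, replacing $P_0$ by a general $P$ satisfying Assumption~\ref{as:P} and the quadratic $x\mapsto x^2$ by a general $C^1$ function $f$, with the boundary term vanishing by Assumption~\ref{as:P}(iii) exactly as you say. Your closing remark about the density argument is a sensible caution (needed when $P$ involves an extra derivative, as for $P=\p_\eta$), in line with how the paper handles Lemma~\ref{lem:dIdtau}.
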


By choosing $f$ such that $f(0)=0$ and $f(x)>0$ for $x\neq0$, we point out that the quantity 
$$
\int_0^{1}f(P(Q_1-Q_2))d\eta
$$
can be understood as a ``distance'' between $Q_1$ and $Q_2$, see Corollary \ref{cor:positive-L2}. Controlling the right-hand-side of \eqref{dissaption-general} is difficult for general $K$, see Theorem \ref{thm:BV} and Lemma \ref{lem:integral-ineq-bv}. However, in the simple linear $K(Q)=kQ+b$ and $f(x)=|x|^p$ ($p>1$), we can obtain similarly to Theorem \ref{thm:L2} the following:

\begin{corollary}\label{cor:linear-general}
        Suppose $K(Q)=kQ+b$ is a linear function. Let $P$ be an operator satisfying Assumption \ref{as:P}. Let $Q_1,Q_2$ be two classical solutions to \eqref{eq-Q-tau-classical}. Then we have
    \begin{equation}\label{estimate-Lp-linear}
         \|PQ_1(\tau)-PQ_2(\tau)\|_{L^p(0,1)}= e^{k\tau}\|PQ_1(0)-PQ_2(0)\|_{L^p(0,1)},
    \end{equation} for every $p>1$. The identity holds whenever the two classical solutions exist towards time $\tau$. 
\end{corollary}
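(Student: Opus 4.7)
The plan is to apply Theorem \ref{thm:P_first} with the specific choice $f(x) = |x|^p$ and exploit the linearity of both $K$ and $P$. For any $p>1$, the function $f(x)=|x|^p$ belongs to $C^1(\mathbb{R})$ with $f'(x) = p\,|x|^{p-2}x$ (continuous at the origin precisely because $p-1>0$), so the hypotheses of Theorem \ref{thm:P_first} are met.

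When $K(Q) = kQ + b$, the linearity of $P$ (and $1\in\ker P$) gives
\begin{equation}
P(K(Q_1) - K(Q_2)) = P\bigl(k(Q_1-Q_2)\bigr) = k\,P(Q_1-Q_2).
\end{equation}
Substituting this and $f'(x) = p|x|^{p-2}x$ into the identity \eqref{dissaption-general} of Theorem \ref{thm:P_first}, the right-hand side becomes
\begin{equation}
\int_0^1 p\,|P(Q_1-Q_2)|^{p-2} P(Q_1-Q_2)\cdot k\,P(Q_1-Q_2)\,d\eta = kp\int_0^1 |P(Q_1-Q_2)|^p\,d\eta.
\end{equation}
Setting $S(\tau) := \|PQ_1(\tau)-PQ_2(\tau)\|_{L^p(0,1)}^p$, the identity reduces to the scalar linear ODE
\begin{equation}
\frac{d}{d\tau}S(\tau) = kp\,S(\tau),
\end{equation}
whose unique solution is $S(\tau) = e^{kp\tau}S(0)$. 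Taking $p$-th roots yields \eqref{estimate-Lp-linear}.

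There is no serious obstacle here: everything is driven by Theorem \ref{thm:P_first} together with the commutation $P\circ(kQ+b) = k\,PQ$ afforded by linearity. The only point worth double-checking is that Theorem \ref{thm:P_first} is applicable to $f(x)=|x|^p$ for all $p>1$, which is clear from the regularity of $|x|^p$. Note that $p=1$ is excluded because $|x|$ is not $C^1$ at $0$, which is exactly the gap that the BV estimate of Theorem \ref{thm:BV} fills via a sign-function argument. The validity of \eqref{estimate-Lp-linear} as long as both classical solutions exist up to $\tau$ follows automatically, since every step above is carried out pointwise in time on the common interval of existence.
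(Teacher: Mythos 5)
Your proof is correct and is exactly the approach the paper intends: apply Theorem \ref{thm:P_first} with $f(x)=|x|^p$ (noting $f\in C^1$ for $p>1$), use linearity of $P$ and $K$ to reduce the right-hand side of \eqref{dissaption-general} to $kp\int_0^1|P(Q_1-Q_2)|^p\,d\eta$, and solve the resulting scalar ODE. The paper does not spell out this proof, but your argument mirrors the paper's proof of the $p=2$ case in Theorem \ref{thm:L2} and is what the remark preceding Theorem \ref{thm:P_first} ("An analogous computation as in Theorem \ref{thm:L2}") points to.
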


In view of this framework, previous Theorem \ref{thm:BV} corresponds to the case $P=\p_{\eta}$ and $f=|x|$, and previous Theorem \ref{thm:L2} corresponds to the case $P=\id-\int_0^1d\eta$ and $f=x^2$. 


\section{Global existence and convergence to the steady state: mild interaction,  $K'<0$}\label{sec:global}

In this section, we prove the global existence and long term convergence to the steady state for a solution to \eqref{eq-Q-tau-classical}-\eqref{ic-q-tau-classical}. The result is in the regime when $K'<0$ and the size of $K$ is not very large. We have already seen in Section \ref{sec:dynamics-classical} that the sign of $K'$ can make a crucial difference to the dynamics -- in particular the distance between two solutions shrinks as soon as $k_{\max}<0$. For instance, if $K'<0$ and Assumption \ref{as:K-cc} holds. Here the size of $K$ also comes into play for the existence of a global solution and the steady state. The complementary regime when $K'>0$ or $K$ is large will be studied in the next section, where finite-time blow-ups can be shown.

 We first state and prove a global existence result in Section \ref{subsec:GE-result}. As a key tool for the proof, we derive an integral equation for $1/N$. In Section \ref{subsec:steady-state}, we first prove a dichotomy on the existence of the steady state. Then combining these results with Theorem \ref{thm:BV}, we show the long time convergence to the steady state.
 
\begin{remark}\label{rmk:51}
The main results in this section agree with \cite[Proposition 1-3]{mauroy2012global} which are presented in different forms. Their statements are for the $\rho$-formulation \eqref{eq:rho-t-1}-\eqref{ic-rho-t} in $t$ timescale. Here we develop proofs for $Q$-formulation \eqref{eq-Q-tau-classical}-\eqref{ic-q-tau-classical} in the dilated timescale $\tau$, which reveal several hidden structures of pulse-coupled oscillators. For example, during the global existence proof we work with an auxiliary variable $H$ (Proposition \ref{prop:eq-H}) and develop an integral equation for $1/N$ (Proposition \ref{prop:eq-tildeN}). These relations are not obvious in the previous formulation for $\rho$ in $t$ timescale. Besides, in \cite{mauroy2012global} Assumption \ref{as:K-cc} is needed for the convergence result, which is relaxed here to $k_{\max}<0$.
\end{remark}

\subsection{Global existence}\label{subsec:GE-result}

We define the following auxiliary variable, which is useful for the statement of global existence results,
\begin{equation}\label{def-Hinit}
    H_{\init}(\eta):=\p_{\eta}Q_{\init}(\eta)-K(Q_{\init}(\eta)),\quad \eta\in[0,1].
\end{equation} 
In particular we impose the following assumption on the initial data.
\begin{assumption}\label{as:H} With $H_{\init}$ defined in \eqref{def-Hinit}, we assume 
\begin{equation}
    H_{\init}(\eta)>0,\qquad \forall\eta\in[0,1].
\end{equation}
\end{assumption}
\begin{remark}[Connection to $\rho$ formulation.]\label{rmk:5-2}
    By definition of $H_{\init}$, Assumption \ref{as:H} is equivalent to 
    \begin{equation}\label{no-blow-up-cond-Q}
        \p_{\eta}Q_{\init}(\eta)>K(Q_{\init}(\eta)),\qquad \forall\eta\in[0,1].
    \end{equation}
    In the $\rho$ formulation, \eqref{no-blow-up-cond-Q} becomes
    \begin{equation}\label{no-blow-up-condition-rho}
        \rho_{\init}(\phi)<\frac{1}{K(\phi)},\qquad \forall\phi\in[0,\Phi_F],
    \end{equation} since $\p_{\eta}Q$ corresponds to $1/\rho$ as in \eqref{tmp-pqpe}.
\end{remark}

Now we state the theorem on global existence. In addition to the size assumption on $K$ as in Remark \ref{rmk:5-2}, we also need $K'\leq 0$ on $[0,\Phi_F]$.

\begin{theorem}[See also {\cite[Proposition 2-3]{mauroy2012global}}]\label{thm:global-solu}
Suppose $K$ satisfies that $K'\leq 0$  on $[0,\Phi_F]$ and the initial data $Q_{\init}$  satisfies Assumption \ref{as:wps}  and Assumption \ref{as:H}. 
Then there exists a unique global solution to \eqref{eq-Q-tau-classical}-\eqref{ic-q-tau-classical}. Moreover, we have the following global-in-time bounds on $N(\tau)$
    \begin{equation}\label{gloabl-bd-N}
        0<\frac{1}{\max_{\eta\in[0,1]}H_{\init}(\eta)}\leq N(\tau)\leq\frac{1}{\min_{\eta\in[0,1]}H_{\init}(\eta)}<+\infty,\qquad \tau\geq0,
    \end{equation} where $H_{\init}$ is defined as in \eqref{def-Hinit}.
\end{theorem}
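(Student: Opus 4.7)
By Theorem~\ref{thm:wps-blow-up-classical}, a unique maximal classical solution $(Q,N)$ on $[0,\tau^*)$ already exists, so uniqueness is settled and the only obstruction to global existence is the blow-up criterion \eqref{limit-N-blowup}, equivalently $1/N(\tau)\to 0^+$. The plan is to exploit $K'\leq 0$ together with the auxiliary quantity
\begin{equation*}
    H(\tau,\eta):=\p_\eta Q(\tau,\eta)-K(Q(\tau,\eta)),
\end{equation*}
whose value at $\eta=1$ is precisely $1/N(\tau)$, in order to establish a \emph{uniform} two-sided bound $\min_{\eta}H_{\init}\leq H(\tau,\eta)\leq \max_{\eta}H_{\init}$ on the whole existence interval; the strict positivity guaranteed by Assumption~\ref{as:H} then prevents blow-up.

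Differentiating equation~\eqref{eq-Q-tau-classical} in $\eta$ and using the chain rule yields the linear transport equation
\begin{equation*}
    (\p_\tau+\p_\eta)H = K'(Q)\bigl(H-1/N(\tau)\bigr), \qquad (\tau,\eta)\in(0,\tau^*)\times(0,1),
\end{equation*}
while Corollary~\ref{cor:5-equivalent} supplies the matching boundary identities $H(\tau,0)=H(\tau,1)=1/N(\tau)$. Integrating this ODE along the characteristic $s\mapsto(\tau_0+s,s)$ with integrating factor $e^{\Lambda_r(s)}$, where $\Lambda_r(s):=\int_r^s K'(Q(\tau_0+\sigma,\sigma))\,d\sigma$, produces
\begin{equation*}
    H(\tau_0+s,s)=e^{\Lambda_0(s)}\,\frac{1}{N(\tau_0)}+\int_0^s e^{\Lambda_r(s)}\bigl(-K'(Q)\bigr)\frac{1}{N(\tau_0+r)}\,dr,
\end{equation*}
with an analogous formula along characteristics emanating from $\tau=0$, in which $1/N(\tau_0)$ is replaced by $H_{\init}(\eta_0)$ at the starting point. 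Specialising to $s=1$, $\tau_0=\tau-1$ (resp.\ $\tau_0=0$, $s=\tau$ when $\tau<1$) yields a self-contained integral equation for $1/N(\tau)$.

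The pivotal algebraic observation is the partition-of-unity identity
\begin{equation*}
    e^{\Lambda_0(s)}+\int_0^s e^{\Lambda_r(s)}\bigl(-K'(Q)\bigr)\,dr=1,
\end{equation*}
which follows from $\frac{d}{dr}e^{\Lambda_r(s)}=-K'(Q)\,e^{\Lambda_r(s)}$ and the fundamental theorem of calculus. Since $K'\leq 0$ and $N>0$, all weights are non-negative, so $H(\tau_0+s,s)$ is a genuine \emph{convex combination} of past values of $1/N$ (and of the initial datum $H_{\init}$ for characteristics entering from $\tau=0$). For any $T<\tau^*$, by continuity the supremum $M^*_T:=\sup_{[0,T]}1/N$ is attained at some $\tau_*\in[0,T]$; the convex combination must then saturate, which pins $1/N\equiv M^*_T$ on the characteristic segment feeding $(\tau_*,1)$. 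A finite iteration, rolling this conclusion back in time along consecutive characteristic pieces of length at most one, eventually reaches the initial slice and gives $M^*_T=H_{\init}(\eta_0)\leq \max_\eta H_{\init}$. The symmetric argument for the infimum produces $\inf_{[0,T]}1/N\geq \min_\eta H_{\init}$, strictly positive by Assumption~\ref{as:H}.

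Letting $T\nearrow\tau^*$, the uniform positive lower bound on $1/N$ is incompatible with the blow-up criterion \eqref{limit-N-blowup}, so $\tau^*=+\infty$; the two-sided bound \eqref{gloabl-bd-N} is exactly the bound on $1/N$ just obtained. The main technical hurdle I expect is making the ``roll-back along characteristics'' step fully rigorous: one must verify that saturation of the convex combination really pins $1/N$ to its extremum on a full segment, which requires careful measure-theoretic bookkeeping since the density $-K'(Q)\,dr$ may vanish on sets of positive measure when $K'$ has zeros. A robust alternative is to replace the bootstrap by a direct maximum-principle argument on $H$ itself, in which the case of a boundary extremum at $\eta\in\{0,1\}$ is dispatched using the sign of $\p_\eta H$ there together with the identity $H=1/N$ on the boundary.
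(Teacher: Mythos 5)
Your strategy coincides with the paper's: same auxiliary variable $H=\p_\eta Q-K(Q)$ (Proposition~\ref{prop:eq-H}), same reduction to a convex-combination identity for $1/N$ via characteristics (Proposition~\ref{prop:eq-tildeN}), and same use of $K'\leq 0$ to make all weights nonnegative. The paper organizes the conclusion through the relaxed problem (global by construction, then shown to keep $\tilde N>0$ and hence to coincide with the original solution via Lemma~\ref{lem:relation-relaxed-original}), whereas you work directly with $1/N$ on $[0,T]\subset[0,\tau^*)$ and then argue by contradiction with the blow-up criterion; both routes are fine.

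However, the central step of your bootstrap is misstated and the repair you sketch is aimed at the wrong difficulty. You claim the convex combination ``pins $1/N\equiv M^*_T$ on the characteristic segment feeding $(\tau_*,1)$.'' That is false in general: when $K'(Q)\equiv 0$ along the segment (which is allowed, $K'\leq 0$ but not $K'<0$), the absolutely continuous part of the weight vanishes identically and the identity says nothing about $1/N$ on the interior of the segment. You flag exactly this as a ``technical hurdle,'' but it is not an obstruction, because the roll-back never needs the interior values to saturate. The only thing the argument requires is that the atom at the starting point of the segment has \emph{strictly positive} weight $\alpha_0=\exp\left(\int K'(Q)\right)>0$ (this is \eqref{integral-c-formula-init}--\eqref{integral-c-formula}), which forces the starting value (either $1/N(\tau_*-1)$ or $H_{\init}(1-\tau_*)$) to equal $M^*_T$; the roll-back then proceeds on the starting values only, and terminates at the initial slice in at most $\lceil\tau_*\rceil$ steps. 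The paper bypasses the equality/saturation case altogether: Lemma~\ref{lem:init-tildeN} and Lemma~\ref{lem:tau-tau-1} write the weighted-average inequality $\tilde N(\tau_{\max})\leq\alpha_0 v_0+(1-\alpha_0)\tilde N(\tau_{\max})$ and simply divide by $\alpha_0>0$ to get $\tilde N(\tau_{\max})\leq v_0$, iterating interval by interval. That is the cleaner and fully rigorous version of what you are driving at, and it disposes of the degenerate-density worry without any measure-theoretic bookkeeping. Once you replace ``pins $1/N$ on the whole segment'' by ``forces the starting value, which carries the (always positive) Dirac weight, to equal the extremum,'' your argument closes.
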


\begin{remark}Assumption \ref{as:H} is necessary, although it may not be sharp, as we may not expect global existence result for general initial data, since there might not be guarantee for the constraint \eqref{constaint-classical-Q-tau} to hold for all times for arbitrary initial data. The finite time blow-up of classical solutions will be anyhow clarified in the next section. 
\end{remark}

In the following we will first work with the relaxed problem \eqref{tilde-eq-classical}-\eqref{tilde-ic-classical} for convenience, as it always has a global classical solution (Theorem \ref{thm:wps-classical-relax}). Its connection to the original problem \eqref{eq-Q-tau-classical}-\eqref{ic-q-tau-classical} has been demonstrated in Section \ref{subsec:relax}, in particular Lemma \ref{lem:relation-relaxed-original}.  With results on the relaxed problems, we will return to the original problem to prove Theorem \ref{thm:global-solu}. Recall in the relaxed problem, $\tilde{N}\in\mathbb{R}$ is used in place of $1/N>0$.
Assumption \ref{as:H} motivates us to look into the quantity
\begin{equation}\label{def-H}
    H(\tau,\eta):=\p_{\eta}Q(\tau,\eta)-K(Q(\tau,\eta)),
\end{equation}
for which we can indeed derive an equation.

\begin{proposition}\label{prop:eq-H} Let $(Q,\tilde{N})$ be a classical solution, $Q\in C^2$, to the relaxed problem  \eqref{tilde-eq-classical}-\eqref{tilde-ic-classical}.
Then $H(\tau,\eta):=\p_{\eta}Q(\tau,\eta)-K(Q(\tau,\eta))$ is $C^1$ and satisfies the following system in the classical sense 
\begin{align}
         &\p_{\tau}H+\p_{\eta}H=K'(Q)(H(\tau,\eta)-H(\tau,1)),\qquad\tau>0,\eta\in(0,1),\label{eq-H}\\\label{bc-H}
     &H(\tau,0)=H(\tau,1),\qquad\tau>0,\\
     &H(\tau=0,\eta)=H_{\init}(\eta),\qquad \eta\in[0,1],\label{ic-H}
\end{align} where $H_{\init}$ is defined as in \eqref{def-Hinit}.
Moreover we have
\begin{equation}\label{tildeN-H}
    \tilde{N}(\tau)=H(\tau,1)=H(\tau,0),\quad \tau\geq0.
\end{equation}
\end{proposition}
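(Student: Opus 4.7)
The $C^1$ regularity of $H$ is immediate: since $Q\in C^2([0,\infty)\times[0,1])$ and $K\in C^2(\mathbb{R})$ by Assumption \ref{as:global-K}, both $\p_\eta Q$ and $K(Q)$ are $C^1$, so $H=\p_\eta Q-K(Q)$ is $C^1$. The initial condition \eqref{ic-H} is just the definition \eqref{def-Hinit}. So the real content is \eqref{eq-H}, \eqref{bc-H}, and \eqref{tildeN-H}.

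The first thing I would do is establish \eqref{tildeN-H} and \eqref{bc-H} as a direct consequence of the boundary relations in Proposition \ref{prop:tildeN-imbc}. Namely, since $Q(\tau,0)=0$ and $Q(\tau,1)=\Phi_F$, evaluating the definition of $H$ at the endpoints gives
\begin{align}
H(\tau,0)&=\p_\eta Q(\tau,0)-K(0)=\bigl(\tilde N(\tau)+K(0)\bigr)-K(0)=\tilde N(\tau),\\
H(\tau,1)&=\p_\eta Q(\tau,1)-K(\Phi_F)=\bigl(\tilde N(\tau)+K(\Phi_F)\bigr)-K(\Phi_F)=\tilde N(\tau),
\end{align}
which yields both \eqref{tildeN-H} and \eqref{bc-H} at once.

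For the transport equation \eqref{eq-H}, the key is to apply the characteristic derivative $\p_\tau+\p_\eta$ directly to $H$. Differentiating the relaxed PDE \eqref{tilde-eq-classical} in $\eta$ (permitted since $Q\in C^2$ and $\tilde N$ does not depend on $\eta$) gives
\begin{equation}
(\p_\tau+\p_\eta)\p_\eta Q=K'(Q)\,\p_\eta Q.
\end{equation}
On the other hand, applying $\p_\tau+\p_\eta$ to $K(Q)$ by the chain rule and then using \eqref{tilde-eq-classical} itself yields
\begin{equation}
(\p_\tau+\p_\eta)K(Q)=K'(Q)(\p_\tau Q+\p_\eta Q)=K'(Q)\bigl(\tilde N(\tau)+K(Q)\bigr).
\end{equation}
Subtracting these two identities produces
\begin{equation}
(\p_\tau+\p_\eta)H=K'(Q)\bigl(\p_\eta Q-K(Q)-\tilde N(\tau)\bigr)=K'(Q)\bigl(H(\tau,\eta)-\tilde N(\tau)\bigr),
\end{equation}
and substituting $\tilde N(\tau)=H(\tau,1)$ from the already-proved \eqref{tildeN-H} gives exactly \eqref{eq-H}.

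I do not expect any real obstacle here: the entire argument is differentiation combined with the boundary identities of Proposition \ref{prop:tildeN-imbc}. The only point to flag is that the computation relies on $C^2$ regularity for $Q$ to differentiate the PDE once in $\eta$; this is part of the hypothesis, so nothing further is needed. A density argument (via Theorem \ref{thm:stability-classical-relax}) would extend everything to merely $C^1$ solutions if desired, but is not required by the statement.
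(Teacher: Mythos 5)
Your proof is correct and follows essentially the same route as the paper: differentiate the relaxed PDE in $\eta$ to get the transport equation for $\partial_\eta Q$, apply the characteristic derivative (equivalently multiply by $K'(Q)$) to handle $K(Q)$, subtract, and close using the boundary identities from Proposition \ref{prop:tildeN-imbc}. The only difference is cosmetic ordering — you establish \eqref{bc-H} and \eqref{tildeN-H} up front, the paper derives them at the end — which does not change the argument.
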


\begin{proof}
    Taking the derivative w.r.t. $\tau$ in \eqref{tilde-eq-classical}, we obtain the following equation for $Z:=\p_{\eta}Q$
    \begin{equation}\label{eq-Z-tmp}
        (\p_{\tau}+\p_{\eta})Z=K'(Q)Z,\quad \tau>0,\eta\in(0,1),
    \end{equation}
as we have already seen in \eqref{eq-petaQ-Z} and \eqref{eq-petaQ-Z-recall}.  Multiplying \eqref{tilde-eq-classical} with $K'(Q)$, we obtain an equation for $K(Q)$
    \begin{equation}\label{eq-K(Q)-tmp}
        (\p_{\tau}+\p_{\eta})(K(Q))=K'(Q)K(Q)+K'(Q)\tilde{N},\quad \tau>0,\,\eta\in(0,1).
    \end{equation}
    Subtracting \eqref{eq-K(Q)-tmp} from \eqref{eq-Z-tmp}, we derive an equation for $H(\tau,\eta):=\p_{\eta}Q(\tau,\eta)-K(Q(\tau,\eta))$
    \begin{equation}\label{tmp-eqH}
        (\p_{\tau}+\p_{\eta})H=K'(Q)H-K'(Q)\tilde{N}(\tau),\quad \tau>0,\,\eta\in(0,1).
    \end{equation}
    Finally, we recall the following relation from \eqref{def-tildeN-tau-classical} and Proposition \ref{prop:tildeN-imbc}
    \begin{equation}\label{tmp-tildeN-H}
        \tilde{N}(\tau)=\p_{\tau}Q(\tau,1)-K(Q(\tau,1))=\p_{\tau}Q(\tau,0)-K(Q(\tau,0)),
    \end{equation} which implies the boundary condition \eqref{bc-H}, and the relation \eqref{tildeN-H}. Substitute \eqref{tildeN-H} into \eqref{tmp-eqH}, we obtain \eqref{eq-H}. And the initial condition \eqref{ic-H} holds by definition of $H$ and $H_{\init}$ in \eqref{def-Hinit}.
    
\end{proof}

The regularity assumption, $Q\in C^2$, in the previous theorem is not essential, only for convenience to justify the calculations in the classical sense.
\begin{remark}
We here explore the meaning of $H$ \eqref{def-H} in the $\rho$ formulation. We define the flux for the continuity equation \eqref{eq:rho-t-1} as
\begin{equation}\label{def-flux-J}
    [1+K(\phi)N(t)]\rho(t,\phi)=:J(t,\phi).
\end{equation} Then \eqref{def-N-t} and \eqref{bc-rho-t} {give} $N(t)=J(t,0)=J(t,\Phi_F)$. We can rewrite \eqref{def-flux-J} as
\begin{equation}\label{H-rho}
    \frac{1}{\rho(t,\phi)}-K(\phi)=\frac{1}{J(t,\phi)}{-}K(\phi)\frac{J(t,\phi)-N(t)}{J(t,\phi)},
\end{equation} whose left hand side corresponds to $H$, as $1/\rho$ corresponds to $\p_{\eta}Q$ \eqref{tmp-pqpe}. The second term in the right hand side of \eqref{H-rho} vanishes at $\phi=0$ or $\Phi_F$, as $N(t)=J(t,0)=J(t,\Phi_F)$. Therefore we might interpret the right hand side as an approximation to the inverse of the flux, which becomes exactly the inverse of the flux when $\phi=0$ or $\Phi_F$.
\end{remark}

\begin{remark}
Note that when $K'$ is a constant (or equivalently when $K(Q)=kQ+b$ is linear),  \eqref{eq-H}-\eqref{ic-H} is a self-contained equation which is also linear in $H$. However, in general we cannot express $Q$ in terms of $H$ from \eqref{eq-H}-\eqref{ic-H}. Nevertheless, we can obtain good estimates on $\tilde{N}$ especially when $K'$ has a fixed sign since $H$ is transported with constant speed in $\eta$. 
\end{remark}

Solving $H$ along the characteristics for $\p_{\tau}+\p_{\eta}$, we can derive an integral equation for $\tilde{N}(\tau)$.

\begin{proposition}\label{prop:eq-tildeN}
    Let $(Q,\tilde{N})$ be a classical solution to the relaxed problem  \eqref{tilde-eq-classical}-\eqref{tilde-ic-classical}. Then we can derive the following integral equations for $\tilde{N}$: 
    
    For $0\leq\tau\leq1$, we have
        \begin{equation}\label{eq-tildeN-0<tau<=1}
        \tilde{N}(\tau)=\left(1-\int_{0}^{\tau}c(\tau,s)ds\right)H_{\init}(1-\tau)+\int_{0}^{\tau}c(\tau,s)\tilde{N}(s)ds,
    \end{equation} where the coefficient $c(\tau,s)$ is defined by 
    \begin{equation}\label{def-kernal-c}
        c(\tau,s)=-K'(Q(s,1-(\tau-s)))\exp\left(\int_{s}^{\tau}K'(Q(r,1-(\tau-r)))dr\right),\qquad  \text{for $s\in[\max(0,\tau-1),\tau],$}
    \end{equation} with
     \begin{equation}\label{integral-c-formula-init}
        1-\int_{0}^{\tau}c(\tau,s)ds=\exp\left(\int_{0}^{\tau}K'(Q(s,1-(\tau-s)))ds\right)>0.
    \end{equation}
    And for $\tau>1$, we have
    \begin{equation}\label{eq-tildeN-tau>1}
        \tilde{N}(\tau)=\left(1-\int_{\tau-1}^{\tau}c(\tau,s)ds\right)\tilde{N}(\tau-1)+\int_{\tau-1}^{\tau}c(\tau,s)\tilde{N}(s)ds,
    \end{equation} where the coefficient $c(\tau,s)$ is also given by \eqref{def-kernal-c} with
    \begin{equation}\label{integral-c-formula}
        1-\int_{\tau-1}^{\tau}c(\tau,s)ds=\exp\left(\int_{\tau-1}^{\tau}K'(Q(s,1-(\tau-s)))ds\right)>0.
    \end{equation}
\end{proposition}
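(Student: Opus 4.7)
The plan is to obtain both integral equations by solving the transport equation \eqref{eq-H} for $H$ along the characteristic curves $\eta=\eta_0+(\tau-\tau_0)$ of the operator $\p_\tau+\p_\eta$, and then reading off $\tilde{N}(\tau)=H(\tau,1)$ from \eqref{tildeN-H}. The key observation is that along any such characteristic, \eqref{eq-H} reduces to a scalar linear ODE in which the inhomogeneous term $-K'(Q)\tilde{N}(\tau)$ can be viewed as a forcing, so that Duhamel's formula produces an integral expression for $H$ on the characteristic. Writing it for the characteristic that arrives at the point $(\tau,1)$ will directly give $\tilde{N}(\tau)$ in terms of either $H_{\init}$ (when the backward trace of the characteristic exits through $\tau=0$) or $\tilde{N}$ evaluated at an earlier time (when it exits through $\eta=0$, which by \eqref{bc-H} equals $H(\cdot,1)$).

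Concretely, fix $\tau>0$ and trace backward the characteristic reaching $(\tau,1)$. For $\tau\leq 1$, parametrize it by $s\in[0,\tau]$ as $(s,1-(\tau-s))$; for $\tau>1$, by $s\in[\tau-1,\tau]$ as $(s,1-(\tau-s))$, which now exits at $\eta=0$ at time $s=\tau-1$. Defining $h(s):=H(s,1-(\tau-s))$ and $a(s):=K'(Q(s,1-(\tau-s)))$, equation \eqref{eq-H} yields
\begin{equation*}
h'(s)=a(s)h(s)-a(s)\tilde{N}(s),
\end{equation*}
and Duhamel's formula, together with the initial/boundary relations $h(0)=H_{\init}(1-\tau)$ for the first case, and $h(\tau-1)=H(\tau-1,0)=\tilde{N}(\tau-1)$ by \eqref{bc-H} and \eqref{tildeN-H} for the second, gives
\begin{equation*}
h(\tau)=e^{\int_{s_0}^{\tau}a(r)dr}h(s_0)-\int_{s_0}^{\tau}a(s)e^{\int_s^{\tau}a(r)dr}\tilde{N}(s)ds,
\end{equation*}
with $s_0=0$ or $s_0=\tau-1$ accordingly. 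Identifying $h(\tau)=\tilde{N}(\tau)$ and setting $c(\tau,s):=-a(s)\exp(\int_s^{\tau}a(r)dr)$ recovers exactly \eqref{eq-tildeN-0<tau<=1} and \eqref{eq-tildeN-tau>1}, once one identifies the prefactor $e^{\int_{s_0}^{\tau}a(r)dr}$ with $1-\int_{s_0}^{\tau}c(\tau,s)ds$.

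The remaining item is to verify the identities \eqref{integral-c-formula-init} and \eqref{integral-c-formula}. This is an elementary computation: writing $A(s):=\int_s^{\tau}a(r)dr$ one has $A'(s)=-a(s)$, so
\begin{equation*}
\int_{s_0}^{\tau}c(\tau,s)ds=\int_{s_0}^{\tau}A'(s)e^{A(s)}ds=e^{A(\tau)}-e^{A(s_0)}=1-e^{\int_{s_0}^{\tau}a(r)dr},
\end{equation*}
which is positive since the exponential is strictly positive. No part of the argument is genuinely hard provided $Q$ is regular enough to justify the ODE step along characteristics, so the only mild subtlety is ensuring the regularity of $a(s)=K'(Q(s,1-(\tau-s)))$; this follows directly from Proposition \ref{prop:eq-H} (which assumes $Q\in C^2$), and under the $C^1$ hypothesis of Definition \ref{def:tilde-classical-q-tau} one can either argue via a density approximation by $C^2$ initial data as already used in Section \ref{sec:dynamics-classical}, or observe that $H$ satisfies \eqref{eq-H} in the mild (characteristic) sense, which is exactly what is needed here.
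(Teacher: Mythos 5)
Your proposal is correct and follows essentially the same route as the paper's proof: both solve the linear ODE for $h(s)=H(s,1-(\tau-s))$ obtained from \eqref{eq-H} along the backward characteristic through $(\tau,1)$, use \eqref{bc-H} and \eqref{tildeN-H} to supply the endpoint values, apply Duhamel's formula, and verify the normalization identities \eqref{integral-c-formula-init}, \eqref{integral-c-formula} by a direct antiderivative computation. You also correctly identify the regularity caveat and the density approximation used elsewhere in the paper to handle it.
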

\begin{proof}
    We shall use the equation for $H$ \eqref{eq-H} derived in Proposition \ref{prop:eq-H}. While Proposition \ref{prop:eq-H} requires $Q\in C^2$, here we can pass to general $Q$ by a density argument, as in the proof of Lemma \ref{lem:dIdtau}. %
    
    Using the equation \eqref{eq-H}, the formulas in Proposition \ref{prop:eq-tildeN} are just obtained by solving $H$ through the characteristics.
    More precisely, for fixed $\tau>1$, set $h(s):=H(s,1-(\tau-s))$ for $s\in[\tau-1,\tau]$. Then by \eqref{bc-H} and \eqref{tildeN-H} we have
    \begin{equation}\label{tmp-h}
        h(\tau-1)=H(\tau-1,0)=\tilde{N}(\tau-1),\quad h(\tau)=H(\tau,1)=\tilde{N}(\tau).
    \end{equation} 
    We calculate using \eqref{eq-H} and \eqref{tildeN-H}
    \begin{align}
        \frac{d}{ds}h(s)=(\p_{\tau}+\p_{\eta})H(s,1-(\tau-s)) =K'(Q(s,1-(\tau-s)))(h(s)-\tilde{N}(s)),\quad s\in[\tau-1,\tau],\label{tmp-linear-ODE}
    \end{align} 
which is a linear ODE given $Q$ and $\tilde{N}$. Solving the ODE \eqref{tmp-linear-ODE}, we can represent $h(\tau)$ via $h(\tau-1)$, which by \eqref{tmp-h} gives a formula for $\tilde{N}(\tau)$
    \begin{align}
        h(\tau)=\exp\left(\int_{0}^{\tau}K'(Q(s,1-(\tau-s)))ds\right)h(\tau-1)-\int_{\tau-1}^{\tau}\!\!\! K'(Q(s,1-(\tau-s)))\exp\left(\int_{s}^{\tau}K'(Q(r,1-(\tau-r))\right)\tilde{N}(s)ds,
    \end{align} 
equivalently written as \eqref{eq-tildeN-tau>1} in view of the definition of $c(\tau,s)$ in \eqref{def-kernal-c} and \eqref{tmp-h}. Indeed, it suffices to check \eqref{integral-c-formula}, which follows from
\begin{align}
            \int_{\tau-1}^{\tau}c(\tau,s)ds&=\int_{\tau-1}^{\tau}-K'(Q(s,1-(\tau-s)))\exp\left(\int_{s}^{\tau}K'(Q(r,1-(\tau-r)))dr\right)ds\\&=\int_{\tau-1}^{\tau}\frac{d}{ds}\left(\exp\left(\int_{s}^{\tau}K'(Q(r,1-(\tau-r)))dr\right)\right)ds\\&=1-\exp\left(\int_{\tau-1}^{\tau}K'(Q(s,1-(\tau-s)))ds\right).
\end{align}
For the initial case $0\leq\tau\leq1$, the derivation of \eqref{eq-tildeN-0<tau<=1} and \eqref{integral-c-formula-init} is similar.
\end{proof}

    Proposition \ref{prop:eq-tildeN} says that at each time $\tilde{N}(\tau)$ is a \textit{linear} combination of previous $\tilde{N}$ (or initial value $H_{\init}$), treating $c(\tau,s)$ as a given function. And the sum of the linear combination coefficients is $1$. We also note that the coefficient for the first term is always positive, as in \eqref{integral-c-formula-init} for the first term in \eqref{eq-tildeN-0<tau<=1}  and in \eqref{integral-c-formula} for the first term in \eqref{eq-tildeN-tau>1}. 

    When $K'<0$, we also have $c\geq0$ in \eqref{def-kernal-c}, therefore all coefficients of the linear combination are non-negative. As a consequence, the linear combination becomes  a \textit{convex} combination, which allows us to obtain the following bounds.  

\begin{proposition}\label{prop:bd-relaxed-tildeN}
    Let $(Q,\tilde{N})$ be a classical solution to the relaxed problem  \eqref{tilde-eq-classical}-\eqref{tilde-ic-classical}. Suppose additionally $K'\leq0$ on $\mathbb{R}$. Then we have the following global bounds on $\tilde{N}$
    \begin{equation}\label{bd-tildeN-H}
        \min_{\eta\in[0,1]}H_{\init}(\eta)\leq \tilde{N}(\tau) \leq \max_{\eta\in[0,1]}H_{\init}(\eta),\qquad \tau\geq0.
    \end{equation}
\end{proposition}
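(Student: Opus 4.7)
The plan is to exploit directly the integral representations \eqref{eq-tildeN-0<tau<=1} and \eqref{eq-tildeN-tau>1} furnished by Proposition \ref{prop:eq-tildeN}. Under the hypothesis $K'\leq 0$, the kernel $c(\tau,s)$ in \eqref{def-kernal-c} is nonnegative (it is $-K'(Q)$ times a positive exponential), while the complementary coefficient $1-\int c(\tau,s)\,ds$, given explicitly by the positive exponentials in \eqref{integral-c-formula-init} and \eqref{integral-c-formula}, is strictly positive. Consequently, both integral equations express $\tilde{N}(\tau)$ as an honest \emph{convex combination} of the anchor value ($H_{\init}(1-\tau)$ or $\tilde{N}(\tau-1)$) and the past trajectory $\{\tilde{N}(s)\}_{s\in[\max(0,\tau-1),\tau]}$, so that \eqref{bd-tildeN-H} is essentially a maximum principle for this integral equation.

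Setting $m:=\min_{[0,1]}H_{\init}$ and $M:=\max_{[0,1]}H_{\init}$, I would first establish the bounds on $[0,1]$ via a supremum-attainment argument. Since $\tilde{N}$ is continuous, let $s^*\in[0,1]$ be a point where $\tilde{N}$ attains its supremum $M_0$ over $[0,1]$. If $s^*=0$, then by \eqref{tildeN-H} $M_0=\tilde{N}(0)=H_{\init}(1)\leq M$. Otherwise, inserting $\tau=s^*$ into \eqref{eq-tildeN-0<tau<=1} and using $H_{\init}(1-s^*)\leq M$ and $\tilde{N}(s)\leq M_0$ for all $s\in[0,s^*]$ gives
\begin{equation}
M_0 \leq (1-A)\,M + A\,M_0, \qquad A := \int_0^{s^*} c(s^*,s)\,ds \in [0,1),
\end{equation}
and rearranging using the strict positivity of $1-A$ (guaranteed by \eqref{integral-c-formula-init}) yields $M_0\leq M$. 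The lower bound is obtained symmetrically by replacing sup with inf, producing $\tilde{N}(\tau)\in[m,M]$ on $[0,1]$.

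The remaining step is to extend the bounds to all $\tau\geq 0$ by induction on the intervals $[n,n+1]$. Assuming $\tilde{N}\in[m,M]$ on $[0,n]$, I would apply the same attainment-plus-convex-combination argument to \eqref{eq-tildeN-tau>1}, splitting the integral $\int_{s^*-1}^{s^*}c(s^*,s)\tilde{N}(s)\,ds$ into its portions over $[s^*-1,n]$ (bounded by $M$ via the induction hypothesis) and $[n,s^*]$ (bounded by the current local supremum $M_1$). The strict inequality $\int_n^{s^*} c(s^*,s)\,ds < 1$, inherited from \eqref{integral-c-formula}, again enables the crucial division to conclude $M_1\leq M$, with the lower bound handled symmetrically. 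I do not anticipate any substantive obstacle: the bulk of the work is already contained in Proposition \ref{prop:eq-tildeN}, and the sign condition $K'\leq 0$ is precisely what turns the linear combination into a convex one. The main point requiring care is the bookkeeping in the inductive split of the integration interval, which is routine.
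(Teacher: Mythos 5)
Your proposal is correct and follows essentially the same route as the paper: the paper proves the base case $\tau\in[0,1]$ as Lemma \ref{lem:init-tildeN} and the propagation step as Lemma \ref{lem:tau-tau-1}, both by exactly the attainment-plus-convex-combination argument you describe (including the same split of the integral and the same use of \eqref{integral-c-formula-init}--\eqref{integral-c-formula} to divide by $1-\int c$). The only cosmetic difference is that the paper phrases the propagation step as a monotonicity statement $m_{\tau-1}\leq m_\tau\leq M_\tau\leq M_{\tau-1}$ for continuous $\tau$, which it then evaluates at integers, while you run the induction directly at integer endpoints; the substance is the same.
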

The proof of Proposition \ref{prop:bd-relaxed-tildeN} relies on the following two lemmas. The first one says that the conclusion of Proposition \ref{prop:bd-relaxed-tildeN} holds for $\tau\in[0,1]$.
\begin{lemma}\label{lem:init-tildeN}
 Let $(Q,\tilde{N})$ be a classical solution to the relaxed problem  \eqref{tilde-eq-classical}-\eqref{tilde-ic-classical}. Suppose additionally $K'\leq 0$ on $\mathbb{R}$. Then for $\tau\in[0,1]$ we have 
 \begin{equation}\label{eq-lem51}
       \min_{\eta\in[0,1]}H_{\init}(\eta)\leq \tilde{N}(\tau) \leq \max_{\eta\in[0,1]}H_{\init}(\eta).
 \end{equation}
 \end{lemma}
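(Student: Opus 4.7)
The plan is to derive the bounds \eqref{eq-lem51} from the integral equation \eqref{eq-tildeN-0<tau<=1} of Proposition \ref{prop:eq-tildeN} by a maximum-principle-type argument, exploiting the fact that when $K'\leq 0$ this integral equation expresses $\tilde N(\tau)$ as a ``convex combination'' of $H_{\init}(1-\tau)$ and the earlier values $\{\tilde N(s)\}_{s\in[0,\tau]}$.

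First, I would record the structural consequences of $K'\leq 0$ on the ingredients of \eqref{eq-tildeN-0<tau<=1}. By \eqref{def-kernal-c}, the kernel satisfies $c(\tau,s)\geq 0$ on $\{0\leq s\leq \tau\leq 1\}$, while \eqref{integral-c-formula-init} gives
\[
1-\int_0^\tau c(\tau,s)\,ds=\exp\left(\int_0^\tau K'(Q(s,1-(\tau-s)))\,ds\right)\in(0,1].
\]
Hence, setting $\alpha(\tau):=\int_0^\tau c(\tau,s)\,ds\in[0,1)$, \eqref{eq-tildeN-0<tau<=1} reads
\[
\tilde N(\tau)=(1-\alpha(\tau))\,H_{\init}(1-\tau)+\int_0^\tau c(\tau,s)\,\tilde N(s)\,ds,
\]
where the coefficients $(1-\alpha(\tau))$ and $c(\tau,s)$ are nonnegative and sum to $1$. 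I would also note that $\tilde N$ is continuous on $[0,1]$ (being a classical solution).

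Write $M:=\max_{\eta\in[0,1]}H_{\init}(\eta)$ and $m:=\min_{\eta\in[0,1]}H_{\init}(\eta)$. To prove the upper bound, let $M^\ast:=\max_{s\in[0,1]}\tilde N(s)$ and pick $\tau_0\in[0,1]$ realising this max. If $\tau_0=0$, then $M^\ast=\tilde N(0)=H(0,1)=H_{\init}(1)\leq M$ by \eqref{tildeN-H}. If $\tau_0>0$, substitute $\tau_0$ into the displayed equation and bound $H_{\init}(1-\tau_0)\leq M$ and $\tilde N(s)\leq M^\ast$ for $s\in[0,\tau_0]$ to obtain
\[
M^\ast\leq (1-\alpha(\tau_0))\,M+\alpha(\tau_0)\,M^\ast.
\]
Since $1-\alpha(\tau_0)>0$, rearranging yields $M^\ast\leq M$, proving the right-hand inequality in \eqref{eq-lem51}. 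The lower bound follows by the symmetric argument applied to $\min_{s\in[0,1]}\tilde N(s)$.

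There is no serious obstacle here: the only subtle point is that the convex-combination structure in \eqref{eq-tildeN-0<tau<=1} requires \emph{both} the sign $c(\tau,s)\geq 0$ (which uses $K'\leq 0$) and the positivity of the ``initial data coefficient'' $1-\alpha(\tau)$ (which holds unconditionally by \eqref{integral-c-formula-init}). Once those are in hand, the pointwise extremum principle above is the natural and essentially unique route. The restriction $\tau\in[0,1]$ is the range on which \eqref{eq-tildeN-0<tau<=1} references only $H_{\init}$ and earlier $\tilde N$ values; extending the bound to $\tau>1$ will be handled separately using \eqref{eq-tildeN-tau>1}, presumably by iterating this lemma.
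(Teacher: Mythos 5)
Your proof is correct and takes essentially the same approach as the paper: both extract the convex-combination structure of the integral equation \eqref{eq-tildeN-0<tau<=1} from the nonnegativity of $c(\tau,s)$ (using $K'\leq 0$) and the strict positivity of $1-\int_0^\tau c(\tau,s)\,ds$ from \eqref{integral-c-formula-init}, then apply an extremum-point argument to the maximizer/minimizer of $\tilde N$ on $[0,1]$. The only cosmetic difference is that you treat $\tau_0=0$ as a separate case, whereas the paper's inequality chain covers it implicitly since the integral term vanishes there.
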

 \begin{proof}
     Suppose $\tau_{\max}\in[0,1]$ is a maximum point of $\tilde{N}$ on $[0,1]$, i.e. $\tilde{N}(\tau_{\max})=\max_{\tau\in[0,1]}\tilde{N}(\tau)$, whose existence is guaranteed by the continuity of $\tilde{N}$. Then by \eqref{eq-tildeN-0<tau<=1} in Proposition \ref{prop:eq-tildeN}, we derive
    \begin{align}
        \tilde{N}(\tau_{\max})&=\left(1-\int_{0}^{\tau_{\max}}c(\tau_{\max},s)ds\right)H_{\init}(1-\tau_{\max})+\int_{0}^{\tau_{\max}}c(\tau_{\max},s)\tilde{N}(s)ds\\&\leq\left(1-\int_{0}^{\tau_{\max}}c(\tau_{\max},s)ds\right)H_{\init}(1-\tau_{\max})+ \tilde{N}(\tau_{\max}) \int_{0}^{\tau_{\max}}c(\tau_{\max},s)ds,\label{proof-lemma51-tmp}
    \end{align} where we have used that $c(\tau,s)\geq0$ when $K'\leq 0$ by \eqref{def-kernal-c}, and that $\tau_{\max}$ is the maximum point on $[0,1]$. Then \eqref{proof-lemma51-tmp} simplifies to 
    \begin{equation}
        \left(1-\int_{0}^{\tau_{\max}}c(\tau_{\max},s)ds\right)\tilde{N}(\tau_{\max})\leq  \left(1-\int_{0}^{\tau_{\max}}c(\tau_{\max},s)ds\right) H_{\init}(1-\tau_{\max}),
    \end{equation} which implies
    \begin{equation}
        \tilde{N}(\tau_{\max})\leq  H_{\init}(1-\tau_{\max})\leq \max_{\eta\in[0,1]}H_{\init}(\eta),
    \end{equation} thanks to the positivity of $\left(1-\int_{0}^{\tau_{\max}}c(\tau_{\max},s)ds\right)$ in \eqref{integral-c-formula-init}. This proves the upper bound in \eqref{eq-lem51}, and the proof of the lower bound is similar. 
 \end{proof}
 The second lemma allows us to control $\tilde{N}$ on $[\tau,\tau+1]$ from the bounds on the previous interval $[\tau-1,\tau]$.
\begin{lemma}\label{lem:tau-tau-1}
Let $(Q,\tilde{N})$ be a classical solution to the relaxed problem  \eqref{tilde-eq-classical}-\eqref{tilde-ic-classical}. Suppose additionally $K'\leq 0$ on $\mathbb{R}$.
For $\tau>0$ define
\begin{equation}\label{def-mM}
    m_{\tau}:= \min_{s\in[\tau,\tau+1]}\tilde{N}(\tau),\qquad M_{\tau}:=  \max_{s\in[\tau,\tau+1]}\tilde{N}(\tau) .
\end{equation} Then the following holds for $\tau\geq1$
\begin{equation}
    m_{\tau-1}\leq m_{\tau}\leq M_{\tau}\leq M_{\tau-1}.
\end{equation}
\end{lemma}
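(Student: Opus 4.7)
The plan is to exploit Proposition \ref{prop:eq-tildeN}, which under $K'\leq 0$ expresses $\tilde{N}(\sigma)$ for $\sigma>1$ as a convex combination: by \eqref{def-kernal-c} we have $c(\sigma,s)\geq 0$, and by \eqref{integral-c-formula} the leading coefficient $A(\sigma):=1-\int_{\sigma-1}^{\sigma}c(\sigma,s)ds=\exp\bigl(\int_{\sigma-1}^{\sigma}K'(Q(r,1-(\sigma-r)))dr\bigr)$ is strictly positive, with $A(\sigma)+\int_{\sigma-1}^{\sigma}c(\sigma,s)ds=1$.

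To prove $M_\tau\leq M_{\tau-1}$, pick a maximizer $\sigma^\star\in[\tau,\tau+1]$ of $\tilde{N}$, so $\tilde{N}(\sigma^\star)=M_\tau$. Since $\tau\geq 1$ we have $\sigma^\star\geq 1$, so applying \eqref{eq-tildeN-tau>1} at $\sigma^\star$,
\begin{equation*}
M_\tau=A(\sigma^\star)\tilde{N}(\sigma^\star-1)+\int_{\sigma^\star-1}^{\sigma^\star}c(\sigma^\star,s)\tilde{N}(s)\,ds.
\end{equation*}
I split the integration interval at $s=\tau$, setting $B_1:=\int_{\sigma^\star-1}^{\tau}c(\sigma^\star,s)ds$ and $B_2:=\int_{\tau}^{\sigma^\star}c(\sigma^\star,s)ds$, so that $A(\sigma^\star)+B_1+B_2=1$. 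Because $\sigma^\star-1\in[\tau-1,\tau]$, we have $\tilde{N}(\sigma^\star-1)\leq M_{\tau-1}$; for $s\in[\sigma^\star-1,\tau]\subseteq[\tau-1,\tau]$ we have $\tilde{N}(s)\leq M_{\tau-1}$; and for $s\in[\tau,\sigma^\star]\subseteq[\tau,\tau+1]$ we have $\tilde{N}(s)\leq M_\tau$. Using nonnegativity of $c$,
\begin{equation*}
M_\tau\leq (A(\sigma^\star)+B_1)M_{\tau-1}+B_2 M_\tau=(1-B_2)M_{\tau-1}+B_2 M_\tau,
\end{equation*}
and since $1-B_2\geq A(\sigma^\star)>0$, rearranging gives $M_\tau\leq M_{\tau-1}$. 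The lower bound $m_{\tau-1}\leq m_\tau$ is obtained by the mirror argument, taking $\sigma^\star$ a minimizer and reversing all inequalities. The middle inequality $m_\tau\leq M_\tau$ is immediate.

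The only delicate point is ensuring the coefficient $1-B_2$ we divide by is strictly positive; this is why we kept the leading term $A(\sigma^\star)$ separate rather than absorbing it into the convex combination, since \eqref{integral-c-formula} gives $A(\sigma^\star)>0$ unconditionally from the exponential representation. No further regularity on $Q$ or sign information on $K$ beyond $K'\leq 0$ is needed.
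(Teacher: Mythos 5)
Your proof is correct and follows essentially the same strategy as the paper: pick an extremizer $\sigma^\star$ of $\tilde N$ on $[\tau,\tau+1]$, apply the integral equation \eqref{eq-tildeN-tau>1} at $\sigma^\star$, split the integral at $s=\tau$, bound the two pieces by $M_{\tau-1}$ and $M_\tau$ respectively using nonnegativity of $c$ (from $K'\leq 0$), and rearrange using strict positivity of the leading coefficient from \eqref{integral-c-formula}. The paper writes out the minimum case and leaves the maximum as ``similar''; you do the reverse, but the argument is the same.
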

\begin{proof}
    Note that $m_{\tau}\leq M_{\tau}$ by definition. It boils down to prove $m_{\tau-1}\leq m_{\tau}$ and $M_{\tau}\leq M_{\tau-1}$. We only give a proof of $m_{\tau-1}\leq m_{\tau}$ here as the other is similar.

    The proof is similar to that of Lemma \ref{lem:init-tildeN}.  Suppose $\tau_{\min}\in[0,1]$ is a minimum point of $\tilde{N}$ on $[\tau,\tau+1]$, i.e. $\tilde{N}(\tau_{\min})=\min_{s\in[\tau,\tau+1]}\tilde{N}(s)=m_{\tau-1}$, whose existence is ensured by the continuity of $\tilde{N}$. Then by \eqref{eq-tildeN-tau>1} in Proposition \ref{prop:eq-tildeN}, we have
    \begin{align}\label{pf-lemma52-tmp}
        m_{\tau}=\tilde{N}(\tau_{\min})&=\left(1-\int_{\tau_{\min}-1}^{\tau_{\min}}c(\tau_{\min},s)ds\right) \tilde{N}(\tau_{\min}-1)+\int_{\tau_{\min}-1}^{\tau_{\min}}c(\tau_{\min},s)\tilde{N}(s)ds.
    \end{align} Note that $\tau_{\min}\in[\tau,\tau+1]$ implies that $\tau_{\min}-1\in[\tau-1,\tau]$, which allows us to deduce $\tilde{N}(\tau_{\min}-1)\geq m_{\tau-1}$ and
    \begin{align}
   \int_{\tau_{\min}-1}^{\tau_{\min}}c(\tau_{\min},s)\tilde{N}(s)ds&=\int_{\tau_{\min}-1}^{\tau}c(\tau_{\min},s)\tilde{N}(s)ds+\int_{\tau}^{\tau_{\min}}c(\tau_{\min},s)\tilde{N}(s)ds\\&\geq m_{\tau-1}\int_{\tau_{\min}-1}^{\tau}c(\tau_{\min},s)ds+m_{\tau}\int_{\tau}^{\tau_{\min}}c(\tau_{\min},s)ds,
    \end{align} where in the last line we use $c(\tau,s)\geq0$ thanks to $K'\leq 0$ by \eqref{def-kernal-c}. Substitute the above equations into \eqref{pf-lemma52-tmp}, and we derive
    \begin{align}
         m_{\tau}\geq \left(1-\int_{\tau_{\min}-1}^{\tau_{\min}}c(\tau_{\min},s)ds\right)m_{\tau-1}+\left(\int_{\tau_{\min}-1}^{\tau}c(\tau_{\min},s)ds\right)m_{\tau-1}+\left(\int_{\tau}^{\tau_{\min}}c(\tau_{\min},s)ds\right)m_{\tau},
    \end{align} which implies
    \begin{equation}
         \left(1-\int_{\tau}^{\tau_{\min}}c(\tau_{\min},s)ds\right)m_{\tau}\geq \left(1-\int_{\tau}^{\tau_{\min}}c(\tau_{\min},s)ds\right)m_{\tau-1}, 
    \end{equation} which proves $m_{\tau}\geq m_{\tau-1}$. Note that we have used the positivity of the following two terms thanks to $c\geq0$ and \eqref{integral-c-formula}
    \begin{equation}
    1-\int_{\tau}^{\tau_{\min}}c(\tau_{\min},s)ds\geq 1-\int_{\tau_{\min}-1}^{\tau_{\min}}c(\tau_{\min},s)ds>0.
    \end{equation}
\end{proof}
\begin{remark}
In Lemma \ref{lem:tau-tau-1}, we control $\tilde{N}$ over the interval $[\tau,\tau+1]$ via its value on $[\tau-1,\tau]$.
While these two consecutive time intervals might correspond to different durations in the original timescale $t$, they are of the same length in $\tau$.
This is a manifestation of the fact that we have normalized the timescale according to the mass flux crossing $\Phi_F$. 
\end{remark}

With Lemmas \ref{lem:init-tildeN}-\ref{lem:tau-tau-1} we can prove Proposition \ref{prop:bd-relaxed-tildeN}.
\begin{proof}[Proof of Proposition \ref{prop:bd-relaxed-tildeN}]
    With $m_{\tau},M_{\tau}$ defined in \eqref{def-mM}, it is equivalent to prove for every $n\in\mathbb{N}_+$,
    \begin{equation}\label{tmp-pf-prop54}
       \min_{\eta\in[0,1]}H_{\init}(\eta)\leq m_{n}\leq M_{n}\leq \max_{\eta\in[0,1]}H_{\init}(\eta).
    \end{equation} Indeed, \eqref{tmp-pf-prop54} is equivalent to  that \eqref{bd-tildeN-H} holds on for $\tau\in[n,n+1]$.

    Lemma \ref{lem:init-tildeN} gives \eqref{tmp-pf-prop54} for $n=1$. Then we can iteratively apply Lemma \ref{lem:tau-tau-1} to derive for each $n\in\mathbb{N}_+$
    \begin{equation}\label{tmp-pf-5-2}
               \min_{\eta\in[0,1]}H_{\init}(\eta)\leq m_{n}\leq m_{n+1}\leq M_{n+1} \leq M_n\leq\max_{\eta\in[0,1]}H_{\init}(\eta).
    \end{equation} Therefore by induction \eqref{tmp-pf-prop54} is proved for every $n\in\mathbb{N}_+$.
\end{proof}

\begin{remark}
    Note that the bounded monotone sequences in \eqref{tmp-pf-5-2} allows us to define limits
    \begin{equation}
                    \min_{\eta\in[0,1]}H_{\init}(\eta)\leq m_{\infty}:=\lim_{n\rightarrow\infty} m_{n}\leq \lim_{n\rightarrow\infty} M_{n}=: M_{\infty}\leq\max_{\eta\in[0,1]}H_{\init}(\eta).
    \end{equation} Moreover the oscillation of $\tilde{N}$ on $[n,n+1]$ is decreasing with respect to $n$. These all indicate some  stable behavior when $K'\leq0$, which is consistent with the convergence to the steady state to be proved later in Theorem \ref{thm:convergence}.
\end{remark}

Now we can prove Theorem \ref{thm:global-solu}.
\begin{proof}[Proof of Theorem \ref{thm:global-solu}]
    Consider the solution to the relaxed problem $(Q,\tilde{N})$ (whose existence is ensured by Theorem \ref{thm:wps-classical-relax} and Assumption \ref{as:wps}), by Proposition \ref{prop:bd-relaxed-tildeN} and Assumption \ref{as:H} we have
    \begin{equation}\label{tmp-pf-thm51}
               0< \min_{\eta\in[0,1]}H_{\init}(\eta)\leq \tilde{N}(\tau) \leq \max_{\eta\in[0,1]}H_{\init}(\eta),\qquad \tau\geq0.
    \end{equation} In particular $\tilde{N}(\tau)>0$ for all $\tau\geq0$. Therefore by Lemma \ref{lem:relation-relaxed-original} we know the solution to the original problem \eqref{eq-Q-tau-classical}-\eqref{ic-q-tau-classical} is global with $N(\tau)=1/\tilde{N}(\tau)$. As a consequence, we derive the bounds \eqref{gloabl-bd-N} from \eqref{tmp-pf-thm51}.

    We note a subtle detail: in Proposition \ref{prop:bd-relaxed-tildeN}, we require $K' \leq 0$ on $\mathbb{R}$, whereas here we assume only $K' \leq 0$ on $[0, \Phi_F]$. However, this discrepancy does not present an issue. Indeed, using Assumption \ref{as:wps} with the bounds in \eqref{tmp-pf-thm51}, we can deduce that even in the context of the relaxed problem, $Q$ only takes values in $[0, \Phi_F]$ (similar to the proof of Proposition \ref{prop:keep-mono}-\ref{prop:char-N-classical}, indeed at each time $Q(\tau,\cdot)$ is an increasing function linking $0$ and $\Phi_F$). Consequently, the behavior of $K$ outside $[0, \Phi_F]$ is irrelevant here.
    \end{proof}

\begin{remark}\label{rmk:K'>0-integral-mM}
We can develop a counterpart for Proposition \ref{prop:bd-relaxed-tildeN} in the case $K'\geq0$. The idea is still to use the integral equation for $\tilde{N}$ in Proposition \ref{prop:eq-tildeN}. It can be shown that when $K'\geq 0$ for $n\in\mathbb{N}_+$ we have
\begin{align}\label{K-positive-mM}
            m_{n+1}\leq m_n\leq \min_{\eta\in[0,1]}H_{\init}(\eta)\leq \max_{\eta\in[0,1]}H_{\init}(\eta)\leq M_n\leq 
            M_{n+1}.
\end{align} Here $m_{\tau}$ and $M_{\tau}$ are defined as in \eqref{def-mM}. The proof of \eqref{K-positive-mM} is similar to Proposition \ref{prop:bd-relaxed-tildeN}. However, in contrast to \eqref{tmp-pf-5-2} for $K'\leq0$, \eqref{K-positive-mM} implies that the difference between the maximum and minimum of $\tilde{N}$ on $[n,n+1]$ is increasing in $n$. This indicates an unstable behavior when $K'>0$, which is consistent with the finite-time blow-up case to be studied in Section \ref{sec:blowup}.
\end{remark}

\subsection{Convergence to the steady state}\label{subsec:steady-state}

We first give a sharp condition about the existence of a steady state. We have the following dichotomy: either there exists a unique steady state to \eqref{eq-Q-tau-classical}-\eqref{ic-q-tau-classical} when $K$ is not very large (precisely \eqref{ss-condition}), or otherwise there is no steady state.

\begin{proposition}[See also {\cite[Proposition 1]{mauroy2012global}}]\label{prop:ss}
There is a dichotomy on the existence of the steady state: If
\begin{equation}\label{ss-condition}
    \int_0^{\Phi_F}\frac{1}{K(\phi)}d\phi> 1,
\end{equation} then there exists a unique steady state to \eqref{eq-Q-tau-classical}-\eqref{constaint-classical-Q-tau}. Otherwise if
\begin{equation}\label{non-ss-condition}
    \int_0^{\Phi_F}\frac{1}{K(\phi)}d\phi\leq 1,
\end{equation}
then there is no steady state.
\end{proposition}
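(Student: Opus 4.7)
The plan is to reduce the existence of a steady state to a scalar equation on $a:=1/N^*>0$ and analyze it by monotonicity.

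First, I would observe that for a steady state $(Q^*,N^*)$, the PDE \eqref{eq-Q-tau-classical} degenerates to the autonomous ODE
\begin{equation*}
\frac{dQ^*}{d\eta}=\frac{1}{N^*}+K(Q^*),\qquad \eta\in(0,1),
\end{equation*}
together with the boundary conditions $Q^*(0)=0$ (from \eqref{bc-Q-tau-classical}) and $Q^*(1)=\Phi_F$ (forced by Proposition \ref{prop:char-N-classical} once \eqref{def-N-tau-classical} holds). Since $K>0$ and $N^*>0$, the right-hand side is strictly positive, so $Q^*$ is strictly increasing. Separating variables and integrating from $\eta=0$ to $\eta=1$, the problem reduces to finding $a:=1/N^*\in(0,+\infty)$ such that
\begin{equation*}
g(a):=\int_0^{\Phi_F}\frac{d\phi}{a+K(\phi)}=1.
\end{equation*}
Conversely, given such an $a$, defining $Q^*$ as the solution of the above ODE with $Q^*(0)=0$ produces a genuine steady state, since $g(a)=1$ forces $Q^*(1)=\Phi_F$, and evaluating the ODE at $\eta=1$ recovers the boundary relation \eqref{def-N-tau-classical}. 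Thus steady states are in bijection with positive roots of $g(a)=1$.

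Next I would study $g$ on $[0,+\infty)$. Since $K\in C^2$ and $K>0$ on $[0,\Phi_F]$, the integrand is smooth in $a$ and uniformly bounded for $a\geq 0$, so $g$ is continuous on $[0,+\infty)$. Differentiating under the integral sign gives
\begin{equation*}
g'(a)=-\int_0^{\Phi_F}\frac{d\phi}{(a+K(\phi))^2}<0,
\end{equation*}
so $g$ is strictly decreasing, with $g(0)=\int_0^{\Phi_F}\frac{1}{K(\phi)}d\phi$ and $g(a)\to 0$ as $a\to+\infty$ by dominated convergence.

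Then I conclude by dichotomy. If \eqref{ss-condition} holds, i.e.\ $g(0)>1$, the intermediate value theorem combined with strict monotonicity yields a unique $a>0$ with $g(a)=1$, hence a unique steady state. If \eqref{non-ss-condition} holds, i.e.\ $g(0)\leq 1$, then by strict monotonicity $g(a)<g(0)\leq 1$ for every $a>0$, so no positive root exists; the boundary value $a=0$ is excluded because it would correspond to $N^*=+\infty$, violating the constraint \eqref{constaint-classical-Q-tau}. The only subtle point I would flag is the borderline case $g(0)=1$: no steady state exists in the classical sense because the candidate ``solution'' requires $N^*=\infty$, consistent with the blow-up picture described in Theorem \ref{thm:formal-6}(ii). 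No serious obstacle is expected; the only care needed is justifying the reduction to the scalar equation and handling the $a=0$ limit.
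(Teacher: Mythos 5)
Your proof is correct and follows essentially the same route as the paper: the paper reduces the problem to finding $N^*$ such that the first hitting time $\eta_N = \int_0^{\Phi_F}\frac{d\phi}{1/N + K(\phi)}$ equals $1$, which is exactly your scalar equation $g(a)=1$ with $a=1/N^*$. The only cosmetic difference is that you obtain strict monotonicity by differentiating under the integral sign, whereas the paper invokes ODE comparison for the first-hitting-time map; both give the same conclusion.
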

\begin{proof}The steady state $(Q^*,N^*)$ satisfies
\begin{equation}
    \frac{d}{d\eta}Q^*=K(Q^*)+\frac{1}{N^*},\qquad Q(0)=0,\, Q(1)=\Phi_F,
\end{equation} with $N^*>0$. 
We can recast the formulation as to find a $N=N^*>0$ such that the solution to the initial value problem
\begin{equation}\label{def-qN}
    \frac{d}{d\eta}q_N(\eta)=K(q_N(\eta))+\frac{1}{N},\quad q_N(0)=0,
\end{equation} satisfies that the ``first hitting time'' to $\Phi_F$ is $1$, or more precisely
\begin{equation}\label{cond-N}
    \eta_N=1,\qquad \eta_N:=\inf \{\eta>0: q_N(\eta)=\Phi_F\}.
\end{equation} Note that
\begin{equation}
    \Phi_F=q_N(\eta_N)-q_N(0)=\int_0^{\eta_N}(K(q_N)+\frac{1}{N})d\eta\geq \frac{\eta_N}{N},
\end{equation} which gives an upper bound $\eta_N\leq N\Phi_F$ and implies that $\eta_{0^+}:=\lim_{N\rightarrow0^+}\eta_N=0$.

We can check that $\eta_N$ is strictly decreasing w.r.t. $N$ and depends on $N$ continuously, by standard ODE arguments. Then, with $\eta_{0^+}=0$, we see a dichotomy that if $\eta_{\infty}:=\lim_{N\rightarrow0^+}\eta_N>1$, then there is a unique $N>0$ satisfying \eqref{cond-N}, corresponding to a unique steady state; otherwise $\eta_{\infty}\leq1$ and there is no solution, which means that there is no steady state. 

Therefore it remains to calculate $\eta_{\infty}$. It can be verified that $\eta_{\infty}$ is the ``first hitting time'' to $\Phi_F$ for the following $q_\infty(\eta)$ 
\begin{equation}\label{def-qInf}
    \frac{d}{d\eta}q_\infty(\eta)=K(q_\infty(\eta)),\quad q_\infty(0)=0,
\end{equation} which is formally just the case $N=\infty$ in \eqref{def-qN}. Solving \eqref{def-qN} using the change of variable $u(\eta)=\int_0^{q_{\infty}(\eta)}\frac{1}{K(\phi)}d\phi$ gives
\begin{equation}
    \eta_{\infty}= u(\eta_{\infty})=\int_0^{q_\infty(\eta_{\infty})}\frac{1}{K(\phi)}d\phi=\int_0^{\Phi_F}\frac{1}{K(\phi)}d\phi,
\end{equation} which induces the desired result.
\end{proof}

\begin{remark}\label{rmk:implicit-assume-k} 
We point out a connection between condition \eqref{ss-condition} for the steady state and Assumption \ref{as:H} for the global existence.
     Integrating in \eqref{no-blow-up-condition-rho}, which is the equivalence of Assumption \ref{as:H} in $\rho$-formulation, we get
\begin{equation}
1=\int_0^{\Phi_F}\rho_{\init}(\phi)d\phi<\int_0^{\Phi_F}\frac{1}{K(\phi)}d\phi.
\end{equation} This is exactly the condition \eqref{ss-condition} in Proposition \ref{prop:ss}. As a result, the conditions in Theorem \ref{thm:global-solu} imply that there is a unique steady state to \eqref{eq-Q-tau-classical}-\eqref{ic-q-tau-classical}. Conversely, if \eqref{non-ss-condition} holds, then there is no probability distribution $\rho_{\rm init}$ satisfying condition \eqref{no-blow-up-condition-rho}.
\end{remark}

Now we have enough preparations to prove the long time convergence to the steady state.

\begin{theorem}[See also {\cite[Proposition 2 and 3]{mauroy2012global}}]\label{thm:convergence}
    Suppose $k_{\max}<0$, where $k_{\max}$ is given by \eqref{def-kmin_kmax}, and \eqref{ss-condition}.  Suppose the initial data $Q_{\init}$  satisfies Assumption \ref{as:wps}  and Assumption \ref{as:H}.     
    Then there exists a unique global classical solution $(Q,N)$  to  \eqref{eq-Q-tau-classical}-\eqref{ic-q-tau-classical}, which converges to the unique steady state $Q^*$ exponentially in the following sense
    \begin{equation}
         \|\p_{\eta}Q(\tau,\cdot)-\p_{\eta}Q^*(\cdot)\|_{L^1(0,1)}\leq e^{k_{\max}\tau}\|\p_{\eta}Q(0,\cdot)-\p_{\eta}Q^*(\cdot)\|_{L^1(0,1)}.
    \end{equation} 
\end{theorem}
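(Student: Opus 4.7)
The plan is to combine three ingredients already established in the paper: the global existence result (Theorem \ref{thm:global-solu}), the existence/uniqueness of the steady state (Proposition \ref{prop:ss}), and the BV contraction estimate (Theorem \ref{thm:BV}). No new structural calculation will be required.

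First, I will observe that the hypothesis $k_{\max}<0$ forces $K'<0$ pointwise on $[0,\Phi_F]$. Indeed, since $K'(\phi)-K'(0)=\int_0^\phi K''(s)\,ds\le \int_0^{\Phi_F}(K''(s))_+\,ds$, one has $K'(\phi)\le k_{\max}<0$ for every $\phi\in[0,\Phi_F]$. Combined with Assumption \ref{as:wps} and \ref{as:H}, this puts us exactly in the setting of Theorem \ref{thm:global-solu}, which yields the unique global classical solution $(Q,N)$ of \eqref{eq-Q-tau-classical}-\eqref{ic-q-tau-classical} together with the two-sided bound \eqref{gloabl-bd-N} on $N(\tau)$.

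Next, I will invoke Proposition \ref{prop:ss} under the condition \eqref{ss-condition} to obtain a unique steady state $(Q^*,N^*)$, and verify that $Q^*$ itself qualifies as a global classical solution. By construction $Q^*(0)=0$, $Q^*(1)=\Phi_F$, $\frac{d}{d\eta}Q^*=K(Q^*)+1/N^*$ with $N^*>0$, so the boundary regularity of Assumption \ref{as:wps} holds (in particular $\frac{d}{d\eta}Q^*(1)=K(\Phi_F)+1/N^*>K(\Phi_F)$ and the compatibility condition $\frac{d}{d\eta}Q^*(1)-\frac{d}{d\eta}Q^*(0)=K(\Phi_F)-K(0)$ is satisfied). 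Since $Q^*$ does not depend on $\tau$, it is trivially a time-independent classical solution.

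Finally, I will apply the upper bound in Theorem \ref{thm:BV} to the pair $Q_1(\tau,\eta)=Q(\tau,\eta)$ and $Q_2(\tau,\eta)=Q^*(\eta)$. Since both are classical solutions defined for all $\tau\ge 0$, the estimate
\[
\|\partial_\eta Q(\tau,\cdot)-\partial_\eta Q^*(\cdot)\|_{L^1(0,1)}\le e^{k_{\max}\tau}\|\partial_\eta Q(0,\cdot)-\partial_\eta Q^*(\cdot)\|_{L^1(0,1)}
\]
holds for every $\tau\ge 0$, and the hypothesis $k_{\max}<0$ turns this into exponential convergence. There is no genuine obstacle to overcome beyond checking compatibility of the steady state with Definition \ref{def:classical-q-tau}; the remark that $k_{\max}<0$ automatically upgrades ``$K'\le 0$'' to ``$K'<0$ on $[0,\Phi_F]$'' is the only point deserving attention, and the convergence result is then an immediate corollary of the three results already established.
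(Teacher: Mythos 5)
Your proof is correct and follows essentially the same route as the paper: use \eqref{ineq-kminkmax} (equivalently, your direct integration of $K''$) to upgrade $k_{\max}<0$ to $K'<0$ on $[0,\Phi_F]$, invoke Theorem \ref{thm:global-solu} for the global solution, Proposition \ref{prop:ss} for the unique steady state $Q^*$, and apply the upper bound of Theorem \ref{thm:BV} with $Q_1=Q$ and $Q_2=Q^*$. The only addition beyond the paper's proof is your explicit check that $Q^*$ satisfies Assumption \ref{as:wps}, which the paper leaves implicit — a reasonable piece of due diligence that does not change the argument.
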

\begin{proof} Note that $k_{\max}<0$ implies that $K'<0$ on $[0,\Phi_F]$ thanks to \eqref{ineq-kminkmax}. By Theorem \ref{thm:global-solu} there is a unique global classical solution $(Q,N)$ with initial data $Q_{\init}$. By Proposition \ref{prop:ss}, there is a unique steady state $Q^*$ due to \eqref{ss-condition}.

    Therefore, we can apply Theorem \ref{thm:BV}  to $Q_1(\tau,\eta):=Q(\tau,\eta)$ and $Q_2(\tau,\eta)=Q^*(\eta)$ to obtain the global convergence.
\end{proof}

\begin{remark}
If additionally Assumption \ref{as:K-cc} holds, then $k_{\max}<0$ is equivalent to $K'<0$ on $[0,\Phi_F]$, see Remark \ref{rmk:Kcc}.
\end{remark}

\begin{remark}
    An analogous result can be obtained using Theorem \ref{thm:L2} instead of Theorem \ref{thm:BV}, with suitable modifications to the setting: $K(Q)=kQ+b$ ($k<0$) and the convergence is in the modified $L^2$ distance \eqref{distance-L2-variant}.
\end{remark}


\section{Blow-up: $K'>0$ or strong interaction}\label{sec:blowup}

In Section \ref{sec:global}, we have shown the global existence and convergence to the steady state, in the regime when $K'<0$ and the size of $K$ is not very large.  In contrast, this section presents finite time blow-up results in the regime when $K'>0$ or the size of $K$ is large.

\begin{remark}\label{rmk:bl-t} Let $\tau^*$ be the maximal existence time in $\tau$ timescale. Then the maximal existence time in the original $t$ timescale is given by $T^*=\int_0^{\tau^*}\frac{1}{N(\tau)}d\tau$ due to \eqref{def-tau-sing}. We note that $\tau^*<+\infty$ directly implies $T^*<+\infty$. More specifically, notice that $\sup_{\tau\in[0,\tau^*)}\frac{1}{N(\tau)}<+\infty$ whenever $\tau^*$ is finite, which follows from \eqref{limit-N-blowup} in Theorem \ref{thm:wps-blow-up-classical}. Hence, we have
\begin{equation}
    T^*=\int_0^{\tau^*}\frac{1}{N(\tau)}d\tau\leq \tau^* \sup_{\tau\in[0,\tau^*)}\frac{1}{N(\tau)}<+\infty,
\end{equation} whenever $\tau^*<+\infty$. Therefore, finite time blow-up in timescale $\tau$ implies finite time blow-up in timescale $t$.
\end{remark}

\subsection{Blow-up 1: $K'>0$}\label{subsec:blow-up1}

We recall from Remark \ref{rmk:Kcc} that $\min_{\phi\in[0,\Phi_F]}K'(\phi)\geq k_{\min}$, thus assuming $k_{\min}>0$ implies that $K'>0$. Moreover, they are equivalent if additionally Assumption \ref{as:K-cc} holds.

\begin{theorem}\label{thm:blow-up-increasingK}
        Suppose $k_{\min}>0$, where $k_{\min}$ is defined in \eqref{def-kmin_kmax}. Then every solution to \eqref{eq-Q-tau-classical}-\eqref{ic-q-tau-classical} blows up in finite time, except if the solution were the unique steady state, when it exists. 
\end{theorem}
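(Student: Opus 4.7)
The plan is to derive a contradiction from the key stability estimate Theorem \ref{thm:BV} by pitting a hypothetical global non-steady solution against a time-shifted copy of itself. Suppose, for contradiction, that $(Q,N)$ is a classical solution of \eqref{eq-Q-tau-classical}-\eqref{ic-q-tau-classical} defined for all $\tau \geq 0$ and distinct from the steady state whenever the latter exists. Since $Q$ is not stationary in $\tau$, there exists some $\tau_0 > 0$ with $Q(\tau_0, \cdot) \not\equiv Q(0, \cdot)$ on $[0,1]$. Combined with the boundary identity $Q(\tau,0) = 0$ from \eqref{bc-Q-tau-classical}, this will force
$$
\delta_0 \;:=\; \|\p_{\eta}Q(0,\cdot) - \p_{\eta}Q(\tau_0,\cdot)\|_{L^1(0,1)} \;>\; 0,
$$
since if the two derivatives agreed a.e.\ then integration from $0$ would yield $Q(0,\cdot) \equiv Q(\tau_0,\cdot)$, contradicting the choice of $\tau_0$.

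Next, I would introduce the shifted solution $\tilde{Q}(\tau,\eta) := Q(\tau+\tau_0, \eta)$ together with $\tilde{N}(\tau) := N(\tau+\tau_0)$. Because the reformulated system is autonomous in $\tau$ and the initial datum $\tilde{Q}(0,\cdot) = Q(\tau_0,\cdot)$ inherits Assumption \ref{as:wps} from the regularity of $Q$ at time $\tau_0$ (compatibility is preserved by the flow, cf.\ Corollary \ref{cor:5-equivalent}), $\tilde Q$ is a global classical solution. Applying the lower bound in Theorem \ref{thm:BV} to the pair $Q$ and $\tilde Q$ gives
$$
e^{k_{\min}\tau}\, \delta_0 \;\leq\; \|\p_{\eta}Q(\tau,\cdot) - \p_{\eta}\tilde{Q}(\tau,\cdot)\|_{L^1(0,1)}, \qquad \forall\, \tau \geq 0.
$$
On the other hand, the universal upper bound \eqref{bdd-BV}, which uses only the monotonicity of $Q_i(\tau,\cdot)$ in $\eta$ together with the boundary values $Q_i(\tau,0)=0$ and $Q_i(\tau,1)=\Phi_F$ (ensured by Proposition \ref{prop:char-N-classical}), yields
$$
\|\p_{\eta}Q(\tau,\cdot) - \p_{\eta}\tilde{Q}(\tau,\cdot)\|_{L^1(0,1)} \;\leq\; 2\Phi_F.
$$

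Chaining these two estimates produces $e^{k_{\min}\tau} \delta_0 \leq 2\Phi_F$ for all $\tau \geq 0$, which is impossible as $\tau \to +\infty$ since $k_{\min} > 0$ and $\delta_0 > 0$. This contradiction rules out any non-steady-state global solution, so its maximal existence time $\tau^*$ must be finite; by Theorem \ref{thm:wps-blow-up-classical}, this forces $N(\tau) \to +\infty$ as $\tau \to (\tau^*)^-$. As a byproduct, inverting the chained inequality provides the explicit blow-up time estimate
$$
\tau^* \;\leq\; \tau_0 + \frac{1}{k_{\min}} \ln\!\left( \frac{2\Phi_F}{\delta_0} \right).
$$
The main obstacle, and essentially the only delicate point, is step (i): producing a time $\tau_0 > 0$ at which the $L^1$-distance between the spatial derivatives is genuinely positive. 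This must be argued carefully from the definition of a non-steady solution plus the Dirichlet boundary condition at $\eta = 0$, so as to avoid circularity; everything else reduces to invoking Theorem \ref{thm:BV} together with the trivial bound \eqref{bdd-BV}.
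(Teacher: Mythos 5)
Your proof is correct and takes essentially the same route as the paper: both rest on the expansion bound from Theorem~\ref{thm:BV} combined with the universal bound~\eqref{bdd-BV}, and both use a time-shift to generate a second global solution. The only organizational difference is that the paper first establishes that at most one initial datum can yield a global solution and then invokes time-shift invariance, whereas you compare $Q$ directly against its own time-shift; your additional explicit bound $\tau^* \leq \tau_0 + k_{\min}^{-1}\ln(2\Phi_F/\delta_0)$ is a correct byproduct (cf.\ Corollary~\ref{cor:bltime-BV}, which obtains a similar estimate using the steady state as reference).
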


\begin{proof}
    Suppose there are \textit{two} global solutions, denoted as $Q_1(\tau,\cdot)$ and $Q_2(\tau,\cdot)$, with \textit{different} initial data, then by Theorem \ref{thm:BV} the following distance grows exponentially in time
    \begin{equation}\label{tmp-bl-1}
        \|\p_{\eta}Q_1(\tau)-\p_{\eta}Q_2(\tau)\|_{L^1(0,1)}\geq e^{k_{\min}\tau}\|\p_{\eta}Q_1(0)-\p_{\eta}Q_2(0)\|_{L^1(0,1)}\rightarrow+\infty,\qquad \text{as $\tau\rightarrow+\infty$}.
    \end{equation} 

    However, as in \eqref{bdd-BV}, we know the distance shall be uniformly bounded as
    \begin{equation}\label{tmp-bl-2}
        \|\p_{\eta}Q_1(\tau)-\p_{\eta}Q_2(\tau)\|_{L^1(0,1)}\leq 2\Phi_F,
    \end{equation} which leads to a contradiction.

    We have shown that there is \textit{at most} one initial data that leads to a global solution, otherwise we contradict the previous statement. Denote it as $Q_1(\tau,\cdot)$. Note that for all $\Delta\tau>0$, $Q_1(\tau+\Delta\tau,\cdot)$ is also a global solution with initial data $Q_1(\Delta\tau,\cdot)$. Hence by previous arguments $Q_1(\Delta\tau,\cdot)$ must coincide with $Q_1(0,\cdot)$, for all $\Delta\tau>0$. This implies that $Q_1$ is the unique steady state.
\end{proof}

\begin{remark}\label{rmk:62} 
Similar blow-up results have appeared in \cite[Proposition 4]{mauroy2012global} under Assumption \ref{as:K-cc}, see Remark \ref{rmk:Kcc}, where the existence of the steady state is needed as an additional requirement. Here in Theorem \ref{thm:blow-up-increasingK} we do not need either an assumption on the steady state or Assumption \ref{as:K-cc}. We just need $k_{\min}>0$ which is stronger than $K'>0$ so there is still some requirement on the second derivatives. Such an improvement is due to that in $\tau$ timescale we can compare two arbitrary solutions (c.f. Theorem \ref{thm:BV}), not necessarily one solution to the steady state (c.f. Corollary \ref{cor:BV}). 
\end{remark}

Using the steady state as a reference, we can also obtain an estimate on the blow-up time.
\begin{corollary}\label{cor:bltime-BV}
Suppose $k_{\min}>0$, where $k_{\min}$ is defined in \eqref{def-kmin_kmax}. Additionally assume \eqref{ss-condition} which ensures that there is a unique steady state $Q^*$. Then for every non-steady initial data $Q_{\init}(\eta)$, the first blow-up time $\tau^*$ to the classical solution of \eqref{eq-Q-tau-classical}-\eqref{ic-q-tau-classical} is bounded by
\begin{equation}
    \tau^*\leq \frac{1}{k_{\min}}\ln\left(\frac{2\Phi_F}{\|\p_{\eta}Q_{\init}(\cdot)-\p_{\eta}Q^*(\cdot)\|_{L^1(0,1)}}\right)<+\infty.
\end{equation}
\end{corollary}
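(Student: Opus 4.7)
The plan is to combine the lower bound in the BV stability estimate of Theorem \ref{thm:BV} with the a priori uniform bound \eqref{bdd-BV}, using the steady state as a reference solution, exactly as in the proof of Theorem \ref{thm:blow-up-increasingK} but now keeping track of quantitative constants.

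First I would take $Q_1(\tau,\eta):=Q(\tau,\eta)$, the classical solution to \eqref{eq-Q-tau-classical}-\eqref{ic-q-tau-classical} issued from $Q_{\init}$ and defined on its maximal existence interval $[0,\tau^*)$, and $Q_2(\tau,\eta):=Q^*(\eta)$, the unique steady state guaranteed by Proposition \ref{prop:ss} under \eqref{ss-condition}, which is trivially a global classical solution. Applying the lower bound in Theorem \ref{thm:BV} on their joint existence interval $[0,\tau^*)$ yields
\begin{equation*}
e^{k_{\min}\tau}\,\|\p_{\eta}Q_{\init}-\p_{\eta}Q^*\|_{L^1(0,1)}\leq \|\p_{\eta}Q(\tau)-\p_{\eta}Q^*\|_{L^1(0,1)},\qquad \tau\in[0,\tau^*).
\end{equation*}

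Next I would couple this with the crude uniform upper bound derived in \eqref{bdd-BV}, which only uses that $Q(\tau,\cdot)$ and $Q^*$ are non-decreasing on $[0,1]$ with boundary values $0$ and $\Phi_F$, and therefore gives $\|\p_{\eta}Q(\tau)-\p_{\eta}Q^*\|_{L^1(0,1)}\leq 2\Phi_F$. Combining the two inequalities leaves $e^{k_{\min}\tau}\|\p_{\eta}Q_{\init}-\p_{\eta}Q^*\|_{L^1(0,1)}\leq 2\Phi_F$ for every $\tau<\tau^*$, and solving for $\tau$ yields the claimed bound after taking the supremum in $\tau$.

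The only point that requires a short justification, and which I would expect to be the main (though minor) obstacle, is the strict positivity of $\|\p_{\eta}Q_{\init}-\p_{\eta}Q^*\|_{L^1(0,1)}$ for non-steady initial data, which ensures the logarithm is well-defined and the right-hand side is finite. This follows from the shared boundary conditions $Q_{\init}(0)=Q^*(0)=0$ and $Q_{\init}(1)=Q^*(1)=\Phi_F$: if $\p_{\eta}Q_{\init}=\p_{\eta}Q^*$ almost everywhere, then $Q_{\init}-Q^*$ would be a constant, which the boundary conditions force to be zero, contradicting $Q_{\init}\ne Q^*$. The finiteness of $\tau^*$ is then automatic, and in fact also follows independently from Theorem \ref{thm:blow-up-increasingK}.
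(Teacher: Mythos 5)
Your proof is correct and follows exactly the same route as the paper: apply the lower bound of Theorem \ref{thm:BV} with $Q_2 = Q^*$, combine with the uniform bound \eqref{bdd-BV}, and solve for $\tau$. Your extra remark verifying strict positivity of $\|\p_{\eta}Q_{\init}-\p_{\eta}Q^*\|_{L^1(0,1)}$ for non-steady data is a sensible addition that the paper leaves implicit.
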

\begin{proof}
    Denote the solution with initial data $Q_{\init}$ as $Q(\tau,\cdot)$. A combination of \eqref{tmp-bl-1}-\eqref{tmp-bl-2} with $Q_1=Q$ and $Q_2=Q^*$ leads to
    \begin{equation}
        e^{k_{\min}\tau}\|\p_{\eta}Q_{\init}(\cdot)-\p_{\eta}Q^*(\cdot)\|_{L^1(0,1)}\leq 2\Phi_F,\qquad \forall\tau< \tau^*,
    \end{equation} which yields the result.
\end{proof}
    
\begin{remark}
Using Theorem \ref{thm:L2}, we can obtain a similar estimate on the blow-up time which involves the modified $L^2$ distance \eqref{distance-L2-variant} in the setting $K(Q)=kQ+b$ ($k>0$).
 \end{remark}

With the steady state criteria in Proposition \ref{prop:ss}, we can refine Theorem \ref{thm:blow-up-increasingK} as
\begin{corollary}\label{cor:refine-blow-up}
            Suppose $k_{\min}>0$, where $k_{\min}$ is defined in \eqref{def-kmin_kmax}.

            (i) Assume \eqref{ss-condition}. Then every solution to \eqref{eq-Q-tau-classical}-\eqref{ic-q-tau-classical} except the unique steady state blows up in finite time. 

            (ii) Otherwise \eqref{non-ss-condition} holds. Then every solution blows up in finite time.
\end{corollary}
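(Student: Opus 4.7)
The plan is straightforward: this corollary is essentially a two-line combination of Theorem \ref{thm:blow-up-increasingK} with the steady-state dichotomy of Proposition \ref{prop:ss}, merely rephrasing the conclusion of the former in terms of the criterion governing the latter. There is no new analytic content to produce; the work is purely bookkeeping of the logical alternatives.

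For part (i), I would invoke Proposition \ref{prop:ss} under hypothesis \eqref{ss-condition} to obtain existence and uniqueness of a steady state $Q^*$ to \eqref{eq-Q-tau-classical}-\eqref{ic-q-tau-classical}. Since $k_{\min}>0$, Theorem \ref{thm:blow-up-increasingK} then applies and yields that every classical solution must blow up in finite time, with the sole possible exception being $Q^*$ itself. This gives exactly the statement of (i).

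For part (ii), I would instead invoke Proposition \ref{prop:ss} under hypothesis \eqref{non-ss-condition} to conclude that there is no steady state whatsoever. Applying Theorem \ref{thm:blow-up-increasingK} a second time, the ``exception clause'' for the steady state is vacuous, so \emph{every} classical solution blows up in finite time. This yields (ii).

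The only step that requires any thought is making sure the exception clause in Theorem \ref{thm:blow-up-increasingK} is interpreted correctly: it reads ``except if the solution were the unique steady state, when it exists,'' which is precisely tailored to be combined with the dichotomy of Proposition \ref{prop:ss}. There is no genuine obstacle; the statement is a clean logical consequence of results already established, and the proof can be written in a couple of sentences with no additional estimates.
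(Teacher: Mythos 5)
Your proposal is correct and matches the paper's approach exactly: the paper presents Corollary \ref{cor:refine-blow-up} as an immediate refinement of Theorem \ref{thm:blow-up-increasingK} using the steady-state dichotomy of Proposition \ref{prop:ss}, with no further argument supplied because none is needed. Your reading of the exception clause is the intended one, and the two-case bookkeeping you describe is precisely what the paper has in mind.
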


 In summary, no matter the size of $K$, as long as $k_{\min}>0$, then every non-steady solution blows up. When $K$ is small \eqref{ss-condition}, the unique steady state is unstable. And when $K$ is large \eqref{non-ss-condition}, there is no steady state and therefore every solution blows up.

Indeed,  next we will show in Section \ref{subsec:blow-up2} that if \eqref{non-ss-condition} holds, then every solution blows up no matter the sign of $K'$. This in particular implies that the second part of Corollary \ref{cor:refine-blow-up} holds, even without the assumption $k_{\min}>0$.

\subsection{Blow-up 2: strong interaction}\label{subsec:blow-up2}

\subsubsection{Proof via the characteristics}
\begin{theorem}\label{thm:bl-char}
    Assume \eqref{non-ss-condition}, then  every classical solution to \eqref{eq-Q-tau-classical}-\eqref{ic-q-tau-classical} blows up in finite time. Furthermore, we have the following upper bound on its maximal existence time $\tau^*$ 
\begin{equation}\label{est_bltime_char}
        \tau^*\leq     \int_0^{\Phi_F}\frac{1}{K(\phi)}d\phi\leq 1<\infty.
    \end{equation} 
\end{theorem}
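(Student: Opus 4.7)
The plan is to argue by contradiction along the characteristic emanating from the corner $(0,0)$. Set $A := \int_0^{\Phi_F} \tfrac{d\phi}{K(\phi)}$, which lies in $(0,1]$ under \eqref{non-ss-condition}. Suppose a classical solution exists on $[0,\tau^*)$ with $\tau^* > A$. Because the transport operator in \eqref{eq-Q-tau-classical} is $\p_\tau + \p_\eta$, the diagonal $\{\eta = \tau\}$ is a characteristic starting at the boundary, so I introduce $q(\tau) := Q(\tau,\tau)$ for $\tau \in [0, \min(1,\tau^*)]$. Using $Q(\tau,0)=0$ together with equation \eqref{eq-Q-tau-classical}, this function satisfies
\begin{equation*}
    q'(\tau) = \frac{1}{N(\tau)} + K(q(\tau)), \qquad q(0) = 0,
\end{equation*}
and since $N$ is finite on $[0,\tau^*)$ for a classical solution, we obtain the strict inequality $q'(\tau) > K(q(\tau))$.

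Next, I compare $q$ with the autonomous flow $\tilde{q}$ defined by $\tilde{q}'(\tau) = K(\tilde{q}(\tau))$, $\tilde{q}(0)=0$. Separating variables and invoking the definition of $A$ gives $\tilde{q}(A) = \Phi_F$. The strict inequality $q' - K(q) = 1/N > 0$, combined with the common initial value $q(0) = \tilde{q}(0) = 0$ and the Lipschitz continuity of $K$, yields a standard strict ODE comparison: at any putative first contact point $\tau_1 > 0$ one would have $(q-\tilde{q})'(\tau_1) = 1/N(\tau_1) > 0$, contradicting the non-positive derivative required at such a contact point. Hence $q(\tau) > \tilde{q}(\tau)$ on $(0, \min(1,\tau^*)]$, and since $A \leq 1$ and $A < \tau^*$, the value $q(A)$ is defined and satisfies $q(A) > \tilde{q}(A) = \Phi_F$.

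I then close the argument using the pseudo-inverse structure preserved by the flow: by Proposition \ref{prop:keep-mono} and Proposition \ref{prop:char-N-classical}, the profile $Q(A,\cdot)$ is non-decreasing on $[0,1]$ with $Q(A,1) = \Phi_F$, so
\begin{equation*}
    q(A) = Q(A,A) \leq Q(A,1) = \Phi_F,
\end{equation*}
which contradicts $q(A) > \Phi_F$. Therefore $\tau^* \leq A \leq 1$, establishing the upper bound \eqref{est_bltime_char}. Finiteness of $\tau^*$ then forces the blow-up of $N$ via Theorem \ref{thm:wps-blow-up-classical}.

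The delicate point is the \emph{strictness} of the comparison $q(\tau) > \tilde{q}(\tau)$, which is what makes the argument sharp at the endpoint $A = 1$: a non-strict inequality would only give $q(1) \geq \Phi_F$, consistent with the admissible $Q(1,1) = \Phi_F$ and yielding no contradiction. It is precisely the positivity $1/N(\tau) > 0$ --- equivalent to the classical character of the solution on $[0,\tau^*)$ --- that drives the strict comparison and closes the argument uniformly for every $A \in (0,1]$.
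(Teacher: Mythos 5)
Your proof is correct and takes essentially the same route as the paper: both compare $q(\tau):=Q(\tau,\tau)$ along the diagonal characteristic with the autonomous flow $q_\infty$ solving $q_\infty'=K(q_\infty)$, $q_\infty(0)=0$ (for which $q_\infty(A)=\Phi_F$ with $A=\int_0^{\Phi_F}K^{-1}$), and then use the strict inequality $q'-K(q)=1/N>0$ together with the pseudo-inverse constraint $Q(A,A)\le Q(A,1)=\Phi_F$ to reach a contradiction if $\tau^*>A$. Your extra remark explaining why the strictness of the ODE comparison is what makes the argument close at the endpoint $A=1$ is a useful clarification but does not change the structure of the argument.
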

\begin{proof}
We recall the following auxiliary equation for $q_{\infty}$ \eqref{def-qInf}
\begin{equation}
    \frac{d}{d\eta}q_\infty(\eta)=K(q_\infty(\eta)),\quad q_\infty(0)=0.
\end{equation} In the proof of Proposition \ref{prop:ss}, we have shown that
\begin{equation}
    q_\infty(\eta_{\infty})=\Phi_F,\qquad   0<\eta_{\infty}=\int_0^{\Phi_F}\frac{1}{K(\phi)}d\phi\leq 1,
\end{equation} where the last inequality is \eqref{non-ss-condition}.

Now we argue by contradiction to show the estimate on the existence time \eqref{est_bltime_char}. Suppose the solution exists at least up to $\tau=\eta_{\infty}$. We consider $q(\eta):=Q(\eta,\eta)$ along the characteristic of $\p_{\tau}+\p_{\eta}$ in \eqref{eq-Q-tau-classical}. Then we have
\begin{align}
        \frac{d}{d\eta}q(\eta)&=K(Q(\eta,\eta))+\frac{1}{N(\eta)}\\&=K(q(\eta))+\frac{1}{N(\eta)}>K(q(\eta)),\qquad \eta\in[0,\eta_{\infty}],
\end{align} with $q(0)=0=q_{\infty}(0)$. Therefore, we can apply the comparison principle for one-dimensional ODE to compare $q$ and $q_{\infty}$ and derive at $\eta_{\infty}$
\begin{align}
Q(\eta_{\infty},\eta_{\infty})=q(\eta_{\infty})>q_{\infty}(\eta_{\infty})=\Phi_F.
\end{align} This is a contradiction since for a classical solution we shall have $ Q(\eta_{\infty},\eta_{\infty})\leq Q(\eta_{\infty},1)=\Phi_F$.
    
\end{proof}

Recall that by Proposition \ref{prop:ss}, \eqref{non-ss-condition} is the sharp criteria for non-existence of steady states. As steady states are naturally global solutions, Corollary \ref{thm:bl-char} implies a sharp dichotomy.
\begin{corollary}
    If there is no steady state, then every classical solution to \eqref{eq-Q-tau-classical}-\eqref{ic-q-tau-classical} blows up in finite time.
\end{corollary}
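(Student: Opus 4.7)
The plan is to observe that this corollary is essentially a direct combination of two already-established results and to phrase its proof as a one-step logical deduction rather than a new analytical argument. The heavy lifting has been done in Theorem~\ref{thm:bl-char}, whose proof via a characteristic-based comparison with the auxiliary ODE~\eqref{def-qInf} yields finite-time blow-up precisely when condition~\eqref{non-ss-condition} holds. The role of Proposition~\ref{prop:ss} is to provide the matching equivalence on the steady-state side.

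First I would invoke Proposition~\ref{prop:ss}, which establishes the sharp dichotomy: a steady state to \eqref{eq-Q-tau-classical}-\eqref{ic-q-tau-classical} exists if and only if $\int_0^{\Phi_F} \frac{1}{K(\phi)}\,d\phi > 1$. Hence the hypothesis that no steady state exists is logically equivalent to the inequality \eqref{non-ss-condition}, namely $\int_0^{\Phi_F} \frac{1}{K(\phi)}\,d\phi \leq 1$. Second, I would apply Theorem~\ref{thm:bl-char}, which asserts that whenever \eqref{non-ss-condition} holds, every classical solution blows up in finite time, and moreover furnishes the quantitative bound $\tau^{*} \leq \int_0^{\Phi_F} \frac{1}{K(\phi)}\,d\phi \leq 1$ on the maximal existence time in the dilated timescale. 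Combining these two facts immediately yields the corollary.

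There is no serious obstacle here: the only thing to verify is that the two statements really do chain together without hidden hypotheses. In particular, Proposition~\ref{prop:ss} and Theorem~\ref{thm:bl-char} both operate under the standing Assumption~\ref{as:global-K} and neither requires a sign condition on $K'$ nor a convexity/concavity restriction, so the combined statement is unconditional given the hypothesis of no steady state. If desired, one could append a remark noting that via Remark~\ref{rmk:bl-t} the blow-up also occurs in finite original time $T^{*}<+\infty$, but this is not needed for the corollary as stated.
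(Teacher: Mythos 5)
Your proof is correct and matches the paper's own reasoning: the corollary is stated immediately after Theorem~\ref{thm:bl-char} with the remark that, by Proposition~\ref{prop:ss}, condition~\eqref{non-ss-condition} is exactly the non-existence of steady states, so the blow-up theorem applies. The chaining of the two results is precisely as you describe, with no hidden hypotheses.
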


\subsubsection{Proofs via moments}

Now we approach the question of blow-up via a different method, namely calculating the moments
\begin{equation}
    \int_0^{1}m(Q(\tau,\eta))d\eta,
\end{equation} for some function $m(\phi)$. Note that $\int_0^{1}m(Q)d\eta$ corresponds to $\int_0^{\Phi_F}m(\phi)\rho(t,\phi)d\phi$ in the $\rho$ formulation in $t$ timescale \eqref{eq:rho-t-1}-\eqref{ic-rho-t}. Our calculations are based on the following lemma. Using similar quantities to discuss blow-up in computational neuroscience models have been used in \cite{caceres2011analysis,roux2021towards}.

\begin{lemma}\label{lem:moment}
    Let $(Q,N)$ be a classical solution to \eqref{eq-Q-tau-classical}-\eqref{ic-q-tau-classical}. Then for $m(\phi)\in C^1[0,\Phi_F]$ with $m(0)=0$, we have
    \begin{equation}\label{dm-dtau}
    \frac{d}{d\tau} \int_0^{1}m(Q)d\eta= \left(\int_0^{1}m'(Q)K(Q)d\eta-m(\Phi_F)\right)+\frac{1}{N(\tau)}\int_0^{1}m'(Q)d\eta.
\end{equation}
\end{lemma}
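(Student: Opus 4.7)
The proof should be a direct calculation using the equation \eqref{eq-Q-tau-classical} together with the boundary identities $Q(\tau,0)=0$ and $Q(\tau,1)=\Phi_F$ (the latter coming from Proposition \ref{prop:char-N-classical}). The plan is to differentiate under the integral sign and rewrite $\p_\tau Q$ using the equation, then handle the transport term via integration by parts.

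First I would note that by the regularity of a classical solution (Definition \ref{def:classical-q-tau}) and $m\in C^1$, the map $\tau\mapsto \int_0^1 m(Q(\tau,\eta))\,d\eta$ is $C^1$ and differentiation under the integral is justified:
\begin{equation*}
\frac{d}{d\tau}\int_0^1 m(Q)\,d\eta=\int_0^1 m'(Q)\,\p_\tau Q\,d\eta.
\end{equation*}
Substituting $\p_\tau Q=\tfrac{1}{N(\tau)}+K(Q)-\p_\eta Q$ from \eqref{eq-Q-tau-classical} splits the right-hand side into three pieces: the constant-in-$\eta$ term $\tfrac{1}{N(\tau)}\int_0^1 m'(Q)\,d\eta$, the reaction term $\int_0^1 m'(Q)K(Q)\,d\eta$, and a transport term $-\int_0^1 m'(Q)\p_\eta Q\,d\eta$.

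The key observation for the transport term is that $m'(Q)\p_\eta Q=\p_\eta(m(Q))$ by the chain rule, so
\begin{equation*}
\int_0^1 m'(Q)\,\p_\eta Q\,d\eta = m(Q(\tau,1))-m(Q(\tau,0)) = m(\Phi_F)-m(0)=m(\Phi_F),
\end{equation*}
where we used the boundary condition \eqref{bc-Q-tau-classical} $Q(\tau,0)=0$, the identity $Q(\tau,1)=\Phi_F$ from Proposition \ref{prop:char-N-classical}, and the hypothesis $m(0)=0$. Collecting the three contributions yields \eqref{dm-dtau}.

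There is no real obstacle here: everything is routine once one uses the two hidden boundary identities of the pseudo-inverse formulation (namely $Q(\tau,0)=0$ and $Q(\tau,1)=\Phi_F$) together with $m(0)=0$ to kill the boundary terms. The only subtlety worth flagging is the use of $Q(\tau,1)=\Phi_F$, which is not imposed explicitly in the PDE but guaranteed by Proposition \ref{prop:char-N-classical} whenever \eqref{def-N-tau-classical} holds, so the argument does require that $(Q,N)$ be a genuine classical solution of the full system \eqref{eq-Q-tau-classical}--\eqref{ic-q-tau-classical} rather than just a solution of the transport equation alone.
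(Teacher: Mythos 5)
Your proof is correct and is essentially the paper's argument: the paper multiplies \eqref{eq-Q-tau-classical} by $m'(Q)$, recognizes $m'(Q)\p_\tau Q=\p_\tau m(Q)$ and $m'(Q)\p_\eta Q=\p_\eta m(Q)$, and integrates in $\eta$, whereas you differentiate $\int_0^1 m(Q)\,d\eta$ in $\tau$ first and then substitute the equation, but the chain rule and the boundary identities $Q(\tau,0)=0$, $Q(\tau,1)=\Phi_F$ (via Proposition \ref{prop:char-N-classical}) together with $m(0)=0$ play exactly the same role. Your remark that the hidden identity $Q(\tau,1)=\Phi_F$ is what makes the boundary term come out right is a fair and accurate highlight of the one nontrivial ingredient.
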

\begin{proof} Multiplying \eqref{eq-Q-tau-classical} by $m'(Q)$, we derive
    \begin{equation}\label{m(Q)-eq}
    \p_{\tau}m(Q)+\p_{\eta}m(Q)=m'(Q)K(Q)+\frac{1}{N(\tau)}m'(Q),
\end{equation} Integrating \eqref{m(Q)-eq} in $\eta$, we obtain
\begin{equation}
        \frac{d}{d\tau} \int_0^{1}m(Q)d\eta+m(\Phi_F)-m(0)= \int_0^{1}m'(Q)K(Q)d\eta+\frac{1}{N(\tau)}\int_0^{1}m'(Q)d\eta,
\end{equation} which simplifies to \eqref{dm-dtau} as $m(0)=0$.
\end{proof}
\begin{remark}\label{rmk:moment-rho}
In the $\rho$ formulation in $t$ timescale \eqref{eq:rho-t-1}-\eqref{ic-rho-t}, the formula \eqref{dm-dtau} corresponds to
\begin{equation}
          \frac{d}{dt}\int_0^{\Phi_F}m(\phi)\rho(t,\phi)d\phi=N(t)\left(\int_0^{\Phi_F}m'(\phi)K(\phi)\rho(t,\phi)d\phi-m(\Phi_F)\right)+\int_0^{\Phi_F}m'(\phi)\rho(t,\phi)d\phi.
\end{equation}
\end{remark}

By choosing appropriate $m(\phi)$ in Lemma \ref{lem:moment}, we can prove the following estimates.
\begin{theorem}\label{thm:bl-moment}Assume \eqref{non-ss-condition}. Let $(Q,N)$ be a classical solution to \eqref{eq-Q-tau-classical}-\eqref{ic-q-tau-classical}, and denote $\tau^*$ as its maximal existence time.      Then we have 
     \begin{equation}\label{moment-1}
         \left(\min_{\phi\in[0,\Phi_F]}K(\phi)-\Phi_F\right)\tau^*+\int_0^{\tau^*}\frac{1}{N(\tau)}d\tau \leq \Phi_F-\int_0^{1}Q_{\init}(\eta)d\eta,
     \end{equation}
     and 
     \begin{equation}\label{moment-2}
     \left(1-\int_0^{\Phi_F}\frac{1}{K(\phi)}d\phi\right)\tau^*+\frac{1}{\max_{\phi\in[0,\Phi_F]}K(\phi)}\int_0^{\tau^*}\frac{1}{N(\tau)}d\tau \leq \int_0^{\Phi_F}\frac{1}{K(\phi)}d\phi-\int_0^{1}\left(\int_0^{Q_{\init}(\eta)}\frac{1}{K(\phi)}d\phi\right)d\eta.
     \end{equation}
\end{theorem}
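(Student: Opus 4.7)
The plan is to apply Lemma \ref{lem:moment} twice, with two carefully chosen test functions $m$, each designed so that the terms of \eqref{dm-dtau} match the shape of the desired inequalities. Both inequalities will be obtained by integrating the resulting identity from $0$ to an arbitrary $\tau<\tau^*$, bounding the right-hand side from below by a constant-in-$\eta$ quantity, bounding the left-hand side from above by its value at the boundary $\phi=\Phi_F$, and then passing to the limit $\tau\to(\tau^*)^-$. Recall that under \eqref{non-ss-condition}, Theorem \ref{thm:bl-char} already guarantees $\tau^*<+\infty$, and that $1/N(\tau)\ge 0$ is bounded on $[0,\tau^*)$ by Theorem \ref{thm:wps-blow-up-classical}.

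For \eqref{moment-1}, the natural choice is $m(\phi)=\phi$, so that $m'\equiv 1$ and $m(0)=0$. Then \eqref{dm-dtau} collapses to
\begin{equation}
\frac{d}{d\tau}\int_0^1 Q\,d\eta \;=\; \int_0^1 K(Q)\,d\eta - \Phi_F + \frac{1}{N(\tau)}.
\end{equation}
Since a classical solution satisfies $0\le Q(\tau,\eta)\le \Phi_F$ (monotonicity plus boundary values given by Proposition \ref{prop:keep-mono} and Proposition \ref{prop:char-N-classical}), we have $\int_0^1 K(Q)\,d\eta\ge \min_{\phi\in[0,\Phi_F]}K(\phi)$. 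Integrating on $[0,\tau]$, then using the trivial upper bound $\int_0^1 Q(\tau,\eta)\,d\eta\le \Phi_F$, and finally letting $\tau\to(\tau^*)^-$ yields \eqref{moment-1}.

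For \eqref{moment-2}, I would take the antiderivative $m(\phi):=\int_0^\phi \frac{1}{K(\psi)}\,d\psi$, well-defined and $C^1$ thanks to Assumption \ref{as:global-K}. Then $m(0)=0$, $m'(\phi)K(\phi)\equiv 1$, and $m(\Phi_F)=\int_0^{\Phi_F}\frac{1}{K}\,d\phi$, so \eqref{dm-dtau} becomes
\begin{equation}
\frac{d}{d\tau}\int_0^1 m(Q)\,d\eta \;=\; 1 - \int_0^{\Phi_F}\frac{1}{K(\phi)}\,d\phi + \frac{1}{N(\tau)}\int_0^1 \frac{1}{K(Q)}\,d\eta.
\end{equation}
Using $\frac{1}{N(\tau)}\ge 0$ together with the pointwise bound $\frac{1}{K(Q)}\ge \frac{1}{\max_{[0,\Phi_F]}K}$, integrating on $[0,\tau]$ and then using the upper bound $\int_0^1 m(Q(\tau,\eta))\,d\eta\le m(\Phi_F)=\int_0^{\Phi_F}\frac{1}{K}\,d\phi$ (since $m$ is increasing and $Q(\tau,\cdot)\le \Phi_F$), finally taking $\tau\to(\tau^*)^-$ produces \eqref{moment-2}.

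The only delicate point is the passage $\tau\to(\tau^*)^-$, but this poses no real obstacle: both $\int_0^1 Q\,d\eta$ and $\int_0^1 m(Q)\,d\eta$ are continuous in $\tau$ on $[0,\tau^*)$ and uniformly bounded above, while $\int_0^\tau \frac{1}{N(s)}\,ds$ is monotone non-decreasing in $\tau$, so the limit of the right-hand side exists (possibly equal to $+\infty$, but then the inequality is trivial) and the limit of the left-hand side is controlled by the supremum bounds $\Phi_F$ and $m(\Phi_F)$. No regularity beyond that of a classical solution is needed.
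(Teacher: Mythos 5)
Your proposal is correct and follows essentially the same route as the paper's proof: both apply Lemma \ref{lem:moment} with the test functions $m(\phi)=\phi$ and $m(\phi)=\int_0^\phi K(\tilde\phi)^{-1}d\tilde\phi$, use the pointwise bounds $K(Q)\geq\min K$ and $K(Q)^{-1}\geq(\max K)^{-1}$ respectively, integrate in time, bound $\int_0^1 m(Q)\,d\eta$ above by $m(\Phi_F)$ via $0\leq Q\leq\Phi_F$, and pass to the limit $\tau\to(\tau^*)^-$. No meaningful difference.
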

\begin{proof}Under \eqref{non-ss-condition},  by Theorem \ref{thm:bl-char} we already know $\tau^* \leq 1<+\infty$. Here our goal is to provide additional estimates \eqref{moment-1}-\eqref{moment-2}. For \eqref{moment-1} we choose $m(\phi)=\phi$ in Lemma \ref{lem:moment} to derive
\begin{align*}
    \frac{d}{d\tau}\int_0^{1}Qd\eta&= \left(\int_0^{1}K(Q)d\eta-\Phi_F\right)+\frac{1}{N(\tau)}\\&\geq \left(\min_{\phi\in[0,\Phi_F]}K(\phi)-\Phi_F\right)+\frac{1}{N(\tau)}.
\end{align*} Integrating in time on $(0,\tau)$ with $\tau<\tau^*$, we have
\begin{align*}
    \left(\min_{\phi\in[0,\Phi_F]}K(\phi)-\Phi_F\right)\tau+\int_0^{\tau}\frac{1}{N(\tilde{\tau})}d\tilde{\tau}&\leq \int_0^{1}Q(\tau,\eta)d\eta-\int_0^{1}Q_{\init}(\eta)d\eta\\&\leq \Phi_F-\int_0^{1}Q_{\init}(\eta)d\eta.
\end{align*} Taking the limit $\tau\rightarrow(\tau^*)^-$ gives \eqref{moment-1}.

For \eqref{moment-2}, we choose more sophisticated $m(\phi)=\int_0^{\phi}\frac{1}{K(\tilde{\phi})}d\tilde{\phi}$ in Lemma \ref{lem:moment}, which satisfies $m'(\phi)=\frac{1}{K(\phi)}$. Then we derive
\begin{align*}
        \frac{d}{d\tau}\int_0^{1}m(Q)d\eta&= \left(1-m(\Phi_F)\right)+\frac{1}{N(\tau)}\int_0^{1}\frac{1}{K(Q)}d\eta\\&\geq  \left(1-m(\Phi_F)\right)+ \frac{1}{\max_{\phi\in[0,\Phi_F]}K(\phi)}\frac{1}{N(\tau)}.
\end{align*}Integrating in time on $(0,\tau)$ with $\tau<\tau^*$, we get
    \begin{align*}
\left(1-m(\Phi_F)\right)\tau+ \frac{1}{\max_{\phi\in[0,\Phi_F]}K(\phi)}\int_0^{\tau}\frac{1}{N(\tilde{\tau})}d\tilde{\tau}&\leq \int_0^{1}m(Q(\tau,\eta))d\eta-\int_0^{1}m(Q_{\init}(\eta))d\eta\\&\leq m(\Phi_F)-\int_0^{1}m(Q_{\init}(\eta))d\eta.
\end{align*}Taking the limit $\tau\rightarrow(\tau^*)^-$ and recalling the choice here $m(\phi)=\int_0^{\phi}\frac{1}{K(\tilde{\phi})}d\tilde{\phi}$, we obtain \eqref{moment-2}.
\end{proof}

The term $\int_0^{\tau^*}\frac{1}{N(\tau)}d\tau$ in the theorem above is the maximal existence time in $t$ timescale, as denoted by $T^*$ in Remark \ref{rmk:bl-t}. We conclude with the following remarks.

\begin{remark}
If $\min_{\phi\in[0,\Phi_F]}K(\phi)>\Phi_F$, then \eqref{moment-1} implies an upper bound on $\tau^*$. Note however this pointwise condition is stronger than the integral condition $\int_0^{\Phi_F}\frac{1}{K(\phi)}d\phi>1$. The latter can be understood as the harmonic average of $K$ on $[0,\Phi_F]$ is greater than $\Phi_F$, i.e.
    \begin{equation}
        \frac{\Phi_F}{\int_0^{\Phi_F}\frac{1}{K(\phi)}d\phi}> \Phi_F.
    \end{equation}
\end{remark}

\begin{remark}
The right hand side of \eqref{moment-1} corresponds to $\int_0^{\Phi_F}(\Phi_F-\phi)\rho_{\init}(\phi)d\phi$ in $\rho$ formulation. Hence \eqref{moment-1} may be interpreted as that a higher concentration of mass near $\Phi_F$ results in an earlier blow-up. {With the high connectivity assumption $\min_{\phi\in[0,\Phi_F]}K(\phi)>\Phi_F$, we have the following simple bound in original timescale
\[ T^*=\int_0^{\tau^*}\frac{1}{N(\tau)}d\tau \leqslant \int_0^{\Phi_F}(\Phi_F-\phi)\rho_{\init}(\phi)d\phi. \]}  
\end{remark}

\section*{Acknowledgements}

JAC and PR was supported by the Advanced Grant Nonlocal-CPD (Nonlocal PDEs for Complex Particle Dynamics: Phase Transitions, Patterns and Synchronization) of the European Research Council Executive Agency (ERC) under the European Union's Horizon 2020 research and innovation programme (grant agreement No. 883363). JAC was also partially supported by EPSRC grants EP/T022132/1 and EP/V051121/1. XD wants to thank Steven Strogatz for inspiring discussions on pulse-coupled oscillators. ZZ is supported by the National Key R\&D Program of China, Project Number 2021YFA1001200, and the NSFC, grant Number 12031013, 12171013. We also thank Beno\^ \i t Perthame for Remark \ref{rmk:pf-rw-r}.

\bibliographystyle{abbrv}

\end{document}